\newcommand{\bbC}{\mathbb{C}}
\newcommand{\bbR}{\mathbb{R}}
\newcommand{\bbZ}{\mathbb{Z}}
\newcommand{\calB}{\mathcal{B}}
\newcommand{\rme}{\mathrm{e}}
\newcommand{\rmi}{\mathrm{i}}
\newcommand{\rmO}{\mathrm{O}}
\newcommand{\rmT}{\mathrm{T}}
\newcommand{\abs}[1]{|{#1}|}
\newcommand{\set}[1]{\{{#1}\}}
\newcommand{\ip}[2]{\langle{#1},{#2}\rangle}
\newtheorem{thm}{Theorem}
\newtheorem{lem}[thm]{Lemma}
\newtheorem{cor}[thm]{Corollary}
\newtheorem{conj}[thm]{Conjecture}
\newtheorem{prob}[thm]{Problem}
\theoremstyle{definition}
\newtheorem{exmp}[thm]{Example}
\newtheorem{defn}[thm]{Definition}
\author{Dustin G.~Mixon}
\title{Sparse Signal Processing with Frame Theory}
\abstract{Many emerging applications involve sparse signals, and their processing is a subject of active research.
We desire a large class of sensing matrices which allow the user to discern important properties of the measured sparse signal.
Of particular interest are matrices with the restricted isometry property (RIP).
RIP matrices are known to enable efficient and stable reconstruction of sufficiently sparse signals, but the deterministic construction of such matrices has proven very difficult.
In this thesis, we discuss this matrix design problem in the context of a growing field of study known as frame theory.
In the first two chapters, we build large families of equiangular tight frames and full spark frames, and we discuss their relationship to RIP matrices as well as their utility in other aspects of sparse signal processing.
In Chapter~3, we pave the road to deterministic RIP matrices, evaluating various techniques to demonstrate RIP, and making interesting connections with graph theory and number theory.
We conclude in Chapter~4 with a coherence-based alternative to RIP, which provides near-optimal probabilistic guarantees for various aspects of sparse signal processing while at the same time admitting a whole host of deterministic constructions.
}
\begin{document}

\section{Overview}

In several applications, data is traditionally collected in massive quantities before employing a reasonable compression strategy. 
The result is a storage bottleneck that can be prevented with a data collection alternative known as \emph{compressed sensing}. 
The philosophy behind compressed sensing is that we might as well target the meaningful data features up front instead of spending our storage budget on less-telling measurements. 
As an example, natural images tend to have a highly compressible wavelet decomposition because many of the wavelet cofficients are typically quite small. 
In this case, one might consider targeting large wavelet coefficients as desired image features; in fact, removing the contribution of the smallest wavelet coefficients will have little qualitative effect on the image~\cite{DavenportDEK:11}, and so using sparsity in this way is intuitively reasonable.

Let $x$ be an unknown $N$-dimensional vector with the property that at most $K$ of its entries are nonzero, that is, $x$ is $K$\emph{-sparse}.
The goal of compressed sensing is to construct relatively few non-adaptive linear measurements along with a stable and efficient reconstruction algorithm that exploits this sparsity structure.
Expressing each measurement as a row of an $M\times N$ matrix $\Phi$, we have the following noisy system:
\begin{equation}
\label{eq.cs eqn}
y=\Phi x+z.
\end{equation}
In the spirit of \emph{compressed} sensing, we only want a few measurements: $M\ll N$.
Also, in order for there to exist an inversion process for \eqref{eq.cs eqn}, $\Phi$ must map $K$-sparse vectors injectively, or equivalently, every subcollection of $2K$ columns of $\Phi$ must be linearly independent.
Unfortunately, the natural reconstruction method in this general case, i.e., finding the sparsest approximation of $y$ from the dictionary of columns of $\Phi$, is known to be $\NP$-hard~\cite{Natarajan:95}.
Moreover, the independence requirement does not impose any sort of dissimilarity between the columns of $\Phi$, meaning distinct identity basis elements could lead to similar measurements, thereby bringing instability in reconstruction.

To get around the $\NP$-hardness of sparse approximation, we need more structure in the matrix~$\Phi$.
Indeed, several efficient reconstruction algorithms have been considered (e.g., Basis Pursuit~\cite{DonohoE:03,DonohoET:06,GribonvalN:03}, Orthogonal Matching Pursuit~\cite{DonohoET:06,Tropp:04}, and the Least Absolute Shrinkage and Selection Operator~\cite{BenHaimEE:10}), and their original performance guarantees depend on the additional structure that the columns of $\Phi$ are nearly orthogonal to each other.
Depending on the algorithm, this structure in the sensing matrix enables successful reconstruction when noise term $z$ in \eqref{eq.cs eqn} is zero, adversarial, or stochastic, but for any of the original guarantees to apply, the sparsity level must be $K=\mathrm{O}(\sqrt{M})$.
To reconstruct signals with larger sparsity levels, Cand\`{e}s and Tao~\cite{CandesT:06} impose a much stronger requirement on the sensing matrix: that every submatrix of $2K$ columns of $\Phi$ be well-conditioned.
To be explicit, we have the following definition:

\begin{defn}
\label{defn.rip}
The matrix $\Phi$ has the \emph{$(K,\delta)$-restricted isometry property (RIP)} if 
\begin{equation*}
(1-\delta)\|x\|^2\leq\|\Phi x\|^2\leq(1+\delta)\|x\|^2
\end{equation*}
for every $K$-sparse vector $x$.
The smallest $\delta$ for which $\Phi$ is $(K,\delta)$-RIP is the \emph{restricted isometry constant (RIC)} $\delta_K$.
\end{defn}

In words, matrices which satisfy RIP act as a near-isometry on sufficiently sparse vectors.
Among other things, this structure imposes near-orthogonality between the columns of $\Phi$, and so in light of the previous results, it is not surprising that RIP sensing matrices enable efficient reconstruction:

\begin{thm}[Theorem 1.3 in \cite{Candes:08}]
\label{thm.rip use}
Suppose an $M\times N$ matrix $\Phi$ has the $(2K,\delta)$-restricted isometry property for some $\delta<\sqrt{2}-1$.
Assuming $\|z\|\leq\varepsilon$, then for every $K$-sparse vector $x\in\mathbb{R}^N$, the following reconstruction from \eqref{eq.cs eqn}:
\begin{equation*}
\tilde{x}=\arg\min\|\hat{x}\|_1\qquad\mbox{s.t. }\|y-\Phi\hat{x}\|\leq\varepsilon
\end{equation*}
satisfies $\|\tilde{x}-x\|\leq C\varepsilon$, where $C$ only depends on $\delta$.
\end{thm}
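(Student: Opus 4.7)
The plan is to follow the now-standard $\ell_{1}$-analysis argument, bounding the error $h \coloneqq \tilde{x} - x$ by exploiting two complementary properties: a ``tube'' bound coming from feasibility, and a ``cone'' bound coming from optimality of the $\ell_{1}$ objective. For the tube bound, note that $x$ itself satisfies $\|y - \Phi x\| = \|z\| \le \varepsilon$, so $x$ is feasible; combining this with $\|y - \Phi \tilde{x}\| \le \varepsilon$ gives $\|\Phi h\| \le 2\varepsilon$. For the cone bound, let $T$ be the support of $x$, of size at most $K$. Since $\tilde{x}$ is $\ell_{1}$-optimal, $\|\tilde{x}\|_{1} \le \|x\|_{1}$; splitting both sides on $T$ and $T^{c}$ and using the reverse triangle inequality yields the key cone condition $\|h_{T^{c}}\|_{1} \le \|h_{T}\|_{1}$.

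Next I would exploit sparsity by decomposing $h_{T^{c}}$ into successive blocks. Let $T_{1}$ index the $K$ largest-magnitude entries of $h_{T^{c}}$, $T_{2}$ the next $K$ largest, and so on, and set $T_{01} \coloneqq T \cup T_{1}$. A standard monotonicity argument shows $\|h_{T_{j}}\| \le \|h_{T_{j-1}}\|_{1}/\sqrt{K}$ for $j \ge 2$, so summing the geometric-style bound gives the tail estimate
\begin{equation*}
\sum_{j \ge 2} \|h_{T_{j}}\| \;\le\; \frac{\|h_{T^{c}}\|_{1}}{\sqrt{K}} \;\le\; \frac{\|h_{T}\|_{1}}{\sqrt{K}} \;\le\; \|h_{T}\| \;\le\; \|h_{T_{01}}\|,
\end{equation*}
using the cone inequality and Cauchy--Schwarz on the $K$-sparse vector $h_{T}$. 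This is the lever that converts $\ell_{1}$ control of the tail into $\ell_{2}$ control in terms of the ``head'' $h_{T_{01}}$.

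Now I would apply RIP to the $2K$-sparse vector $h_{T_{01}}$. Writing $\Phi h = \Phi h_{T_{01}} + \sum_{j \ge 2} \Phi h_{T_{j}}$ and taking inner products with $\Phi h_{T_{01}}$, the left side is controlled below by $(1-\delta_{2K})\|h_{T_{01}}\|^{2}$ via Definition~\ref{defn.rip}, and the right side splits into $\langle \Phi h_{T_{01}}, \Phi h\rangle$, bounded by $2\varepsilon\sqrt{1+\delta_{2K}}\,\|h_{T_{01}}\|$, and the cross terms $\langle \Phi h_{T_{0}}, \Phi h_{T_{j}}\rangle$ and $\langle \Phi h_{T_{1}}, \Phi h_{T_{j}}\rangle$, each bounded by $\delta_{2K}$ times a product of norms (the standard RIP-implies-approximate-orthogonality lemma). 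Using $\|h_{T}\| + \|h_{T_{1}}\| \le \sqrt{2}\,\|h_{T_{01}}\|$ to combine the two cross-term families, and plugging in the tail estimate above, one arrives at
\begin{equation*}
\bigl(1 - (1+\sqrt{2})\,\delta_{2K}\bigr)\,\|h_{T_{01}}\| \;\le\; 2\varepsilon\sqrt{1+\delta_{2K}}.
\end{equation*}

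The hypothesis $\delta_{2K} < \sqrt{2}-1$ is exactly what makes the coefficient on the left positive, producing $\|h_{T_{01}}\| \le C'\varepsilon$ with $C'$ depending only on $\delta$. Finally $\|h\| \le \|h_{T_{01}}\| + \sum_{j\ge 2}\|h_{T_{j}}\| \le 2\|h_{T_{01}}\| \le C\varepsilon$, completing the bound. The main obstacle in the argument is the combination of cross-term estimates with the $\sqrt{2}$ trick: one has to coordinate the RIP near-orthogonality bound across two disjoint $K$-subsets of $T_{01}$ carefully enough that the resulting constant is $\sqrt{2}\,\delta_{2K}$ rather than $2\delta_{2K}$, since it is precisely this sharpening that yields the threshold $\sqrt{2}-1$ stated in the theorem.
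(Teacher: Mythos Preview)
Your proposal is correct and reproduces precisely the argument of Cand\`es~\cite{Candes:08}, which is the reference the paper cites for this theorem; the paper itself does not give a proof of this statement, merely quoting it as background. There is nothing to compare against in the paper beyond the citation, and your sketch matches the cited source's proof.
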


The exciting part about this guarantee is how the sparsity level $K$ of recoverable signals scales with the number of measurements $M$.
Certainly, we expect at least $K\sim \sqrt{M}$ since RIP is a stronger matrix requirement than near-orthogonality between columns.
In analyzing the sparsity level, random matrices have found the most success, specifically matrices with independent Gaussian or Bernoulli entries~\cite{BaraniukDDW:08}, or matrices whose rows were randomly selected from the discrete Fourier transform matrix~\cite{RudelsonV:08}.
With high probability, these random constructions support sparsity levels $K$ on the order of $\smash{\frac{M}{\log^\alpha N}}$ for some $\alpha\geq1$.
Intuitively, this level of sparsity is near-optimal because $K$ cannot exceed $\smash{\frac{M}{2}}$ by the linear independence condition.
Thus, Theorem~\ref{thm.rip use} is a substantial improvement over the previous guarantees, and this has prompted further investigation of RIP matrices.
Unfortunately, it is difficult to check whether a particular instance of a random matrix is $(K,\delta)$-RIP, as this involves the calculation of singular values for all $\smash{\binom{N}{K}}$ submatrices of $K$ columns of the matrix.
For this reason, and for the sake of reliable sensing standards, many have pursued deterministic RIP matrix constructions; Tao discusses the significance of this open problem in~\cite{Tao:07}.

Throughout this thesis, we consider the problem from a variety of directions.
In Chapter~1, we observe a technique which is commonly used to analyze the restricted isometry of deterministic constructions: the Gershgorin circle theorem.
This technique fails to demonstrate RIP for large sparsity levels; it is only capable of showing RIP for sparity levels on the order of $\sqrt{M}$, as opposed to $M$.
This limitation has become known as the ``square-root bottleneck.''
To illustrate that this bottleneck is not merely an artifact of the Gershgorin analysis, we consider a construction which is optimal in the Gershgorin sense, and we establish that this construction is $(K,\delta)$-RIP for every $K\leq\delta\sqrt{M}$ but is not $(K,1-\varepsilon)$-RIP for any $K>\sqrt{2M}$.
The first inequality is proved by the Gershgorin circle theorem, while the second uses the \emph{spark} of the matrix, that is, the number of nonzero entries in the sparsest vector in its nullspace.
While this disparity between $\sqrt{M}$ and $M$ is significant in many applications, such constructions are particularly well-suited for the sparse signal processing application of digital fingerprinting, and so we briefly investigate this application.

For the applications with larger sparsity levels, we note that spark deficiency is incompatible with restricted isometry; indeed, any matrix which is $(K,1-\varepsilon)$-RIP necessarily has spark strictly greater than $K$.  
As such, in Chapter~2, we consider $M\times N$ \emph{full spark} matrices, that is, matrices whose spark is as large as possible: $M+1$.
We start by finding various full spark constructions using Vandermonde matrices and discrete Fourier transforms.
These deterministic constructions are particularly attractive as RIP candidates because they satisfy the necessary condition of large spark, a property which is difficult to verify in general.
To solidify this notion of difficulty, we also show that the problem of testing whether a matrix is full spark is hard for $\NP$ under randomized polynomial-time reductions; this contrasts with the similar problem of testing for RIP, which currently has unknown computational complexity~\cite{KoiranZ:11}.
To demonstrate that full spark matrices are useful in their own right, we use them to solve another important problem in sparse signal processing: signal recovery without phase.

To date, the only deterministic RIP construction that manages to go beyond the square-root bottleneck is given by Bourgain et al.~\cite{BourgainDFKK:11}.
In Chapter~3, we discuss the technique they use to demonstrate RIP.
It is important to stress the significance of their contribution: 
Before~\cite{BourgainDFKK:11}, it was unclear how deterministic analysis might break the bottleneck, and as such, their result is a major theoretical achievement. 
On the other hand, their improvement over the square-root bottleneck is notably slight compared to what random matrices provide. 
However, we show that their technique can actually be used to demonstrate RIP for sparsity levels much larger than $\sqrt{M}$, meaning one could very well demonstrate random-like performance given the proper construction. 
Our result applies their technique to random matrices, and it inadvertently serves as a simple alternative proof that certain random matrices are RIP. 
We also introduce another technique, and we show that it can demonstrate RIP for similarly large sparsity levels.
Later, we propose a specific class of full spark matrices as candidates for being RIP. 
Using a correspondence between these matrices and the Paley graphs, we observe certain combinatorial and number-theoretic implications; this lends some probabilistic intuition for a new bound on the clique number of Paley graphs of prime order.

After investigating deterministic RIP matrices in Chapters~1--3, we have yet to find deterministic $M\times N$ sensing matrices which provably allow for the efficient reconstruction of signals with sparsity level $K\sim\smash{\frac{M}{\log^\alpha N}}$ for some $\alpha\geq1$.
To fill this gap, in Chapter~4, we consider an alternative model for the sparsity in our signal, namely, that the locations of the nonzero entries are drawn uniformly at random.
With this model, we show that a particularly simple algorithm called \emph{one-step thresholding} can reconstruct the signal with high probability provided $K=\mathrm{O}(\frac{M}{\log N})$.
In fact, this performance guarantee requires relatively modest structure in the sensing matrix: that the columns are nearly orthogonal to each other and well-distributed over the unit sphere.
Indeed, this structural requirement is much less stringent than RIP, and we provide a catalog of random and \emph{deterministic} sensing matrices which satisfy these conditions.
Later, we further analyze the two conditions separately, finding new fundamental limits on near-orthogonality and illustrating how to manipulate a given sensing matrix to achieve good distribution over the sphere.

Throughout this thesis, we use ideas from \emph{frame theory}, and so it is fitting to take some time to review the basics:

\section{A brief introduction to frame theory}
\label{section.intro to frame theory}

A \emph{frame} is a sequence $\{\varphi_i\}_{i\in\mathcal{I}}$ in a Hilbert space $\mathcal{H}$ with \emph{frame bounds} $0<A\leq B<\infty$ that satisfy
\begin{equation*}
A\|x\|^2\leq\sum_{i\in\mathcal{I}}|\langle x,\varphi_i\rangle|^2\leq B\|x\|^2
\qquad\forall x\in\mathcal{H}.
\end{equation*}
Frames were introduced by Duffin and Schaeffer~\cite{DuffinS:52} in the context of nonharmonic Fourier analysis, where $\mathcal{H}=L^2(-\pi,\pi)$ and the frame elements $\varphi_i$ are sinusoids of irregularly spaced frequencies.
However, the modern application of frame theory to signal processing came decades later after the landmark paper of Daubechies et al.~\cite{DaubechiesGM:86}. 
This paper gave the first nontrivial examples of \emph{tight frames}, that is, frames with equal frame bounds $A=B$.
The utility of tight frames lies partially in their painless reconstruction formula:
\begin{equation*}
x=\frac{1}{A}\sum_{i\in\mathcal{I}}\langle x,\varphi_i\rangle\varphi_i.
\end{equation*}
Note that orthonormal bases are tight frames with $A=B=1$; in this way, frames form a natural and useful generalization.
While this founding research in frame theory concerned frames over infinite-dimensional Hilbert spaces, many of today's applications of frames require a finite-dimensional treatment.
In fact, finite frame theory has found some important progress in the past decade~\cite{BenedettoF:03,CahillFMPS:11,CasazzaFM:12,CasazzaFMWZ:11,CasazzaT:06,StrohmerH:03}, and the remainder of this section will discuss the basics of this field.

In finite dimensions, say, $\mathcal{H}=\mathbb{C}^M$, a frame is given by the columns of a full-rank $M\times N$ matrix $\Phi=[\varphi_1\cdots\varphi_N]$ with $N\geq M$.
Here, the extreme eigenvalues of $\Phi\Phi^*$ are the frame bounds, and a tight frame has equal frame bounds; equivalently, a frame $\Phi$ is tight if
\begin{itemize}
 \item[(i)]  the rows are equal-norm and orthogonal.
\end{itemize}
As established above, tight frames $\Phi$ are useful because they give a redundant linear encoding $y=\Phi^*x$ of a signal $x$ that permits painless recovery: $x=\frac{1}{A}\Phi y$, where $A$ is the common squared-norm of the rows.  
Constructing tight frames is rather simple: perform Gram-Schmidt on the rows of any frame to orthogonalize with equal norms.  
For the sake of democracy in the entries of the encoding $y$, some applications opt for a \emph{unit norm tight frame (UNTF)}~\cite{CasazzaK:03}, which has the additional property that
\begin{itemize}
 \item[(ii)]  the columns are unit-norm.
\end{itemize}
Constructing UNTFs has proven a bit more difficult, and there has been a lot of research to characterize these~\cite{BenedettoF:03,CahillFMPS:11,Strawn:11}.  
As a special example of a UNTF, take any rows from a discrete Fourier transform matrix and normalize the resulting columns.  
In addition to unit-norm tightness, it is often beneficial to have the columns of $\Phi$ be incoherent, and this occurs when $\Phi$ is an \emph{equiangular tight frame (ETF)}, that is, a UNTF with the final property that
\begin{itemize}
 \item[(iii)]  the sizes of the inner products between distinct columns are equal.
\end{itemize}
ETFs do not exist for all matrix dimensions~\cite{BenedettoK:06}, and there are only three general constructions to date~\cite{FickusMT:10,Waldron:09,XiaZG:05}; these invoke block designs, strongly regular graphs, and difference sets, respectively.

To mitigate any confusion, the reader should be aware that throughout the literature, both UNTFs and ETFs are referred to as \emph{Welch-bound equality sequences}~\cite{Sarwate:99}.  
As one might expect, each achieves equality in one of two important inequalities, and it is important to review them.  
Consider $M\times N$ matrices $\Phi=[\varphi_1\cdots \varphi_N]$ which have (ii), but not necessarily (i) or (iii).  
As such, $\Phi$ might not be a frame, but we can still take the Hilbert-Schmidt norm of the Gram matrix of its columns:
\begin{equation*}
\|\Phi^*\Phi\|_{\mathrm{HS}}^2=\sum_{n=1}^N\sum_{n'=1}^N|\langle \varphi_n,\varphi_{n'}\rangle|^2.
\end{equation*}
This is oftentimes called the \emph{frame potential} of $\Phi$~\cite{BenedettoF:03}, and its significance will become apparent shortly.  
Since the columns of $\Phi$ have unit norm, and since $\Phi^*\Phi$ has at most $M$ nonzero eigenvalues, we have
\begin{equation*}
N^2=\big(\mathrm{Tr}(\Phi^*\Phi)\big)^2=\bigg(\sum_{m=1}^M\lambda_m(\Phi^*\Phi)\bigg)^2\leq M\sum_{m=1}^M\big(\lambda_m(\Phi^*\Phi)\big)^2=M\|\Phi^*\Phi\|_{\mathrm{HS}}^2,
\end{equation*}
where the inequality follows from the Cauchy-Schwarz inequality with the all-ones vector.  
As such, equality is achieved if and only if the $M$ largest eigenvalues of $\Phi^*\Phi$ are equal; since these are also the eigenvalues of $\Phi\Phi^*$, this implies that $\Phi\Phi^*$ is a multiple identity, and so $\Phi$ satisfies (ii).  
Thus, the frame potential of $\Phi$ satisfies $\|\Phi^*\Phi\|_{\mathrm{HS}}^2\geq \frac{N^2}{M}$, with equality if and only if $\Phi$ is a UNTF.  
Some call this the \emph{Welch bound}, and therefore say that UNTFs have Welch-bound equality.

Another bound is also (more correctly) referred to as the Welch bound, and its derivation uses the previous one.  
It concerns the \emph{worst-case coherence} of an $M\times N$ matrix $\Phi=[\varphi_1\cdots \varphi_N]$ that satisfies (ii):
\begin{equation*}
\mu:=\max_{\substack{n,n'\in\{1,\ldots,N\}\\n\neq n'}}|\langle \varphi_{n},\varphi_{n'}\rangle|.
\end{equation*}
Since the columns of $\Phi$ have unit norm, we have
\begin{equation*}
\frac{N^2}{M}
\leq\|\Phi^*\Phi\|_{\mathrm{HS}}^2
=\sum_{n=1}^N\sum_{n'=1}^N|\langle \varphi_n,\varphi_{n'}\rangle|^2
\leq N+N(N-1)\mu^2.
\end{equation*}
Again, equality is achieved in the first inequality if and only if $\Phi$ satisfies (i).  
Also, equality is achieved in the second inequality if and only if $\Phi$ satisfies (iii).  Rearranging gives the following:

\begin{thm}[Welch bound~\cite{StrohmerH:03,Welch:74}]
\label{thm.welch bound}
Every $M\times N$ matrix $\Phi$ with unit-norm columns has worst-case coherence
\begin{equation*}
\mu\geq\sqrt{\frac{N-M}{M(N-1)}},
\end{equation*}
with equality if and only if $\Phi$ is an equiangular tight frame.
\end{thm}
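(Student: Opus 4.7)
The plan is to simply assemble the two chains of inequalities already developed in the paragraphs immediately preceding the theorem and then solve for $\mu$. First, I would record that since $\Phi$ has unit-norm columns, the $N$ diagonal entries of $\Phi^*\Phi$ each equal $1$, while each of the $N(N-1)$ off-diagonal entries satisfies $|\langle\varphi_n,\varphi_{n'}\rangle|\leq\mu$ by the very definition of worst-case coherence. Squaring and summing gives the upper bound
\begin{equation*}
\|\Phi^*\Phi\|_{\mathrm{HS}}^2\leq N+N(N-1)\mu^2.
\end{equation*}

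Next, I would invoke the frame-potential lower bound $\|\Phi^*\Phi\|_{\mathrm{HS}}^2\geq N^2/M$ established earlier via Cauchy--Schwarz applied to the (at most $M$) nonzero eigenvalues of $\Phi^*\Phi$. Chaining the two bounds yields
\begin{equation*}
\frac{N^2}{M}\leq N+N(N-1)\mu^2,
\end{equation*}
and isolating $\mu^2$ produces $\mu^2\geq\frac{N-M}{M(N-1)}$, which is the stated Welch bound after taking a square root.

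For the equality characterization, I would trace when each of the two constituent inequalities is tight. The frame-potential bound is tight precisely when the nonzero eigenvalues of $\Phi^*\Phi$ are all equal, i.e., when $\Phi\Phi^*$ is a multiple of the identity, which is property (i): tightness. The coherence bound is tight precisely when every off-diagonal inner product has modulus exactly $\mu$, which is property (iii): equiangularity. Since property (ii) is built into the hypothesis that the columns have unit norm, simultaneous equality in the composite bound is equivalent to $\Phi$ being an equiangular tight frame.

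There is essentially no obstacle, as the substantive work, namely the Cauchy--Schwarz argument for the frame potential and the diagonal/off-diagonal splitting of the Hilbert--Schmidt norm, has already been performed in the discussion preceding the theorem. The only task that remains is to bookkeep the two equality conditions and verify that they combine to characterize ETFs, which is immediate from the three defining properties (i)--(iii).
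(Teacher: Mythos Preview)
Your proposal is correct and is precisely the approach taken in the paper: the derivation immediately preceding the theorem chains the frame-potential inequality $\tfrac{N^2}{M}\leq\|\Phi^*\Phi\|_{\mathrm{HS}}^2$ with the coherence bound $\|\Phi^*\Phi\|_{\mathrm{HS}}^2\leq N+N(N-1)\mu^2$ and then rearranges, identifying the two equality cases with properties (i) and (iii) exactly as you describe.
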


Equiangular lines have long been a subject of interest~\cite{LemmensS:73}, and since equiangular tight frames have minimal coherence, they are particularly useful in a number of applications.  Recent work on ETFs was spurred by results inspired by communication theory~\cite{BodmannP:05,HolmesP:04,StrohmerH:03} that show that the linear encoders provided by ETFs are optimally robust against channel erasures.  In the real setting, the existence of an ETF of a given size is equivalent to the existence of a strongly regular graph with certain corresponding parameters~\cite{HolmesP:04,Seidel:73}.  Such graphs have a rich history and remain an active topic of research~\cite{Brouwer:07}; the specific ETFs which arise from particular graphs are detailed in~\cite{Waldron:09}.  Some of this theory generalizes to the complex-variable setting in the guise of complex Seidel matrices~\cite{BodmannE:10,BodmannPT:09,DuncanHS:10}.  Many approaches to constructing ETFs have focused on the special case in which every entry of $\Phi$ is a root of unity~\cite{Kalra:06,Renes:07,Strohmer:08,SustikTDH:07,XiaZG:05}.  Other approaches are given in~\cite{CasazzaRT:08,Singh:10,TroppDHS:05}.  In the complex setting, much attention has focused on the \textit{maximal} case of $M^2$ vectors in $\mathbb{C}^M$~\cite{Appleby:05,Fickus:09,Khatirinejad:08,RenesBSC:04,ScottG:10}.

In the next chapter, we construct one of three known general families of ETFs, and we evaluate their performance as RIP matrices.
Having reviewed the frame-theoretic background for this thesis, the interested reader is encouraged to discover more about frame theory in~\cite{Christensen:02}.

\chapter{Steiner equiangular tight frames}

In this chapter, we provide a new method for constructing equiangular tight frames (ETFs), that is, matrices $\Phi$ with orthogonal and equal-norm rows, and unit-norm columns whose inner products are equal in modulus.
As discussed earlier, such frames have minimal worst-case coherence, and are therefore quite useful in applications.
However, up to this point, they have proven notoriously difficult to construct. 
By contrast, the construction of \emph{Steiner equianglar tight frames} is particularly simple: a tensor-like combination of a Steiner system and a regular simplex.  This simplicity permits us to resolve an open question regarding ETFs and the restricted isometry property (RIP): we show that the RIP performance of some ETFs is unfortunately no better than the so-called ``square-root bottleneck.''

In the next section, we provide some simple tests for demonstrating whether a given matrix is RIP; not only will this clarify the notion of the square-root bottleneck, it will show how ETFs are in some sense optimal as deterministic RIP matrices, thereby motivating the construction of ETFs.
Later, we provide the main result of this chapter, namely Theorem~\ref{theorem.steiner etfs}, which shows how certain Steiner systems may be combined with regular simplices to produce ETFs~\cite{FickusMT:11,FickusMT:10}.  In the third section, we discuss each of the known infinite families of such Steiner systems, and compute the corresponding infinite families of ETFs they generate.  We further provide some necessary and asymptotically sufficient conditions, namely Theorem~\ref{theorem.necessary conditions}, to aid in the quest for discovering other examples of such frames that lie outside of the known infinite families.   Finally, after demonstrating that Steiner ETFs fail to break the square-root bottleneck, we consider their application to the design of digital fingerprints to combat data piracy~\cite{MixonQKF:11,MixonQKF:12}.

\section{Simple tests for restricted isometry}

Before formally defining Steiner equiangular tight frames, we motivate their construction by reviewing a couple common methods for determining whether a matrix is RIP:
\medskip
\begin{center}
\begin{tabular}{ll}
\textbf{Positive test for RIP:} & Apply the Gershgorin circle theorem to the submatrices $\Phi_\mathcal{K}^*\Phi_\mathcal{K}^{}$. \\
\textbf{Negative test for RIP:} & Find a sparse vector in the nullspace of $\Phi$.
\end{tabular}
\end{center}
\medskip
In what follows, we discuss each of these tests in more detail, and later, we will use these tests to analyze Steiner ETFs as RIP matrices.

\subsection{Applying Gershgorin's circle thoerem}

Take an $M\times N$ matrix $\Phi$, and recall Definition~\ref{defn.rip}.
For a given $K$, we wish to find some $\delta$ for which $\Phi$ is $(K,\delta)$-RIP.
To this end, it is useful to consider the following expression for the restricted isometry constant:

\begin{lem} 
\label{lem:delta_min_bnd}
The smallest $\delta$ for which $\Phi$ is $(K,\delta)$-RIP is given by
\begin{equation}
\label{eq.delta min}
\delta_K
=\max_{\substack{\mathcal{K}\subseteq\{1,\ldots,N\}\\|\mathcal{K}|=K}}\|\Phi_\mathcal{K}^*\Phi_\mathcal{K}^{}-I_K\|_2,
\end{equation}
where $\Phi_\mathcal{K}$ denotes the submatrix consisting of columns of $\Phi$ indexed by $\mathcal{K}$.
\end{lem}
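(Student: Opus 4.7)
The plan is to unpack the definition of RIP directly in terms of the principal submatrices $\Phi_\mathcal{K}^*\Phi_\mathcal{K}^{}$ of the Gram matrix. First I would observe that any $K$-sparse vector $x\in\bbR^N$ can be written, up to reindexing, as a vector supported on some index set $\mathcal{K}$ with $|\mathcal{K}|=K$. Letting $x_\mathcal{K}\in\bbR^K$ denote the restriction of $x$ to $\mathcal{K}$, we have $\Phi x = \Phi_\mathcal{K} x_\mathcal{K}$, so
\begin{equation*}
\|\Phi x\|^2 - \|x\|^2 = x_\mathcal{K}^*\bigparen{\Phi_\mathcal{K}^*\Phi_\mathcal{K}^{} - I_K}x_\mathcal{K}^{}.
\end{equation*}
From here, the defining inequalities $(1-\delta)\|x\|^2\leq\|\Phi x\|^2\leq (1+\delta)\|x\|^2$ are equivalent to
\begin{equation*}
\bigabs{x_\mathcal{K}^*\bigparen{\Phi_\mathcal{K}^*\Phi_\mathcal{K}^{}-I_K}x_\mathcal{K}^{}} \leq \delta\|x_\mathcal{K}\|^2
\qquad\forall x_\mathcal{K}\in\bbR^K.
\end{equation*}

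Next I would invoke the variational characterization of the spectral norm of a Hermitian matrix: $\|A\|_2 = \sup_{y\neq 0}|y^*Ay|/\|y\|^2$. Applied to the Hermitian matrix $A = \Phi_\mathcal{K}^*\Phi_\mathcal{K}^{}-I_K$, the above condition on a single support $\mathcal{K}$ is equivalent to $\|\Phi_\mathcal{K}^*\Phi_\mathcal{K}^{}-I_K\|_2\leq\delta$. Since the RIP condition demands this for every $K$-sparse vector, and since every such vector is supported on some $\mathcal{K}$ of size at most $K$, it follows that $\Phi$ is $(K,\delta)$-RIP if and only if $\|\Phi_\mathcal{K}^*\Phi_\mathcal{K}^{}-I_K\|_2\leq\delta$ for all $\mathcal{K}$ with $|\mathcal{K}|=K$. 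Taking the infimum over valid $\delta$ then gives the claimed expression for $\delta_K$.

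The only subtlety I foresee is handling supports of size strictly less than $K$: a priori, one might worry that maximizing over size-$K$ subsets misses smaller supports, since RIP must hold for every $k$-sparse vector with $k\leq K$. This is easily resolved by Cauchy interlacing: for $\mathcal{K}'\subseteq\mathcal{K}$, the matrix $\Phi_{\mathcal{K}'}^*\Phi_{\mathcal{K}'}^{}-I_{|\mathcal{K}'|}$ is a principal submatrix of $\Phi_\mathcal{K}^*\Phi_\mathcal{K}^{}-I_K$, so its extreme eigenvalues are bracketed by those of the larger matrix, giving $\|\Phi_{\mathcal{K}'}^*\Phi_{\mathcal{K}'}^{}-I_{|\mathcal{K}'|}\|_2\leq\|\Phi_\mathcal{K}^*\Phi_\mathcal{K}^{}-I_K\|_2$. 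Thus the maximum over $|\mathcal{K}|=K$ captures all smaller supports as well, justifying the restriction in \eqref{eq.delta min}.
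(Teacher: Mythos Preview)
Your argument is correct and follows essentially the same route as the paper: reduce the RIP inequalities on a $K$-sparse $x$ to the Rayleigh-quotient bound $\bigl|\langle x_\mathcal{K}/\|x_\mathcal{K}\|,(\Phi_\mathcal{K}^*\Phi_\mathcal{K}^{}-I_K)\,x_\mathcal{K}/\|x_\mathcal{K}\|\rangle\bigr|\leq\delta$, then identify the tightest $\delta$ with the spectral norm of $\Phi_\mathcal{K}^*\Phi_\mathcal{K}^{}-I_K$ and maximize over $\mathcal{K}$. The paper handles supports of size less than $K$ simply by embedding them in a size-$K$ index set (which you also do in your first paragraph), so your Cauchy-interlacing remark, while correct, is redundant with that embedding step.
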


\begin{proof}
We first note that $\Phi$ being $(K,\delta)$-RIP trivially implies that $\Phi$ is $(K,\delta+\varepsilon)$-RIP for every $\varepsilon>0$.
It therefore suffices to show that the expression for $\delta_K$ in \eqref{eq.delta min} satisfies two criteria: (i) $\Phi$ is $(K,\delta_K)$-RIP, and (ii) $\Phi$ is not $(K,\delta)$-RIP for any $\delta<\delta_K$.
To this end, pick some $K$-sparse vector~$x$.
To prove (i), we need to show that
\begin{equation}
\label{eq.rip min}
(1-\delta_K)\|x\|^2
\leq\|\Phi x\|^2
\leq(1+\delta_K)\|x\|^2.
\end{equation}
Let $\mathcal{K}\subseteq\{1,\dots,N\}$ be the size-$K$ support of $x$, and let $x_\mathcal{K}$ be the corresponding subvector.
Then rearranging \eqref{eq.rip min} gives
\begin{equation}
\label{eq.delta min bound}
\delta_K
\geq\Big|\tfrac{\|\Phi x\|^2}{\|x\|^2}-1\Big|
=\Big|\tfrac{\langle \Phi_\mathcal{K}x_\mathcal{K},\Phi_\mathcal{K}x_\mathcal{K}\rangle-\langle x_\mathcal{K},x_\mathcal{K}\rangle}{\|x_\mathcal{K}\|^2}\Big|
=\Big|\Big\langle \tfrac{x_\mathcal{K}}{\|x_\mathcal{K}\|},(\Phi_\mathcal{K}^*\Phi_\mathcal{K}^{}-I_K)\tfrac{x_\mathcal{K}}{\|x_\mathcal{K}\|} \Big\rangle\Big|.
\end{equation}
Since the expression for $\delta_K$ in \eqref{eq.delta min} maximizes \eqref{eq.delta min bound} over all supports $\mathcal{K}$ and entry values $x_\mathcal{K}$, the inequality necessarily holds; that is, $\Phi$ is necessarily $(K,\delta_K)$-RIP.
Furthermore, equality is achieved by the support $\mathcal{K}$ which maximizes \eqref{eq.delta min} and the eigenvector $x_\mathcal{K}$ corresponding to the largest eigenvalue of $\Phi_\mathcal{K}^*\Phi_\mathcal{K}^{}-I_K$; this proves (ii).
\end{proof}

Note that we are not tasked with actually computing $\delta_K$; rather, we recognize that $\Phi$ is $(K,\delta)$-RIP for every $\delta\geq\delta_K$, and so we seek an upper bound on $\delta_K$.
The following classical result offers a particularly easy-to-calculate bound on eigenvalues:

\begin{thm}[Gershgorin circle theorem~\cite{Gerschgorin:31}]
For each eigenvalue $\lambda$ of a $K\times K$ matrix $A$, there is an index $i\in\{1,\ldots,K\}$ such that
\begin{equation*}
\Big|\lambda-A[i,i]\Big|\leq\sum_{\substack{j=1\\j\neq i}}^K\Big|A[i,j]\Big|.
\end{equation*}
\end{thm}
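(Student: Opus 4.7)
The plan is to extract a scalar consequence of the eigenvalue equation $Av = \lambda v$ at a single, cleverly chosen row. Given any nonzero eigenvector $v$ associated with $\lambda$, the $i$th component of this identity reads $\sum_{j=1}^{K} A[i,j] v_j = \lambda v_i$, and I would immediately isolate the diagonal contribution by rewriting this as $(\lambda - A[i,i]) v_i = \sum_{j \neq i} A[i,j] v_j$. This rearrangement surfaces precisely the quantity $\lambda - A[i,i]$ that the theorem seeks to bound.

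The decisive step is the choice of $i$. I would take $i$ to be any index at which $|v_i|$ attains its maximum over $\{1, \ldots, K\}$; since $v \neq 0$, this maximum is strictly positive, so dividing through by $|v_i|$ at the end will be legitimate. Taking absolute values in the scalar identity, applying the triangle inequality to the right-hand sum, and then bounding $|v_j| \leq |v_i|$ via maximality of $i$, yields
\[
|\lambda - A[i,i]| \cdot |v_i| \leq \sum_{j \neq i} |A[i,j]| \cdot |v_j| \leq |v_i| \sum_{j \neq i} |A[i,j]|.
\]
Canceling $|v_i|$ on both sides delivers the claimed inequality.

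There is no real obstacle here—the argument is essentially a one-line calculation once the maximal-coordinate row has been singled out. The only substantive idea is that any other choice of $i$ would leave the off-diagonal sum multiplied by ratios $|v_j|/|v_i|$ that cannot be controlled without the maximality hypothesis, so this selection of $i$ is what makes the triangle inequality sharp enough to close the argument and is really the entire content of the proof.
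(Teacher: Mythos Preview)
Your argument is correct and is in fact the standard proof of Gershgorin's theorem. Note, however, that the paper does not supply its own proof of this statement: it is quoted as a classical result with a citation and then immediately applied, so there is no in-paper proof to compare against.
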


To use this theorem, take some $\Phi$ with unit-norm columns.
Note that $\Phi_\mathcal{K}^*\Phi_\mathcal{K}^{}$ is the Gram matrix of the columns indexed by $\mathcal{K}$, and as such, the diagonal entries are $1$, and the off-diagonal entries are inner products between distinct columns of $\Phi$.
Let $\mu$ denote the \emph{worst-case coherence} of $\Phi=[\varphi_1\cdots \varphi_N]$:
\begin{equation*}
\mu:=\max_{\substack{i,j\in\{1,\ldots,N\}\\i\neq j}}|\langle \varphi_i,\varphi_j\rangle|.
\end{equation*}
Then the size of each off-diagonal entry of $\Phi_\mathcal{K}^*\Phi_\mathcal{K}^{}$ is $\leq\mu$, regardless of our choice for $\mathcal{K}$.
Therefore, for every eigenvalue $\lambda$ of $\Phi_\mathcal{K}^*\Phi_\mathcal{K}^{}-I_K$, the Gershgorin circle theorem gives
\begin{equation}
\label{eq.bound}
|\lambda|
=|\lambda-0|
\leq\sum_{\substack{j=1\\j\neq i}}^K|\langle \varphi_i,\varphi_j\rangle|
\leq(K-1)\mu.
\end{equation}
Since \eqref{eq.bound} holds for every eigenvalue $\lambda$ of $\Phi_\mathcal{K}^*\Phi_\mathcal{K}^{}-I_K$ and every choice of $\mathcal{K}\subseteq\{1,\ldots,N\}$, we conclude from \eqref{eq.delta min} that $\delta_K\leq(K-1)\mu$, i.e., $\Phi$ is $(K,(K-1)\mu)$-RIP.
This process of using the Gershgorin circle theorem to demonstrate RIP for deterministic constructions has become standard in the community~\cite{ApplebaumHSC:09,DeVore:07,FickusMT:10}.

Recall that random RIP constructions support sparsity levels $K$ on the order of $\smash{\frac{M}{\log^\alpha N}}$ for some $\alpha\geq1$.
To see how well the Gershgorin circle theorem demonstrates RIP, we need to express $\mu$ in terms of $M$ and $N$.
To this end, we consider the Welch bound (Theorem~\ref{thm.welch bound}):
\begin{equation*}
\mu\geq\sqrt{\frac{N-M}{M(N-1)}}.
\end{equation*}
Since equiangular tight frames (ETFs) achieve equality in the Welch bound (as demonstrated in Section~\ref{section.intro to frame theory}), we can further analyze what it means for an $M\times N$ ETF $\Phi$ to be $(K,(K-1)\mu)$-RIP.
In particular, since Theorem~\ref{thm.rip use} requires that $\Phi$ be $(2K,\delta)$-RIP for $\delta<\sqrt{2}-1$, it suffices to have
$\smash{\frac{2K}{\sqrt{M}}<\sqrt{2}-1}$, since this implies
\begin{equation}
\label{eq.square root bottleneck}
\delta=(2K-1)\mu=(2K-1)\sqrt{\frac{N-M}{M(N-1)}}\leq\frac{2K}{\sqrt{M}}<\sqrt{2}-1.
\end{equation}
That is, ETFs form sensing matrices that support sparsity levels $K$ on the order of $\sqrt{M}$.
Most other deterministic constructions have identical bounds on sparsity levels~\cite{ApplebaumHSC:09,DeVore:07,FickusMT:10}.
In fact, since ETFs minimize coherence, they are necessarily optimal constructions in terms of the Gershgorin demonstration of RIP, but the question remains whether they are actually RIP for larger sparsity levels; the Gershgorin demonstration fails to account for cancellations in the sub-Gram matrices $\Phi_\mathcal{K}^*\Phi_\mathcal{K}^{}$, and so this technique is too weak to indicate either possibility.

\subsection{Spark considerations}

Recall that, in order for an inversion process for \eqref{eq.cs eqn} to exist, $\Phi$ must map $K$-sparse vectors injectively, or equivalently, every subcollection of $2K$ columns of $\Phi$ must be linearly independent.
This linear independence condition can be nicely expressed in more general terms, as the following definition provides:

\begin{defn}
The \emph{spark} of a matrix $\Phi$ is the size of the smallest linearly dependent subset of columns, i.e.,
\begin{equation*}
\mathrm{Spark}(\Phi)
=\min\Big\{\|x\|_0:\Phi x=0,~x\neq0\Big\}.
\end{equation*}
\end{defn}

This definition was introduced by Dohono and Elad~\cite{DonohoE:03} to help build a theory of sparse representation that later gave birth to modern compressed sensing.
The concept of spark is also found in matroid theory, where it goes by the name \emph{girth}.
The condition that every subcollection of $2K$ columns of $\Phi$ is linearly independent is equivalent to $\mathrm{Spark}(\Phi)>2K$.
Relating spark to RIP, suppose $\Phi$ is $(K,\delta)$-RIP with $\mathrm{Spark}(\Phi)\leq K$.
Then there exists a nonzero $K$-sparse vector $x$ such that $(1-\delta)\|x\|^2\leq\|\Phi x\|^2=0$, and so $\delta\geq1$.
The reason behind this stems from our necessary linear independence condition: RIP implies linear independence, and so small spark implies linear dependence, which in turn implies not RIP.

As an example of using spark to test RIP, consider the $M\times 2M$ matrix $\Phi=[I~~F]$ that comes from concatenating the identity matrix $I$ with the unitary discrete Fourier transform matrix $F$.
In this example, columns from a common orthonormal basis are orthogonal, while columns from different bases have an inner product of size $\frac{1}{\sqrt{M}}$.
As such, the Gershgorin analysis gives that $\Phi$ is $(K,\delta)$-RIP for all $\delta\geq\frac{K-1}{\sqrt{M}}$.
However, when $M$ is a perfect square, the Dirac comb $x$ of $\sqrt{M}$ Kronecker deltas is an eigenvector of $F$, and so concatenating $Fx$ with $-x$ produces a $2\sqrt{M}$-sparse vector in the nullspace of $\Phi$.
In other words, $\mathrm{Spark}(\Phi)\leq2\sqrt{M}$, and so $\Phi$ is not $(K,1-\varepsilon)$-RIP for any $K\geq2\sqrt{M}$.
After building Steiner equiangular tight frames, we will see that they perform similarly as RIP matrices.

\section{Constructing Steiner equiangular tight frames}

Steiner systems and block designs have been studied for over a century; the background facts presented here on these topics are taken from~\cite{AbelG:07,ColbournM:07}.  In short, a $(v,b,r,k,\lambda)$-\textit{block design} is a $v$-element set $V$ along with a collection $\calB$ of $b$ size-$k$ subsets of $V$, dubbed \textit{blocks}, that have the property that any element of $V$ lies in exactly $r$ blocks and that any $2$-element subset of $V$ is contained in exactly $\lambda$ blocks.  The corresponding \textit{incidence matrix} is a $v\times b$ matrix $A$ that is one in a given entry if that block contains the corresponding point, and is otherwise zero; in this chapter, it is more convenient for us to work with the $b\times v$ transpose $A^\rmT$ of this incidence matrix.  Our particular construction of ETFs involves a special class of block designs known as $(2,k,v)$-\textit{Steiner systems}.  These have the property that any $2$-element subset of $V$ is contained in exactly one block, that is, $\lambda=1$.  With respect to our purposes, the crucial facts are the following:
\medskip
\begin{quote}
The transpose $A^\rmT$ of the $\set{0,1}$-incidence matrix $A$ of a $(2,k,v)$-Steiner system:
\begin{enumerate}
\item[(i)]
is of size $\smash{\frac{v(v-1)}{k(k-1)}}\times v$,
\item[(ii)]
has $k$ ones in each row,
\item[(iii)]
has $\smash{\frac{v-1}{k-1}}$ ones in each column, and
\item[(iv)]
has the property that any two of its columns have a inner product of one.
\end{enumerate} 
\end{quote}
\medskip
The first three facts follow immediately from solving for $\smash{b=\frac{v(v-1)}{k(k-1)}}$ and $\smash{r=\frac{v-1}{k-1}}$, using the well-known relations $vr=bk$ and $r(k-1)=\lambda(v-1)$.  Meanwhile, (iv) comes from the fact that $\lambda=1$: each column of $A^\rmT$ corresponds to an element of the set, and the inner product of any two columns computes the number of blocks that contains the corresponding pair of points.  This in hand, we present the main result of this chapter;  here, the \textit{density} of a matrix is the ratio of the number of nonzero entries of that matrix to the total number of its entries:

\begin{thm}
\label{theorem.steiner etfs}
Every $(2,k,v)$-Steiner system generates an equiangular tight frame consisting of $N=v(1+\frac{v-1}{k-1})$ vectors in $M=\frac{v(v-1)}{k(k-1)}$-dimensional space with redundancy $\smash{\frac NM=k(1+\frac{k-1}{v-1})}$ and density $\smash{\frac{k}{v}=(\frac{N-1}{M(N-M)})^{\frac12}}$.\medskip

\noindent
Moreover, if there exists a real Hadamard matrix of size $\smash{1+\frac{v-1}{k-1}}$, then such frames are real.\medskip

\noindent
Specifically, a $\frac{v(v-1)}{k(k-1)}\times v(1+\frac{v-1}{k-1})$ ETF matrix $\Phi$ may be constructed as follows:
\begin{enumerate}
\item
Let $A^\rmT$ be the $\smash{\frac{v(v-1)}{k(k-1)}}\times v$ transpose of the adjacency matrix of a $(2,k,v)$-Steiner system.\smallskip
\item
For each $j=1,\dotsc,v$, let $H_j$ be any $(1+\frac{v-1}{k-1})\times(1+\frac{v-1}{k-1})$ matrix that has orthogonal rows and unimodular entries, such as a possibly complex Hadamard matrix.\smallskip
\item
For each $j=1,\dotsc,v$, let $\Phi_j$ be the $\smash{\frac{v(v-1)}{k(k-1)}\times(1+\frac{v-1}{k-1})}$ matrix obtained from the $j$th column of $A^\rmT$ by replacing each of the one-valued entries with a distinct row of $H_j$, and every zero-valued entry with a row of zeros.\smallskip
\item
Concatenate and rescale the $\Phi_j$'s to form  $\Phi=(\frac{k-1}{v-1})^\frac12[\Phi_1 \cdots \Phi_v]$.
\end{enumerate}
\end{thm}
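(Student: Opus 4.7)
The plan is to verify the three defining properties of an equiangular tight frame directly from the construction: (a) the columns of $\Phi$ have unit norm, (b) the rows of $\Phi$ are orthogonal and of common norm (tightness), and (c) the pairwise inner products between distinct columns all have the same magnitude. The key structural picture is that rows of $\Phi$ are indexed by blocks of the Steiner system and columns are grouped into $v$ blocks of $r+1$ columns each (one group per element $j \in V$), where $r := \frac{v-1}{k-1}$. Within the $j$th group, the block $\Phi_j$ is supported precisely on the $r$ rows corresponding to blocks containing $j$, and the nonzero entries form $r$ distinct rows of $H_j$.

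First I would dispose of (a): each column of $\Phi_j$ has exactly $r$ nonzero entries, all unimodular since $H_j$ has unimodular entries, giving squared norm $r$; the global rescaling factor $\frac{k-1}{v-1} = 1/r$ then produces unit norm. Next I would address (b). Each row of $\Phi$ corresponds to a block $B$ of size $k$; within that row, the $\Phi_j$-segment is nonzero exactly for the $k$ elements $j \in B$, and each nonzero segment is one row of $H_j$ with $r+1$ unimodular entries, so every row of $\Phi$ has the same squared norm $k(r+1)$ before rescaling. For orthogonality between rows corresponding to distinct blocks $B \ne B'$, only contributions from $j \in B \cap B'$ survive; the Steiner property $\lambda = 1$ forces $|B \cap B'| \le 1$ (two shared elements would lie in two common blocks, a contradiction). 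If $B \cap B' = \emptyset$ the inner product is zero; if $B \cap B' = \{j\}$, the inner product equals the inner product of two \emph{distinct} rows of $H_j$, which is zero by the orthogonality of the rows of $H_j$.

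The main obstacle, and the really clever part of the construction, is verifying (c). I would split into two cases. Case A: the two columns both lie in $\Phi_j$. They then come from two distinct columns $\ell,\ell'$ of $H_j$, read off on only $r$ of the $r+1$ row-indices (those corresponding to blocks containing $j$). Since $H_j$ is square with orthogonal unimodular rows, it also has orthogonal columns, so summing $H_j[\cdot,\ell]\overline{H_j[\cdot,\ell']}$ over all $r+1$ rows gives $0$; summing over only $r$ of them therefore equals the negative of the one omitted unimodular term, which has modulus $1$. Case B: the columns lie in $\Phi_j$ and $\Phi_{j'}$ with $j \ne j'$. A term contributes only from a block $B$ containing both $j$ and $j'$; by $\lambda=1$ there is exactly one such block, contributing a single unimodular entry. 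In both cases the unrescaled inner product has modulus $1$, so after rescaling every off-diagonal inner product of $\Phi$ has modulus $\frac{k-1}{v-1}$, establishing equiangularity. The fact that this common modulus coincides with the Welch bound for the stated $M$ and $N$ is then a routine algebraic check and may be cited, or simply deduced from (a)--(c) together with Theorem~\ref{thm.welch bound}.

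Finally I would tabulate the dimensions: $M = \frac{v(v-1)}{k(k-1)}$ comes from property (i) of $A^\rmT$, $N = v(r+1) = v\bigl(1+\tfrac{v-1}{k-1}\bigr)$ from concatenating $v$ blocks of $r+1$ columns, and the density $\tfrac{k}{v}$ from counting $v \cdot r(r+1)$ nonzero entries in $MN$ total. The ``real'' clause follows immediately: if a real Hadamard matrix of order $r+1$ exists, every $H_j$ may be taken to be a $\{\pm 1\}$-matrix, making $\Phi$ real.
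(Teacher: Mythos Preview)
Your proposal is correct and follows essentially the same approach as the paper: you verify unit-norm columns, row orthogonality and constant row norm (tightness), and then split the equiangularity check into the same two cases (columns in the same $\Phi_j$ via the ``missing row of $H_j$'' trick, columns in different $\Phi_j$'s via $\lambda=1$). The only cosmetic difference is that for row orthogonality you invoke $|B\cap B'|\le 1$ explicitly, whereas the paper simply notes that every $\Phi_j$-segment contributes either an inner product of two distinct rows of $H_j$ or zero---both arguments are equally valid.
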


It is important to note that a version of this ETF construction was previously employed by Seidel in Theorem~12.1 of~\cite{Seidel:73} to prove the existence of certain strongly regular graphs.  In the context of that result, our contributions are as follows: (i) the realization that when Seidel's block design arises from a particular type of Steiner system, the resulting strongly regular graph indeed corresponds to a real ETF; (ii) noting that in this case, the graph theory may be completely bypassed, as the idea itself directly produces the requisite frame $\Phi$; and (iii) having bypassed the graph theory, realizing that this construction immediately generalizes to the complex-variable setting if Seidel's requisite Hadamard matrix is permitted to become complex.  These realizations permit us to exploit the vast literature on Steiner systems~\cite{ColbournM:07} to construct several new infinite families of ETFs, in both the real and complex settings.  Moreover, these ETFs are extremely sparse in their native space; sparse tight frames have recently become a subject of interest in their own right~\cite{CasazzaHKK:10}.

We refer to the ETFs produced by Theorem~\ref{theorem.steiner etfs} as \textit{$(2,k,v)$-Steiner ETFs}.  In essence, the idea of the construction is that the nonzero rows of any particular $\Phi_j$ form a regular simplex in $\smash{\frac{v-1}{k-1}}$-dimensional space; these vectors are automatically equiangular amongst themselves; by requiring the entries of these simplices to be unimodular, and requiring that distinct blocks have only one entry of mutual support, one can further control the inner products of vectors arising from distinct blocks.  This idea is best understood by considering a simple example, such as the ETF that arises from a $(2,2,4)$-Steiner system whose transposed incidence matrix is
\begin{equation*}
A^\rmT=\begin{bmatrix}+&+&&\\+&&+&\\+&&&+\\&+&+&\\&+&&+\\&&+&+\end{bmatrix}.
\end{equation*}
One can immediately verify that $A^\rmT$ corresponds to a block design: there is a set $V$ of $v=4$ elements, each corresponding to a column of $A^\rmT$; there is also a collection $\calB$ of $b=6$ subsets of $V$, each corresponding to a row of $A^\rmT$; every row contains $k=2$ elements; every column contains $r=3$ elements; any given pair of elements is contained in exactly one row, that is, $\lambda=1$, a fact which is equivalent to having the inner product of any two distinct columns of $A^\rmT$ being $1$.  To form an ETF, for each of the four columns of $A^\rmT$ we must choose a $4\times 4$ matrix $H$ with unimodular entries and orthogonal rows; the size of $H$ is always one more than the number $r$ of ones in a given column of $A^\rmT$.  Though in principle one may choose a different $H$ for each column, we choose them all to be the same, namely the Hadamard matrix:
\begin{equation*}
H=\begin{bmatrix}+&+&+&+\\+&-&+&-\\+&+&-&-\\+&-&-&+\end{bmatrix}.
\end{equation*}
To form the ETF, for each column of $A^\rmT$ we replace each of its $1$-valued entries with a distinct row of $H$.  Again, though in principle one may choose a different sequence of rows of $H$ for each column, we simply decide to use the second, third and fourth rows, in that order.  The result is a real ETF of $N=16$ elements of dimension $M=6$:
\begin{equation}
\label{eq.steiner etf example}
\Phi=\frac1{\sqrt{3}}\left[\begin{array}{cccccccccccccccc}+&-&+&-&+&-&+&-\\+&+&-&-&&&&&+&-&+&-\\+&-&-&+&&&&&&&&&+&-&+&-\\&&&&+&+&-&-&+&+&-&-\\&&&&+&-&-&+&&&&&+&+&-&-\\&&&&&&&&+&-&-&+&+&-&-&+  \end{array}\right].
\end{equation}
One can immediately verify that the rows of $\Phi$ are orthogonal and have constant norm, implying $\Phi$ is indeed a tight frame.  One can also easily see that the inner products of two columns from the same block are $-\frac13$, while the inner products of columns from distinct blocks are $\pm\frac13$.  Theorem~\ref{theorem.steiner etfs} states that this behavior holds in general for any appropriate choice of $A^\rmT$ and $H$.

\begin{proof}[Proof of Theorem~\ref{theorem.steiner etfs}]
To verify $\Phi$ is a tight frame, note that the inner product of any two distinct rows of $\Phi$ is zero, as they are the sum of the inner products of the corresponding rows of the $\Phi_j$'s over all $j=1,\dotsc,v$; for any $j$, these shorter inner products are necessarily zero, as they either correspond to inner products of distinct rows of $H_j$ or to inner products with zero vectors.  Moreover, the rows of $\Phi$ have constant norm: as noted in (ii) above, each row of $A^\rmT$ contains $k$ ones; since each $H_j$ has unimodular entries, the squared-norm of any row of $\Phi$ is the squared-scaling factor $\frac{k-1}{v-1}$ times a sum of $\smash{k(1+\frac{v-1}{k-1})}$ ones, which, as is necessary for any unit norm tight frame, equals the redundancy $\smash{\frac NM=k(1+\frac{k-1}{v-1})}$.

Having that $\Phi$ is tight, we show $\Phi$ is also equiangular.  We first note that the columns of $\Phi$ have unit norm: the squared-norm of any column of $\Phi$ is $\smash{\frac{k-1}{v-1}}$ times the squared-norm of a column of one of the $\Phi_j$'s; since the entries of $H_j$ are unimodular and (iii) above gives that each column of $A^\rmT$ contains $\smash{\frac{v-1}{k-1}}$ ones, the squared-norm of any column of $\Phi$ is $\smash{(\frac{k-1}{v-1})(\frac{v-1}{k-1})1=1}$, as claimed.  Moreover, the inner products of any two distinct columns of $\Phi$ has constant modulus.  Indeed, the fact (iv) that any two distinct columns of $A^\rmT$ have but a single entry of mutual support implies the same is true for columns of $\Phi$ that arise from distinct $\Phi_j$ blocks, implying the inner product of such columns is $\smash{\frac{k-1}{v-1}}$ times the product of two unimodular numbers.  That is, the squared-magnitude of the inner products of two columns that arise from distinct blocks is $\smash{\frac{N-M}{M(N-1)}=(\frac{k-1}{v-1})^2}$, as needed.  Meanwhile, the same holds true for columns that arise from the same block $\Phi_j$.  To see this, note that since $H_j$ is a scalar multiple of a unitary matrix, its columns are orthogonal.  Moreover, $\Phi_j$ contains all but one of the $H_j$'s rows, namely one for each of the $1$-valued entries of $A^\rmT$, \`{a} la (iii).  Thus, the inner products of the portions of $H_j$ that lie in $\Phi_j$ are their entire inner product of zero, less the contribution from the left-over entries.  Overall, the inner product of two columns of $\Phi$ that arise from the same $\Phi_j$ block is $\smash{\frac{k-1}{v-1}}$ times the negated product of one entry of $H_j$ and the conjugate of another; since the entries of $H_j$ are unimodular, we have that the squared-magnitude of such inner products is $\smash{\frac{N-M}{M(N-1)}=(\frac{k-1}{v-1})^2}$, as needed.

Thus $\Phi$ is an ETF.  Moreover, as noted above, its redundancy is $\smash{\frac NM=k(1+\frac{k-1}{v-1})}$.  All that remains to verify is its density: as the entries of each $H_j$ are all nonzero, the proportion of $\Phi$'s nonzero entries is the same as that of the incidence matrix $A$, which is clearly $\frac kv$, having $k$ ones in each $v$-dimensional row.  Moreover, substituting $\smash{N=v(1+\frac{v-1}{k-1})}$ and $\smash{M=\frac{v(v-1)}{k(k-1)}}$ into the quantity $\smash{\frac{N-1}{M(N-M)}}$ reveals it to be $\frac{k^2}{v^2}$, and so the density can be alternatively expressed as $\smash{(\frac{N-1}{M(N-M)})^{\frac12}}$.
\end{proof}

In the next section, we apply Theorem~\ref{theorem.steiner etfs} to produce several infinite families of Steiner ETFs.  Before doing so, however, we pause to remark on the redundancy and sparsity of such frames.  In particular, note that since the parameters $k$ and $v$ of the requisite Steiner system always satisfy $2\leq k\leq v$, the redundancy $k(1+\frac{k-1}{v-1})$ of Steiner ETFs is always between $k$ and $2k$; the redundancy is therefore on the order of $k$, and is always strictly greater than $2$.  If a low-redundancy ETF is desired, one can always take the Naimark complement~\cite{CasazzaFMWZ:11} of an ETF of $N$ elements in $M$-dimensional space to produce a new ETF of $N$ elements in $(N-M)$-dimensional space; though the complement process does not preserve sparsity, it nevertheless transforms any Steiner ETF into a new ETF whose redundancy is strictly less than $2$.  However, such a loss of sparsity should not be taken lightly.  Indeed, the low density of Steiner ETFs gives them a large computational advantage over their non-sparse brethren.  

To clarify, the most common operation in frame-theoretic applications is the evaluation of the \textit{analysis} operator $\Phi^*$ on a given $x\in\mathbb{C}^M$.  For a non-sparse $\Phi$, this act of computing $\Phi^*x$ requires $\rmO(MN)$ operations; for a frame $\Phi$ of density $D$, this cost is reduced to $\rmO(DMN)$.  Indeed, using the explicit value of $\smash{D=(\frac{N-1}{M(N-M)})^{\frac12}}$ given in Theorem~\ref{theorem.steiner etfs} as well as the aforementioned fact that the redundancy of such frames necessarily satisfies $\frac NM>2$, we see that the cost of evaluating $\Phi^*x$ when $\Phi$ is a Steiner ETF is on the order of $\smash{(\frac{M(N-1)}{N-M})^{\frac12}N <(2M)^\frac{1}{2}N}$ operations, a dramatic cost savings when $M$ is large.  Further efficiency is gained when $\Phi$ is real, as its nonzero elements are but a fixed scaling factor times the entries of a real Hadamard matrix, implying $\Phi^*x$ can be evaluated using only additions and subtractions.  The fact that every entry of $\Phi$ is either $0$ or $\pm 1$ further makes real Steiner ETFs potentially useful for applications that require binary measurements, such as design of experiments.

\section{Examples of Steiner equiangular tight frames}
\label{section.examples of Steiner ETFs}
In this section, we apply Theorem~\ref{theorem.steiner etfs} to produce several infinite families of Steiner ETFs.   When designing frames for real-world applications, three considerations reign supreme: size, redundancy and sparsity.  As noted above, every Steiner ETF is very sparse, a serious computational advantage in high-dimensional signal processing.  Moreover, some of these infinite families, such as those arising from finite affine and projective geometries, provide great flexibility in choosing the ETF's size and redundancy.  Indeed, these constructions provide the first known guarantee that for a given application, one is always able to find ETFs whose frame elements lie in a space whose dimension matches, up to an order of magnitude, that of one's desired class of signals, while simultaneously permitting one to have an almost arbitrary fixed level of redundancy, a handy weapon in the fight against noise.  To be clear, recall that the redundancy of a Steiner ETF is always strictly greater than~$2$.  Moreover, general bounds on the maximal number of equiangular lines~\cite{LemmensS:73} require that any real $M\times N$ ETF satisfy $\smash{N\leq\frac{M(M+1)}{2}}$ and any complex ETF satisfy $N\leq M^2$; thus, the redundancy of an ETF is never truly arbitrary.  Nevertheless, if one prescribes a given level of redundancy in advance, the Steiner method can produce arbitrarily large ETFs whose redundancy is approximately the prime power closest to the desired level.  

\subsection{Infinite families of Steiner equiangular tight frames}
We now detail eight infinite families of ETFs, each generated by applying Theorem~\ref{theorem.steiner etfs} to one of the eight completely understood infinite families of $(2,k,v)$-Steiner systems.  Table~\ref{table.infinite families} summarizes the most important features of each family, and Table~\ref{table.low-dimensional examples} gives the first few examples of each type, summarizing those that lie in 100 dimensions or less.

\subsubsection{All two-element blocks: $(2,2,v)$-Steiner ETFs for any $v\geq2$.}

The first infinite family of Steiner systems is so simple that it is usually not discussed in the design-theory literature.  For any $v\geq 2$, let $V$ be a $v$-element set, and let $\calB$ be the collection of all $2$-element subsets of $V$.  Clearly, we have $\smash{b=\frac{v(v-1)}{2}}$ blocks, each of which contains $k=2$ elements; each point is contained in $r=v-1$ blocks, and each pair of points is indeed contained in but a single block, that is, $\lambda=1$.

By Theorem~\ref{theorem.steiner etfs}, the ETFs arising from these $(2,2,v)$-Steiner systems consist of $N=v(1+\frac{v-1}{k-1})=v^2$ vectors in $\smash{M=\frac{v(v-1)}{k(k-1)}=\frac{v(v-1)}{2}}$-dimensional space.  Though these frames can become arbitrarily large, they do not provide any freedom with respect to redundancy: $\smash{\frac NM=2\frac{v}{v-1}}$ is essentially $2$.  These frames have density $\smash{\frac kv=\frac2v}$.  Moreover, these ETFs can be real-valued if there exists a real Hadamard matrix of size $\smash{1+\frac{v-1}{k-1}}=v$.  In particular, it suffices to have $v$ to be a power of $2$; should the Hadamard conjecture prove true, it would suffice to have $v$ divisible by $4$.

One example of such an ETF with $v=4$ was given in the previous section.  For a complex example, consider $v=3$.  The $b\times v$ transposed incidence matrix $A^\rmT$ is $3\times 3$, with each row corresponding to a given $2$-element subset of $\set{0,1,2}$:
\begin{equation*}
A^\rmT=\begin{bmatrix}+&+&\\+&&+\\&+&+\end{bmatrix}.
\end{equation*}
To form the corresponding $3\times 9$ ETF $\Phi$, we need a $3\times 3$ unimodular matrix with orthogonal rows, such as a DFT; letting $\smash{\omega=\rme^{2\pi\rmi/3}}$, we can take
\begin{equation*}
H=\left[\begin{array}{lll}1&1&1\\1&\omega^2&\omega\\1&\omega&\omega^2\end{array}\right].
\end{equation*}
To form $\Phi$, in each column of $A^\rmT$, we replace each $1$-valued entry with a distinct row of $H$.  Always choosing the second and third rows yields an ETF of $9$ elements in $\bbC^3$:
\begin{equation*}
\Phi=\frac1{\sqrt{2}}\left[\begin{array}{lllllllll}1&\omega^2&\omega&1&\omega^2&\omega&&&\\1&\omega&\omega^2&&&&1&\omega^2&\omega\\&&&1&\omega&\omega^2&1&\omega&\omega^2\end{array}\right].
\end{equation*}
This is the only known instance of when the Steiner-based construction of Theorem~\ref{theorem.steiner etfs} produces a maximal ETF, that is, one that has $N=M^2$.

\subsubsection{Steiner triple systems: $(2,3,v)$-Steiner ETFs for any $v\equiv 1,3\bmod 6$.}

\textit{Steiner triple systems}, namely $(2,3,v)$-Steiner systems, have been a subject of interest for over a century, and are known to exist precisely when $v\equiv 1,3\bmod 6$~\cite{ColbournM:07}.  Each of the $\smash{b=\frac{v(v-1)}{6}}$ blocks contains $k=3$ points, while each point is contained in $\smash{r=\frac{v-1}{2}}$ blocks.  The corresponding ETFs produced by Theorem~\ref{theorem.steiner etfs} consist of $\smash{\frac{v(v+1)}{2}}$ vectors in $\smash{\frac{v(v-1)}6}$-dimensional space.  The density of such frames is $\frac3v$.  As with ETFs stemming from $2$-element blocks, Steiner triple systems offer little freedom in terms of redundancy: $\smash{\frac NM=3\frac{v+1}{v-1}}$ is always approximately $3$.  Such ETFs can be real if there exists a real Hadamard matrix of size $\smash{\frac{v+1}{2}}$.

\subsubsection{Four element blocks: $(2,4,v)$-Steiner ETFs for any $v\equiv 1,4\bmod 12$.}

It is known that $(2,4,v)$-Steiner systems exist precisely when $v\equiv 1,4\bmod 12$~\cite{AbelG:07}.  Continuing the trend of the previous two families, these ETFs can vary in size but not in redundancy: they consist of $\smash{\frac{v(v+2)}{3}}$ vectors in $\smash{\frac{v(v-1)}{12}}$-dimensional space, having redundancy $\smash{4\frac{v+2}{v-1}}$ and density $\smash{\frac4v}$.  Interestingly, such frames can never be real: with the exception of the trivial $1\times 1$ and $2\times 2$ cases, the dimensions of all real Hadamard matrices are divisible by $4$; since $v\equiv 1,4\bmod 12$, the requisite matrices $H$ here are of size $\smash{\frac{v+2}{3}}\equiv1,2\bmod4$.

\subsubsection{Five element blocks: $(2,5,v)$-Steiner ETFs for any $v\equiv 1,5\bmod 20$.}

It is also known that $(2,5,v)$-Steiner systems exist precisely when $v\equiv 1,5\bmod 20$~\cite{AbelG:07}.  The corresponding ETFs consist of $\smash{\frac{v(v+3)}{4}}$ vectors in $\smash{\frac{v(v-1)}{20}}$-dimensional space, having redundancy $\smash{5\frac{v+3}{v-1}}$ and density $\smash{\frac5v}$.  Such frames can be real whenever there exists a real Hadamard matrix of size $\frac{v+3}4$.  In particular, letting $v=45$, we see that there exists a real Steiner ETF of $540$ vectors in $99$-dimensional space, a fact not obtained from any other known infinite family.

\subsubsection{Affine geometries: $(2,q,q^n)$-Steiner ETFs for any prime power $q$, $n\geq 2$.}

At this point, the constructions depart from those previously considered, allowing both $k$ and $v$ to vary.  In particular, using techniques from finite geometry, one can show that for any prime power $q$ and any $n\geq2$, there exists a $(2,k,v)$-Steiner system with $k=q$ and $v=q^n$~\cite{ColbournM:07}.  The corresponding ETFs consist of $\smash{q^n(1+\frac{q^n-1}{q-1})}$ vectors in $\smash{q^{n-1}(\frac{q^n-1}{q-1}})$-dimensional space.  Like the preceding four classes of Steiner ETFs, these frames can grow arbitrarily large: fixing any prime power $q$, one may manipulate $n$ to produce ETFs of varying orders of magnitude.  However, unlike the four preceding classes, these affine Steiner ETFs also provide great flexibility in choosing redundancy.  That is, they provide the ability to pick $M$ and $N$ somewhat independently.  Indeed, the redundancy of such frames $q(1+\frac{q-1}{q^n-1})$ is essentially $q$, which may be an arbitrary prime power.  Moreover, as these frames grow large, they also become increasingly sparse: their density is $\smash{\frac1{q^{n-1}}}$.  Because of their high sparsity and flexibility with regards to size and redundancy, these frames, along with their projective geometry-based cousins detailed below, are perhaps the best known candidates for use in ETF-based applications.  Such ETFs can be real if there exists a real Hadamard matrix of size $1+\frac{q^n-1}{q-1}$, such as whenever $q=2$, or when $q=5$ and $n=3$.

\subsubsection{Projective geometries: $(2,q+1,\frac{q^{n+1}-1}{q-1})$-Steiner ETFs for any prime power $q$, $n\geq 2$.}

With finite geometry, one can show that for any prime power $q$ and any $n\geq2$, there exists a $(2,k,v)$-Steiner system with $k=q+1$ and $\smash{v=\frac{q^{n+1}-1}{q-1}}$~\cite{ColbournM:07}.  Qualitatively speaking, the ETFs that these projective geometries generate share much in common with their affinely generated cousins, possessing very high sparsity and great flexibility with respect to size and redundancy.  The technical details are as follows: they consist of $\smash{\frac{q^{n+1}-1}{q-1}(1+\frac{q^n-1}{q-1})}$ vectors in $\smash{\frac{(q^n-1)(q^{n+1}-1)}{(q+1)(q-1)^2}}$-dimensional space, with density $\smash{\frac{q^2-1}{q^{n+1}-1}}$ and redundancy $\smash{(q+1)(1+\frac{q-1}{q^n-1})}$.  These frames can be real if there exists a real Hadamard matrix of size $1+\frac{q^n-1}{q-1}$; note this restriction is identical to the one for ETFs generated by affine geometries for the same $q$ and $n$, implying that real Steiner ETFs generated by finite geometries always come in pairs, such as the $6\times 16$ and $7\times 28$ ETFs generated when $q=2$, $n=2$, and the $28\times 64$ and $35\times 120$ ETFs generated when $q=2$, $n=3$.

\subsubsection{Unitals: $(2,q+1,q^3+1)$-Steiner ETFs for any prime power $q$.}

For any prime power $q$, one can show that there exists a $(2,k,v)$-Steiner system with $k=q+1$ and $v=q^3+1$~\cite{ColbournM:07}.  Though one may pick a redundancy of one's liking, such a choice confines one to ETFs of a given size: they consist of $(q^2+1)(q^3+1)$ vectors in $\smash{\frac{q^2(q^3+1)}{q+1}}$-dimensional space, having redundancy $\smash{(q+1)(1+\frac1{q^2})}$ and density $\smash{\frac{q+1}{q^3+1}}$.  These ETFs can never be real: the requisite Hadamard matrices are of size $q^2+1$ which is never divisible by $4$ since $0$ and $1$ are the only squares in $\bbZ_4$.

\subsubsection{Denniston designs: $(2,2^r,2^{r+s}+2^r-2^s)$-Steiner ETFs for any $2\leq r<s$.}

For any $2\leq r<s$, one can show that there exists a $(2,k,v)$-Steiner system with $k=2^r$ and $v=2^{r+s}+2^r-2^s$~\cite{ColbournM:07}.  By manipulating $r$ and $s$, one can independently determine the order of magnitude of redundancy and size: the corresponding ETFs consist of $(2^s+2)(2^{r+s}+2^r-2^s)$ vectors in $\smash{\frac{(2^s+1)(2^{r+s}+2^r-2^s)}{2^r}}$-dimensional space, having redundancy $\smash{2^r\frac{2^s+2}{2^s+1}}$ and density $\smash{\frac{2^r}{2^{r+s}+2^r-2^s}}$.  As such, this family has some qualitative similarities to the familes of ETFs produced by affine and projective geometries.  However, unlike those families, the ETFs produced by Denniston designs can never be real: the requisite Hadamard matrices are of size $2^s+2$, which is never divisible by $4$.

\begin{table}
\begin{center}
\begin{scriptsize}
\begin{tabular}{llllll}
\hline
Name		&$M$										&$N$											&Redundancy						&Real?					&Restrictions\\
\hline
$2$-blocks	&$\frac{v(v-1)}{2}$							&$v^2$											&$2\frac{v}{v-1}$				&$v$					&None\medskip\\
$3$-blocks	&$\frac{v(v-1)}6$							&$\frac{v(v+1)}{2}$								&$3\frac{v+1}{v-1}$				&$\frac{v+1}{2}$		&$v\equiv 1,3\bmod 6$\medskip\\
$4$-blocks	&$\frac{v(v-1)}{12}$						&$\frac{v(v+2)}{3}$								&$4\frac{v+2}{v-1}$				&Never					&$v\equiv 1,4\bmod 12$\medskip\\
$5$-blocks	&$\frac{v(v-1)}{20}$						&$\frac{v(v+3)}{4}$								&$5\frac{v+3}{v-1}$				&$\frac{v+3}4$			&$v\equiv 1,5\bmod 20$\medskip\\
Affine		&$q^{n-1}(\frac{q^n-1}{q-1})$				&$q^n(1+\frac{q^n-1}{q-1})$						&$q(1+\frac{q-1}{q^n-1})$		&$1+\frac{q^n-1}{q-1}$	&prime power $q$, $n\geq2$\medskip\\
Projective	&$\frac{(q^n-1)(q^{n+1}-1)}{(q+1)(q-1)^2}$	&$\frac{q^{n+1}-1}{q-1}(1+\frac{q^n-1}{q-1})$	&$(q+1)(1+\frac{q-1}{q^n-1})$	&$1+\frac{q^n-1}{q-1}$	&prime power $q$, $n\geq2$\medskip\\
Unitals		&$\frac{q^2(q^3+1)}{q+1}$					&$(q^2\!+1)(q^3\!+1)$								&$(q+1)(1+\frac1{q^2})$			&Never					&prime power $q$\medskip\\
Denniston	&$\frac{(2^s+1)(2^{r+s}+2^r-2^s)}{2^r}$		&$(2^s\!+2)(2^{r+s}\!+2^r\!-2^s)$						&$2^r\frac{2^s+2}{2^s+1}$		&Never					&$2\leq r<s$\\
\hline
\end{tabular}
\end{scriptsize}
\caption{\label{table.infinite families}Eight infinite families of Steiner ETFs, each arising from a known infinite family of $(2,k,v)$-Steiner designs.  Each family permits both $M$ and $N$ to grow very large, but only a few families---affine, projective and Denniston---give one the freedom to simultaneously control the proportion between $M$ and $N$, namely the redundancy $\frac NM$ of the ETF.  The column denoted ``Real?" indicates the size for which a real Hadamard matrix must exist in order for the resulting ETF to be real; it suffices to have this size be a power of $2$; if the Hadamard conjecture is true, it would suffice for this number to be divisible by $4$.}
\end{center}
\end{table}

\begin{table}
\begin{center}
\begin{tabular}{rrrrrcl}
\hline
$M$&$N$&$k$&$v$&$r$&$\bbR/\bbC$&Construction of the Steiner system\\
\hline
  6&  16& 2& 4& 3&$\bbR$&$2$-blocks of $v=4$; Affine with $q=2$, $n=2$\\
  7&  28& 3& 7& 3&$\bbR$&$3$-blocks of $v=7$; Projective with $q=2$, $n=2$\\
 28&  64& 2& 8& 7&$\bbR$&$2$-blocks of $v=8$; Affine with $q=2$, $n=3$\\
 35& 120& 3&15& 7&$\bbR$&$3$-blocks of $v=15$; Projective with $q=2$, $n=3$\\
 66& 144& 2&12&11&$\bbR$&$2$-blocks of $v=12$\\
 99& 540& 5&45&11&$\bbR$&$5$-blocks of $v=45$\\ 
\hline
  3&   9& 2& 3& 2&$\bbC$&$2$-blocks of $v=3$\\
 10&  25& 2& 5& 4&$\bbC$&$2$-blocks of $v=5$\\
 12&  45& 3& 9& 4&$\bbC$&$3$-blocks of $v=9$; Affine with $q=3$, $n=2$\\
 13&  65& 4&13& 4&$\bbC$&$4$-blocks of $v=13$; Projective with $q=3$, $n=2$\\
 15&  36& 2& 6& 5&$\bbC$&$2$-blocks of $v=6$\\
 20&  96& 4&16& 5&$\bbC$&$4$-blocks of $v=16$; Affine with $q=4$, $n=2$\\
 21&  49& 2& 7& 6&$\bbC$&$2$-blocks of $v=7$\\
 21& 126& 5&21& 5&$\bbC$&$5$-blocks of $v=21$; Projective with $q=4$, $n=2$\\
 26&  91& 3&13& 6&$\bbC$&$3$-blocks of $v=13$\\
 30& 175& 5&25& 6&$\bbC$&$5$-blocks of $v=25$; Affine with $q=5$, $n=2$\\
 31& 217& 6&31& 6&$\bbC$&Projective with $q=5$, $n=2$\\
 36&  81& 2& 9& 8&$\bbC$&$2$-blocks of $v=9$\\
 45& 100& 2&10& 9&$\bbC$&$2$-blocks of $v=10$\\
 50& 225& 4&25& 8&$\bbC$&$4$-blocks of $v=25$\\
 55& 121& 2&11&10&$\bbC$&$2$-blocks of $v=11$\\
 56& 441& 7&49& 8&$\bbC$&Affine with $q=7$, $n=2$\\
 57& 190& 3&19& 9&$\bbC$&$3$-blocks of $v=19$\\
 57& 513& 8&57& 8&$\bbC$&Projective with $q=7$, $n=2$\\
 63& 280& 4&28& 9&$\bbC$&Unital with $q=3$; Denniston with $r=2$, $s=3$\\
 70& 231& 3&21&10&$\bbC$&$3$-blocks of $v=21$\\
 72& 640& 8&64& 9&$\bbC$&Affine with $q=8$, $n=2$\\
 73& 730& 9&73& 9&$\bbC$&Projective with $q=8$, $n=2$\\
 78& 169& 2&13&12&$\bbC$&$2$-blocks of $v=13$\\
 82& 451& 5&41&19&$\bbC$&$5$-blocks of $v=41$\\
 90& 891& 9&81&10&$\bbC$&Affine with $q=9$, $n=2$\\
 91& 196& 2&14&13&$\bbC$&$2$-blocks of $v=14$\\
 91&1001&10&91&10&$\bbC$&Projective with $q=9$, $n=2$\\
100& 325& 3&25&12&$\bbC$&$3$-blocks of $v=25$\\ 
\hline
\end{tabular}
\caption{\label{table.low-dimensional examples}The ETFs of dimension 100 or less that can be constructed by applying Theorem~\ref{theorem.steiner etfs} to the eight infinite families of Steiner systems detailed in Section~\ref{section.examples of Steiner ETFs}.  That is, these ETFs represent the first few examples of the general constructions summarized in Table~\ref{table.infinite families}.  For each ETF, we give the dimension $M$ of the underlying space, the number of frame vectors $N$, as well as the number $k$ of elements that lie in any block of a $v$-element set in the corresponding $(2,k,v)$-Steiner system.  We further give the value $r$ of the number of blocks that contain a given point; by Theorem~\ref{theorem.necessary conditions}, $\abs{\ip{f_n}{f_{n'}}}=\frac1r$ measures the angle between any two frame elements.  We also indicate whether the given frame is real or complex, and the method(s) of constructing the corresponding Steiner system.
}
\end{center}
\end{table}

\subsection{Conditions for the existence of Steiner equiangular tight frames}

$(2,k,v)$-Steiner systems have been actively studied for over a century, with many celebrated results.  Nevertheless, much about these systems is still unknown.  In this subsection, we discuss some known partial characterizations of the Steiner systems which lie outside of the eight families we have already discussed, as well as what these results tell us about the existence of certain ETFs.  To begin, recall that, for a given $k$ and $v$, if a $(2,k,v)$-Steiner system exists, then the number $r$ of blocks that contain a given point is necessarily $\smash{\frac{v-1}{k-1}}$, while the total number of blocks $b$ is $\smash{\frac{v(v-1)}{k(k-1)}}$.  As such, in order for a $(2,k,v)$-Steiner system to exist, it is necessary for $(k,v)$ to be \textit{admissible}, that is, to have the property that $\smash{\frac{v-1}{k-1}}$ and $\smash{\frac{v(v-1)}{k(k-1)}}$ are integers.

However, this property is not sufficient for existence: it is known that a $(2,6,16)$-Steiner system does not exist~\cite{AbelG:07} despite the fact that $\smash{\frac{v-1}{k-1}}=3$ and $\smash{\frac{v(v-1)}{k(k-1)}}=8$.  In fact, letting $v$ be either $16$, $21$, $36$, or $46$ results in an admissible pair with $k=6$, despite the fact that none of the corresponding Steiner systems exist; there are twenty-nine additional values of $v$ which form an admissible pair with $k=6$ and for which the existence of a corresponding Steiner system remains an open problem~\cite{AbelG:07}.  Similar nastiness arises with $k\geq 7$.  The good news is that admissibility, though not sufficient for existence, is, in fact, asymptotically sufficient: for any fixed $k$, there exists a corresponding admissible index $v_0(k)$ for which for all $v>v_0(k)$ such that $\smash{\frac{v-1}{k-1}}$ and $\smash{\frac{v(v-1)}{k(k-1)}}$ are integers, a $(2,k,v)$-Steiner system indeed exists~\cite{AbelG:07}.  Moreover, explicit values of $v_0(k)$ are known for small $k$: $v_0(6)=801$, $v_0(7)=2605$, $v_0(8)=3753$, $v_0(9)=16497$.  We now detail the ramifications of these design-theoretic results on frame theory:

\begin{thm}
\label{theorem.necessary conditions}
If an $M\times N$ Steiner equiangular tight frame exists, then letting $\smash{\alpha=(\frac{N-M}{M(N-1)})^{\frac12}}$, the corresponding block design has parameters:
\begin{equation*}
v=\tfrac{N\alpha}{1+\alpha},
\qquad
b=M,
\qquad
r=\tfrac1{\alpha},
\qquad
k=\tfrac{N}{M(1+\alpha)}.
\end{equation*}
In particular, if such a frame exists, then these expressions for $v$, $k$ and $r$ are necessarily integers.\medskip

\noindent
Conversely, for any fixed $k\geq2$, there exists an index $v_0(k)$ for which for all $v>v_0(k)$ such that $\smash{\frac{v-1}{k-1}}$ and $\smash{\frac{v(v-1)}{k(k-1)}}$ are integers, there exists a Steiner equiangular tight frame of $\smash{v(1+\frac{v-1}{k-1})}$ vectors for a space of dimension $\smash{\frac{v(v-1)}{k(k-1)}}$.\medskip

\noindent
In particular, for any fixed $k\geq2$, letting $v$ be either $jk(k-1)+1$ or $jk(k-1)+k$ for increasingly large values of $j$ results in a sequence of Steiner equiangular tight frames whose redundancy is asymptotically $k$; these frames can be real if there exist real Hadamard matrices of sizes $jk+1$ or $jk+2$, respectively.
\end{thm}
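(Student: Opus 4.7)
The plan is to reverse-engineer the first part of Theorem~\ref{theorem.necessary conditions} from the explicit formulas in Theorem~\ref{theorem.steiner etfs}, and to obtain the second and third parts by invoking the design-theoretic facts about $(2,k,v)$-Steiner systems that were recalled in the preceding discussion.

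First I would address the forward direction. Suppose an $M\times N$ Steiner ETF arises from a $(2,k,v)$-Steiner system. Because any ETF achieves the Welch bound (Theorem~\ref{thm.welch bound}), its worst-case coherence equals $\alpha$; and from the proof of Theorem~\ref{theorem.steiner etfs}, the coherence of this particular Steiner ETF is exactly $\tfrac{k-1}{v-1}$. Identifying the two expressions gives $\alpha = \tfrac{k-1}{v-1}$, equivalently $r=\tfrac{v-1}{k-1}=\tfrac1\alpha$. Substituting into the formulas $N=v(1+r)$ and $M=vr/k$ of Theorem~\ref{theorem.steiner etfs} and solving yields $v=\tfrac{N\alpha}{1+\alpha}$ and $k=\tfrac{N}{M(1+\alpha)}$, while $b=M$ is read off directly from the fact that the $b\times v$ transposed incidence matrix becomes the shape of $\Phi$'s row set. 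The integrality of $v$, $k$, $r$ is then automatic, since each counts a well-defined combinatorial object in the underlying block design.

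For the converse direction, I would simply cite the asymptotic sufficiency of admissibility recalled in the paragraph preceding the theorem: for each fixed $k\geq 2$ there exists an index $v_0(k)$ such that whenever $v>v_0(k)$ and both $\tfrac{v-1}{k-1}$ and $\tfrac{v(v-1)}{k(k-1)}$ are integers, a $(2,k,v)$-Steiner system actually exists. Feeding any such system into Theorem~\ref{theorem.steiner etfs} immediately produces the claimed ETF of $v(1+\tfrac{v-1}{k-1})$ vectors in dimension $\tfrac{v(v-1)}{k(k-1)}$.

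The final clause amounts to exhibiting two explicit admissible sequences of $v$'s tending to infinity. I would check admissibility directly: for $v=jk(k-1)+1$ one has $v-1=jk(k-1)$, so $\tfrac{v-1}{k-1}=jk$ and hence $\tfrac{v(v-1)}{k(k-1)}$ is an integer; for $v=jk(k-1)+k$ one has $v-1=(k-1)(jk+1)$, so $\tfrac{v-1}{k-1}=jk+1$ and again $\tfrac{v(v-1)}{k(k-1)}\in\mathbb{Z}$. In each case $v\to\infty$ with $j$, so for all sufficiently large $j$ the existence part above applies. The redundancy $k\bigl(1+\tfrac{k-1}{v-1}\bigr)$ then converges to $k$, and the Hadamard sizes $1+\tfrac{v-1}{k-1}$ evaluate to $jk+1$ and $jk+2$ in the two cases, matching the reality condition stated. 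The only genuinely nontrivial ingredient here is the asymptotic-sufficiency result for Steiner systems, which is a deep theorem of Wilson quoted from the design-theory literature; once that is taken as given, the proof is essentially a bookkeeping exercise combining Theorem~\ref{theorem.steiner etfs} with the Welch bound.
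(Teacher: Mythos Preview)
Your proposal is correct, and the second and third parts match the paper's argument almost verbatim. The forward direction, however, takes a genuinely cleaner route than the paper's. The paper starts from $N=v(1+\tfrac{v-1}{k-1})$ and $M=\tfrac{v(v-1)}{k(k-1)}$, eliminates $k$ via $N=v+kM$, and arrives at the quadratic $0=(M-1)v^2+2(N-M)v-N(N-M)$, whose positive root it identifies as $v=\tfrac{N\alpha}{1+\alpha}$; only afterward does it recover $r=\tfrac1\alpha$ from the block-design relations. You instead read off $\alpha=\tfrac{k-1}{v-1}$ directly from the coherence computation inside the proof of Theorem~\ref{theorem.steiner etfs} (where the squared inner product is shown to equal $(\tfrac{k-1}{v-1})^2$), giving $r=\tfrac1\alpha$ immediately and reducing the remaining parameters to linear substitutions. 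This avoids the quadratic entirely and is the more transparent argument; the paper's version has the minor advantage of using only the stated dimensions $M,N$ rather than reaching back into the proof of the construction theorem, but both are valid.
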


\begin{proof}
To prove the necessary conditions on $M$ and $N$, recall that Steiner ETFs, namely those ETFs produced by Theorem~\ref{theorem.steiner etfs}, have $\smash{N=v(1+\frac{v-1}{k-1})}$ and $\smash{M=\frac{v(v-1)}{k(k-1)}}$.  Together, these two equations imply $N=v+kM$.  Solving for $k$ and substituting the resulting expression into $\smash{N=v(1+\frac{v-1}{k-1})}$ yields the quadratic equation $0=(M-1)v^2+2(N-M)v-N(N-M)$.  With some algebra, the only positive root of this equation can be found to be $\smash{v=\frac{N\alpha}{1+\alpha}}$, as claimed.  Substituting this expression for $v$ into $N=v+kM$ yields $\smash{k=\tfrac{N}{M(1+\alpha)}}$.  Having $v$ and $k$, the previously mentioned relations $bk=vr$ and $v-1=r(k-1)$ imply $\smash{r=\frac{v-1}{k-1}=\frac1\alpha}$ and $\smash{b=\frac vkr=M}$, as claimed. 

The second set of conclusions is the result of applying Theorem~\ref{theorem.steiner etfs} to the aforementioned $(2,k,v)$-Steiner ETFs that are guaranteed to exist for all sufficiently large $v$, provided $\smash{\frac{v-1}{k-1}}$ and $\smash{\frac{v(v-1)}{k(k-1)}}$ are integers.  The final set of conclusions are then obtained by applying this fact in the special cases where $v$ is either $jk(k-1)+1$ or $jk(k-1)+k$.  In particular, if $v=jk(k-1)+1$ then $\smash{\frac{v-1}{k-1}=jk}$ and $M=\smash{\frac{v(v-1)}{k(k-1)}=j\big(jk(k-1)+1\big)}$ are integers, and the resulting ETF of $(jk+1)\big(jk(k-1)+1\big)$ vectors has a redundancy of $\smash{k+\frac1j}$ that tends to $k$ for large $j$; such an ETF can be real if there exists a real Hadamard matrix of size $jk+1$.  Meanwhile, if $v=jk(k-1)+k$ then $\smash{\frac{v-1}{k-1}=jk+1}$ and $M=\smash{\frac{v(v-1)}{k(k-1)}=(jk+1)\big(j(k-1)+1\big)}$ are integers, and the resulting ETF of $k(jk+2)\big(j(k-1)+1\big)$ vectors has a redundancy of $\smash{k\frac{jk+2}{jk+1}}$ that tends to $k$ for large $j$; such an ETF can be real if there exists a real Hadamard matrix of size $jk+2$.
\end{proof}
We conclude this section with a few thoughts on Theorems~\ref{theorem.steiner etfs} and~\ref{theorem.necessary conditions}.  First, we emphasize that the method of Theorem~\ref{theorem.steiner etfs} is a method for constructing some ETFs, and by no means constructs them all.  Indeed, as noted above, the redundancy of Steiner ETFs is always strictly greater than $2$; while some of those ETFs with $\frac NM<2$ will be the Naimark complements of Steiner ETFs, one must admit that the Steiner method contributes little towards the understanding of those ETFs with $\frac NM=2$, such as those arising from Paley graphs~\cite{Waldron:09}.  Moreover, Theorem~\ref{theorem.necessary conditions} implies that not even every ETF with $\frac NM>2$ arises from a Steiner system: though there exists an ETF of $76$-elements in $\bbR^{19}$~\cite{Waldron:09}, the corresponding parameters of the design would be $v=\frac{38}3$, $r=5$ and $k=\frac{10}3$, not all of which are integers.

That said, the method of Theorem~\ref{theorem.steiner etfs} is truly significant: comparing Table~\ref{table.low-dimensional examples} with a comprehensive list of all real ETFs of dimension $50$ or less~\cite{Waldron:09}, we see the Steiner method produces $4$ of the $17$ ETFs that have redundancy greater than $2$, namely $6\times 16$, $7\times 28$, $28\times 64$ and $35\times 120$ ETFs.  Interestingly, an additional $4$ of these $17$ ETFs can also be produced by the Steiner method, but only in complex form, namely those of $15\times 36$, $20\times 96$, $21\times 126$ and $45\times 100$ dimensions; it is unknown whether this is the result of a deficit in our analysis or the true non-existence of real-valued Steiner-based constructions of these sizes.  The plot further thickens when one realizes that an additional $2$ of these $17$ real ETFs satisfy the necessary conditions of Theorem~\ref{theorem.necessary conditions}, but that the corresponding $(2,k,v)$-Steiner systems are known to not exist: if a $28\times 288$ ETF was to arise as a result of Theorem~\ref{theorem.steiner etfs}, the corresponding Steiner system would have $k=6$ and $v=36$, while the $43\times 344$ ETF would have $k=7$ and $v=43$; in fact, $(2,6,36)$- and  $(2,7,43)$-Steiner systems cannot exist~\cite{AbelG:07}.  With our limited knowledge of the rich literature on Steiner systems, we were unable to resolve the existence of two remaining candidates: $23\times 276$ and $46\times 736$ ETFs could potentially arise from $(2,10,46)$- and $(2,14,92)$-Steiner systems, respectively, provided they exist.

\section{Restricted isometry and digital fingerprinting}

In the previous section, we used Theorem~\ref{theorem.steiner etfs} to construct many examples of Steiner ETFs.  In this section, we investigate the feasibility of using such frames for applications in sparse signal processing.  
Regarding restricted isometry, one of the sad consequences of the Steiner construction method in Theorem~\ref{theorem.steiner etfs} is that we now know there is a large class of ETFs for which the seemingly coarse estimate from the Gershgorin analysis~\eqref{eq.bound} is, in fact, accurate.  In particular, recall that Gershgorin guarantees that every $M\times N$ ETF is $(K,\delta)$-RIP whenever $K\leq\delta\sqrt{M}$.
Furthermore, recall from Theorem~\ref{theorem.steiner etfs} that every Steiner ETF is built by carefully overlapping $v$ regular simplices, each consisting of $r+1$ vectors in an $r$-dimensional subspace of $b$-dimensional space.  Thus, the corresponding subcollection of $r+1$ vectors that lie in a given block are linearly dependent.
Considering the value of $r$ given in Theorem~\ref{theorem.necessary conditions}, we see that Steiner ETFs $\Phi$ have
\begin{equation*}
\mathrm{Spark}(\Phi)
\leq r+1
=\sqrt{\frac{M(N-1)}{N-M}}+1
\leq\sqrt{\frac{MN}{N-N/2}}+1
=\sqrt{2M}+1,
\end{equation*}
where the last inequality uses the fact that Steiner ETFs have redundancy $\frac{N}{M}\geq2$.
Therefore, Steiner ETFs are not $(K,1-\varepsilon)$-RIP for any $K>\sqrt{2M}$, that is, they fail to break the square-root bottleneck.
This begs the open question: Are there any ETFs which are as RIP as random matrices, or does being optimal in the Gershgorin sense necessarily come at the cost of being able to support large sparsity levels?
In Chapter~3, we address this problem directly and make some interesting connections with graph theory and number theory, but we do not give a conclusive answer.

\newcommand{\mK}{\mathcal{K}}
\newcommand{\mC}{\mathcal{C}}
\newcommand{\maginnerprod}[2]{ \left | \! \left \langle #1 , #2  \right \rangle  \! \right |}
\newcommand{\innerprod}[2]{  \left \langle #1 , #2  \right \rangle }
\newcommand{\ExpVal}[1]{ \text{E} \left [ #1  \right ] }
\newcommand{\mCETF}{\mathcal{C}_{\text{ETF}}} 
\newcommand{\Kmax}{K_{\text{max}}}
\newcommand{\dmk}{\mathrm{dist}(\mathcal{G}_{K,n},\neg\mathcal{G}_{K,n})}  
\newcommand{\udmk}{\underline{\mathrm{dist}}(\mathcal{G}_{K,n},\neg\mathcal{G}_{K,n})}
\newcommand{\WNR}{\text{WNR}}

Despite their provably suboptimal performance as RIP matrices, we will see that Steiner ETFs are particularly well-suited for the application of digital fingerprints.
Digital media protection has become an important issue in recent years, as illegal distribution of licensed material has become increasingly prevalent.  
A number of methods have been proposed to restrict illegal distribution of media and ensure only licensed users are able to access it.  
One method involves cryptographic techniques, which encrypt the media before distribution.  
By doing this, only the users with appropriate licensed hardware or software have access; satellite TV and DVDs are two such examples.  
Unfortunately, cryptographic approaches are limited in that once the content is decrypted (legally or illegally), it can potentially be copied and distributed freely.

An alternate approach involves marking each copy of the media with a unique signature.  
The signature could be a change in the bit sequence of the digital file or some noise-like distortion of the media.  
The unique signatures are called \emph{fingerprints}, by analogy to the uniqueness of human fingerprints.  
With this approach, a licensed user could illegally distribute the file, only to be implicated by his fingerprint.
The potential for prosecution acts as a deterrent to unauthorized distribution.  
However, fingerprinting systems are vulnerable when multiple users form a \emph{collusion} by combining their copies to create a forged copy.  
This attack can reduce and distort the colluders' individual fingerprints, making identification of any particular user difficult.  
Some examples of potential attacks involve comparing the bit sequences of different copies, averaging copies in the signal space, as well as introducing noise, rotations, or cropping.  

One of the principal approaches to designing fingerprints with robustness to collusions uses what is called the \emph{distortion assumption}.
In this regime, fingerprints are noise-like distortions to the media in signal space.  
In order to preserve the overall quality of the media, limits are placed on the magnitude of this distortion. The content owner limits the power of the fingerprint he adds, and the collusion limits the power of the noise they add in their attack.  
When applying the distortion assumption, the literature typically assumes that the collusion linearly averages their individual copies to forge the host signal.
Also, while results using the distortion assumption tend to accommodate fewer users than those with other assumptions, this assumption is distinguished by its natural embedding of fingerprints, namely in the signal space.

Cox et al.~introduced one of the first robust fingerprint designs under the distortion assumption~\cite{cox1997secure}; the robustness was later analytically proven in~\cite{kilian1998resistance}.  Different fingerprint designs have since been studied, including orthogonal fingerprints~\cite{wang2005anti} and simplex fingerprints~\cite{kiyavash2009regular}.  
We propose ETFs as a fingerprint design under the distortion assumption, and we analyze their performance against the worst-case collusion~\cite{MixonQKF:11,MixonQKF:12}.
Using analysis from Ergun et al.~\cite{ergun1999note}, we will show that ETFs perform particularly well as fingerprints; as a matter of fact, Steiner ETF fingerprints perform comparably to orthogonal and simplex fingerprints on average, while accommodating several times as many users~\cite{MixonQKF:12}.  
We start by formally presenting the fingerprinting and collusion processes.

\subsection{Problem setup} 
\label{sec:problem_setup}

A content owner has a host signal that he wishes to share, but he wants to mark it with fingerprints before distributing it.
We view this host signal as a vector $s\in\mathbb{R}^M$, and the marked versions of this vector will be given to $N>M$ users.  
Specifically, the $n$th user is given 
\begin{equation*} 
\label{eq:fingerprint_assignement}
\hat{s}_n := s + \varphi_n,
\end{equation*} 
where $\varphi_n\in\mathbb{R}^M$ denotes the $n$th fingerprint; we assume the fingerprints have equal norm.
We wish to design the fingerprints $\{\varphi_n\}_{n=1}^N$ to be robust to a linear averaging attack.
In particular, let $\mathcal{K}\subseteq\{1,\ldots,N\}$ denote a collection of users who together make a different copy of the host signal.
Then their linear averaging attack produces a forgery:
\begin{equation}
\label{eq:dm:attack1}
f:=\sum_{k\in\mathcal{K}}x_k\hat{s}_k+z,
\qquad\sum_{k\in\mathcal{K}}x_k=1,
\qquad x_k\geq0~~~\forall k,
\end{equation}
where $z$ is a noise vector introduced by the colluders.  
This attack model is illustrated in Figure~\ref{fig:attack_channel}.

\begin{figure}[t]
\centering

\begin{picture}(300,260)(-30,-30)

\put(-27,98){$s$}

\put(0,-30){\dashbox{1}(66,260){}}
\put(16,-20){\tiny{fingerprint}}
\put(16,-25){\tiny{assignment}}
\put(-20,100){\line(1,0){40}}
\put(20,0){\line(0,1){200}}


\put(20,200){\vector(1,0){20}}
\put(43,200){\circle{6}}
\put(43,200){\line(0,1){3}}
\put(43,200){\line(1,0){3}}
\put(43,200){\line(0,-1){3}}
\put(43,200){\line(-1,0){3}}
\put(43,213){\vector(0,-1){10}}
\put(38,217){$\varphi_1$}

\put(20,165){\vector(1,0){20}}
\put(43,165){\circle{6}}
\put(43,165){\line(0,1){3}}
\put(43,165){\line(1,0){3}}
\put(43,165){\line(0,-1){3}}
\put(43,165){\line(-1,0){3}}
\put(43,178){\vector(0,-1){10}}
\put(38,182){$\varphi_2$}

\put(20,130){\vector(1,0){20}}
\put(43,130){\circle{6}}
\put(43,130){\line(0,1){3}}
\put(43,130){\line(1,0){3}}
\put(43,130){\line(0,-1){3}}
\put(43,130){\line(-1,0){3}}
\put(43,143){\vector(0,-1){10}}
\put(38,147){$\varphi_3$}

\put(41,104){$\vdots$}

\put(20,70){\vector(1,0){20}}
\put(43,70){\circle{6}}
\put(43,70){\line(0,1){3}}
\put(43,70){\line(1,0){3}}
\put(43,70){\line(0,-1){3}}
\put(43,70){\line(-1,0){3}}
\put(43,83){\vector(0,-1){10}}
\put(32,87){$\varphi_{N-2}$}

\put(20,35){\vector(1,0){20}}
\put(43,35){\circle{6}}
\put(43,35){\line(0,1){3}}
\put(43,35){\line(1,0){3}}
\put(43,35){\line(0,-1){3}}
\put(43,35){\line(-1,0){3}}
\put(43,48){\vector(0,-1){10}}
\put(32,52){$\varphi_{N-1}$}

\put(20,0){\vector(1,0){20}}
\put(43,0){\circle{6}}
\put(43,0){\line(0,1){3}}
\put(43,0){\line(1,0){3}}
\put(43,0){\line(0,-1){3}}
\put(43,0){\line(-1,0){3}}
\put(43,13){\vector(0,-1){10}}
\put(37,17){$\varphi_N$}


\put(46,200){\vector(1,0){40}}
\put(88,198){$\hat{s}_1$}

\put(92,104){$\vdots$}

\put(46,70){\vector(1,0){40}}
\put(88,68){$\hat{s}_{N-2}$}

\put(46,35){\vector(1,0){40}}
\put(88,33){$\hat{s}_{N-1}$}

\put(113,-15){$\underbrace{}_{\mathcal{K}}$}

\put(150,-30){\dashbox{1}(92,260){}}
\put(159,-20){\tiny{linear-average-plus-noise}}
\put(173,-25){\tiny{forgery process}}

\put(46,165){\vector(1,0){70}}
\put(118,163){$\hat{s}_2$}
\put(130,165){\vector(1,0){40}}
\put(173,165){\circle{6}}
\put(173,165){\line(1,1){2}}
\put(173,165){\line(-1,1){2}}
\put(173,165){\line(1,-1){2}}
\put(173,165){\line(-1,-1){2}}
\put(173,178){\vector(0,-1){10}}
\put(168,182){$x_2$}
\put(176,165){\line(1,0){20}}

\put(46,130){\vector(1,0){70}}
\put(118,128){$\hat{s}_3$}
\put(130,130){\vector(1,0){40}}
\put(173,130){\circle{6}}
\put(173,130){\line(1,1){2}}
\put(173,130){\line(-1,1){2}}
\put(173,130){\line(1,-1){2}}
\put(173,130){\line(-1,-1){2}}
\put(173,143){\vector(0,-1){10}}
\put(168,147){$x_3$}
\put(176,130){\line(1,0){20}}

\put(120,104){$\vdots$}

\put(171,104){$\vdots$}

\put(46,0){\vector(1,0){70}}
\put(117,-2){$\hat{s}_N$}
\put(130,0){\vector(1,0){40}}
\put(173,0){\circle{6}}
\put(173,0){\line(1,1){2}}
\put(173,0){\line(-1,1){2}}
\put(173,0){\line(1,-1){2}}
\put(173,0){\line(-1,-1){2}}
\put(173,13){\vector(0,-1){10}}
\put(168,17){$x_N$}
\put(176,0){\line(1,0){20}}


\put(196,0){\line(0,1){165}}
\put(196,100){\vector(1,0){20}}
\put(219,100){\circle{6}}
\put(219,100){\line(0,1){3}}
\put(219,100){\line(1,0){3}}
\put(219,100){\line(0,-1){3}}
\put(219,100){\line(-1,0){3}}
\put(219,113){\vector(0,-1){10}}
\put(216,117){$z$}
\put(222,100){\vector(1,0){40}}
\put(264,97){$f$}

\end{picture}

\caption{The fingerprint and forgery processes.
First, the content owner makes different copies of his host signal $s$ by adding fingerprints $\varphi_n$ which are unknown to the users.
Next, a subcollection $\mathcal{K}\subseteq\{1,\ldots,N\}$ of the users collude to create a forgery $f$ by picking a convex combination of their copies and adding noise $z$.
In this example, the forgery coalition $\mathcal{K}$ includes users $2$, $3$, and $N$.
}
\label{fig:attack_channel}
\end{figure}
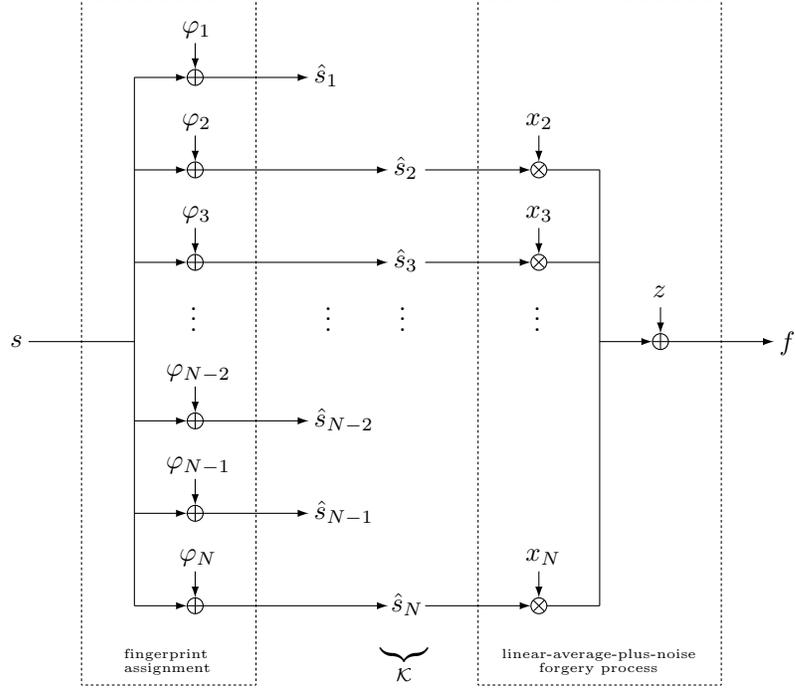

Certainly, the ultimate goal of the content owner is to detect every member of the forgery coalition.  
This can prove difficult in practice, though, particularly when some individuals contribute little to the forgery, with $x_k \ll \frac{1}{|\mathcal{K}|}$.  
However, in the real world, if at least one colluder is caught, then other members could be identified through the legal process.
As such, we consider \emph{focused} detection, where a test statistic is computed for each user, and we perform a binary hypothesis test to decide whether that particular user is guilty.  

Our detection procedure is as follows:
With the cooperation of the content owner, the host signal can be subtracted from a forgery to isolate the fingerprint combination: 
\begin{equation}
\label{eq:forgery} 
y:=f-s=\sum_{k\in\mathcal{K}}x_k\varphi_k+z.
\end{equation} 
To help the content owner discern who is guilty, we then use a normalized correlation function as a test statistic for each user $n$: 
\begin{equation*} 
\label{eq:test_stat} 
T_n(y):=\frac{\langle y,\varphi_n\rangle}{\|\varphi_n\|^2}.
\end{equation*} 
Having devised a test statistic, let $H_1(n)$ denote the guilty hypothesis ($n\in\mathcal{K}$) and $H_0(n)$ denote the innocent hypothesis ($n\not\in\mathcal{K}$).  
Then picking some correlation threshold $\tau$, we use the following detector:
\begin{equation} 
\label{eq:focused_detection}
D_\tau(n) := \left\{
   \begin{array}{ll}
     H_1(n), & T_n(y) \geq \tau,\\
     H_0(n), & T_n(y) < \tau.
   \end{array} \right.
\end{equation}  
To determine the effectiveness of our fingerprint design and focused detector, we will investigate the corresponding error probabilities, but first, we build our intuition for fingerprint design using a certain geometric figure of merit.

\subsection{A geometric figure of merit for fingerprint design} 

For each user $n$, consider the distance between forgeries deriving from two types of potential collusions: those of which $n$ is a member, and those of which $n$ is not.  
Intuitively, if every fingerprint combination involving $n$ is distant from every combination not involving $n$, then even with moderate noise, there should be little ambiguity as to whether the $n$th user was involved.
To make this precise, for each user $n$, we define the ``guilty'' and ``not guilty'' sets of noiseless fingerprint combinations:
\begin{align*}
\mathcal{G}_{K,n}&:=\bigg\{\frac{1}{|\mathcal{K}|}\sum_{k\in\mathcal{K}}\varphi_k:n\in\mathcal{K}\subseteq\{1,\ldots,N\},~|\mathcal{K}|\leq K\bigg\},\\
\neg\mathcal{G}_{K,n}&:=\bigg\{\frac{1}{|\mathcal{K}|}\sum_{k\in\mathcal{K}}\varphi_k:n\not\in\mathcal{K}\subseteq\{1,\ldots,N\},~|\mathcal{K}|\leq K\bigg\}.
\end{align*}
In words, $\mathcal{G}_{K,n}$ is the set of size-$K$ fingerprint combinations of equal weights which include $n$, while $\neg\mathcal{G}_{K,n}$ is the set of combinations which do not include $n$. 
Note that in our setup \eqref{eq:dm:attack1}, the weights $x_k$ were arbitrary values which sum to $1$.  
We will show in Theorem~\ref{thm:cq:linear_attack_against_corr_detector} that the best attack from the collusion's perspective uses equal weights so that no single colluder is particularly vulnerable.  
From this perspective, it makes sense to bound the distance between these two sets:
\begin{equation}
\label{eqn:dist_guilt_notguilt}
\dmk
:= \min\{\|y-y'\|_2:y\in\mathcal{G}_{K,n},~y'\in\neg\mathcal{G}_{K,n}\}.
\end{equation}  

Note that by taking $\Phi$ to be the $M\times N$ matrix whose columns are the fingerprints $\varphi_n$, the fingerprint combination \eqref{eq:forgery} can be rewritten as $y=\Phi x+z$, where the entries of $x$ are $x_k$ when $k\in\mathcal{K}$ and zero otherwise.
Thus, if the matrix of fingerprints $\Phi$ is $(K,\delta)$-RIP with $\delta<\sqrt{2}-1$, then we can recover the $K$-sparse vector $x$ using Theorem~\ref{thm.rip use}.
However, the error in the estimate $\tilde{x}$ of $x$ will be on the order of $10$ times the size of the noise $z$~\cite{Candes:08}.
Due to the potential legal ramifications of false accusations, this order of error is not tolerable.  
Note that the methods of compressed sensing recover the entire vector $x$, the support of which identifies the entire collusion.
By contrast, we will investigate RIP matrices for fingerprint design, but to minimize false accusations, we will use focused detection \eqref{eq:focused_detection} to identify colluders.

We now investigate how well RIP matrices perform with respect to our geometric figure of merit.
Without loss of generality, we assume the fingerprints are unit norm; since they have equal norm, the fingerprint combination can be scaled by $\frac{1}{\|\varphi_n\|}$ before the detection phase.   
With this in mind, we have the following a lower bound on the distance~\eqref{eqn:dist_guilt_notguilt} between the ``guilty'' and ``not guilty'' sets corresponding to any user $n$:

\begin{thm}
\label{thm.rip fingerprints}
Suppose fingerprints $\Phi=[\varphi_1\cdots\varphi_N]$ have restricted isometry constant $\delta_{2K}$. 
Then
\begin{equation} 
\label{lem:distbnd1}
\dmk\geq\sqrt{\frac{1-\delta_{2K}}{K(K-1)}}. 
\end{equation}
\end{thm}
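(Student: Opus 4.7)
The plan is to express the distance $\|y-y'\|$ between arbitrary elements $y \in \mathcal{G}_{K,n}$ and $y' \in \neg\mathcal{G}_{K,n}$ as $\|\Phi x\|$ for a sufficiently sparse vector $x$, so that the $(2K,\delta_{2K})$-RIP hypothesis reduces the geometric lower bound to a combinatorial $\ell^2$ estimate on $x$. Concretely, I would fix $y = \frac{1}{|\mathcal{K}_1|}\sum_{k\in\mathcal{K}_1}\varphi_k$ and $y' = \frac{1}{|\mathcal{K}_2|}\sum_{k\in\mathcal{K}_2}\varphi_k$ with $n \in \mathcal{K}_1$, $n \notin \mathcal{K}_2$, and $|\mathcal{K}_i| \leq K$. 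Then $y - y' = \Phi x$ for some $x$ supported on $\mathcal{K}_1 \cup \mathcal{K}_2$, a set of cardinality at most $2K$, so RIP immediately yields $\|y-y'\|^2 \geq (1-\delta_{2K})\|x\|^2$.

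The first step is to read off the entries of $x$: they are $1/|\mathcal{K}_1|$ on $\mathcal{K}_1\setminus\mathcal{K}_2$, $-1/|\mathcal{K}_2|$ on $\mathcal{K}_2\setminus\mathcal{K}_1$, and $1/|\mathcal{K}_1| - 1/|\mathcal{K}_2|$ on $\mathcal{K}_1\cap\mathcal{K}_2$. Squaring and summing these contributions collapses (after the cross terms on $\mathcal{K}_1\cap\mathcal{K}_2$ cancel cleanly with the diagonal pieces) to the clean identity
\[
\|x\|^2 \;=\; \frac{|\mathcal{K}_1 \triangle \mathcal{K}_2|}{|\mathcal{K}_1|\,|\mathcal{K}_2|}.
\]
Thus everything reduces to proving the combinatorial lower bound $\|x\|^2 \geq 1/(K(K-1))$.

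This lower bound is the only real obstacle, and it is handled by a short case analysis on the sizes. Write $a=|\mathcal{K}_1|$, $b=|\mathcal{K}_2|$, $c=|\mathcal{K}_1\cap\mathcal{K}_2|$; since $n$ lies in $\mathcal{K}_1\setminus\mathcal{K}_2$ we have $c \leq a-1$, and by hypothesis $a,b \leq K$. Minimizing $(a+b-2c)/(ab)$ subject to these constraints splits into two regimes: when $b < a$, taking $c = b$ gives $(a-b)/(ab)$, minimized at $b = a-1$, $a = K$ to yield exactly $1/(K(K-1))$; when $b \geq a$, taking $c = a-1$ gives $(b-a+2)/(ab) \geq 2/a^2 \geq 2/K^2$, which exceeds $1/(K(K-1))$ for every $K \geq 2$. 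In either case $\|x\|^2 \geq 1/(K(K-1))$, with the extremal configuration being $\mathcal{K}_2 \subset \mathcal{K}_1$ differing only by the user $n$.

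Substituting this estimate into the RIP inequality and taking square roots gives $\|y-y'\| \geq \sqrt{(1-\delta_{2K})/(K(K-1))}$ for every admissible pair $(y,y')$, and taking the minimum over such pairs yields the advertised bound on $\mathrm{dist}(\mathcal{G}_{K,n},\neg\mathcal{G}_{K,n})$.
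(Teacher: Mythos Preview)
Your argument is correct and follows essentially the same route as the paper: write $y-y'=\Phi x$, apply the lower RIP inequality, compute $\|x\|^2=(a+b-2c)/(ab)$ with $a=|\mathcal{K}_1|$, $b=|\mathcal{K}_2|$, $c=|\mathcal{K}_1\cap\mathcal{K}_2|$, and minimize combinatorially to $1/(K(K-1))$. One tiny imprecision: in your case $b\geq a$, the intermediate inequality $(b-a+2)/(ab)\geq 2/a^2$ fails when $a=1$ and $b\geq 2$, but in that corner case $\|x\|^2=(1+b)/b\geq 1\geq 1/(K(K-1))$ trivially, so the conclusion is unaffected.
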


\begin{proof}
Take $\mathcal{K},\mathcal{K}'\subseteq\{1,\ldots,N\}$ such that $|\mathcal{K}|,|\mathcal{K}'|\leq K$ and $n\in\mathcal{K}\setminus\mathcal{K}'$.
Then the left-hand inequality of the restricted isometry property gives
\begin{align}
\nonumber
\bigg\|\frac{1}{|\mathcal{K}|}\sum_{n\in\mathcal{K}}\varphi_n-\frac{1}{|\mathcal{K}'|}\sum_{n\in\mathcal{K}'}\varphi_n\bigg\|^2
\nonumber
&=\bigg\|\Big(\frac{1}{|\mathcal{K}|}-\frac{1}{|\mathcal{K}'|}\Big)\sum_{n\in\mathcal{K}\cap\mathcal{K}'}\varphi_n+\frac{1}{|\mathcal{K}|}\sum_{n\in\mathcal{K}\setminus\mathcal{K}'}\varphi_n-\frac{1}{|\mathcal{K}'|}\sum_{n\in\mathcal{K}'\setminus\mathcal{K}}\varphi_n\bigg\|^2\\
\nonumber
&\geq(1-\delta_{|\mathcal{K}\cup\mathcal{K}'|})\bigg(|\mathcal{K}\cap\mathcal{K}'|\Big(\frac{1}{|\mathcal{K}|}-\frac{1}{|\mathcal{K}'|}\Big)^2+\frac{|\mathcal{K}\setminus\mathcal{K}'|}{|\mathcal{K}|^2}+\frac{|\mathcal{K}'\setminus\mathcal{K}|}{|\mathcal{K}'|^2}\bigg)\\
\label{eq.distance bound}
&=\frac{1-\delta_{|\mathcal{K}\cup\mathcal{K}'|}}{|\mathcal{K}||\mathcal{K}'|}\bigg(|\mathcal{K}|+|\mathcal{K}'|-2|\mathcal{K}\cap\mathcal{K}'|\bigg).
\end{align}
For a fixed $|\mathcal{K}|$, we will find a lower bound for
\begin{equation}
\label{eq.min value} \frac{1}{|\mathcal{K}|}\bigg(|\mathcal{K}|+|\mathcal{K}'|-2|\mathcal{K}\cap\mathcal{K}'|\bigg)=1+\frac{|\mathcal{K}|-2|\mathcal{K}\cap\mathcal{K}'|}{|\mathcal{K}'|}.
\end{equation}
Since we can have $|\mathcal{K}\cap\mathcal{K}'|>\frac{|\mathcal{K}|}{2}$, 
we know $\frac{|\mathcal{K}|-2|\mathcal{K}\cap\mathcal{K}'|}{|\mathcal{K}'|}<0$ when \eqref{eq.min value} is minimized.  
That said, $|\mathcal{K}'|$ must be as small as possible, i.e., $|\mathcal{K}'|=|\mathcal{K}\cap\mathcal{K}'|$.
Thus, when \eqref{eq.min value} is minimized, we have
\begin{equation*}
\frac{1}{|\mathcal{K}|}\bigg(|\mathcal{K}|+|\mathcal{K}'|-2|\mathcal{K}\cap\mathcal{K}'|\bigg)=\frac{|\mathcal{K}|}{|\mathcal{K}\cap\mathcal{K}'|}-1,
\end{equation*}
i.e., $|\mathcal{K}\cap\mathcal{K}'|$ must be as large as possible.
Since $n\in\mathcal{K}\setminus\mathcal{K}'$, we have $|\mathcal{K}\cap\mathcal{K}'|\leq|\mathcal{K}|-1$.
Therefore,
\begin{equation}
\label{eq.min value 2}
\frac{1}{|\mathcal{K}|}\bigg(|\mathcal{K}|+|\mathcal{K}'|-2|\mathcal{K}\cap\mathcal{K}'|\bigg)\geq\frac{1}{|\mathcal{K}|-1}.
\end{equation}
Substituting \eqref{eq.min value 2} into \eqref{eq.distance bound} gives 
\begin{equation*}
\bigg\|\frac{1}{|\mathcal{K}|}\sum_{n\in\mathcal{K}}\varphi_n-\frac{1}{|\mathcal{K}'|}\sum_{n\in\mathcal{K}'}\varphi_n\bigg\|^2
\geq\frac{1-\delta_{|\mathcal{K}\cup\mathcal{K}'|}}{|\mathcal{K}|(|\mathcal{K}|-1)}
\geq\frac{1-\delta_{2K}}{K(K-1)}.
\end{equation*}
Since this bound holds for every $n$, $\mathcal{K}$ and $\mathcal{K}'$ with $n\in\mathcal{K}\setminus\mathcal{K}'$, we have~\eqref{lem:distbnd1}.
\end{proof}
  
Combining Theorem~\ref{thm.rip fingerprints} with the Gershgorin estimate $\delta_{2K}\leq(2K-1)\mu$ in terms of worst-case coherence $\mu$ yields the following:

\begin{cor}
\label{cor:etf_fingerprints}
Suppose fingerprints $\Phi=[\varphi_1\cdots\varphi_N]$ are unit-norm with worst-case coherence $\mu$.  
Then
\begin{equation}
\label{eq:dm:bound_dist_guilty_notguilty_set}
\dmk\geq\sqrt{\frac{1-(2K-1)\mu}{K(K-1)}}. 
\end{equation} 
\end{cor}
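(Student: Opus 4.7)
The plan is to derive this corollary as an immediate consequence of Theorem~\ref{thm.rip fingerprints} combined with the Gershgorin-based coherence bound already established in this section. The statement is essentially a rewriting of the RIP-based distance bound in terms of the more elementary (and computable) quantity $\mu$.

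First I would invoke Theorem~\ref{thm.rip fingerprints}, which says that any fingerprints $\Phi$ satisfy $\dmk \geq \sqrt{(1-\delta_{2K})/(K(K-1))}$. Next, I would recall the Gershgorin calculation leading up to \eqref{eq.bound}, which showed that for any unit-norm matrix $\Phi$ with worst-case coherence $\mu$, the RIC satisfies $\delta_K \leq (K-1)\mu$. Applied with $2K$ in place of $K$, this gives $\delta_{2K} \leq (2K-1)\mu$. Since the right-hand side of the Theorem~\ref{thm.rip fingerprints} bound is a decreasing function of $\delta_{2K}$ (larger RIC gives a weaker lower bound), substituting the upper bound $\delta_{2K} \leq (2K-1)\mu$ yields
\begin{equation*}
\dmk \geq \sqrt{\frac{1-\delta_{2K}}{K(K-1)}} \geq \sqrt{\frac{1-(2K-1)\mu}{K(K-1)}},
\end{equation*}
which is \eqref{eq:dm:bound_dist_guilty_notguilty_set}.

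There really is no obstacle here; the corollary is a one-line substitution combining two results already proved in the chapter. The only minor sanity check is to confirm the monotonicity direction so that we use the upper bound on $\delta_{2K}$ (rather than a lower bound) when passing from the RIP formulation to the coherence formulation, and to note implicitly that the bound is vacuous unless $(2K-1)\mu < 1$, which is precisely the regime in which the Gershgorin demonstration of RIP is informative in the first place.
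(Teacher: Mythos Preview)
Your proposal is correct and matches the paper's approach exactly: the paper states the corollary as an immediate consequence of Theorem~\ref{thm.rip fingerprints} together with the Gershgorin estimate $\delta_{2K}\leq(2K-1)\mu$, which is precisely the substitution you describe.
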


In words, Corrolary~\ref{cor:etf_fingerprints} says that less coherent fingerprints provide a greater distance between the ``guilty'' and ``not guilty'' sets.
It is therefore fitting to consider minimizers of worst-case coherence, namely equiangular tight frames.
One type of ETF has already been proposed for fingerprint design: the simplex~\cite{kiyavash2009regular}. 
The simplex is an ETF with $N=M+1$ and $\mu=\frac{1}{M}$.  
In fact, \cite{kiyavash2009regular} gives a derivation for the exact value of the distance \eqref{eqn:dist_guilt_notguilt} in this case:
\begin{equation} 
\label{eq.simplex distance} 
\dmk
=\sqrt{\frac{1}{K(K-1)}\frac{N}{N-1}}.
\end{equation}  
The bound \eqref{eq:dm:bound_dist_guilty_notguilty_set} is lower than \eqref{eq.simplex distance} by a factor of $\sqrt{1 - \frac{2K}{M+1}}$, and for practical cases in which $K \ll M$, the two are particularly close.   
Overall, ETF fingerprint design is a natural generalization of the provably optimal simplex design of~\cite{kiyavash2009regular}.

Having applied the Gershgorin analysis to illustrate how ETF fingerprints perform with respect to our geometric figure of merit, we have yet to establish any fingerprint-specific consequences of Steiner ETFs not being as RIP as random matrices.
Certainly, whether $K$ scales as $\sqrt{M}$ or $M$ is an important distinction in the compressed sensing community, but interestingly, in the context of fingerprints, this difference offers no advantage.
To be clear, Ergun et al.~\cite{ergun1999note} showed that for any fingerprinting system, there is a tradeoff between the probabilities of successful detection and false positives imposed by a linear-average-plus-noise attack from sufficiently large collusions. 
Specifically, a collusion of size $K=\mathrm{\Omega}\big(\sqrt{\frac{M}{\log M}}\big)$ is sufficient to overcome the fingerprints, as the detector will not be able to identify any attacker without incurring a false-alarm probability that is too large to be admissible in court.  
This constraint is more restrictive than the coherence-based reconstruction guarantees which require $K=\mathrm{O}(\sqrt{M})$, and so from this perspective, random RIP constructions are no better for fingerprint design than deterministic constructions.

\subsection{Error analysis} \label{sec:error_analysis}

We now investigate the errors associated with using ETF fingerprints and a focused correlation detector with linear-average-plus-noise attacks.
To do this, we assume that the noise $z$ included in the attack \eqref{eq:dm:attack1} has independent Gaussian entries of mean zero and variance $\sigma^2$.
One type of error we can expect is the false-positive error, in which an innocent user $n\notin\mathcal{K}$ is found guilty ($T_n(y) \geq \tau$).  
This could have significant ramifications in legal proceedings, so this error probability $\mathrm{Pr}\big[T_n(y)\geq\tau\big|H_0(n)\big]$ should be kept extremely low. 
To ensure this type of error is improbable, we consider the \emph{worst-case type I error probability}, which depends on the fingerprint design $\Phi$, the correlation threshold~$\tau$, and the weights $\{x_k\}_{k=1}^K$ used by the colluders in their linear average:
\begin{equation}
\label{eq:cq:worst_case_error_1}
\mathrm{P}_\mathrm{I}(\Phi,\tau,\{x_k\}_{k=1}^K)
:=\max_{\substack{\mathcal{K}\subseteq\{1,\ldots,N\}\\|\mathcal{K}|=K}}\max_{\substack{\mathcal{K}\rightarrow\{x_k\}\\\mathrm{bijective}}}\max_{n\not\in\mathcal{K}}\mathrm{Pr}\big[T_n(y)\geq\tau\big|H_0(n)\big].
\end{equation}
In words, the probability that an innocent user $n$ is found guilty is no larger than $\mathrm{P}_\mathrm{I}(\Phi,\tau,\{x_k\}_{k=1}^K)$, regardless of the coalition $\mathcal{K}$ or how the coalition members assign weights from $\{x_k\}_{k=1}^K$.
The other error type is the false-negative error, in which a guilty user $n\in\mathcal{K}$ is found innocent ($T_n(y)<\tau$).
In this case, since the goal of our detection is to catch at least one of the colluders, we define the \emph{worst-case type II error probability} as follows:
\begin{equation}
\label{eq:cq:worst_case_error_2}
\mathrm{P}_\mathrm{II}(\Phi,\tau,\{x_k\}_{k=1}^K)
:=\max_{\substack{\mathcal{K}\subseteq\{1,\ldots,N\}\\|\mathcal{K}|=K}}\max_{\substack{\mathcal{K}\rightarrow\{x_k\}\\\mathrm{bijective}}}\min_{n\in\mathcal{K}}\mathrm{Pr}\big[T_n(y)<\tau\big|H_1(n)\big].
\end{equation}
This way, regardless of who the colluders are or how they assign the weights, at least one of the colluders will have a false-negative probability less than $\mathrm{P}_\mathrm{II}(\Phi,\tau,\{x_k\}_{k=1}^K)$, meaning even in the worst-case scenario, we can correctly identify one of the colluders with probability $\geq1-\mathrm{P}_\mathrm{II}$.

\begin{thm}
\label{thm:cq:linear_attack_against_corr_detector}
Take fingerprints as the columns of an $M\times N$ matrix $\Phi=[\varphi_1\cdots\varphi_N]$, which, when normalized by the fingerprints' common norm $\gamma$, forms an equiangular tight frame.
If the noise $z$ included in the attack \eqref{eq:dm:attack1} has independent Gaussian entries of mean zero and variance $\sigma^2$, then the worst-case type I and type II error probabilities, \eqref{eq:cq:worst_case_error_1} and \eqref{eq:cq:worst_case_error_2}, satisfy
\begin{align*}
\mathrm{P}_\mathrm{I}(\Phi,\tau,\{x_k\}_{k=1}^K)
&\leq Q\bigg(\frac{\gamma}{\sigma}\big(\tau-\mu\big)\bigg),\\
\mathrm{P}_\mathrm{II}(\Phi,\tau,\{x_k\}_{k=1}^K)
&\leq Q\bigg(\frac{\gamma}{\sigma}\Big((1+\mu)\max\{x_k\}_{k=1}^K-\mu-\tau\Big)\bigg),
\end{align*}
where $Q(x) := \frac{1}{\sqrt{2\pi}}\int_{x}^{\infty} e^{-u^2/2} du$ and $\mu=\sqrt{\frac{N-M}{M(N-1)}}$.
\end{thm}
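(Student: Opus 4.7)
The plan is to decompose the test statistic into a deterministic signal part, bounded uniformly in the attack parameters via the Welch-bound equality of the (normalized) ETF, and a Gaussian noise part whose distribution is independent of $\mathcal{K}$ and $\{x_k\}$. Specifically, starting from \eqref{eq:forgery}, one writes
\begin{equation*}
T_n(y)=\frac{1}{\gamma^2}\sum_{k\in\mathcal{K}}x_k\langle\varphi_k,\varphi_n\rangle+\frac{1}{\gamma^2}\langle z,\varphi_n\rangle.
\end{equation*}
Since $z$ has i.i.d.\ $\mathcal{N}(0,\sigma^2)$ entries and $\|\varphi_n\|=\gamma$, the noise term $\frac{1}{\gamma^2}\langle z,\varphi_n\rangle$ is Gaussian with mean zero and variance $\sigma^2/\gamma^2$, independent of $n$, $\mathcal{K}$, and the assignment.

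First I would treat the type~I case, where $n\notin\mathcal{K}$. Every cross term satisfies $|\langle\varphi_k,\varphi_n\rangle|\leq\gamma^2\mu$ by Theorem~\ref{thm.welch bound} applied to the normalized frame, so
\begin{equation*}
\bigg|\frac{1}{\gamma^2}\sum_{k\in\mathcal{K}}x_k\langle\varphi_k,\varphi_n\rangle\bigg|\leq\mu\sum_{k\in\mathcal{K}}x_k=\mu.
\end{equation*}
Hence $T_n(y)\leq\mu+\frac{1}{\gamma^2}\langle z,\varphi_n\rangle$, and standard Gaussian tail manipulation gives $\Pr[T_n(y)\geq\tau\mid H_0(n)]\leq Q\bigl(\frac{\gamma}{\sigma}(\tau-\mu)\bigr)$; since this bound is independent of $n$, $\mathcal{K}$ and the assignment, it bounds $\mathrm{P}_\mathrm{I}(\Phi,\tau,\{x_k\})$ directly.

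Next I would handle the type~II case, where $n\in\mathcal{K}$. Split off the diagonal contribution $x_n\|\varphi_n\|^2/\gamma^2=x_n$ from the remaining off-diagonal terms, which are again bounded in absolute value by $\mu\sum_{k\in\mathcal{K},k\neq n}x_k=\mu(1-x_n)$. Thus the signal part is at least $x_n(1+\mu)-\mu$, and
\begin{equation*}
\Pr[T_n(y)<\tau\mid H_1(n)]\leq Q\Bigl(\tfrac{\gamma}{\sigma}\bigl(x_n(1+\mu)-\mu-\tau\bigr)\Bigr),
\end{equation*}
using $-\langle z,\varphi_n\rangle\sim\langle z,\varphi_n\rangle$. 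Because $Q$ is strictly decreasing, the minimum over $n\in\mathcal{K}$ of this bound is attained by the colluder with the largest weight, which gives exactly the $Q\bigl(\tfrac{\gamma}{\sigma}((1+\mu)\max_k x_k-\mu-\tau)\bigr)$ claimed in the theorem; taking maxima over $\mathcal{K}$ and over assignments changes nothing, since the bound depends on the attack only through $\max_k x_k$.

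The only subtle step is the observation that turns \eqref{eq:cq:worst_case_error_2} into a statement about $\max_k x_k$: namely, even though a colluder with small weight $x_n$ is very hard to catch, the $\min_{n\in\mathcal{K}}$ in the definition rewards us for catching \emph{any} one of them, and the heaviest colluder is the easiest. This is also what makes equal-weight attacks worst-case from the coalition's point of view, which is the claim alluded to just before the statement of the theorem. Everything else is Cauchy-Schwarz plus the defining $\mu$-bound of the ETF and routine Gaussian tail bookkeeping; I do not expect a real obstacle here.
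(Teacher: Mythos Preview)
Your proposal is correct and follows essentially the same approach as the paper: decompose $T_n(y)$ into a Gaussian noise term with variance $\sigma^2/\gamma^2$ and a deterministic signal term, then bound the signal term via the ETF's coherence $\mu$ together with $\sum_k x_k=1$, and finally observe that the $\min_{n\in\mathcal{K}}$ in the type~II definition picks out the colluder with the largest weight. The paper's argument is line-for-line the same, including the splitting off of the diagonal term $x_n$ and the use of monotonicity of $Q$.
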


\begin{proof}
To bound $\mathrm{P}_\mathrm{I}(\Phi,\tau,\{x_k\}_{k=1}^K)$, assume a given user $n$ is innocent, i.e., $H_0(n)$.
Then the test statistic for our detector \eqref{eq:focused_detection} is given by
\begin{equation*}
T_n(y)
=\frac{1}{\gamma^2}\bigg\langle \sum_{k\in\mathcal{K}}x_k\varphi_k+z,\varphi_n\bigg\rangle
=\sum_{k\in\mathcal{K}}x_k\bigg\langle \frac{\varphi_k}{\|\varphi_k\|},\frac{\varphi_n}{\|\varphi_n\|}\bigg\rangle+\frac{1}{\gamma}\bigg\langle z,\frac{\varphi_n}{\|\varphi_n\|}\bigg\rangle.
\end{equation*}
By the symmetry of $z$'s Gaussian distribution, we know the projection $\langle z,\frac{\varphi_n}{\|\varphi_n\|}\rangle$ also has Gaussian distribution with mean zero and variance $\sigma^2$, meaning our test statistic $T_n(y)$ has Gaussian distribution with mean $\sum_{k\in\mathcal{K}}x_k\langle \frac{\varphi_k}{\|\varphi_k\|},\frac{\varphi_n}{\|\varphi_n\|}\rangle$ and variance $\frac{\sigma^2}{\gamma^2}$.
Furthermore, since the normalized fingerprints form an ETF with worst-case coherence $\mu$, we can use the triangle inequality to bound the mean of $T_n(y)$:
\begin{equation*}
\sum_{k\in\mathcal{K}}x_k\bigg\langle \frac{\varphi_k}{\|\varphi_k\|},\frac{\varphi_n}{\|\varphi_n\|}\bigg\rangle
\leq\bigg|\sum_{k\in\mathcal{K}}x_k\bigg\langle \frac{\varphi_k}{\|\varphi_k\|},\frac{\varphi_n}{\|\varphi_n\|}\bigg\rangle\bigg|
\leq\sum_{k\in\mathcal{K}}x_k\bigg|\bigg\langle \frac{\varphi_k}{\|\varphi_k\|},\frac{\varphi_n}{\|\varphi_n\|}\bigg\rangle\bigg|
=\mu.
\end{equation*}
We use this to bound the false-positive probability for user $n$:
\begin{equation*}
\mathrm{Pr}\big[T_n(y)\geq\tau\big|H_0(n)\big]
=Q\bigg(\frac{\gamma}{\sigma}\Big(\tau-\mathbb{E}\big[T_n(y)|H_0(n)\big]\Big)\bigg)
\leq Q\bigg(\frac{\gamma}{\sigma}\big(\tau-\mu\big)\bigg).
\end{equation*}
Since this bound holds for all coalitions, weight assignments and innocent users, this bound must also hold for $\mathrm{P}_\mathrm{I}(\Phi,\tau,\{x_k\}_{k=1}^K)$.

Next, to bound $\mathrm{P}_\mathrm{II}(\Phi,\tau,\{x_k\}_{k=1}^K)$, assume a given user $n$ is guilty, i.e., $H_1(n)$.
In this case, the test statistic for our detector \eqref{eq:focused_detection} is given by
\begin{equation*}
T_n(y)
=\frac{1}{\gamma^2}\bigg\langle \sum_{k\in\mathcal{K}}x_k\varphi_k+z,\varphi_n\bigg\rangle
=x_n+\sum_{\substack{k\in\mathcal{K}\\k\neq n}}x_k\bigg\langle \frac{\varphi_k}{\|\varphi_k\|},\frac{\varphi_n}{\|\varphi_n\|}\bigg\rangle+\frac{1}{\gamma}\bigg\langle z,\frac{\varphi_n}{\|\varphi_n\|}\bigg\rangle.
\end{equation*}
As before, $T_n(y)$ has Gaussian distribution with variance $\frac{\sigma^2}{\gamma^2}$, but this time, the mean is
\begin{equation*}
x_n+\sum_{\substack{k\in\mathcal{K}\\k\neq n}}x_k\bigg\langle \frac{\varphi_k}{\|\varphi_k\|},\frac{\varphi_n}{\|\varphi_n\|}\bigg\rangle
\geq x_n-\bigg|\sum_{\substack{k\in\mathcal{K}\\k\neq n}}x_k\bigg\langle \frac{\varphi_k}{\|\varphi_k\|},\frac{\varphi_n}{\|\varphi_n\|}\bigg\rangle\bigg|
\geq x_n-\mu\sum_{\substack{k\in\mathcal{K}\\k\neq n}}x_k
=(1+\mu)x_n-\mu.
\end{equation*}
As such, the false-negative probability for user $n$ is
\begin{equation*}
\mathrm{Pr}\big[T_n(y)<\tau\big|H_1(n)\big]
=Q\bigg(-\frac{\gamma}{\sigma}\Big(\tau-\mathbb{E}\big[T_n(y)|H_1(n)\big]\Big)\bigg)
\leq Q\bigg(\frac{\gamma}{\sigma}\Big((1+\mu)x_n-\mu-\tau\Big)\bigg).
\end{equation*}
Applying the definition of $\mathrm{P}_\mathrm{II}(\Phi,\tau,\{x_k\}_{k=1}^K)$ therefore gives
\begin{align*}
\mathrm{P}_\mathrm{II}(\Phi,\tau,\{x_k\}_{k=1}^K)
&=\max_{\substack{\mathcal{K}\subseteq\{1,\ldots,N\}\\|\mathcal{K}|=K}}\max_{\substack{\mathcal{K}\rightarrow\{x_k\}\\\mathrm{bijective}}}\min_{n\in\mathcal{K}}\mathrm{Pr}\big[T_n(y)<\tau\big|H_1(n)\big]\\
&\leq\max_{\substack{\mathcal{K}\subseteq\{1,\ldots,N\}\\|\mathcal{K}|=K}}\max_{\substack{\mathcal{K}\rightarrow\{x_k\}\\\mathrm{bijective}}}\min_{n\in\mathcal{K}}Q\bigg(\frac{\gamma}{\sigma}\Big((1+\mu)x_n-\mu-\tau\Big)\bigg)\\
&=Q\bigg(\frac{\gamma}{\sigma}\Big((1+\mu)\max\{x_k\}_{k=1}^K-\mu-\tau\Big)\bigg).
\qedhere
\end{align*}
\end{proof}

From Theorem~\ref{thm:cq:linear_attack_against_corr_detector}, we can glean a few interesting insights about ETF fingerprints.
First, the upper bound on $\mathrm{P}_\mathrm{I}(\Phi,\tau,\{x_k\}_{k=1}^K)$ is independent of $\{x_k\}_{k=1}^K$, indicating that the coalition cannot pick weights in a way that frames an innocent user.
Additionally, the upper bound on $\mathrm{P}_\mathrm{II}(\Phi,\tau,\{x_k\}_{k=1}^K)$ is maximized when the weights $x_k$ are equal, corresponding to our use of equal weights in the geometric figure of merit.
This confirms our intuition that the coalition has the best chance of not being caught if no member is particularly vulnerable.

\chapter{Full spark frames}

In the previous chapter, we reviewed how to use the Gershgorin circle theorem to demonstrate the restricted isometry property (RIP), and how identifying small spark disproves RIP.
We then showed that Steiner equiangular tight frames (ETFs) are optimal in the Gershgorin sense, but have particularly small spark.
Among other things, this illustrates that the ``square-root bottleneck'' with deterministic RIP matrices is not merely an artifact of the Gershgorin analysis.
That said, as an intermediate goal to constructing RIP matrices, we seek deterministic matrices with large spark, understanding that RIP matrices necessarily have this property.
To this end, one is naturally led to consider \emph{full spark} matrices, that is, $M\times N$ matrices $\Phi$ with the largest spark possible: $\mathrm{Spark}(\Phi)=M+1$.
Equivalently, $M\times N$ full spark matrices have the property that every $M\times M$ submatrix is invertible; as such, a full spark matrix is necessarily full rank, and therefore a frame.

Interestingly, in sparse signal processing, the specific application of full spark frames has already been studied for some time.
In 1997, Gorodnitsky and Rao~\cite{GorodnitskyR:97} first considered full spark frames, referring to them as matrices with the \emph{unique representation property}.
Since~\cite{GorodnitskyR:97}, the unique representation property has been explicitly used to find a variety of performance guarantees for sparse signal processing~\cite{BourguignonCI:07,MohimaniBJ:09,WipfR:04}.
Tang and Nehorai~\cite{TangN:10} also obtain performance guarantees using full spark frames, but they refer to them as \emph{non-degenerate measurement matrices}.

For another application of full spark frames, we consider the problem of reconstructing a signal from distorted frame coefficients.
Specifically, we observe a scenario in which frame coefficients $\{(\Phi^*x)[n]\}_{n=1}^N$ are transmitted over a noisy or lossy channel before reconstructing the signal:
\begin{equation}
\label{eq.reconstruction}
y=\mathcal{D}(\Phi^*x),\qquad \tilde{x}=(\Phi\Phi^*)^{-1}\Phi y,                                                                                                                                                                                                                  \end{equation}
where $\mathcal{D}(\cdot)$ represents the channel's random and not-necessarily-linear deformation process.
Using an additive white Gaussian noise model, Goyal~\cite{Goyal:phd98} established that, of all unit norm frames, unit norm tight frames minimize mean squared error in reconstruction.
For the case of a lossy channel, Holmes and Paulsen~\cite{HolmesP:04} established that, of all tight frames, unit norm tight frames minimize worst-case error in reconstruction after one erasure, and that equiangular tight frames minimize this error after two erasures.
We note that the reconstruction process in \eqref{eq.reconstruction}, namely the application of $(\Phi\Phi^*)^{-1}\Phi$, is inherently blind to the effect of the deformation process of the channel.
This contrasts with P\"{u}schel and Kova\v{c}evi\'{c}'s more recent work~\cite{PuschelK:dcc05}, which describes an adaptive process for reconstruction after multitudes of erasures.
In this context, they reconstruct the signal after first identifying which frame coefficients were not erased; with this information, the signal can be estimated provided the corresponding frame elements span.
In this sense, full spark frames are \emph{maximally robust to erasures}, as coined in~\cite{PuschelK:dcc05}.
In particular, an $M\times N$ full spark frame is robust to $N-M$ erasures since any $M$ of the frame coefficients will uniquely determine the original signal.

Yet another application of full spark frames is phaseless reconstruction, which can be viewed in terms of a channel, as in \eqref{eq.reconstruction}; in this case, $\mathcal{D}(\cdot)$ is the entrywise absolute value function.
Phaseless reconstruction has a number of real-world applications including speech processing~\cite{BalanCE:acha06}, X-ray crystallography~\cite{CandesSV:arxiv11}, and quantum state estimation~\cite{RenesBSC:04}.
As such, there has been a lot of work to reconstruct an $M$-dimensional vector (up to an overall phase factor) from the magnitudes of its frame coefficients, most of which involves frames in operator space, which inherently require $N=\Omega(M^2)$ measurements~\cite{BalanBCE:jfaa09,RenesBSC:04}.
However, Balan et al.~\cite{BalanCE:acha06} show that if an $M\times N$ real frame $\Phi$ is full spark with $N\geq 2M-1$, then $\mathcal{D}\circ \Phi^*$ is injective, meaning an inversion process is possible with only $N=\mathrm{O}(M)$ measurements.
This result prompted an ongoing search for efficient phaseless reconstruction processes~\cite{BalanBCE:spie07,CandesSV:arxiv11}, but no reconstruction process can succeed without a good family of frames, such as full spark frames.

Despite the fact that full spark frames have a multitude of applications, to date, there has not been much progress in constructing deterministic full spark frames, let alone full spark frames with additional desirable properties.
A noteworthy exception is P\"{u}schel and Kova\v{c}evi\'{c}'s work~\cite{PuschelK:dcc05}, in which real full spark tight frames are constructed using polynomial transforms.
In the present chapter, we start by investigating Vandermonde frames, harmonic frames, and modifications thereof~\cite{AlexeevCM:arxiv11}.
While the use of certain Vandermonde and harmonic frames as full spark frames is not new~\cite{BourguignonCI:07,CandesRT:06,Fuchs:05}, the fruits of our investigation are new:
For instance, we demonstrate that certain classes of ETFs are full spark, and we characterize the $M\times N$ full spark harmonic frames for which $N$ is a prime power.
Later, we prove that verifying whether a matrix is full spark is hard for $\NP$ under randomized polynomial-time reductions~\cite{AlexeevCM:arxiv11}.
In other words, assuming $\NP\not\subseteq\BPP$ (a computational complexity assumption slightly stronger than $\P\neq\NP$ and nearly as widely believed), then there is no method by which one can efficiently test whether matrices are full spark.
As such, the deterministic constructions we provide are significant in that they guarantee a property which is otherwise difficult to check.
We conclude the chapter by introducing a new technique for efficient phaseless recovery, which explicitly makes use of deterministic full spark frames to design $N=\mathrm{O}(M)$ measurements.

\section{Deterministic constructions of full spark frames}

A square matrix is invertible if and only if its determinant is nonzero, and in our quest for deterministic constructions of full spark frames, this characterization will reign supreme.
One class of matrices has a particularly simple determinant formula: Vandermonde matrices.
Specifically, Vandermonde matrices have the following form:
\begin{equation}
\label{eq.vandermonde}
V
=\begin{bmatrix}
 1 & 1 & \cdots & 1\\
 \alpha_1 & \alpha_2 & \cdots & \alpha_N\\
\vdots & \vdots & \cdots & \vdots\\
 \alpha_1^{M-1} & \alpha_2^{M-1} & \cdots & \alpha_N^{M-1}
 \end{bmatrix},
\end{equation}
and square Vandermonde matrices, i.e., with $N=M$, have the following determinant:
\begin{equation}
\label{eq.vandermonde determinant}
\mathrm{det}(V)=\prod_{1\leq i<j\leq M}(\alpha_j-\alpha_i).
\end{equation}
Consider \eqref{eq.vandermonde} in the case where $N\geq M$.
Since every $M\times M$ submatrix of $V$ is also Vandermonde, we can modify the indices in \eqref{eq.vandermonde determinant} to calculate the determinant of the submatrices.  
These determinants are nonzero precisely when the bases $\{\alpha_{n}\}_{n=1}^N$ are distinct, yielding the following result:

\begin{lem}
\label{lem.vandermonde}
A Vandermonde matrix is full spark if and only if its bases are distinct.
\end{lem}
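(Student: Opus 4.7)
The plan is to use the determinantal characterization of full spark, namely that an $M\times N$ matrix is full spark if and only if every $M\times M$ submatrix has nonzero determinant. The key observation is that every $M\times M$ column-submatrix of a Vandermonde $V$ built from bases $\{\alpha_n\}_{n=1}^N$ is itself a square Vandermonde matrix, whose bases are the subcollection $\{\alpha_{n_1},\ldots,\alpha_{n_M}\}$ picked out by the chosen column indices. The whole proof is then reduced to reading off the product formula \eqref{eq.vandermonde determinant}.

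For the ``if'' direction, I would assume the bases $\{\alpha_n\}_{n=1}^N$ are distinct, fix an arbitrary size-$M$ index set $\mathcal{K}\subseteq\{1,\dotsc,N\}$, and apply \eqref{eq.vandermonde determinant} to the corresponding submatrix $V_\mathcal{K}$ to obtain $\det(V_\mathcal{K})=\prod_{i<j,\,i,j\in\mathcal{K}}(\alpha_j-\alpha_i)\neq 0$, since every factor is nonzero by hypothesis. Hence every $M$-subset of columns of $V$ is linearly independent, which is exactly $\mathrm{Spark}(V)=M+1$.

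For the ``only if'' direction, I would prove the contrapositive. Suppose two bases coincide, say $\alpha_p=\alpha_q$ with $p\neq q$. Since $N\geq M\geq 2$, I can extend $\{p,q\}$ to a size-$M$ set $\mathcal{K}\subseteq\{1,\dotsc,N\}$. The submatrix $V_\mathcal{K}$ then has two equal columns (columns $p$ and $q$ agree entry-wise because every entry is a fixed power of a common base), so $\det(V_\mathcal{K})=0$ and the columns indexed by $\mathcal{K}$ are linearly dependent. Thus $\mathrm{Spark}(V)\leq M<M+1$, contradicting full spark.

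There is no real obstacle here beyond this bookkeeping; the content is entirely packaged inside the Vandermonde determinant formula \eqref{eq.vandermonde determinant}, which has already been recorded. The only thing worth being mildly careful about is the trivial case $M=1$, where the claim is vacuously true since a Vandermonde ``matrix'' is then a single row of ones and is full spark regardless of any bases.
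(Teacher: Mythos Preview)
Your proposal is correct and mirrors the paper's argument exactly: the paper observes just before the lemma that every $M\times M$ submatrix of $V$ is again Vandermonde, so the determinant formula \eqref{eq.vandermonde determinant} makes each such determinant nonzero precisely when the chosen bases are distinct. Your write-up simply spells out both directions of this observation in more detail.
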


To be clear, this result is not new.
In fact, the full spark of Vandermonde matrices was first exploited by Fuchs~\cite{Fuchs:05} for sparse signal processing.
Later, Bourguignon et al.~\cite{BourguignonCI:07} specifically used the full spark of Vandermonde matrices whose bases are sampled from the complex unit circle.
Interestingly, when viewed in terms of frame theory, Vandermonde matrices naturally point to the discrete Fourier transform:

\begin{thm}
\label{thm.vandermonde}
The only $M\times N$ Vandermonde matrices that are equal norm and tight have bases in the complex unit circle.  Among these, the frames with the smallest worst-case coherence have bases that are equally spaced in the complex unit circle, provided $N\geq 2M$.
\end{thm}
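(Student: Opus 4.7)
The plan is to prove the two claims separately. For the first claim, I would observe that the $n$-th column of $V$ has squared norm $\sum_{i=0}^{M-1}|\alpha_n|^{2i}$, and since the polynomial $p(t) = 1+t+\cdots+t^{M-1}$ is strictly increasing on $[0,\infty)$ (for $M\ge 2$), the equal-norm hypothesis forces all $|\alpha_n|$ to share a common value $r\geq 0$. The diagonal of $VV^*$ then reads $(VV^*)_{ii}=Nr^{2(i-1)}$, which tightness requires to be constant in $i$; this forces $r=1$.

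For the second claim, I would write $\alpha_n = e^{\mathrm{i}\phi_n}$, normalize each column by $\sqrt{M}$, and note that $|\langle \varphi_n,\varphi_m\rangle| = |h(\phi_n-\phi_m)|$, where $h(\theta):=\sin(M\theta/2)/(M\sin(\theta/2))$ is the normalized Dirichlet kernel, satisfying $h(0)=1$. The worst-case coherence is $\mu = \max_{n\neq m}|h(\phi_n-\phi_m)|$, and the problem reduces to minimizing this quantity over configurations $(\phi_1,\ldots,\phi_N)\in[0,2\pi)^N$ subject to the tightness constraints $\sum_n e^{\mathrm{i}k\phi_n}=0$ for $k=1,\ldots,M-1$.

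The key ingredients of the lower bound are (a) the pigeonhole principle, which guarantees that any $N$ points on the unit circle admit a pair at angular distance $\theta_0\leq 2\pi/N$, with equality iff equally spaced (up to rotation); and (b) a monotonicity lemma: $|h|$ is strictly decreasing on $[0,2\pi/M]$, from $1$ at the origin to $0$ at $2\pi/M$. I would prove the latter by analyzing $f(u)=\sin(Mu)/\sin(u)$ on $(0,\pi/M)$: on $(0,\pi/(2M))$ the inequality $f'(u)<0$ reduces to $\tan(Mu)>M\tan(u)$, which follows from the fact that $g(u)=\tan(Mu)-M\tan(u)$ vanishes at $0$ and has strictly positive derivative there; on $[\pi/(2M),\pi/M)$ it is immediate from the signs of the factors $\cos(Mu)\leq 0$ and $\cos(u)>0$. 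Since $N\geq 2M$ gives $2\pi/N\leq \pi/M$, the minimum-gap pair of any tight configuration contributes $|h(\theta_0)|\geq|h(2\pi/N)|$ by monotonicity, so $\mu\geq|h(2\pi/N)|$, with equality possible only when $\theta_0=2\pi/N$, i.e., when the points are equally spaced.

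The main obstacle I anticipate is verifying that equally spaced bases actually attain this bound, since $\mu_{\mathrm{eq}}=\max_{k=1,\ldots,N-1}|h(2\pi k/N)|$ could a priori be attained at a sidelobe rather than at $k=1$. For $k\leq N/M$ (main-lobe range), monotonicity immediately gives $|h(2\pi k/N)|\leq|h(2\pi/N)|$; for the sidelobe range $\theta\in(2\pi/M,2\pi-2\pi/M)$, I would apply the crude bound $|h(\theta)|\leq 1/(M\sin(\theta/2))\leq 1/(M\sin(\pi/M))$ together with the elementary inequality $\sin(\pi M/N)\sin(\pi/M)>\sin(\pi/N)$, which holds for $N\geq 2M\geq 4$ by the bounds $\sin(x)\geq (2/\pi)x$ on $[0,\pi/2]$ and $\sin(x)\leq x$. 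This sidelobe comparison is the most delicate step but is entirely elementary.
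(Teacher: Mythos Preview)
Your proposal is correct and follows essentially the same strategy as the paper: equal column norms force a common modulus via strict monotonicity of $t\mapsto\sum_{i=0}^{M-1}t^i$, and tightness (constant diagonal of $VV^*$) then forces that modulus to be $1$; for the second claim, pigeonhole plus monotonicity of the Dirichlet kernel on the main lobe together with a sidelobe bound show that the minimum-gap pair always achieves the worst-case coherence, so maximizing the minimum gap (i.e., equal spacing) is optimal. The only cosmetic differences are that you establish monotonicity on the full main lobe $(0,2\pi/M)$ (the paper stops at $1/(2M)$ in $x$-coordinates) and reduce the sidelobe comparison to the numerical inequality $4>\pi$, whereas the paper compares directly to the value at the half-lobe endpoint via the geometric-sum formula.
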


\begin{proof}
Suppose a Vandermonde matrix is equal norm and tight.  
Note that a zero base will produce the zeroth identity basis element $\delta_0$.
Letting $\mathcal{P}$ denote the indices of the nonzero bases, the fact that the matrix is full rank implies $|\mathcal{P}|\geq M-1$.
Also, equal norm gives that the frame element length
\[ \|\varphi_n\|^2=\sum_{m=0}^{M-1}|\varphi_n[m]|^2=\sum_{m=0}^{M-1}|\alpha_n^m|^2=\sum_{m=0}^{M-1}|\alpha_n|^{2m} \]
is constant over $n\in\mathcal{P}$.  
Since $\sum_{m=0}^{M-1}x^{2m}$ is strictly increasing over $0<x<\infty$, there exists $c>0$ such that $|\alpha_n|^2=c$ for all $n\in\mathcal{P}$.   
Next, tightness gives that the rows have equal norm, implying that the first two rows have equal norm, i.e., $|\mathcal{P}|c=|\mathcal{P}|c^2$.  
Thus $c=1$, and so the nonzero bases are in the complex unit circle.  
Furthermore, since the zeroth and first rows have equal norm by tightness, we have $|\mathcal{P}|=N$, and so every base is in the complex unit circle.

Now consider the inner product between Vandermonde frame elements whose bases $\{e^{2\pi i x_n}\}_{n=1}^N$ come from the complex unit circle:
\[ \langle \varphi_n,\varphi_{n'}\rangle=\sum_{m=0}^{M-1}(e^{2\pi i x_n})^m\overline{(e^{2\pi i x_{n'}})^m}=\sum_{m=0}^{M-1}e^{2\pi i (x_n-x_{n'})m}. \]
We will show that the worst-case coherence comes from the two closest bases.  
Consider the following function:
\begin{equation}
\label{eq.g function}
g(x)
:=\bigg|\sum_{m=0}^{M-1}e^{2\pi ixm}\bigg|^2.
\end{equation}
Figure~\ref{figure} gives a plot of this function in the case where $M=5$.
We will prove two things about this function:
\begin{itemize}
\item[(i)] $\tfrac{d}{dx}g(x)<0$ for every $x\in(0,\tfrac{1}{2M})$,
\item[(ii)] $g(x)\leq g(\tfrac{1}{2M})$ for every $x\in(\tfrac{1}{2M},1-\tfrac{1}{2M})$.
\end{itemize}

\begin{figure}[t]
\centering
\includegraphics[width=0.5\textwidth]{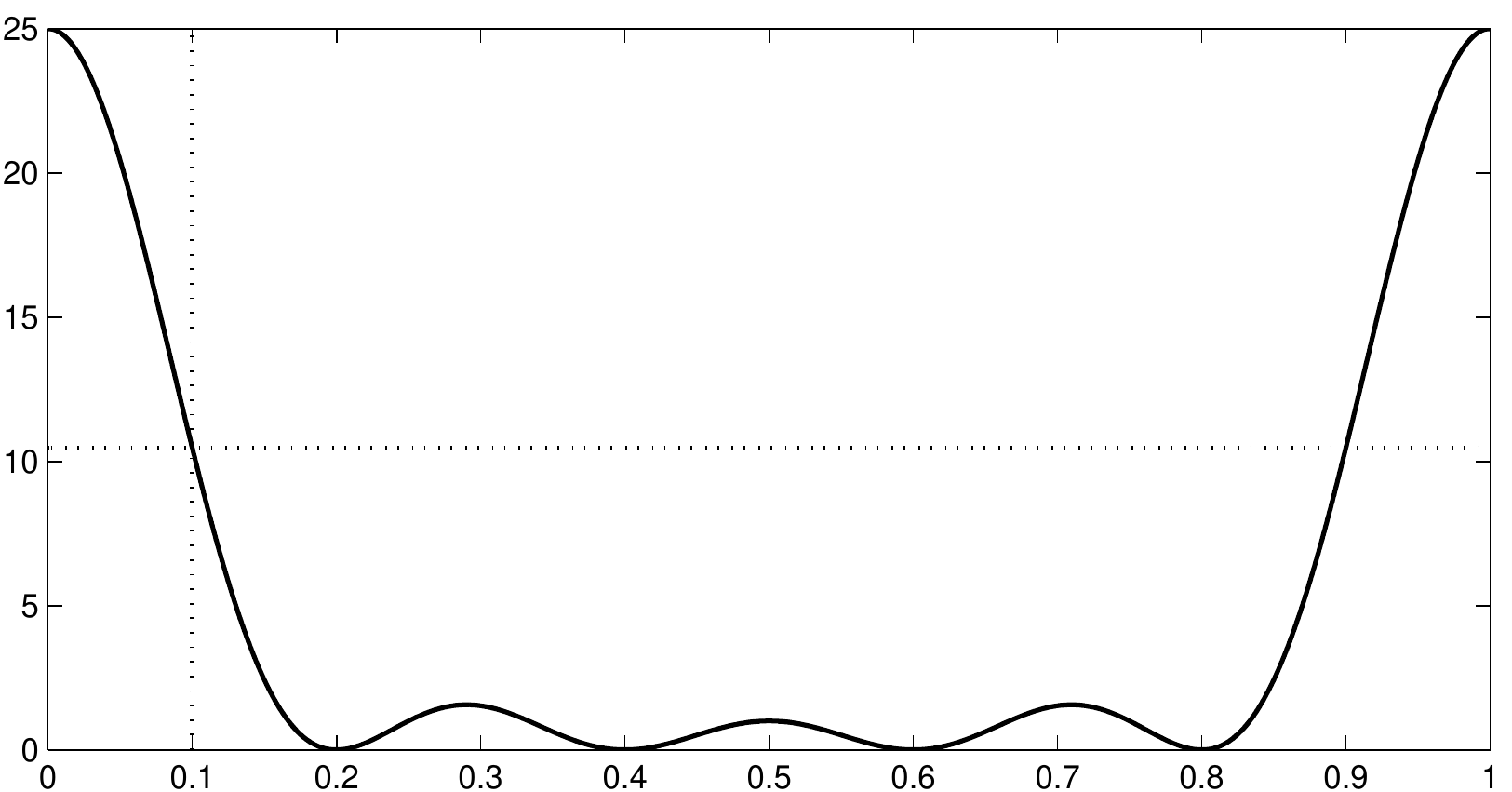}
\caption{Plot of $g$ defined by \eqref{eq.g function} in the case where $M=5$.  Observe (i) that $g$ is strictly decreasing on the interval $(0,\frac{1}{10})$, and (ii) that $g(x)\leq g(\frac{1}{10})$ for every $x\in(\frac{1}{10},\frac{9}{10})$.  As established in the proof of Theorem~\ref{thm.vandermonde}, $g$ behaves in this manner for general values of $M$.
\label{figure}}
\end{figure}

First, we claim that (i) and (ii) are sufficient to prove our result.
To establish this, we first show that the two closest bases $e^{2\pi ix_{n'}}$ and $e^{2\pi ix_{n''}}$ satisfy $|x_{n'}-x_{n''}|\leq\frac{1}{2M}$.
Without loss of generality, the $n$'s are ordered in such a way that $\{x_n\}_{n=0}^{N-1}\subseteq[0,1)$ are nondecreasing.
Define
\begin{equation*}
d(x_n,x_{n+1})
:=\left\{\begin{array}{ll}
x_{n+1}-x_n,&n=0,\ldots,N-2\\ 
x_0-(x_{N-1}-1),&n=N-1,
\end{array}
\right.
\end{equation*}
and let $n'$ be the $n$ which minimizes $d(x_n,x_{n+1})$.
Since the minimum is less than the average, we have
\begin{equation}
\label{eq.average inequality}
d(x_{n'},x_{n'+1})\leq\frac{1}{N}\bigg((x_0-(x_{N-1}-1))+\sum_{n=0}^{N-1}(x_{n+1}-x_n)\bigg)=\frac{1}{N}\leq\frac{1}{2M},
\end{equation}
provided $N\geq2M$.
Note that if we view $\{x_n\}_{n\in\mathbb{Z}_N}$ as members of $\mathbb{R}/\mathbb{Z}$, then $d(x_n,x_{n+1})=x_{n+1}-x_n$.
Since $g(x)$ is even, then (i) implies that $|\langle \varphi_{n'+1},\varphi_{n'}\rangle|^2=g(x_{n'+1}-x_{n'})$ is larger than any other $g(x_p-x_{p'})=|\langle \varphi_p,\varphi_{p'}\rangle|^2$ in which $x_p-x_{p'}\in[0,\tfrac{1}{2M}]\cup[1-\tfrac{1}{2M},1)$.
Next, \eqref{eq.average inequality} and (ii) together imply that $|\langle \varphi_{n'+1},\varphi_{n'}\rangle|^2=g(x_{n'+1}-x_{n'})\geq g(\tfrac{1}{2M})$ is larger than any other $g(x_p-x_{p'})=|\langle \varphi_p,\varphi_{p'}\rangle|^2$ in which $x_p-x_{p'}\in(\tfrac{1}{2M},1-\tfrac{1}{2M})$, provided $N\geq2M$.
Combined, (i) and (ii) give that $|\langle \varphi_{n'+1},\varphi_{n'}\rangle|$ achieves the worst-case coherence of $\{\varphi_n\}_{n\in\mathbb{Z}_N}$.
Additionally, (i) gives that the worst-case coherence $|\langle \varphi_{n'+1},\varphi_{n'}\rangle|$ is minimized when $x_{n'+1}-x_{n'}$ is maximized, i.e., when the $x_n$'s are equally spaced in the unit interval.

To prove (i), note that the geometric sum formula gives
\begin{equation}
\label{eq.g of x}
g(x)
=\bigg|\sum_{m=0}^{M-1}e^{2\pi ixm}\bigg|^2
=\bigg|\frac{e^{2M\pi ix}-1}{e^{2\pi ix}-1}\bigg|^2
=\frac{2-2\cos(2M\pi x)}{2-2\cos(2\pi x)}
=\bigg(\frac{\sin(M\pi x)}{\sin(\pi x)}\bigg)^2,
\end{equation}
where the final expression uses the identity $1-\cos(2z)=2\sin^2 z$.
To show that $g$ is decreasing over $(0,\frac{1}{2M})$, note that the base of \eqref{eq.g of x} is positive on this interval, and performing the quotient rule to calculate its derivative will produce a fraction whose denominator is nonnegative and whose numerator is given by
\begin{equation}
\label{eq.g of x 2}
M\pi\sin(\pi x)\cos(M\pi x)-\pi\sin(M\pi x)\cos(\pi x).
\end{equation}
This factor is zero at $x=0$ and has derivative:
\begin{equation*}
-(M^2-1)\pi^2\sin(\pi x)\sin(M\pi x),
\end{equation*}
which is strictly negative for all $x\in(0,\tfrac{1}{2M})$.
Hence, \eqref{eq.g of x 2} is strictly negative whenever $x\in(0,\tfrac{1}{2M})$, and so $g'(x)<0$ for every $x\in(0,\tfrac{1}{2M})$.

For (ii), note that for every $x\in(\frac{1}{2M},1-\frac{1}{2M})$, we can individually bound the numerator and denominator of what the geometric sum formula gives:
\begin{equation*}
g(x)
=\bigg|\sum_{m=0}^{M-1}e^{2\pi ixm}\bigg|^2
=\frac{|e^{2M\pi ix}-1|^2}{|e^{2\pi ix}-1|^2}
\leq\frac{|e^{\pi i}-1|^2}{|e^{\pi i/M}-1|^2}
=\bigg|\sum_{m=0}^{M-1}e^{\pi im/M}\bigg|^2
=g(\tfrac{1}{2M}).\qedhere
\end{equation*}
\end{proof}

Consider the $N\times N$ discrete Fourier transform (DFT) matrix, scaled to have entries of unit modulus:
\begin{equation*}
\begin{bmatrix}
   1 & 1 & 1 & \cdots & 1\\
   1 & \omega & \omega^2 & \cdots & \omega^{N-1}\\
   1 & \omega^2 & \omega^4 & \cdots & \omega^{2(N-1)}\\
\vdots & \vdots & \vdots & \cdots & \vdots\\
   1 & \omega^{N-1} & \omega^{2(N-1)} & \cdots & \omega^{(N-1)(N-1)}
  \end{bmatrix},
\end{equation*}
where $\omega=e^{-2\pi i/N}$.
The first $M$ rows of the DFT form a Vandermonde matrix of distinct bases $\{\omega^n\}_{n=0}^{N-1}$; as such, this matrix is full spark by Lemma~\ref{lem.vandermonde}.
In fact, the previous result says that this is in some sense an optimal Vandermonde frame, but this might not be the best way to pick rows from a DFT.
Indeed, several choices of DFT rows could produce full spark frames, some with smaller coherence or other desirable properties, and so the remainder of this section focuses on full spark DFT submatrices.
First, we note that not every DFT submatrix is full spark.
For example, consider the $4\times 4$ DFT:
\begin{equation*}
\begin{bmatrix}
1&1&1&1\\
1&-i&-1&i\\
1&-1&1&-1\\
1&i&-1&-i
\end{bmatrix}.
\end{equation*}
Certainly, the zeroth and second rows of this matrix are not full spark, since the zeroth and second columns of this submatrix form the all-ones matrix, which is not invertible.
So what can be said about the set of permissible row choices?
The following result gives some necessary conditions on this set:

\begin{thm}
\label{thm.closure}
Take an $N\times N$ discrete Fourier transform matrix, and select the rows indexed by $\mathcal{M}\subseteq\mathbb{Z}_N$ to build the matrix $\Phi$.
If $\Phi$ is full spark, then so is the matrix built from rows indexed by
\begin{itemize}
\item[(i)] any translation of $\mathcal{M}$,
\item[(ii)] any $A\mathcal{M}$ with $A$ relatively prime to $N$,
\item[(iii)] the complement of $\mathcal{M}$ in $\mathbb{Z}_N$.
\end{itemize}
\end{thm}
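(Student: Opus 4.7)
My plan is to dispatch the three parts using three different symmetries of the DFT: a diagonal modulation in the column index, reindexing by a unit in $\mathbb{Z}_N$, and finally Fourier duality via unitarity of the full DFT $F$. Throughout, write $\omega = e^{-2\pi i/N}$ so that $\Phi$ has entries $\omega^{mn}$ for $m\in\mathcal{M}$, $n\in\mathbb{Z}_N$, and recall that full spark for a $K\times N$ matrix is equivalent to invertibility of every $K\times K$ column submatrix, a property preserved both under column permutations and under right multiplication by any invertible diagonal matrix.

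Parts (i) and (ii) should be immediate. For (i), the entry $\omega^{(m+c)n} = \omega^{cn}\,\omega^{mn}$ factors as a function of $n$ times the original entry, so the translated matrix equals $\Phi D$ with $D = \mathrm{diag}(\omega^{cn})_{n\in\mathbb{Z}_N}$; since $D$ is diagonal with unimodular entries, every $|\mathcal{M}|$-column submatrix of $\Phi D$ is invertible iff the corresponding submatrix of $\Phi$ is. For (ii), rewriting $\omega^{(Am)n} = \omega^{m(An)}$ shows that the $n$th column of the new matrix equals the $(An \bmod N)$th column of $\Phi$; since $\gcd(A,N)=1$, the map $n\mapsto An$ is a bijection on $\mathbb{Z}_N$, so this is just a column permutation.

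Part (iii) will be the main obstacle, since swapping $\mathcal{M}$ for its complement is not realized by any one-sided row or column operation on $\Phi$ alone. My idea is to use the identity $F^{-1} = \tfrac{1}{N}F^*$ to convert a nullspace hypothesis for $\Psi$ into one for $\Phi^*$. Let $K = |\mathcal{M}|$ and let $\Psi$ be the $(N-K)\times N$ DFT submatrix with rows indexed by $\mathbb{Z}_N \setminus \mathcal{M}$; the goal is $\mathrm{Spark}(\Psi) \geq N-K+1$. Given a nonzero $x$ with $\Psi x = 0$, set $\hat{x} := Fx$; then $\Psi x = 0$ says precisely that $\hat{x}$ vanishes off $\mathcal{M}$, and Fourier inversion yields
\begin{equation*}
x \;=\; \tfrac{1}{N}\, \Phi^*\, \hat{x}|_\mathcal{M},
\end{equation*}
where $\hat{x}|_\mathcal{M} \in \mathbb{C}^K$ is nonzero because $F$ is invertible. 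Suppose for contradiction that $x$ has at least $K$ zero coordinates, and choose any subset $\mathcal{N}\subseteq\mathbb{Z}_N$ of size $K$ on which $x$ vanishes. Restricting the displayed identity to the rows $\mathcal{N}$ of $\Phi^*$ gives $\Phi^*_\mathcal{N}\, \hat{x}|_\mathcal{M} = 0$, but $\Phi^*_\mathcal{N}$ is the conjugate transpose of the $K\times K$ column submatrix $\Phi_\mathcal{N}$ of $\Phi$, which is invertible by the full spark hypothesis. This forces $\hat{x}|_\mathcal{M} = 0$, a contradiction; hence every nonzero $x\in\ker\Psi$ has $|\mathrm{supp}(x)|\geq N-K+1$, i.e., $\Psi$ is full spark.
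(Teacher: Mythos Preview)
Your proof is correct. Parts (i) and (ii) coincide with the paper's argument: translation of $\mathcal{M}$ corresponds to right-multiplication by an invertible diagonal matrix, and dilation by a unit corresponds to a column permutation.

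For part (iii) you take a genuinely different route. The paper argues by contrapositive via the Gram-matrix identity $NI_N=\Phi^*\Phi+\Psi^*\Psi$: a dependency among $N-|\mathcal{M}|$ columns of $\Psi$ is manipulated into a nonzero vector in $\mathrm{Range}(\Phi^*\Phi)$ that is orthogonal to $\mathrm{Span}\{\Phi^*\Phi\delta_j\}_{j\in\mathcal{K}^\mathrm{c}}$, forcing the $|\mathcal{M}|\times|\mathcal{M}|$ submatrix $\Phi_{\mathcal{K}^\mathrm{c}}$ to be rank-deficient. Your argument instead proceeds directly: $\Psi x=0$ means $Fx$ is supported on $\mathcal{M}$, Fourier inversion writes $x=\tfrac1N\Phi^*(Fx)|_{\mathcal{M}}$, and then any $|\mathcal{M}|$ zeros of $x$ would give an invertible submatrix $(\Phi_{\mathcal{N}})^*$ annihilating a nonzero vector. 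This is an uncertainty-principle style argument and is arguably cleaner in the DFT setting. The paper's packaging has the advantage that it visibly uses only the Naimark relation $\Phi^*\Phi+\Psi^*\Psi=NI_N$, and the paper indeed remarks that its proof extends verbatim to show that the Naimark complement of any full spark tight frame is full spark; your argument also extends (replace ``$Fx$ supported on $\mathcal{M}$'' by ``$x\in\ker\Psi\Rightarrow x=\tfrac1c\Phi^*(\Phi x)$''), though you did not phrase it that way.
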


\begin{proof}
For (i), we first define $D$ to be the $N\times N$ diagonal matrix whose diagonal entries are $\{\omega^n\}_{n=0}^{N-1}$.
Note that, since $\omega^{(m+1)n}=\omega^n\omega^{mn}$, translating the row indices $\mathcal{M}$ by $1$ corresponds to multiplying $\Phi$ on the right by $D$.
For some set $\mathcal{K}\subseteq\mathbb{Z}_N$ of size $M:=|\mathcal{M}|$, let $\Phi_\mathcal{K}$ denote the $M\times M$ submatrix of $\Phi$ whose columns are indexed by $\mathcal{K}$, and let $D_\mathcal{K}$ denote the $M\times M$ diagonal submatrix of $D$ whose diagonal entries are indexed by $\mathcal{K}$.
Then since $D_\mathcal{K}$ is unitary, we have
\begin{equation*}
|\mathrm{det}((\Phi D)_\mathcal{K})|
=|\mathrm{det}(\Phi_\mathcal{K}D_\mathcal{K})|
=|\mathrm{det}(\Phi_\mathcal{K})||\mathrm{det}(D_\mathcal{K})|
=|\mathrm{det}(\Phi_\mathcal{K})|.
\end{equation*}
Thus, if $\Phi$ is full spark, $|\mathrm{det}((\Phi D)_\mathcal{K})|=|\mathrm{det}(\Phi_\mathcal{K})|>0$, and so $\Phi D$ is also full spark.
Using this fact inductively proves (i) for all translations of $\mathcal{M}$.

For (ii), let $\Psi$ denote the submatrix of rows indexed by $A\mathcal{M}$.
Then for any $\mathcal{K}\subseteq\mathbb{Z}_N$ of size $M$,
\begin{equation*}
\mathrm{det}(\Psi_\mathcal{K})
=\mathrm{det}(\omega^{(Am)k})_{m\in\mathcal{M},k\in\mathcal{K}}
=\mathrm{det}(\omega^{m(Ak)})_{m\in\mathcal{M},k\in\mathcal{K}}
=\mathrm{det}(\Phi_{A\mathcal{K}}).
\end{equation*}
Since $A$ is relatively prime to $N$, multiplication by $A$ permutes the elements of $\mathbb{Z}_N$, and so $A\mathcal{K}$ has exactly $M$ distinct elements.
Thus, if $\Phi$ is full spark, then $\mathrm{det}(\Psi_\mathcal{K})=\mathrm{det}(\Phi_{A\mathcal{K}})\neq0$, and so $\Psi$ is also full spark.

For (iii), we let $\Psi$ be the $(N-M)\times N$ submatrix of rows indexed by $\mathcal{M}^\mathrm{c}$, so that
\begin{equation}
\label{eq.complements}
NI_N
=\begin{bmatrix}\Phi^* & \Psi^*\end{bmatrix}\begin{bmatrix}\Phi\\\Psi\end{bmatrix}
=\Phi^*\Phi+\Psi^*\Psi.
\end{equation}
We will use contraposition to show that $\Phi$ being full spark implies that $\Psi$ is also full spark.
To this end, suppose $\Psi$ is not full spark.
Then $\Psi$ has a collection of $N-M$ linearly dependent columns $\{\psi_i\}_{i\in \mathcal{K}}$, and so there exists a nontrivial sequence $\{\alpha_i\}_{i\in \mathcal{K}}$ such that 
\begin{equation*}
\sum_{i\in \mathcal{K}}\alpha_i\psi_i
=0.
\end{equation*}
Considering $\psi_i=\Psi\delta_i$, where $\delta_i$ is the $i$th identity basis element, we can use \eqref{eq.complements} to express this linear dependence in terms of $\Phi$:
\begin{equation*}
0
=\Psi^*0
=\Psi^*\sum_{i\in \mathcal{K}}\alpha_i\psi_i
=\sum_{i\in \mathcal{K}}\alpha_i\Psi^*\Psi\delta_i
=\sum_{i\in \mathcal{K}}\alpha_i(NI_N-\Phi^*\Phi)\delta_i.
\end{equation*}
Rearranging then gives
\begin{equation}
\label{eq.x defn}
x:=N\sum_{i\in \mathcal{K}}\alpha_i\delta_i=\sum_{i\in \mathcal{K}}\alpha_i\Phi^*\Phi\delta_i.
\end{equation}
Here, we note that $x$ is nonzero since $\{\alpha_i\}_{i\in \mathcal{K}}$ is nontrivial, and that $x\in\mathrm{Range}(\Phi^*\Phi)$.
Furthermore, whenever $j\not\in \mathcal{K}$, we have from \eqref{eq.x defn} that
\begin{equation*}
\langle x,\Phi^*\Phi\delta_j\rangle
=\langle \Phi^*\Phi x,\delta_j\rangle
=N\bigg\langle \Phi^*\Phi\sum_{i\in \mathcal{K}}\alpha_i\delta_i,\delta_j\bigg\rangle
=N^2\bigg\langle\sum_{i\in \mathcal{K}}\alpha_i\delta_i,\delta_j\bigg\rangle
=0,
\end{equation*}
and so $x\perp\mathrm{Span}\{\Phi^*\Phi\delta_j\}_{j\in \mathcal{K}^\mathrm{c}}$.
Thus, the containment $\mathrm{Span}\{\Phi^*\Phi\delta_j\}_{j\in \mathcal{K}^\mathrm{c}}\subseteq\mathrm{Range}(\Phi^*\Phi)$ is proper, and so
\begin{equation*}
M
=\mathrm{Rank}(\Phi)
=\mathrm{Rank}(\Phi^*\Phi)
>\mathrm{Rank}(\Phi^*\Phi_{\mathcal{K}^\mathrm{c}})
=\mathrm{Rank}(\Phi_{\mathcal{K}^\mathrm{c}}).
\end{equation*}
Since the $M\times M$ submatrix $\Phi_{\mathcal{K}^\mathrm{c}}$ is rank-deficient, it is not invertible, and therefore $\Phi$ is not full spark.
\end{proof}

We note that our proof of (iii) above uses techniques from Cahill et al.~\cite{CahillCH:sampta11}, and can be easily generalized to prove that the Naimark complement of a full spark tight frame is also full spark.
Theorem~\ref{thm.closure} tells us quite a bit about the set of permissible choices for DFT rows.
For example, not only can we pick the first $M$ rows of the DFT to produce a full spark Vandermonde frame, but we can also pick any consecutive $M$ rows, by Theorem~\ref{thm.closure}(i).
We would like to completely characterize the choices that produce full spark harmonic frames.
The following classical result does this in the case where $N$ is prime:

\begin{thm}[Chebotar\"{e}v, see~\cite{StevenhagenL:mi96}]
Let $N$ be prime.
Then every square submatrix of the $N\times N$ discrete Fourier transform matrix is invertible.
\end{thm}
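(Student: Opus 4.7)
The plan is to compute the $\mathfrak{p}$-adic valuation of $\det A$ directly in the cyclotomic integer ring $\mathbb{Z}[\omega]$ and show it is finite. Let $S=\{s_1,\ldots,s_M\}$ and $T=\{t_1,\ldots,t_M\}$ be the row- and column-index sets of an arbitrary $M\times M$ submatrix $A=(\omega^{s_it_j})$, and promote $\omega$ to a formal variable $z$ by setting
\[
D(z)=\det(z^{s_it_j})_{i,j=1}^M\in\mathbb{Z}[z],
\]
so that $\det A=D(\omega)$. Let $\pi:=1-\omega$; then $\mathfrak{p}=(\pi)$ is the unique prime of $\mathbb{Z}[\omega]$ above $N$, with ramification index $N-1$, so $v_\mathfrak{p}(N)=N-1$ and $v_\mathfrak{p}(n)=(N-1)v_N(n)$ for $n\in\mathbb{Z}$. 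It suffices to show $v_\mathfrak{p}(D(\omega))<\infty$.

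Expand $D$ at $z=1$: write $D(z)=\sum_k d_k(z-1)^k$ with $d_k\in\mathbb{Z}$, giving $D(\omega)=\sum_k(-1)^k d_k\pi^k$. Using the Leibniz expansion $D(z)=\sum_\sigma\mathrm{sgn}(\sigma)z^{n_\sigma}$ with $n_\sigma:=\sum_i s_it_{\sigma(i)}$ together with multilinearity of the determinant in rows, one obtains
\[
d_k=\sum_\sigma\mathrm{sgn}(\sigma)\binom{n_\sigma}{k}=\sum_{\ell_1+\cdots+\ell_M=k}\det\biggl(\binom{s_it_j}{\ell_i}\biggr)_{\!i,j}.
\]
The inner determinant vanishes unless $(\ell_1,\ldots,\ell_M)$ is a permutation of $(0,1,\ldots,M-1)$: the $i$-th row is a polynomial in $t_j$ of degree $\ell_i$ with vanishing constant term whenever $\ell_i\geq 1$, so rows with $\ell_i\in\{0,1\}$ contribute at most one independent vector each and rows with $\ell_i=\ell\geq 2$ at most $\ell$ such vectors, and a capacity count rules out every tuple with $\sum_i\ell_i<\binom{M}{2}$. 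Hence $d_k=0$ for $k<\binom{M}{2}$, and at $k=\binom{M}{2}$ only the $M!$ permutations of $(0,1,\ldots,M-1)$ survive; summing their signed contributions and collapsing through the Vandermonde identity $\det(s_i^{j-1})=V(s)$ yields
\[
d_{\binom{M}{2}}=\pm\frac{V(s_1,\ldots,s_M)\,V(t_1,\ldots,t_M)}{\prod_{i=1}^M(i-1)!},\qquad V(x)=\prod_{i<j}(x_j-x_i).
\]

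Since $N$ is prime and the $s_i$ (and separately the $t_j$) are pairwise distinct modulo $N$, both Vandermondes are units in $\mathbb{F}_N$, and since $M\leq N$ every factor of $(i-1)!$ is strictly less than $N$, so the denominator is a unit as well. Therefore $v_N(d_{\binom{M}{2}})=0$, giving $v_\mathfrak{p}(d_{\binom{M}{2}})=0$ and $v_\mathfrak{p}(d_{\binom{M}{2}}\pi^{\binom{M}{2}})=\binom{M}{2}$; every subsequent term satisfies $v_\mathfrak{p}(d_k\pi^k)\geq k>\binom{M}{2}$, so the leading term dominates the expansion and $v_\mathfrak{p}(D(\omega))=\binom{M}{2}<\infty$. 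Hence $\det A=D(\omega)\neq 0$.

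The step I anticipate needing the most care is the row-capacity argument establishing that $\det(\binom{s_it_j}{\ell_i})$ vanishes for every tuple $(\ell_i)$ with $\sum_i\ell_i<\binom{M}{2}$; the sketch above via separating the $\ell=0,1$ rows from the $\ell\geq 2$ rows is plausible but a bit delicate because distinct $s_i$ can still give linearly independent rows when $\ell\geq 2$. A cleaner alternative is to invoke the factorization of the generalized Vandermonde $\det(z_j^{s_i})=s_\lambda(z)\cdot V(z)$ (Schur polynomial times Vandermonde) and extract the same leading Taylor coefficient from that identity, which should make the bookkeeping more transparent.
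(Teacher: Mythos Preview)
Your proof is correct and takes a genuinely different route from the paper's. The paper does not prove Chebotar\"{e}v's theorem directly; it cites it and later recovers it as the prime case of the prime-power characterization, whose proof (following Tao) works with the \emph{multivariate} polynomial $D(z_1,\ldots,z_M)=\det(z_n^{m})$, factors out the Vandermonde $\prod_{i<j}(z_j-z_i)$ in the column variables, and then evaluates the residual factor $P(1,\ldots,1)$ via the differential operators $(z_n\partial_{z_n})^{n-1}$, obtaining $P(1,\ldots,1)=V(\mathcal{M})/\prod m!$ and invoking the minimal-polynomial argument (Lemma on $P(1,\ldots,1)\equiv 0\bmod p$). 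Your approach instead keeps a single variable $z$, expands at $z=1$, and reads off the $\mathfrak p$-adic valuation directly; the resulting leading coefficient $V(s)V(t)/\prod_k k!$ is symmetric in the row and column sets, whereas the paper's asymmetry (column Vandermonde stripped off, row Vandermonde left in $P(1,\ldots,1)$) is what lets it extend to prime powers. So your argument is tidier and more self-contained for the prime case, while the paper's buys the generalization.

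Your hesitation about the capacity step is unwarranted: sort the $\ell_i$ as $\ell_{(1)}\leq\cdots\leq\ell_{(M)}$ and observe that rows $1,\ldots,k$ (after sorting) all lie in $\mathrm{span}\{(t_j^m)_j:0\leq m\leq \ell_{(k)}\}$, a space of dimension at most $\ell_{(k)}+1$; linear independence of all $M$ rows therefore forces $\ell_{(k)}\geq k-1$ for every $k$, hence $\sum_i\ell_i\geq\binom{M}{2}$, with equality iff $(\ell_i)$ is a permutation of $(0,\ldots,M-1)$. For the value of $d_{\binom{M}{2}}$ itself, a clean bookkeeping device is to pass to $D(e^u)=\det(e^{us_it_j})$, expand each entry as $\sum_k (us_it_j)^k/k!$, and use multilinearity: the coefficient of $u^{\binom{M}{2}}$ is $\frac{1}{\prod_k k!}\sum_\tau\mathrm{sgn}(\tau)\prod_i s_i^{\tau(i)}\det(t_j^{\tau(i)})=\frac{V(s)V(t)}{\prod_k k!}$, and since $(z-1)^k=u^k+O(u^{k+1})$ and the lower $d_k$ vanish, this equals $d_{\binom{M}{2}}$. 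This avoids both the permanent-looking sum and the Schur-polynomial detour.
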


As an immediate consequence of Chebotar\"{e}v's theorem, every choice of rows from the DFT produces a full spark harmonic frame, provided $N$ is prime.
This application of Chebotar\"{e}v's theorem was first used by Cand\`{e}s et al.~\cite{CandesRT:06} for sparse signal processing.
Note that each of these frames are equal-norm and tight by construction.
Harmonic frames can also be designed to have minimal coherence; Xia et al.~\cite{XiaZG:05} produces harmonic equiangular tight frames by selecting row indices which form a difference set in $\mathbb{Z}_N$.
Interestingly, most known families of difference sets in $\mathbb{Z}_N$ require $N$ to be prime~\cite{JungnickelPS:hcd07}, and so the corresponding harmonic equiangular tight frames are guaranteed to be full spark by Chebotar\"{e}v's theorem.
In the following, we use Chebotar\"{e}v's theorem to demonstrate full spark for a class of frames which contains harmonic frames, namely, frames which arise from concatenating harmonic frames with any number of identity basis elements:

\begin{thm}[{cf.~\cite[Theorem 1.1]{Tao:mrl05}}]
\label{thm.harmonic plus identity}
Let $N$ be prime, and pick any $M\leq N$ rows of the $N\times N$ discrete Fourier transform matrix to form the harmonic frame $H$.
Next, pick any $K\leq M$, and take $D$ to be the $M\times M$ diagonal matrix whose first $K$ diagonal entries are $\sqrt{\frac{N+K-M}{MN}}$, and whose remaining $M-K$ entries are $\sqrt{\frac{N+K}{MN}}$.
Then concatenating $DH$ with the first $K$ identity basis elements produces an $M\times(N+K)$ full spark unit norm tight frame.
\end{thm}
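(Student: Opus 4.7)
The plan is to verify the three required properties of the concatenated frame $\Phi = [DH~|~\delta_0~\cdots~\delta_{K-1}]$ separately: unit-norm columns, tightness, and full spark. The scalars in $D$ are rigged precisely so that the first two properties hold by direct calculation; the real work is the full spark property, which will reduce to Chebotar\"{e}v's theorem via a block-triangular decomposition.

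First I would check that the columns of $\Phi$ are unit-norm. Since every entry of $H$ has modulus one, the squared norm of the $j$th column of $DH$ is
\begin{equation*}
\sum_{m=0}^{M-1} D[m,m]^2\,|H[m,j]|^2 = \sum_{m=0}^{M-1} D[m,m]^2 = K\cdot\tfrac{N+K-M}{MN} + (M-K)\cdot\tfrac{N+K}{MN} = 1,
\end{equation*}
while the identity basis elements have unit norm by definition.

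Next, for tightness I would compute $\Phi\Phi^*$ directly. Because $H$ is built from $M$ distinct rows of the $N\times N$ DFT, its rows are orthogonal with common squared norm $N$, giving $HH^* = NI_M$; hence $(DH)(DH)^* = ND^2$. Adding $\sum_{k=0}^{K-1}\delta_k\delta_k^*$ augments the first $K$ diagonal entries of $ND^2$ by $1$. By construction the first $K$ diagonal entries then become $\tfrac{N+K-M}{M} + 1 = \tfrac{N+K}{M}$, matching the last $M-K$ entries, and so $\Phi\Phi^* = \tfrac{N+K}{M}\,I_M$, which is tight.

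The main obstacle is full spark, and this is where Chebotar\"{e}v's theorem does the heavy lifting. Pick an arbitrary size-$M$ column set $\mathcal{K}$, consisting of $L$ identity columns indexed by $\mathcal{I}\subseteq\{0,\ldots,K-1\}$ together with $M-L$ columns of $DH$ indexed by $\mathcal{J}\subseteq\{0,\ldots,N-1\}$. Permuting rows so that rows in $\mathcal{I}$ come first, the support structure of the $\delta_i$'s forces $\Phi_\mathcal{K}$ into the block form
\begin{equation*}
\begin{pmatrix} I_L & A \\ 0 & B \end{pmatrix},
\end{equation*}
so $\det(\Phi_\mathcal{K}) = \det(B)$, where $B$ is the $(M-L)\times(M-L)$ submatrix of $DH$ with rows indexed by $\mathcal{I}^c$ and columns by $\mathcal{J}$. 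Since $D$ is diagonal with strictly positive entries, $\det(B) = \det(D_{\mathcal{I}^c})\det(H_{\mathcal{I}^c,\mathcal{J}})$; the first factor is nonzero, and the second is the determinant of a square submatrix of the $N\times N$ DFT with $N$ prime, which is nonzero by Chebotar\"{e}v's theorem. This covers the case $L=0$ as well (taking $\mathcal{I}=\emptyset$), so every $M\times M$ submatrix of $\Phi$ is invertible, proving full spark.
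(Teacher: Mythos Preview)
Your proof is correct and follows essentially the same approach as the paper's: direct verification of unit norm and tightness via the rigged diagonal entries, followed by a block-triangular reduction of any $M\times M$ submatrix to a square DFT submatrix, to which Chebotar\"{e}v's theorem applies. The only cosmetic difference is that the paper places the identity block in the lower-right rather than the upper-left, and it treats the $L=0$ and $L=M$ cases in separate sentences rather than folding them into the general argument.
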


As an example, when $N=5$ and $K=1$, we can pick $M=3$ rows of the $5\times 5$ DFT which are indexed by $\{0,1,4\}$.  
In this case, $D$ makes the entries of the first DFT row have size $\sqrt{\frac{1}{5}}$ and the entries of the remaining rows have size $\sqrt{\frac{2}{5}}$.
Concatenating with the first identity basis element then produces an equiangular tight frame which is full spark:
\begin{equation}
\label{eq.full spark etf example}
\Phi
=\left[\begin{array}{llllll} 
\sqrt{\frac{1}{5}}\quad\quad\quad&\sqrt{\frac{1}{5}}\quad\quad\quad&\sqrt{\frac{1}{5}}&\sqrt{\frac{1}{5}}\quad\quad\quad&\sqrt{\frac{1}{5}}\quad\quad\quad&1\\
\sqrt{\frac{2}{5}}&\sqrt{\frac{2}{5}}e^{-2\pi\mathrm{i}/5}&\sqrt{\frac{2}{5}}e^{-2\pi\mathrm{i}2/5}&\sqrt{\frac{2}{5}}e^{-2\pi\mathrm{i}3/5}&\sqrt{\frac{2}{5}}e^{-2\pi\mathrm{i}4/5}&0\\
\sqrt{\frac{2}{5}}&\sqrt{\frac{2}{5}}e^{-2\pi\mathrm{i}4/5}&\sqrt{\frac{2}{5}}e^{-2\pi\mathrm{i}3/5}&\sqrt{\frac{2}{5}}e^{-2\pi\mathrm{i}2/5}&\sqrt{\frac{2}{5}}e^{-2\pi\mathrm{i}/5}&0\\     
       \end{array}\right].
\end{equation}

\begin{proof}[Proof of Theorem~\ref{thm.harmonic plus identity}]
Let $\Phi$ denote the resulting $M\times(N+K)$ frame.
We start by verifying that $\Phi$ is unit norm.
Certainly, the identity basis elements have unit norm.
For the remaining frame elements, the modulus of each entry is determined by $D$, and so the norm squared of each frame element is
\begin{equation*}
K(\tfrac{N+K-M}{MN})+(M-K)(\tfrac{N+K}{MN})
=1.
\end{equation*}
To demonstrate that $\Phi$ is tight, it suffices to show that $\Phi\Phi^*=\frac{N+K}{M}I_M$.
The rows of $DH$ are orthogonal since they are scaled rows of the DFT, while the rows of the identity portion are orthogonal because they have disjoint support.
Thus, $\Phi\Phi^*$ is diagonal.
Moreover, the norm squared of each of the first $K$ rows is $N(\frac{N+K-M}{MN})+1=\frac{N+K}{M}$, while the norm squared of each of the remaining rows is $N(\frac{N+K}{MN})=\frac{N+K}{M}$, and so $\Phi\Phi^*=\frac{N+K}{M}I_M$.

To show that $\Phi$ is full spark, note that every $M\times M$ submatrix of $DH$ is invertible since
\begin{equation*}
|\mathrm{det}((DH)_\mathcal{K})|
=|\mathrm{det}(DH_\mathcal{K})|
=|\mathrm{det}(D)||\mathrm{det}(H_\mathcal{K})|
>0,
\end{equation*}
by Chebotar\"{e}v's theorem.
Also, in the case where $K=M$, we note that the $M\times M$ submatrix of $\Phi$ composed solely of identity basis elements is trivially invertible.
The only remaining case to check is when identity basis elements and columns of $DH$ appear in the same $M\times M$ submatrix $\Phi_\mathcal{K}$.
In this case, we may shuffle the rows of $\Phi_\mathcal{K}$ to have the form 
\begin{equation*}
\begin{bmatrix} A&0\\B&I_K\end{bmatrix}.
\end{equation*}
Since shuffling rows has no impact on the size of the determinant, we may further use a determinant identity on block matrices to get
\begin{equation*}
|\mathrm{det}(\Phi_\mathcal{K})|
=\left|\mathrm{det}\begin{bmatrix} A&0\\B&I_K\end{bmatrix}\right|
=|\mathrm{det}(A)\mathrm{det}(I_K)|
=|\mathrm{det}(A)|.
\end{equation*}
Since $A$ is a multiple of a square submatrix of the $N\times N$ DFT, we are done by Chebotar\"{e}v's theorem.
\end{proof}

As an example of Theorem~\ref{thm.harmonic plus identity}, pick $N$ to be a prime congruent to $1\bmod 4$, and select $\frac{N+1}{2}$ rows of the $N\times N$ DFT according to the index set $\mathcal{M}:=\{k^2:k\in\mathbb{Z}_N\}$.
If we take $K=1$, the process in Theorem~\ref{thm.harmonic plus identity} produces an equiangular tight frame of redundancy $2$, which we will verify in the next chapter using quadratic Gauss sums; in the case where $N=5$, this construction produces \eqref{eq.full spark etf example}.
Note that this corresponds to a special case of a construction in Zauner's thesis~\cite{Zauner:phd99}, which was later studied by Renes~\cite{Renes:07} and Strohmer~\cite{Strohmer:08}.
Theorem~\ref{thm.harmonic plus identity} says that this construction is full spark.

Maximally sparse frames have recently become a subject of active research~\cite{CasazzaHKK:10,FickusMT:10}.
We note that when $K=M$, Theorem~\ref{thm.harmonic plus identity} produces a maximally sparse $M\times (N+K)$ full spark frame, having a total of $M(M-1)$ zero entries.
To see that this sparsity level is maximal, we note that if the frame had any more zero entries, then at least one of the rows would have $M$ zero entries, meaning the corresponding $M\times M$ submatrix would have a row of all zeros and hence a zero determinant.
Similar ideas were studied previously by Nakamura and Masson~\cite{NakamuraM:ieeetc82}.

Another interesting case is where $K=M=N$, i.e., when the frame constructed in Theorem~\ref{thm.harmonic plus identity} is a union of the unitary DFT and identity bases.
Unions of orthonormal bases have received considerable attention in the context of sparse approximation~\cite{DonohoE:03,Tropp:acha08}.
In fact, when $N$ is a perfect square, concatenating the DFT with an identity basis forms the canonical example $\Phi$ of a dictionary with small spark~\cite{DonohoE:03}, and we used this example in the previous chapter.
Recall the Dirac comb of $\sqrt{N}$ spikes is an eigenvector of the DFT, and so concatenating this comb with the negative of its Fourier transform produces a $2\sqrt{N}$-sparse vector in the nullspace of $\Phi$.
In stark contrast, when $N$ is prime, Theorem~\ref{thm.harmonic plus identity} shows that $\Phi$ is full spark.

The vast implications of Chebotar\"{e}v's theorem leads one to wonder whether the result admits any interesting generalization.
In this direction, Cand\`{e}s et al.~\cite{CandesRT:06} note that any such generalization must somehow account for the nontrivial subgroups of $\mathbb{Z}_N$ which are not present when $N$ is prime.
Certainly, if one could characterize the full spark submatrices of a general DFT, this would provide ample freedom to optimize full spark frames for additional considerations.
While we do not have a characterization for the general case, we do have one for the case where $N$ is a prime power.
Before stating the result, we require a definition:

\begin{defn}
We say a subset $\mathcal{M}\subseteq\mathbb{Z}_N$ is \emph{uniformly distributed over the divisors of $N$} if, for every divisor $d$ of $N$, the $d$ cosets of $\langle d\rangle$ partition $\mathcal{M}$ into subsets, each of size $\lfloor\frac{|\mathcal{M}|}{d}\rfloor$ or $\lceil\frac{|\mathcal{M}|}{d}\rceil$.
\end{defn}

At first glance, this definition may seem rather unnatural, but we will discover some important properties of uniformly distributed rows from the DFT.
As an example, we briefly consider uniform distribution in the context of the restricted isometry property (RIP).
Recall that a matrix of random rows from a DFT and normalized columns is RIP with high probability~\cite{RudelsonV:08}. 
We will show that harmonic frames satisfy RIP only if the selected row indices are nearly uniformly distributed over sufficiently small divisors of $N$.

To this end, recall that for any divisor $d$ of $N$, the Fourier transform of the $d$-sparse normalized Dirac comb $\frac{1}{\sqrt{d}}\chi_{\langle \frac{N}{d}\rangle}$ is the $\frac{N}{d}$-sparse normalized Dirac comb $\sqrt{\frac{d}{N}}\chi_{\langle d\rangle}$.
Let $F$ be the $N\times N$ unitary DFT, and let $\Phi$ be the harmonic frame which arises from selecting rows of $F$ indexed by $\mathcal{M}$ and then normalizing the columns.
In order for $\Phi$ to be $(K,\delta)$-RIP, $\mathcal{M}$ must contain at least one member of $\langle d\rangle$ for every divisor $d$ of $N$ which is $\leq K$, since otherwise
\begin{equation*}
\Phi\tfrac{1}{\sqrt{d}}\chi_{\langle\frac{N}{d}\rangle}
=\sqrt{\tfrac{N}{|\mathcal{M}|}}(F\tfrac{1}{\sqrt{d}}\chi_{\langle\frac{N}{d}\rangle})_\mathcal{M}
=\sqrt{\tfrac{N}{|\mathcal{M}|}}\Big(\sqrt{\tfrac{d}{N}}\chi_{\langle d\rangle}\Big)_\mathcal{M}
=\sqrt{\tfrac{d}{|\mathcal{M}|}}\chi_{\mathcal{M}\cap\langle d\rangle}
=0, 
\end{equation*}
which violates the lower RIP bound at $x=\frac{1}{\sqrt{d}}\chi_{\langle\frac{N}{d}\rangle}$.
In fact, the RIP bounds indicate that 
\begin{equation*}
\|\Phi x\|^2
=\|\Phi\tfrac{1}{\sqrt{d}}\chi_{\langle\frac{N}{d}\rangle}\|^2
=\Big\|\sqrt{\tfrac{d}{|\mathcal{M}|}}\chi_{\mathcal{M}\cap\langle d\rangle}\Big\|^2
=\tfrac{d}{|\mathcal{M}|}|\mathcal{M}\cap\langle d\rangle|
\end{equation*}
cannot be more than $\delta$ away from $\|x\|^2=1$.
Similarly, taking $x$ to be $\frac{1}{\sqrt{d}}\chi_{\langle\frac{N}{d}\rangle}$ modulated by $a$, i.e., $x[n]:=\frac{1}{\sqrt{d}}\chi_{\langle\frac{N}{d}\rangle}[n]e^{2\pi ian/N}$ for every $n\in\mathbb{Z}_N$, gives that $\|\Phi x\|^2=\frac{d}{|\mathcal{M}|}|\mathcal{M}\cap(a+\langle d\rangle)|$ is also no more than $\delta$ away from $1$.
This observation gives the following result:

\begin{thm}
Select rows indexed by $\mathcal{M}\subseteq\mathbb{Z}_N$ from the $N\times N$ discrete Fourier transform matrix and then normalize the columns to produce the harmonic frame $\Phi$.
Then $\Phi$ satisfies the $(K,\delta)$-restricted isometry property only if
\begin{equation*}
\Big|\big|\mathcal{M}\cap(a+\langle d\rangle)\big|-\tfrac{|\mathcal{M}|}{d}\Big|\leq\tfrac{|\mathcal{M}|}{d}\delta
\end{equation*}
for every divisor $d$ of $N$ with $d\leq K$ and every $a=0,\ldots,d-1$.
\end{thm}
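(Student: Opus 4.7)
The proof is essentially already outlined in the paragraph preceding the statement; my plan is simply to formalize that sketch by carefully choosing a family of $d$-sparse test vectors and reading off the RIP inequality.

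First, I would fix a divisor $d$ of $N$ with $d\leq K$ and a residue $a\in\{0,\ldots,d-1\}$, and define the candidate vector $x\in\mathbb{C}^N$ by
\begin{equation*}
x[n]:=\tfrac{1}{\sqrt{d}}\,\chi_{\langle N/d\rangle}[n]\,e^{2\pi\rmi an/N}.
\end{equation*}
This vector is supported on the coset $\langle N/d\rangle$ of size $d$, so it is $d$-sparse (hence $K$-sparse), and an easy computation shows $\|x\|^2=1$.

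Next I would compute $\Phi x$. Writing $\Phi=\sqrt{N/|\mathcal{M}|}\,F_{\mathcal{M}}$, where $F$ is the $N\times N$ unitary DFT and $F_{\mathcal{M}}$ consists of the rows of $F$ indexed by $\mathcal{M}$, the column normalization factor is exactly $\sqrt{N/|\mathcal{M}|}$. The Fourier transform of the normalized Dirac comb $\frac{1}{\sqrt{d}}\chi_{\langle N/d\rangle}$ is the dual comb $\sqrt{d/N}\,\chi_{\langle d\rangle}$, and multiplication by the modulation $e^{2\pi\rmi an/N}$ in the time domain translates the spectrum by $a$. Hence $Fx=\sqrt{d/N}\,\chi_{a+\langle d\rangle}$, and restricting to rows indexed by $\mathcal{M}$ gives
\begin{equation*}
\|\Phi x\|^2=\tfrac{N}{|\mathcal{M}|}\cdot\tfrac{d}{N}\cdot\bigl|\mathcal{M}\cap(a+\langle d\rangle)\bigr|=\tfrac{d}{|\mathcal{M}|}\bigl|\mathcal{M}\cap(a+\langle d\rangle)\bigr|.
\end{equation*}

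Finally, I would invoke the RIP inequality at this particular $K$-sparse unit-norm vector: $(1-\delta)\leq\|\Phi x\|^2\leq(1+\delta)$ rearranges to $|\|\Phi x\|^2-1|\leq\delta$, which, upon substituting the displayed expression and multiplying through by $|\mathcal{M}|/d$, is exactly the inequality in the theorem statement. Since $d$ and $a$ were arbitrary within the stated range, the conclusion follows.

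I do not anticipate a genuine obstacle; the only points requiring a little care are bookkeeping the scaling factor coming from column normalization and the Fourier transform pair for Dirac combs (including the identity that modulation in time corresponds to translation in frequency). These are routine, and the proof is then a one-line consequence of the RIP bounds applied to this explicitly constructed test vector.
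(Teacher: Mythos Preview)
Your proposal is correct and is essentially identical to the paper's argument: the paper constructs the same modulated Dirac comb test vector $x[n]=\tfrac{1}{\sqrt{d}}\chi_{\langle N/d\rangle}[n]e^{2\pi\rmi an/N}$, computes $\|\Phi x\|^2=\tfrac{d}{|\mathcal{M}|}\,|\mathcal{M}\cap(a+\langle d\rangle)|$, and reads off the RIP inequality exactly as you do. As you yourself note, you are simply formalizing the sketch given in the paragraph preceding the theorem.
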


Now that we have an intuition for uniform distribution in terms of modulated Dirac combs and RIP, we take this condition to the extreme by considering uniform distribution over all divisors.
Doing so produces a complete characterization of full spark harmonic frames when $N$ is a prime power:

\begin{thm}
\label{thm.uniformly distributed}
Let $N$ be a prime power, and select rows indexed by $\mathcal{M}\subseteq\mathbb{Z}_N$ from the $N\times N$ discrete Fourier transform matrix to build the submatrix $\Phi$.
Then $\Phi$ is full spark if and only if $\mathcal{M}$ is uniformly distributed over the divisors of $N$.
\end{thm}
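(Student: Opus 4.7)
The plan is to prove the two directions of the equivalence in turn; the forward implication admits an explicit construction, while the reverse implication requires an induction that is the main obstacle.

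\textbf{Forward direction.}
Arguing by contrapositive, suppose $\mathcal{M}$ fails uniform distribution over some divisor $d$ of $N$; I will exhibit a nonzero $M$-sparse $x$ with $\hat{x}|_\mathcal{M}=0$. The main construction parameterizes $x$ by its values on a union $\bigcup_{c\in C}(c+\langle N/d\rangle)$ of cosets of $\langle N/d\rangle$, writing $x(c+jN/d)=g_c(j)$. The identity $\omega^{(c+jN/d)m}=\omega^{cm}e^{-2\pi imj/d}$ decouples the condition $\hat{x}|_\mathcal{M}=0$ into $d$ independent linear subsystems indexed by residues $a\in\mathbb{Z}_d$, the $a$-th having $m_a:=|\mathcal{M}\cap(a+\langle d\rangle)|$ equations in the unknowns $(\hat{g}_c(a))_{c\in C}$. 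In the generic failure mode $\min_a m_a\leq\lfloor M/d\rfloor-1$, choosing $a^*$ to attain the minimum and $|C|=m_{a^*}+1$ makes the $a^*$-th subsystem underdetermined; taking a nonzero kernel vector $v^*$ there, together with the zero choice in every other subsystem, produces $g_c(j)\propto v^*(c)e^{2\pi ia^*j/d}$, and hence $|\mathrm{supp}(x)|=d\cdot|\{c:v^*(c)\neq 0\}|\leq d(m_{a^*}+1)\leq M$. In the remaining cases, where non-uniformity at $d$ manifests only as an over-represented coset while $\min_a m_a$ stays large, a complementary ``row-collapse'' argument applies: if $m_1,m_2\in\mathcal{M}$ satisfy $\gcd(m_1-m_2,N)=g\geq M$, then rows $m_1$ and $m_2$ of $\Phi$ agree on the $g$-element subgroup $\langle N/g\rangle$, so any $M$ columns drawn from $\langle N/g\rangle$ give a singular $M\times M$ submatrix. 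One must verify that for $N=p^k$, every non-uniform $\mathcal{M}$ admits at least one of these two witnesses, possibly after coarsening to a divisor $d/p$.

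\textbf{Reverse direction.}
I would induct on $k$ where $N=p^k$; the base case $k=1$ is Chebotar\"{e}v's theorem, since uniform distribution is automatic for prime $N$. Assuming the theorem for $p^{k-1}$ and $\mathcal{M}\subseteq\mathbb{Z}_{p^k}$ uniformly distributed, suppose toward contradiction that some nonzero $x$ with $|\mathrm{supp}(x)|\leq M$ satisfies $\hat{x}|_\mathcal{M}=0$. The Cooley--Tukey identity $\hat{x}(m)=\sum_{a=0}^{p-1}\omega^{ma}\hat{y^{(a)}}(m\bmod p^{k-1})$ splits $x$ into $p$ coset slices $y^{(a)}\in\mathbb{C}^{p^{k-1}}$ defined by $y^{(a)}(n')=x(a+pn')$. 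For each $m'\in\mathbb{Z}_{p^{k-1}}$, the vanishing equations at $m\equiv m'\pmod{p^{k-1}}$ form a subsystem of the $p\times p$ DFT; by Chebotar\"{e}v applied to the prime $p$, its rank equals $|\mathcal{M}\cap(m'+\langle p^{k-1}\rangle)|$ exactly. The strategy is then to combine this rank information with uniform distribution of $\mathcal{M}$ at the finer divisors to extract an instance of the inductive hypothesis on $\mathbb{Z}_{p^{k-1}}$, forcing $\sum_a|\mathrm{supp}(y^{(a)})|>M$ and contradicting the sparsity bound on $x$.

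\textbf{Main obstacle.}
The heart of the proof is the inductive descent in the reverse direction. Uniform distribution over the divisors of $p^k$ does not translate cleanly to a uniform condition on $\mathcal{M}\bmod p^{k-1}$, since taking residues collapses each coset of $\langle p^{k-1}\rangle$ to a single point while the Chebotar\"{e}v layer couples the slices $y^{(a)}$ in a nontrivial, $m'$-dependent way. Careful bookkeeping of the multiplicity profile $\{|\mathcal{M}\cap(m'+\langle p^{k-1}\rangle)|\}_{m'}$ through the descent, perhaps aided by Theorem~\ref{thm.closure}(iii) to swap $\mathcal{M}$ with its complement when convenient, is the core technical challenge.
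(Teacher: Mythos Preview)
Your forward direction (the contrapositive of ``full spark $\Rightarrow$ uniformly distributed'') is essentially sound in the underrepresented case, and in fact is dual to the paper's argument: the paper reduces to the \emph{over}represented case via Theorem~\ref{thm.closure}(iii) and then exhibits a rank-deficient $M\times M$ submatrix using a block decomposition into rank-1 pieces, whereas you handle the \emph{under}represented case directly by constructing a sparse null vector. Your ``row-collapse'' argument for the overrepresented case, however, does not follow: an overrepresented coset of $\langle d\rangle$ gives you many $m_i$ congruent modulo $d$, but $d$ can be far smaller than $M$, so you cannot conclude $\gcd(m_1-m_2,N)\geq M$. The clean fix is exactly the complement trick you already cite later---if $\mathcal{M}$ has an overrepresented coset at $d$, then $\mathcal{M}^{\mathrm{c}}$ has an underrepresented one, and Theorem~\ref{thm.closure}(iii) transfers full spark. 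So this direction is salvageable with one line.

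The real gap is the reverse direction. The paper does \emph{not} proceed by induction on $k$ or Cooley--Tukey; it uses an arithmetic argument that generalizes Tao's proof of Chebotar\"{e}v. Specifically, Lemma~\ref{lem.multiple of p} shows that if a polynomial with integer coefficients vanishes at a tuple of $N$th roots of unity then its value at $(1,\dots,1)$ is divisible by $p$, and Lemma~\ref{lem.big fraction} converts this into the criterion that $\Phi$ is full spark whenever
\[
\frac{\prod_{i<j}(m_j-m_i)}{\prod_{m=0}^{M-1}m!}
\]
is not divisible by $p$. The proof then reduces to counting $p$-divisors in the numerator and denominator separately and showing they match exactly when $\mathcal{M}$ is uniformly distributed over the divisors of $N$. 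This $p$-adic counting is the heart of the argument, and it is what your proposal is missing. Your inductive approach runs into precisely the obstacle you name---uniform distribution over divisors of $p^k$ does not project to a clean condition on $\mathcal{M}\bmod p^{k-1}$, and the Chebotar\"{e}v layer couples the slices $y^{(a)}$ in a way that resists a direct sparsity bound---and I do not see how to close it without importing the arithmetic machinery above.
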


Note that, perhaps surprisingly, an index set $\mathcal{M}$ can be uniformly distributed over $p$ but not over $p^2$, and vice versa.
For example, $\mathcal{M}=\{0,1,4\}$ is uniformly distributed over $2$ but not $4$, while $\mathcal{M}=\{0,2\}$ is uniformly distributed over $4$ but not $2$.

Since the first $M$ rows of a DFT form a full spark Vandermonde matrix, let's check that this index set is uniformly distributed over the divisors of $N$.
For each divisor $d$ of $N$, we partition the first $M$ indices into the $d$ cosets of $\langle d\rangle$.
Write $M=qd+r$ with $0\leq r<d$.
The first $qd$ of the $M$ indices are distributed equally amongst all $d$ cosets, and then the remaining $r$ indices are distributed equally amongst the first $r$ cosets.
Overall, the first $r$ cosets contain $q+1=\lfloor\frac{M}{d}\rfloor+1$ indices, while the remaining $d-r$ cosets have $q=\lfloor\frac{M}{d}\rfloor$ indices; thus, the first $M$ indices are indeed uniformly distributed over the divisors of $N$.
Also, when $N$ is prime, \emph{every} subset of $\mathbb{Z}_N$ is uniformly distributed over the divisors of $N$ in a trivial sense.
In fact, Chebotar\"{e}v's theorem follows immediately from Theorem~\ref{thm.uniformly distributed}.
In some ways, portions of our proof of Theorem~\ref{thm.uniformly distributed} mirror recurring ideas in the existing proofs of Chebotar\"{e}v's theorem~\cite{DelvauxV:laa08,EvansI:ams77,StevenhagenL:mi96,Tao:mrl05}.
For the sake of completeness, we provide the full argument and save the reader from having to parse portions of proofs from multiple references.
We start with the following lemmas, whose proofs are based on the proofs of Lemmas~1.2 and~1.3 in~\cite{Tao:mrl05}.

\begin{lem}
\label{lem.multiple of p}
Let $N$ be a power of some prime $p$, and let $P(z_1,\ldots,z_M)$ be a polynomial with integer coefficients.
Suppose there exists $N$th roots of unity $\{\omega_m\}_{m=1}^M$ such that $P(\omega_1,\ldots,\omega_M)=0$.
Then $P(1,\ldots,1)$ is a multiple of $p$.
\end{lem}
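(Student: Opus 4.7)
The plan is to exploit the key number-theoretic fact that, when $N=p^k$ is a prime power, the $N$th cyclotomic polynomial $\Phi_N(x)$ satisfies $\Phi_N(1)=p$. Let $\omega=\rme^{2\pi\rmi/N}$ be a primitive $N$th root of unity; then every $N$th root of unity is of the form $\omega^{a}$ for some integer $a$, so in particular each $\omega_m=\omega^{a_m}$ lies in $\mathbb{Z}[\omega]\cong\mathbb{Z}[x]/(\Phi_N(x))$.

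First I would verify the identity $\Phi_{p^k}(1)=p$ by noting the explicit formula
\begin{equation*}
\Phi_{p^k}(x)=1+x^{p^{k-1}}+x^{2p^{k-1}}+\cdots+x^{(p-1)p^{k-1}},
\end{equation*}
which is a sum of exactly $p$ monomials, each equal to $1$ at $x=1$. This is the one place where the prime-power hypothesis on $N$ is essential: for composite $N$ divisible by two or more distinct primes, $\Phi_N(1)=1$, and the argument below collapses.

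Next, I would construct a ring homomorphism $\pi:\mathbb{Z}[\omega]\to\mathbb{F}_p$ by composing evaluation at $x=1$ on $\mathbb{Z}[x]$ with reduction modulo $p$. The identification $\mathbb{Z}[\omega]\cong\mathbb{Z}[x]/(\Phi_N(x))$ makes $\pi$ well-defined precisely because $\Phi_N(1)=p\equiv 0\pmod p$, so the relation defining $\mathbb{Z}[\omega]$ is killed in $\mathbb{F}_p$. Under $\pi$, each $\omega_m=\omega^{a_m}$ maps to $1^{a_m}=1$, while the integer coefficients of $P$ are preserved modulo $p$.

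Applying $\pi$ to both sides of $P(\omega_1,\ldots,\omega_M)=0$ therefore yields $P(1,\ldots,1)\equiv 0\pmod p$, which is exactly the claim. The main (in fact only) nontrivial step is justifying the existence of $\pi$, which reduces to the cyclotomic identity $\Phi_{p^k}(1)=p$; all remaining steps are routine manipulations in the quotient ring $\mathbb{Z}[\omega]$.
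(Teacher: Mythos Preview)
Your proof is correct and is essentially the same argument as the paper's, just packaged more abstractly: the paper substitutes $z^{k_m}$ for each $z_m$ to obtain a single-variable integer polynomial $Q(z)=P(z^{k_1},\ldots,z^{k_M})$ vanishing at $\omega$, invokes divisibility by $\Phi_N$ in $\mathbb{Z}[z]$, and evaluates at $z=1$; your ring homomorphism $\mathbb{Z}[\omega]\cong\mathbb{Z}[x]/(\Phi_N)\to\mathbb{F}_p$ is precisely the quotient-ring formulation of that divisibility statement. Both hinge on the same key fact $\Phi_{p^k}(1)=p$.
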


\begin{proof}
Denoting $\omega:=e^{-2\pi i/N}$, then for every $m=1,\ldots,M$, we have $\omega_m=\omega^{k_m}$ for some $0\leq k_m<N$.
Defining the polynomial $Q(z):=P(z^{k_1},\ldots,z^{k_M})$, we have $Q(\omega)=0$ by assumption.
Also, $Q(z)$ is a polynomial with integer coefficients, and so it must be divisible by the minimal polynomial of $\omega$, namely, the cyclotomic polynomial $\Phi_N(z)$.
Evaluating both polynomials at $z=1$ then gives that $p=\Phi_N(1)$ divides $Q(1)=P(1,\ldots,1)$.
\end{proof}

\begin{lem}
\label{lem.big fraction}
Let $N$ be a power of some prime $p$, and pick $\mathcal{M}=\{m_i\}_{i=1}^M\subseteq\mathbb{Z}_N$ such that
\begin{equation}
\label{eq.big fraction}
\frac{\displaystyle{\prod_{1\leq i<j\leq M}(m_j-m_i)}}{\displaystyle{\prod_{m=0}^{M-1}m!}}
\end{equation}
is not a multiple of $p$.
Then the rows indexed by $\mathcal{M}$ in the $N\times N$ discrete Fourier transform form a full spark frame. 
\end{lem}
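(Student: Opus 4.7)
The plan is to show that for every choice of column indices $\mathcal{K}=\{k_j\}_{j=1}^M\subseteq\mathbb{Z}_N$, the $M\times M$ submatrix with entries $\omega^{m_i k_j}$, where $\omega=e^{-2\pi i/N}$, has nonzero determinant. Introduce the integer-coefficient polynomial
\[
P(z_1,\ldots,z_M) := \det\bigl(z_j^{m_i}\bigr)_{i,j=1}^M,
\]
so that the determinant of the submatrix equals $P(\omega^{k_1},\ldots,\omega^{k_M})$. Applying Lemma~\ref{lem.multiple of p} directly to $P$ is useless, since $P(1,\ldots,1)=0$ (all rows become $(1,\ldots,1)$). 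The antisymmetry of $P$ in the $z_j$'s is the obvious source of this zero, so I would factor it out: since every alternating polynomial is divisible by the Vandermonde $V(z):=\prod_{i<j}(z_j-z_i)$, write $P(z)=V(z)\,Q(z)$ with $Q\in\mathbb{Z}[z_1,\ldots,z_M]$.

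The heart of the argument is evaluating $Q(1,\ldots,1)$. Substituting $z_j=1+w_j$ and expanding $z_j^{m_i}=\sum_{k\geq 0}\binom{m_i}{k}w_j^k$, column-multilinearity of the determinant gives
\[
P(z) = \sum_{k_1,\ldots,k_M\geq 0}\Bigl(\prod_{j=1}^M w_j^{k_j}\Bigr)\det\Bigl[\binom{m_i}{k_j}\Bigr]_{i,j},
\]
and the determinants vanish unless the $k_j$'s are distinct. The lowest-degree summands in $w$ correspond to $\{k_j\}=\{0,1,\ldots,M-1\}$; grouping them by the permutation $\sigma$ with $k_j=\sigma(j)-1$ and pulling $\mathrm{sgn}(\sigma)$ through the determinant, the leading homogeneous piece of $P$ is $\det[\binom{m_i}{j-1}]_{i,j}\cdot V(z)$. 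Reading off the quotient yields $Q(1,\ldots,1)=\det[\binom{m_i}{j-1}]_{i,j}$. Pulling $1/(j-1)!$ out of column $j$ replaces this by $\det[m_i^{\underline{j-1}}]/\prod_{j=1}^M(j-1)!$, where $m_i^{\underline{j-1}}$ is the falling factorial; since the falling factorials form a monic basis of the correct degrees, a unitriangular column reduction converts them into ordinary powers, and a Vandermonde evaluation gives
\[
Q(1,\ldots,1) = \frac{\prod_{1\leq i<j\leq M}(m_j-m_i)}{\prod_{m=0}^{M-1}m!},
\]
which by hypothesis is an integer that is not divisible by $p$.

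The remainder is a direct contrapositive application of Lemma~\ref{lem.multiple of p} to $Q$: since $Q(1,\ldots,1)$ is not a multiple of $p$, there is no tuple of $N$th roots of unity $(\zeta_1,\ldots,\zeta_M)$ with $Q(\zeta_1,\ldots,\zeta_M)=0$, so in particular $Q(\omega^{k_1},\ldots,\omega^{k_M})\neq 0$. Because the $k_j$ are distinct elements of $\mathbb{Z}_N$, the powers $\omega^{k_j}$ are distinct roots of unity and $V(\omega^{k_1},\ldots,\omega^{k_M})\neq 0$. Therefore $P(\omega^{k_1},\ldots,\omega^{k_M})=V\cdot Q\neq 0$, every $M\times M$ submatrix is invertible, and the harmonic frame is full spark.

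The main obstacle is the bookkeeping in the second paragraph: turning the trivial identity $P(1,\ldots,1)=0$ into the precise value $\prod_{i<j}(m_j-m_i)/\prod_{m=0}^{M-1}m!$ for $Q(1,\ldots,1)$, so that the lemma's hypothesis matches exactly what is needed to invoke Lemma~\ref{lem.multiple of p}. Once that evaluation is secured, the full spark conclusion follows with no further work.
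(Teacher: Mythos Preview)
Your proof is correct and follows the same overall architecture as the paper: define the determinant polynomial, factor out the Vandermonde $V(z)$, evaluate the quotient at $(1,\ldots,1)$, and invoke Lemma~\ref{lem.multiple of p} in the contrapositive. The one substantive difference is how you carry out the key evaluation $Q(1,\ldots,1)$. The paper applies the differential operators $(z_1\partial_{z_1})^0(z_2\partial_{z_2})^1\cdots(z_M\partial_{z_M})^{M-1}$ to $P$ and then sets all $z_j=1$: one product-rule bookkeeping computation yields $\prod_{m=0}^{M-1}m!\cdot Q(1,\ldots,1)$, while applying the same operators term-by-term to the determinant yields $\det(m_i^{j-1})=\prod_{i<j}(m_j-m_i)$. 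You instead substitute $z_j=1+w_j$, expand binomially, and read off $Q(1,\ldots,1)$ as the coefficient of $V(w)$ in the lowest-degree homogeneous piece of $P(1+w)$, obtaining $\det\bigl[\binom{m_i}{j-1}\bigr]$ directly and then reducing that to the Vandermonde in the $m_i$'s by the falling-factorial column reduction. Both routes land on the same value for $Q(1,\ldots,1)$; yours is arguably more elementary (no differential operators, just a Taylor-type expansion and a standard binomial-determinant identity), while the paper's avoids tracking multi-index degrees. Either way, the conclusion follows identically.
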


\begin{proof}
We wish to show that $\mathrm{det}(\omega_n^m)_{m\in\mathcal{M},1\leq n\leq M}\neq0$ for all $M$-tuples of distinct $N$th roots of unity $\{\omega_n\}_{n=1}^M$.
Define the polynomial $D(z_1,\ldots,z_M):=\mathrm{det}(z_n^m)_{m\in\mathcal{M},1\leq n\leq M}$.
Since columns $i$ and $j$ of $(z_n^m)_{m\in\mathcal{M},1\leq n\leq M}$ are identical whenever $z_i=z_j$, we know that $D$ vanishes in each of these instances, and so we can factor:
\begin{equation*}
D(z_1,\ldots,z_M)=P(z_1,\ldots,z_M)\prod_{1\leq i<j\leq M}(z_j-z_i)
\end{equation*}
for some polynomial $P(z_1,\ldots,z_M)$ with integer coefficients.
By Lemma~\ref{lem.multiple of p}, it suffices to show that $P(1,\ldots,1)$ is not a multiple of $p$, since this implies $D(\omega_1,\ldots,\omega_M)$ is nonzero for all $M$-tuples of distinct $N$th roots of unity $\{\omega_n\}_{n=1}^M$.

To this end, we proceed by considering
\begin{equation}
\label{eq.A defn}
A:=\bigg(z_1\frac{\partial}{\partial z_1}\bigg)^0\bigg(z_2\frac{\partial}{\partial z_2}\bigg)^1\cdots\bigg(z_M\frac{\partial}{\partial z_M}\bigg)^{M-1}D(z_1,\ldots,z_M)\bigg|_{z_1=\cdots=z_M=1}.
\end{equation}
To compute $A$, we note that each application of $z_j\frac{\partial}{\partial z_j}$ produces terms according to the product rule.
For some terms, a linear factor of the form $z_j-z_i$ or $z_i-z_j$ is replaced by $z_j$ or $-z_j$, respectively.
For each the other terms, these linear factors are untouched, while another factor, such as $P(z_1,\ldots,z_M)$, is differentiated and multiplied by $z_j$.
Note that there are a total of $M(M-1)/2$ linear factors, and only $M(M-1)/2$ differentiation operators to apply.
Thus, after expanding every product rule, there will be two types of terms: terms in which every differentiation operator was applied to a linear factor, and terms which have at least one linear factor remaining untouched.
When we evaluate at $z_1=\cdots=z_M=1$, the terms with linear factors vanish, and so the only terms which remain came from applying every differentiation operator to a linear factor.
Furthermore, each of these terms before the evaluation is of the form $P(z_1,\ldots,z_M)\prod_{1\leq i<j\leq M}z_j$, and so evaluation at $z_1=\cdots=z_M=1$ produces a sum of terms of the form $P(1,\ldots,1)$; to determine the value of $A$, it remains to count these terms.
The $M-1$ copies of $z_M\frac{\partial}{\partial z_M}$ can only be applied to linear factors of the form $z_M-z_i$, of which there are $M-1$, and so there are a total of $(M-1)!$ ways to distribute these operators.
Similarly, there are $(M-2)!$ ways to distribute the $M-2$ copies of $z_{M-1}\frac{\partial}{\partial z_{M-1}}$ amongst the $M-2$ linear factors of the form $z_{M-1}-z_i$.
Continuing in this manner produces an expression for $A$:
\begin{equation}
\label{eq.first A}
A=(M-1)!(M-2)!\cdots 1!0!~P(1,\ldots,1).
\end{equation}

For an alternate expression of $A$, we substitute the definition of $D(z_1,\ldots,z_M)$ into $\eqref{eq.A defn}$.
Here, we exploit the multilinearity of the determinant and the fact that $(z_n\frac{\partial}{\partial z_n})z_n^m=mz_n^m$ to get
\begin{equation}
\label{eq.second A}
A=\mathrm{det}(m^{n-1})_{m\in\mathcal{M},1\leq n\leq M}=\prod_{1\leq i<j\leq M}(m_j-m_i),
\end{equation}
where the final equality uses the fact that $(m^{n-1})_{m\in\mathcal{M},1\leq n\leq M}$ is the transpose of a Vandermonde matrix.
Equating \eqref{eq.first A} to \eqref{eq.second A} reveals that \eqref{eq.big fraction} is an expression for $P(1,\ldots,1)$.  
Thus, by assumption, $P(1,\ldots,1)$ is not a multiple of $p$, and so we are done.
\end{proof}

\begin{proof}[Proof of Theorem~\ref{thm.uniformly distributed}]
($\Leftarrow$) We will use Lemma~\ref{lem.big fraction} to demonstrate that $\Phi$ is full spark.
To apply this lemma, we need to establish that \eqref{eq.big fraction} is not a multiple of $p$, and to do this, we will show that there are as many $p$-divisors in the numerator of \eqref{eq.big fraction} as there are in the denominator.
We start by counting the $p$-divisors of the denominator:
\begin{equation}
\label{eq.denominator 1}
\prod_{m=0}^{M-1}m!=\prod_{m=1}^{M-1}\prod_{\ell=1}^m \ell=\prod_{\ell=1}^{M-1}\prod_{m=1}^{M-l} \ell.
\end{equation}
For each pair of integers $k,a\geq 1$, there are $\max\{M-ap^k,~0\}$ factors in \eqref{eq.denominator 1} of the form $\ell=ap^k$.
By adding these, we count each factor $\ell$ as many times as it can be expressed as a multiple of a power of $p$, which equals the number of $p$-divisors in $\ell$.
Thus, the number of $p$-divisors of \eqref{eq.denominator 1} is
\begin{equation}
\label{eq.denominator 2}
\sum_{k=1}^{\lfloor \log_p M\rfloor}\sum_{a=1}^{\lfloor \frac{M}{p^k}\rfloor}(M-ap^k).
\end{equation}

Next, we count the $p$-divisors of the numerator of \eqref{eq.big fraction}.
To do this, we use the fact that $\mathcal{M}$ is uniformly distributed over the divisors of $N$. 
Since $N$ is a power of $p$, the only divisors of $N$ are smaller powers of $p$.
Also, the cosets of $\langle p^k\rangle$ partition $\mathcal{M}$ into subsets $S_{k,b}:=\{m_i\equiv b\mod p^k\}$.
We note that $m_j-m_i$ is a multiple of $p^k$ precisely when $m_i$ and $m_j$ belong to the same subset $S_{k,b}$ for some $0\leq b<p^k$.
To count $p$-divisors, we again count each factor $m_j-m_i$ as many times as it can be expressed as a multiple of a prime power:
\begin{equation}
\label{eq.numerator 1}
\sum_{k=1}^{\lfloor \log_p M\rfloor}\sum_{b=0}^{p^k-1}\binom{|S_{k,b}|}{2}.
\end{equation}
Write $M=qp^k+r$ with $0\leq r<p^k$.
Then $q=\lfloor\frac{M}{p^k}\rfloor$.
Since $\mathcal{M}$ is uniformly distributed over $p^k$, there are $r$ subsets $S_{k,b}$ with $q+1$ elements and $p^k-r$ subsets with $q$ elements.
We use this to get
\begin{equation*}
\sum_{b=0}^{p^k-1}\binom{|S_{k,b}|}{2}
=\binom{q+1}{2}r+\binom{q}{2}(p^k-r)
=\frac{q}{2}\Big((q-1)p^k+2r+(qp^k-qp^k)\Big).
\end{equation*}
Rearranging and substituting $M=qp^k+r$ then gives
\begin{equation*}
\sum_{b=0}^{p^k-1}\binom{|S_{k,b}|}{2}
=\frac{q}{2}\Big(2M-(q+1)p^k\Big)
=Mq-\binom{q+1}{2}p^k
=\sum_{a=1}^{\lfloor \frac{M}{p^k}\rfloor}(M-ap^k).
\end{equation*}
Thus, there are as many $p$-divisors in the numerator \eqref{eq.numerator 1} as there are in the denominator \eqref{eq.denominator 2}, and so \eqref{eq.big fraction} is not divisible by $p$.
Lemma~\ref{lem.big fraction} therefore gives that $\Phi$ is full spark.

($\Rightarrow$)
We will prove that this direction holds regardless of whether $N$ is a prime power.
Suppose $\mathcal{M}\subseteq\mathbb{Z}_N$ is not uniformly distributed over the divisors of $N$.
Then there exists a divisor $d$ of $N$ such that one of the cosets of $\langle d\rangle$ intersects $\mathcal{M}$ with $\leq\lfloor \frac{M}{d}\rfloor-1$ or $\geq\lceil\frac{M}{d}\rceil+1$ indices.
Notice that if a coset of $\langle d\rangle$ intersects $\mathcal{M}$ with $\leq\lfloor \frac{M}{d}\rfloor-1$ indices, then the complement $\mathcal{M}^\mathrm{c}$ intersects the same coset with $\geq\lceil\frac{N-M}{d}\rceil+1=\lceil\frac{|\mathcal{M}^\mathrm{c}|}{d}\rceil+1$ indices.
By Theorem~\ref{thm.closure}(iii), $\mathcal{M}$ produces a full spark harmonic frame precisely when $\mathcal{M}^\mathrm{c}$ produces a full spark harmonic frame, and so we may assume without loss of generality that there exists a coset of $\langle d\rangle$ which intersects $\mathcal{M}$ with $\geq\lceil\frac{M}{d}\rceil+1$ indices.

To prove that the rows with indices in $\mathcal{M}$ are not full spark, we find column entries which produce a singular submatrix.
Writing $M=qd+r$ with $0\leq r<d$, let $\mathcal{K}$ contain $q=\lfloor\frac{M}{d}\rfloor$ cosets of $\langle\frac{N}{d}\rangle$ along with $r$ elements from an additional coset.
We claim that the DFT submatrix with row entries $\mathcal{M}$ and column entries $\mathcal{K}$ is singular.
To see this, shuffle the rows and columns to form a matrix $A$ in which the row entries are grouped into common cosets of $\langle d\rangle$ and the column entries are grouped into common cosets of $\langle\frac{N}{d}\rangle$.
This breaks $A$ into rank-1 submatrices: each pair of cosets $a+\langle d\rangle$ and $b+\langle\frac{N}{d}\rangle$ produces a submatrix
\begin{equation*}
(\omega^{(a+id)(b+j\frac{N}{d})})_{i\in\mathcal{I},j\in\mathcal{J}}
=\omega^{ab}(\omega^{bdi}\omega^{a\frac{N}{d}j})_{i\in\mathcal{I},j\in\mathcal{J}}
\end{equation*}
for some index sets $\mathcal{I}$ and $\mathcal{J}$; this is a rank-1 outer product.
Let $\mathcal{L}$ be the largest intersection between $\mathcal{M}$ and a coset of $\langle d\rangle$.
Then $|\mathcal{L}|\geq\lceil\frac{M}{d}\rceil+1$ is the number of rows in the tallest of these rank-1 submatrices.
Define $A_\mathcal{L}$ to be the $M\times M$ matrix with entries $A_\mathcal{L}[i,j]=A[i,j]$ whenever $i\in\mathcal{L}$ and zero otherwise.
Then 
\begin{equation}
\label{eq.rank 1}
\mathrm{Rank}(A)
=\mathrm{Rank}(A_\mathcal{L}+A-A_\mathcal{L})
\leq\mathrm{Rank}(A_\mathcal{L})+\mathrm{Rank}(A-A_\mathcal{L}).
\end{equation}
Since $A-A_\mathcal{L}$ has $|\mathcal{L}|$ rows of zero entries, we also have
\begin{equation}
\label{eq.rank 2}
\mathrm{Rank}(A-A_\mathcal{L})
\leq M-|\mathcal{L}|\leq M-(\lceil\tfrac{M}{d}\rceil+1).
\end{equation}
Moreover, since we can decompose $A_\mathcal{L}$ into a sum of $\lceil\frac{M}{d}\rceil$ zero-padded rank-1 submatrices, we have $\mathrm{Rank}(A_\mathcal{L})
\leq \lceil\frac{M}{d}\rceil$.
Combining this with \eqref{eq.rank 1} and \eqref{eq.rank 2} then gives that $\mathrm{Rank}(A)\leq M-1$, and so the DFT submatrix is not invertible.
\end{proof}

Note that our proof of Theorem~\ref{thm.uniformly distributed} establishes the necessity of having row indices uniformly distributed over the divisors of $N$ in the general case.
This leaves some hope for completely characterizing full spark harmonic frames.
Naturally, one might suspect that the uniform distribution condition is sufficient in general, but this suspicion fails when $N=10$.
Indeed, the following DFT submatrix is singular despite the row indices being uniformly distributed over the divisors of $10$:
\begin{equation*}
(e^{-2\pi i mn/10})_{m\in\{0,1,3,4\},n\in\{0,1,2,6\}}.
\end{equation*}

Just as we used Chebotar\"{e}v's theorem to analyze the harmonic equiangular tight frames from Xia et al.~\cite{XiaZG:05}, we can also use Theorem~\ref{thm.uniformly distributed} to determine whether harmonic equiangular tight frames with a prime power number of frame elements are full spark.
Unfortunately, none of the infinite families in~\cite{XiaZG:05} have the number of frame elements in the form of a prime power (other than primes).
Luckily, there is at least one instance in which the number of frame elements happens to be a prime power: the harmonic frames that arise from Singer difference sets have 
$M=\frac{q^d-1}{q-1}$ and $N=\frac{q^{d+1}-1}{q-1}$ for a prime power $q$ and an integer $d\geq2$; when $q=3$ and $d=4$, the number of frame elements $N=11^2$ is a prime power.
In this case, the row indices we select are
\begin{align*}
\mathcal{M}
=&\{1, 2, 3, 6, 7, 9, 11, 18, 20, 21, 25, 27, 33, 34, 38, 41, 44, 47, 53, 54, 55, 56, \\
& ~~~  58, 59, 60, 63, 64, 68, 70, 71, 75, 81, 83, 89, 92, 99, 100, 102, 104, 114\},
\end{align*}
but these are not uniformly distributed over 11, and so the corresponding harmonic frame is not full spark by Theorem~\ref{thm.uniformly distributed}.

\section{The computational complexity of verifying full spark}

In the previous section, we constructed a large collection of deterministic full spark frames.
To see how special these constructions are, we consider the following question:  
How much computation is required to check whether any given frame is full spark?
At the heart of the matter is computational complexity theory, which provides a rigorous playing field for expressing how hard certain problems are.
In this section, we consider the complexity of the following problem:

\begin{prob}[\textsc{Full Spark}]
\label{prob.full spark}
Given a matrix, is it full spark?
\end{prob}

For the lay mathematician, \textsc{Full Spark} is ``obviously'' $\NP$-hard because the easiest way he can think to solve it for a given $M\times N$ matrix is by determining whether each of the $M\times M$ submatrices is invertible; computing $\binom{N}{M}$ determinants would do, but this would take a lot of time, and so \textsc{Full Spark} must be $\NP$-hard.
However, computing $\binom{N}{M}$ determinants may not necessarily be the fastest way to test whether a matrix is full spark.
For example, perhaps there is an easy-to-calculate expression for the product of the determinants; after all, this product is nonzero precisely when the matrix is full spark.
Recall that Theorem~\ref{thm.uniformly distributed} gives a very straightforward litmus test for \textsc{Full Spark} in the special case where the matrix is formed by rows of a DFT of prime-power order---who's to say that a version of this test does not exist for the general case?
If such a test exists, then it would suffice to find it, but how might one disprove the existence of any such test?
Indeed, since we are concerned with the necessary amount of computation, as opposed to a sufficient amount, the lay mathematician's intuition is a bit misguided.

To discern how much computation is necessary, the main feature of interest is a problem's \emph{complexity}.
We use complexity to compare problems and determine whether one is harder than the other.
As an example of complexity, intuitively, doubling an integer is no harder than adding integers, since one can use addition to multiply by $2$; put another way, the complexity of doubling is somehow ``encoded'' in the complexity of adding, and so it must be lesser (or equal).
To make this more precise, complexity theorists use what is called a \emph{polynomial-time reduction}, that is, a polynomial-time algorithm that solves problem $A$ by exploiting an oracle which solves problem $B$; the reduction indicates that solving problem $A$ is no harder than solving problem $B$ (up to polynomial factors in time), and we say ``$A$ reduces to $B$,'' or $A\leq B$.
Since we can use the polynomial-time routine $x+x$ to produce $2x$, we conclude that doubling an integer reduces to adding integers, as expected.

In complexity theory, problems are categorized into complexity classes according to the amount of resources required to solve them.
For example, the complexity class $\P$ contains all problems which can be solved in polynomial time, while problems in $\EXP$ may require as much as exponential time.
Problems in $\NP$ have the defining quality that solutions can be verified in polynomial time given a certificate for the answer.
As an example, the graph isomorphism problem is in $\NP$ because, given an isomorphism between graphs (a certificate), one can verify that the isomorphism is legit in polynomial time.
Clearly, $\P\subseteq\NP$, since we can ignore the certificate and still solve the problem in polynomial time.
Finally, a problem $B$ is called $\NP$-\emph{hard} if every problem $A$ in $\NP$ reduces to $B$, and a problem is called $\NP$-\emph{complete} if it is both $\NP$-hard and in $\NP$.
In plain speak, $\NP$-hard problems are harder than every problem in $\NP$, while $\NP$-complete problems are the hardest of problems in $\NP$.

At this point, it should be clear that $\NP$-hard problems are not merely problems that seem to require a lot of computation to solve.
Certainly, $\NP$-hard problems have this quality, as an $\NP$-hard problem can be solved in polynomial time only if $\P=\NP$; this is an open problem, but it is 
widely believed that $\P\neq\NP$.
However, there are other problems which seem hard but are not known to be $\NP$-hard (e.g., the graph isomorphism problem).
Rather, to determine whether a problem is $\NP$-hard, one must find a polynomial-time reduction that compares the problem to all problems in $\NP$.
To this end, notice that $A\leq B$ and $B\leq C$ together imply $A\leq C$, and so to demonstrate that a problem $C$ is $\NP$-hard, it suffices to show that $B\leq C$ for some $\NP$-hard problem $B$.

Unfortunately, it can sometimes be difficult to find a deterministic reduction from one problem to another.
One example is reducing the satisfiability problem (\textsc{SAT}) to the unique satisfiability problem (\textsc{Unique~SAT}).
To be clear, \textsc{SAT} is an $\NP$-hard problem~\cite{Karp:ccc72} that asks whether there exists an input for which a given Boolean function returns ``true,'' while \textsc{Unique~SAT} asks the same question with an additional promise: that the given Boolean function is satisfiable only if there is a \emph{unique} input for which it returns ``true.''
Intuitively, \textsc{Unique~SAT} is easier than \textsc{SAT} because we might be able to exploit the additional structure of uniquely satisfiable Boolean functions; thus, it could be difficult to find a reduction from \textsc{SAT} to \textsc{Unique~SAT}.
Despite this intuition, there is a \emph{randomized} polynomial-time reduction from \textsc{SAT} to \textsc{Unique~SAT}~\cite{ValiantV:tcs86}.
Defined over all Boolean functions of $n$ variables, the reduction maps functions that are not satisfiable to other functions that are not satisfiable, and with probability $\geq\frac{1}{8n}$, it maps satisfiable functions to uniquely satisfiable functions.
After applying this reduction to a given Boolean function, if a \textsc{Unique~SAT} oracle declares ``uniquely satisfiable,'' then we know for certain that the original Boolean function was satisfiable.
But the reduction will only map a satisfiable problem to a uniquely satisfiable problem with probability $\geq\frac{1}{8n}$, so what good is this reduction?
The answer lies in something called \emph{amplification}; since the success probability is, at worst, polynomially small in $n$ (i.e., $\geq\frac{1}{p(n)}$), we can repeat our oracle-based randomized algorithm a polynomial number of times $np(n)$ and achieve an error probability $\leq(1-\frac{1}{p(n)})^{np(n)}\sim e^{-n}$ which is exponentially small.

In this section, we give a randomized polynomial-time reduction from a problem in matroid theory.
Before stating the problem, we first briefly review some definitions.
To each bipartite graph with bipartition $(E,E')$, we associate a \emph{transversal matroid} $(E,\mathcal{I})$, where $\mathcal{I}$ is the collection of subsets of $E$ whose vertices form the ends of a matching in the bipartite graph; subsets in $\mathcal{I}$ are called $\emph{independent}$.
Next, just as spark is the size of the smallest linearly dependent set, the \emph{girth} of a matroid is the size of the smallest subset of $E$ that is not in $\mathcal{I}$.
In fact, this analogy goes deeper: 
A matroid is \emph{representable over a field} $\mathbb{F}$ if, for some $M$, there exists a mapping $\varphi\colon E\rightarrow\mathbb{F}^M$ such that $\varphi(A)$ is linearly independent if and only if $A\in\mathcal{I}$; as such, the girth of $(E,\mathcal{I})$ is the spark of $\varphi(E)$.
In our reduction, we make use of the fact that every transversal matroid is representable over $\mathbb{R}$~\cite{PiffW:jlms70}.
We are now ready to state the problem from which we will reduce \textsc{Full Spark}:

\begin{prob}
\label{prob.girth}
Given a bipartite graph, what is the girth of its transversal matroid?
\end{prob}

Before giving the reduction, we note that Problem~\ref{prob.girth} is $\NP$-hard.
This is demonstrated in McCormick's thesis~\cite{McCormick:phd83}, which credits the proof to Stockmeyer; since~\cite{McCormick:phd83} is difficult to access, we refer the reader to~\cite{AlexeevCM:arxiv11}.
We now turn to the main result of this section; note that our proof is specifically geared toward the case where the matrix in question has integer entries---this is stronger than manipulating real (complex) numbers exactly as well as with truncations and tolerances.

\begin{thm}
\label{thm.full spark hard}
\textsc{Full Spark} is hard for $\NP$ under randomized polynomial-time reductions.
\end{thm}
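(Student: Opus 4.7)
The plan is to establish $\NP$-hardness of \textsc{Full Spark} by exhibiting a randomized polynomial-time Turing reduction from Problem~\ref{prob.girth}, which is already known to be $\NP$-hard. The broad strategy is to convert the bipartite graph $G$ with bipartition $(E',E)$ into an integer matrix $\tilde\Phi$ that represents its transversal matroid over $\mathbb{R}$ with high probability, and then to probe the spark of $\tilde\Phi$ via a sequence of well-chosen oracle calls.

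First I would linearize the matroid. For each edge of $G$, introduce an independent indeterminate $x_{ij}$, and let $\Phi$ be the $|E'|\times|E|$ matrix whose $(i,j)$-entry is $x_{ij}$ when $i\sim j$ and $0$ otherwise. The Piff--Welsh construction cited in the text guarantees that a subset of columns of $\Phi$ is linearly independent over the rational function field if and only if the corresponding subset of $E$ is independent in the transversal matroid. Replacing each indeterminate by an independent uniform integer drawn from $\{1,\ldots,T\}$ with $T=2^{\mathrm{poly}(|E|)}$ yields an integer matrix $\tilde\Phi$ of polynomial bit-length. Applying the Schwartz--Zippel lemma to each $k\times k$ minor (a polynomial of degree at most $k$), followed by a union bound over all column subsets of every size, shows that $\tilde\Phi$ faithfully represents the matroid with probability exponentially close to $1$; in particular, $\mathrm{Spark}(\tilde\Phi)$ equals the matroid girth.

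Second I would reduce each query ``is $\mathrm{Spark}(\tilde\Phi)>k$?'' to a single call of the \textsc{Full Spark} oracle via a random projection. Sample a $k\times|E'|$ integer matrix $R$ with polynomial-bit-length entries, and query the oracle on the $k\times|E|$ matrix $R\tilde\Phi$. Any linear dependence among $k$ columns of $\tilde\Phi$ persists in $R\tilde\Phi$, so the ``no'' direction is automatic. Conversely, when $\mathrm{Spark}(\tilde\Phi)>k$, for each fixed $k$-subset $S$ the determinant $\det(R\tilde\Phi_S)$ is a nonzero degree-$k$ polynomial in the entries of $R$; Schwartz--Zippel plus a union bound over the $\binom{|E|}{k}$ subsets guarantees that $R\tilde\Phi$ is full spark with high probability. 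A linear or binary search over $k\in\{1,\ldots,|E'|\}$, each step using one fresh oracle call, then recovers the matroid girth and hence solves the original instance of Problem~\ref{prob.girth}.

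The main obstacle I anticipate is managing the compound failure probability: a polynomially long sequence of oracle queries is used, each preceded by a union bound over exponentially many column subsets, and all of these calls must succeed simultaneously. Choosing each sampling range to be $2^{\mathrm{poly}(|E|)}$ keeps every matrix entry of polynomial bit-length---so the reduction remains polynomial-time as measured in the input size---while driving each individual failure probability to be exponentially small; a final union bound over the polynomially many queries then makes the overall reduction correct with probability exponentially close to $1$, which suffices to conclude that \textsc{Full Spark} is hard for $\NP$ under randomized polynomial-time reductions.
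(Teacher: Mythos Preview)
Your proposal is correct and follows the same high-level architecture as the paper: reduce from the transversal-matroid girth problem, randomly realize the matroid as an integer matrix, and then determine its spark by a sequence of \textsc{Full Spark} oracle calls, each made on a random $k\times N$ projection of the matrix. The one substantive difference lies in how the projection is built and analyzed. The paper fixes a deterministic full spark Vandermonde frame $H$ with $P=M^{3}2^{N+1}$ columns, draws a random $K$-subset $\mathcal{K}$ of its columns, and uses $H_{\mathcal{K}}^{*}\Phi$ as the oracle input; correctness is argued by a dimension-counting lemma (at most $M-\dim S$ columns of $H$ can lie in $S^{\perp}$ because $H$ is full spark), which ties the argument back to the chapter's constructions but only yields success probability $\geq\tfrac14$ and hence requires amplification. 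Your version replaces this with a fully random integer projection $R$ and analyzes $\det(R\tilde\Phi_{S})$ directly via Schwartz--Zippel and Cauchy--Binet; this is more self-contained, handles both the representation step and the projection step with a single tool, and gives exponentially small failure probability without amplification. Either route proves the theorem; yours is arguably the cleaner reduction, while the paper's has the expository virtue of reusing the full spark machinery developed earlier in the chapter.
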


\begin{proof}
We will give a randomized polynomial-time reduction from Problem~\ref{prob.girth} to \textsc{Full Spark}.
As such, suppose we are given a bipartite graph $G$, in which every edge is between the disjoint sets $A$ and $B$.
Take $M:=|B|$ and $N:=|A|$.
Using this graph, we randomly draw an $M\times N$ matrix $\Phi$ using the following process: for each $i\in B$ and $j\in A$, pick the entry $\Phi_{ij}$ randomly from $\{1,\ldots,N2^{N+1}\}$ if $i\leftrightarrow j$ in $G$; otherwise set $\Phi_{ij}=0$.  
In Proposition 3.11 of~\cite{Marx:tcs09}, it is shown that the columns of $\Phi$ form a representation of the transversal matroid of $G$ with probability $\geq\frac{1}{2}$.
For the moment, we assume that $\Phi$ succeeds in representing the matroid.

Since the girth of the original matroid equals the spark of its representation, for each $K=1,\ldots,M$, we test whether $\mathrm{Spark}(\Phi)>K$.
To do this, take $H$ to be some $M\times P$ full spark frame.
We will determine an appropriate value for $P$ later, but for simplicity, we can take $H$ to be the Vandermonde matrix formed from bases $\{1,\ldots,P\}$; see Lemma~\ref{lem.vandermonde}.
We claim we can randomly select $K$ indices $\mathcal{K}\subseteq\{1,\ldots,P\}$ and test whether $H_\mathcal{K}^*\Phi$ is full spark to determine whether $\mathrm{Spark}(\Phi)>K$.
Moreover, after performing this test for each $K=1,\ldots,M$, the probability of incorrectly determining $\mathrm{Spark}(\Phi)$ is $\leq\frac{1}{2}$, provided $P$ is sufficiently large.

We want to test whether $H_\mathcal{K}^*\Phi$ is full spark and use the result as a proxy for whether $\mathrm{Spark}(\Phi)>K$.
For this to work, we need to have $\mathrm{Rank}(H_\mathcal{K}^*\Phi_{\mathcal{K}'})=K$ precisely when $\mathrm{Rank}(\Phi_{\mathcal{K}'})=K$ for every $\mathcal{K}'\subseteq\{1,\ldots,N\}$ of size $K$.
To this end, it suffices to have the nullspace $\mathcal{N}(H_\mathcal{K}^*)$ of $H_\mathcal{K}^*$ intersect trivially with the column space of $\Phi_{\mathcal{K}'}$ for every $\mathcal{K}'$.
To be clear, it is always the case that $\mathrm{Rank}(H_\mathcal{K}^*\Phi_{\mathcal{K}'})\leq\mathrm{Rank}(\Phi_{\mathcal{K}'})$, and so $\mathrm{Rank}(\Phi_{\mathcal{K}'})<K$ implies $\mathrm{Rank}(H_\mathcal{K}^*\Phi_{\mathcal{K}'})<K$.
If we further assume that $\mathcal{N}(H_\mathcal{K}^*)\cap\mathrm{Span}(\Phi_{\mathcal{K}'})=\{0\}$, then the converse also holds.
To see this, suppose $\mathrm{Rank}(H_\mathcal{K}^*\Phi_{\mathcal{K}'})<K$.
Then by the rank-nullity theorem, there is a nontrivial $x\in\mathcal{N}(H_\mathcal{K}^*\Phi_{\mathcal{K}'})$.
Since $H_\mathcal{K}^*\Phi_{\mathcal{K}'}x=0$, we must have $\Phi_{\mathcal{K}'}x\in\mathcal{N}(H_\mathcal{K}^*)$, which in turn implies $x\in\mathcal{N}(\Phi_{\mathcal{K}'})$ since $\mathcal{N}(H_\mathcal{K}^*)\cap\mathrm{Span}(\Phi_{\mathcal{K}'})=\{0\}$ by assumption.
Thus, $\mathrm{Rank}(\Phi_{\mathcal{K}'})<K$ by the rank-nullity theorem.

Now fix $\mathcal{K}'\subseteq\{1,\ldots,N\}$ of size $K$ such that $\mathrm{Rank}(\Phi_{\mathcal{K}'})=K$.
We will show that the vast majority of choices $\mathcal{K}\subseteq\{1,\ldots,P\}$ of size $K$ satisfy $\mathcal{N}(H_\mathcal{K}^*)\cap\mathrm{Span}(\Phi_{\mathcal{K}'})=\{0\}$.
To do this, we consider the columns $\{h_k\}_{k\in\mathcal{K}}$ of $H_\mathcal{K}$ one at a time, and we make use of the fact that $\mathcal{N}(H_\mathcal{K}^*)=\bigcap_{k\in\mathcal{K}}\mathcal{N}(h_k^*)$.
In particular, since $H$ is full spark, there are at most $M-K$ columns of $H$ in the orthogonal complement of $\mathrm{Span}(\Phi_{\mathcal{K}'})$, and so there are at least $P-(M-K)$ choices of $h_{k_1}$ for which $\mathcal{N}(h_{k_1}^*)$ does not contain $\mathrm{Span}(\Phi_{\mathcal{K}'})$, i.e.,
\begin{equation*}
\mathrm{dim}\Big(\mathcal{N}(h_{k_1}^*)\cap\mathrm{Span}(\Phi_{\mathcal{K}'})\Big)=K-1.
\end{equation*}
Similarly, after selecting the first $J$ $h_k$'s, we have $\mathrm{dim}(S)=K-J$, where
\begin{equation*}
S:=\bigcap_{j=1}^J\mathcal{N}(h_{k_j}^*)\cap\mathrm{Span}(\Phi_{\mathcal{K}'}).
\end{equation*}
Again, since $H$ is full spark, there are at most $M-(K-J)$ columns of $H$ in the orthogonal complement of $S$, and so the remaining $P-(M-(K-J))$ columns are candidates for $h_{k_{J+1}}$ that give
\begin{equation*}
\mathrm{dim}\bigg(\bigcap_{j=1}^{J+1}\mathcal{N}(h_{k_j}^*)\cap\mathrm{Span}(\Phi_{\mathcal{K}'})\bigg)
=\mathrm{dim}\Big(\mathcal{N}(h_{k_{J+1}}^*)\cap S\Big)
=K-(J+1).
\end{equation*}
Overall, if we randomly pick $\mathcal{K}\subseteq\{1,\ldots,P\}$ of size $K$, then
\begin{align*}
\mathrm{Pr}\Big(\mathcal{N}(H_\mathcal{K}^*)\cap\mathrm{Span}(\Phi_{\mathcal{K}'})=\{0\}\Big)
&\geq(1-\tfrac{M-K}{P})(1-\tfrac{M-(K-1)}{P})\cdots(1-\tfrac{M-1}{P})\\
&\geq(1-\tfrac{M}{P})^K\\
&\geq 1-\tfrac{MK}{P},
\end{align*}
where the final step is by Bernoulli's inequality.
Taking a union bound over all choices of $\mathcal{K}'\subseteq\{1,\ldots,N\}$ and all values of $K=1,\ldots,M$ then gives
\begin{align*}
\mathrm{Pr}\bigg(\mbox{fail to determine $\mathrm{Spark}(\Phi)$}\bigg)
&\leq\sum_{K=1}^M\binom{N}{K}\mathrm{Pr}\Big(\mathcal{N}(H_\mathcal{K}^*)\cap\mathrm{Span}(\Phi_{\mathcal{K}'})\neq\{0\}\Big)\\
&\leq\sum_{K=1}^M\binom{N}{K}\frac{MK}{P}\\
&\leq\frac{M^32^N}{P}.
\end{align*}
Thus, to make the probability of failure $\leq\frac{1}{2}$, it suffices to have $P=M^32^{N+1}$.

In summary, we succeed in representing the original matroid with probability $\geq\frac{1}{2}$, and then we succeed in determining the spark of its representation with probability $\geq\frac{1}{2}$.
The probability of overall success is therefore $\geq\frac{1}{4}$.
Since our success probability is, at worst, polynomially small, we can apply amplification to achieve an exponentially small error probability.
\end{proof}

Our use of random linear projections in the above reduction to \textsc{Full Spark} is similar in spirit to Valiant and Vazirani's use of random hash functions in their reduction to \textsc{Unique~SAT}~\cite{ValiantV:tcs86}.
Since their randomized reduction is the canonical example thereof, we find our reduction to be particularly natural.

To conclude this section, we clarify that Theorem~\ref{thm.full spark hard} is a statement about the amount of computation necessary in the \emph{worst case}.
Indeed, the hardness of \textsc{Full Spark} does not rule out the existence of smaller classes of matrices for which full spark is easily determined.
As an example, Theorem~\ref{thm.uniformly distributed} determines \textsc{Full Spark} in the special case where the matrix is formed by rows of a DFT of prime-power order.
This illustrates the utility of applying additional structure to efficiently solve the \textsc{Full Spark} problem, and indeed, such classes of matrices are rather special for this reason.

\section{Phaseless recovery with polarization}

In the previous sections, we constructed deterministic full spark frames and showed that checking for full spark in general is computationally hard.
In this section, we provide a new technique for phaseless recovery which makes use of full spark frames in the measurement design.
We are particularly interested in using the fewest measurements necessary for recovery, namely $N=\mathrm{O}(M)$, where $M$ is the dimension of the signal~\cite{BalanCE:acha06}.

Take a finite set $V$, and suppose we take phaseless measurements of $x\in\mathbb{C}^M$ with a frame $\Phi_V:=\{\varphi_i\}_{i\in V}\subseteq\mathbb{C}^M$ with the task of recovering $x$ up to a global phase factor.
For notational convenience, we take $\sim$ to be the equivalence relation of being identical up to a global phase factor, and we say $y$ is a member of the equivalence class $[x]\in\mathbb{C}^M/\!\!\sim$ if $y\sim x$.
Having $|\langle x,\varphi_i\rangle|$ for every $i\in V$, we claim it suffices to determine the relative phase between all pairs of frame coefficients. 
If we had this information, we could arbitrarily assign some nonzero frame coefficient $c_i=|\langle x,\varphi_i\rangle|$ to have positive phase.
If $\langle x,\varphi_j\rangle$ is also nonzero, then it has well-defined relative phase
\begin{equation*}
\omega_{ij}:=\big(\tfrac{\langle x,\varphi_i\rangle}{|\langle x,\varphi_i\rangle|}\big)^{-1}\tfrac{\langle x,\varphi_j\rangle}{|\langle x,\varphi_j\rangle|},
\end{equation*}
which determines the frame coefficent by multiplication: $c_j=\omega_{ij}|\langle x,\varphi_j\rangle|$.
Otherwise when $\langle x,\varphi_j\rangle=0$, we naturally take $c_j=0$, and for notational convenience, we arbitrarily take $\omega_{ij}=1$.
From here, $[x]\in\mathbb{C}^M/\!\!\sim$ can be identified by applying the canonical dual frame $\{\tilde{\varphi}_j\}_{j\in V}$ of $\Phi_V$:
\begin{equation*}
\sum_{j\in V}c_j\tilde{\varphi}_j
=\sum_{j\in V}\omega_{ij}|\langle x,\varphi_j\rangle|\tilde{\varphi}_j
=\big(\tfrac{\langle x,\varphi_i\rangle}{|\langle x,\varphi_i\rangle|}\big)^{-1}\sum_{j\in V}\langle x,\varphi_j\rangle\tilde{\varphi}_j
=\big(\tfrac{\langle x,\varphi_i\rangle}{|\langle x,\varphi_i\rangle|}\big)^{-1}x
\in[x].
\end{equation*}
To find the relative phase between frame coefficients, we turn to the polarization identity:
\begin{equation*}
\overline{\langle x,\varphi_i\rangle}\langle x,\varphi_j\rangle
=\frac{1}{4}\sum_{k=0}^3\mathrm{i}^{k}\big|\langle x,\varphi_i\rangle+\mathrm{i}^{-k}\langle x,\varphi_j\rangle\big|^2
=\frac{1}{4}\sum_{k=0}^3\mathrm{i}^{k}\big|\langle x,\varphi_i+\mathrm{i}^{k}\varphi_j\rangle\big|^2.
\end{equation*}
Thus, if in addition to $\Phi_V$, we measure with $\{\varphi_i+\mathrm{i}^{k}\varphi_j\}_{k=0}^3$, we can use the above calculation to determine $\overline{\langle x,\varphi_i\rangle}\langle x,\varphi_j\rangle$ and then normalize to get the relative phase $\omega_{ij}$, provided both $\langle x,\varphi_i\rangle$ and $\langle x,\varphi_j\rangle$ are nonzero.
To summarize, if we measure with $\Phi_V$ and $\{\varphi_i+\mathrm{i}^{k}\varphi_j\}_{k=0}^3$ for every pair $i,j\in V$, then we can recover $[x]$.
However, such a method uses $|V|+4\binom{|V|}{2}$ measurements, and since $\Phi_V$ is a frame, we necessarily have $|V|\geq M$ and thus a total of $\Omega(M^2)$ measurements.

In pursuit of $\mathrm{O}(M)$ measurements, take some simple graph $G=(V,E)$, and only take measurements with $\Phi_V$ and $\Phi_E:=\bigcup_{(i,j)\in E}\{\varphi_i+\mathrm{i}^{k}\varphi_j\}_{k=0}^3$.
To recover $[x]$, we again arbitrarily assign some nonzero vertex measurement to have positive phase, and then we propagate relative phase information along the edges by multiplication to determine the phase of the other vertex measurements relative to the original vertex measurement.
However, if $x$ is orthogonal to a given vertex vector, then that measurement is zero, and so relative phase information cannot propagate through the corresponding vertex; indeed, such orthogonality has the effect of removing the vertex from the graph, and for some graphs, this will prevent recovery.
For example, if $G$ is a star, then $x$ could be orthogonal to the vector corresponding to the internal vertex, whose removal would render the remaining graph edgeless.
That said, we should select $\Phi_V$ and $G$ so as to minimize the impact of orthogonality with vertex vectors.

First, we can take $\Phi_V$ to be full spark so that every subcollection of $M$ frame elements spans.
This implies that $x$ is orthogonal to at most $M-1$ members of $\Phi_V$, thereby limiting the extent of $x$'s damage to our graph.
Additionally, $\Phi_V$ being full spark frees us from requiring the graph to be connected after the removal of vertices; indeed, any remaining component of size $M$ or more will correspond to a subframe of $\Phi_V$ that necessarily has a dual frame to reconstruct with.
It remains to find a graph of $\mathrm{O}(M)$ vertices and edges that maintains a size-$M$ component after the removal of any $M-1$ vertices.

To this end, we consider a well-studied family of sparse graphs known as \emph{expander graphs}.
We choose these graphs for their notably strong connectivity properties.
There is a combinatorial definition of expander graphs, but we will focus on the spectral definition.
Given a $d$-regular graph $G$ of $n$ vertices, consider the eigenvalues of its adjacency matrix: $\lambda_1\geq\lambda_2\geq\cdots\geq\lambda_n$.
We say $G$ has \emph{expansion} $\lambda(G):=\frac{1}{d}\max\{|\lambda_2|,|\lambda_n|\}$.
Furthermore, a family of $d$-regular graphs $\{G_i\}_{i=1}^\infty$ is a \emph{spectral expander family} if there exists $c<1$ such that every $G_i$ has expansion $\lambda(G_i)\leq c$.
Since $d$ is constant over an expander family, we see that expanders with many vertices are particularly sparse.
There are many results which describe the connectivity of expanders, but the following is particularly relevant to our application:

\begin{lem}[\cite{HarshaB:05}]
Consider a $d$-regular graph $G$ of $n$ vertices with spectral expansion $\leq\lambda$.
For all $\varepsilon\leq\frac{1-\lambda}{6}$, removing any $\varepsilon dn$ edges from $G$ results in a connected component of size $\geq(1-\frac{2\varepsilon}{1-\lambda})n$.
\end{lem}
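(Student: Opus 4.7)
The plan is to argue by contradiction. Set $\alpha := \frac{2\varepsilon}{1-\lambda}$, and suppose that after removing $\varepsilon dn$ edges every connected component of the resulting graph has size strictly less than $(1-\alpha)n$. The strategy is to produce a cut $(T, V\setminus T)$ in $G$ that respects the component structure of the edge-deleted graph (so that the number of $G$-edges across the cut is at most $\varepsilon dn$) yet is balanced enough for the spectral isoperimetric inequality to force that same cut size strictly above $\varepsilon dn$.

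The first step is to find a vertex subset $S$ that is a union of components of the edge-deleted graph and that is nontrivially balanced, in the sense that $\min(|S|, |V\setminus S|) > \alpha n$. There are three cases. If some component $C$ has $|C| \geq n/2$, take $S := V\setminus C$: the hypothesis $|C| < (1-\alpha)n$ gives $|S| > \alpha n$, while $|V\setminus S| = |C| \geq n/2 > \alpha n$. If some component $C$ has $\alpha n < |C| < n/2$, take $S := C$. The residual case, in which every component has size at most $\alpha n$, is handled by greedy accumulation: add components one at a time until $|S|$ first exceeds $\alpha n$, producing $|S| \in (\alpha n, 2\alpha n]$. The precise hypothesis $\varepsilon \leq (1-\lambda)/6$, which forces $\alpha \leq 1/3$, is exactly what ensures that both $|S|$ and $|V\setminus S|$ exceed $\alpha n$ in this subcase; this is where the constant $6$ earns its place.

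With such a balanced, component-respecting cut in hand, let $T$ be the smaller of $S$ and $V\setminus S$, so that $\alpha n < |T| \leq n/2$. The standard spectral isoperimetric bound, a direct consequence of the expander-mixing lemma for a $d$-regular graph with spectral expansion at most $\lambda$, reads
\[
e_G(T, V\setminus T) \;\geq\; \frac{d(1-\lambda)\,|T|\,|V\setminus T|}{n} \;\geq\; \frac{d(1-\lambda)}{2}\,|T|.
\]
Because $T$ is a union of components of the edge-deleted graph, every $G$-edge across $(T, V\setminus T)$ must be one of the $\varepsilon dn$ removed edges, so $e_G(T, V\setminus T) \leq \varepsilon dn$. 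Chaining the two inequalities yields $|T| \leq \frac{2\varepsilon n}{1-\lambda} = \alpha n$, contradicting $|T| > \alpha n$ from the balancing step.

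The main obstacle I anticipate is the balancing step in its residual subcase, where every component is already small: one must verify that greedy aggregation lands in the target window $(\alpha n, n/2]$ on at least one side of the partition, and this is arithmetically tight, with the constant $6$ in the hypothesis precisely calibrated so that $\alpha \leq 1/3$. The spectral inequality and the final comparison are routine once a suitable cut has been produced.
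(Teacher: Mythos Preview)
The paper does not prove this lemma; it is quoted from the lecture notes~\cite{HarshaB:05} and used as a black box in the phaseless-recovery section. So there is no in-paper argument to compare against.

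That said, your argument is the standard one and is correct. The key spectral inequality
\[
e_G(T,V\setminus T)\;\geq\;\frac{d(1-\lambda)\,|T|\,|V\setminus T|}{n}
\]
is exactly the Laplacian-gap bound: writing $x=\mathbf{1}_T=\tfrac{|T|}{n}\mathbf{1}+y$ with $y\perp\mathbf{1}$, one has $e_G(T,V\setminus T)=x^\top(dI-A)x=y^\top(dI-A)y\geq(d-\lambda d)\|y\|^2$ and $\|y\|^2=|T||V\setminus T|/n$. Your case analysis for producing a component-respecting cut with $|T|>\alpha n$ is also right, and your observation that $\alpha\le 1/3$ is precisely what makes the greedy subcase land is the correct reading of the constant~$6$.

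One minor point worth tightening: in the boundary situation $\alpha=1/3$ with $|S|=2\alpha n$ exactly, your simplified chain using $|V\setminus T|\ge n/2$ only yields $|T|\le\alpha n$, which does not contradict $|T|\ge\alpha n$. If you instead keep the sharper factor $|V\setminus T|/n=(1-\alpha)$ rather than $1/2$, the comparison becomes $d(1-\lambda)\alpha(1-\alpha)n\le \varepsilon dn$, i.e., $\alpha(1-\alpha)\le\alpha/2$, forcing $\alpha\ge 1/2$, which is still a contradiction. So the edge case is harmless once you avoid the premature relaxation $|V\setminus T|\ge n/2$.
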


For our application, removing $\varepsilon n$ vertices from a $d$-regular graph necessarily removes $\leq\varepsilon dn$ edges, and so this lemma directly applies.
Also,
\begin{equation*}
\varepsilon
\leq\frac{1-\lambda}{6}
<\frac{1}{6}
<\frac{2}{3}
\leq 1-\frac{2\varepsilon}{1-\lambda},
\end{equation*}
where the last inequality is a rearrangement of $\varepsilon\leq\frac{1-\lambda}{6}$.
Since we want to guarantee that the removal of any $M-1$ vertices maintains a size-$M$ component, we must therefore take $M\leq\varepsilon n+1$.
Overall, we use the following criteria to pick our expander graph:
Given the signal dimension $M$, use a $d$-regular graph $G=(V,E)$ of $n$ vertices with spectral expansion $\lambda$ such that $M\leq(\frac{1-\lambda}{6})n+1$.
Then by the previous discussion, the total number of measurements is $N=|V|+4|E|=(2d+1)n$.
We wish to find choices of graphs which yield only $N=\mathrm{O}(M)$ measurements.

To minimize the redundancy $\frac{N}{M}$, we see that for a fixed degree $d$, we would like minimal spectral expansion $\lambda$.
Spectral graph families known as \emph{Ramanujan graphs} are asymptotically optimal in this sense; taking $\mathcal{G}_n^d$ to be the set of connected $d$-regular graphs with $\geq n$ vertices, Alon and Boppana (see~\cite{Alon:86}) showed that for any fixed $d$,
\begin{equation*} \lim_{n\rightarrow\infty}\inf_{G\in\mathcal{G}_n^d}\lambda(G)\geq\frac{2\sqrt{d-1}}{d},
\end{equation*}
while Ramanujan graphs are defined to have spectral expansion $\leq\frac{2\sqrt{d-1}}{d}$.
To date, Ramanujan graphs have only been constructed for certain values of $d$.
One important construction was given by Lubotzky et al.~\cite{LubotzkyPS:88}, which produces a Ramanujan family whenever $d-1\equiv 1\bmod4$ is prime.
Among these graphs, we get the smallest redundancy $\frac{N}{M}$ when $d=6$ and $M=\lfloor(\frac{1-\lambda}{6})n+1\rfloor$:
\begin{equation*}
\frac{N}{M}
\leq\frac{(2d+1)n}{(1-\lambda)n/6}
\leq\frac{6d(2d+1)}{d-2\sqrt{d-1}}
=\frac{234}{3-\sqrt{5}}
\approx 306.31.
\end{equation*}
Thus, in such cases, we may perform phaseless recovery with only $N\leq 307M$ measurements.
However, the number of vertices in each Ramanujan graph from~\cite{LubotzkyPS:88} is of the form $q(q^2-1)$ or $\frac{q(q^2-1)}{2}$, where $q\equiv1\bmod4$ is prime, and so any bound on redundancy $\frac{N}{M}$ using graphs from~\cite{LubotzkyPS:88} will only be valid for particular values of $M$.

In order to get $N=\mathrm{O}(M)$ in general, we use the fact that random graphs are nearly Ramanujan with high probability.
In particular, for every $\varepsilon>0$ and even $d$, a random $d$-regular graph has spectral expansion $\lambda\leq\frac{1}{d}(2\sqrt{d-1}+\varepsilon)$ with high probability as $n\rightarrow\infty$~\cite{Friedman:03}.
Thus, picking $\varepsilon$ and $d$ to satisfy $\frac{1}{d}(2\sqrt{d-1}+\varepsilon)<1$, we may again take $M=\lfloor(\frac{1-\lambda}{6})n+1\rfloor$ to get
\begin{equation*}
\frac{N}{M}
\leq\frac{6(2d+1)}{1-\lambda}
\leq\frac{6d(2d+1)}{d-(2\sqrt{d-1}+\varepsilon)}
\end{equation*}
with high probability.
Note that in this case, $n$ can be any sufficiently large integer, and so the above bound is valid for all sufficiently large $M$, i.e., our procedure can perform phaseless recovery with $N=\mathrm{O}(M)$ measurements in general.

Note that this section has only considered the case in which the phaseless measurements were not corrupted by noise.
For the noisy case, Cand\`{e}s et al.~\cite{CandesSV:arxiv11} used semidefinite programming to stably reconstruct from $N=\mathrm{O}(M \log M)$ measurements.
Our technique also appears to be stable, and we expect positive results in this vein using synchronization-type analysis~\cite{Singer:11}; we leave this for future work.

\chapter{Deterministic matrices with the restricted isometry property}

In Chapter~1, we observed how to use the Gershgorin circle theorem to demonstrate that certain $M\times N$ matrices have the restricted isometry property (RIP) for sparsity levels $K=\mathrm{O}(\sqrt{M})$.
In this chapter, we consider better demonstration techniques which promise to break this ``square-root bottleneck''~\cite{BandeiraFMW:12}.
To date, the only deterministic construction that manages to go beyond the bottleneck is given by Bourgain et al.~\cite{BourgainDFKK:11}; in the following section, we discuss what they call \emph{flat RIP}, which is the technique they use to demonstrate RIP.
We will see that their technique can be used to demonstrate RIP for sparsity levels much larger than $\sqrt{M}$, meaning one could very well demonstrate random-like performance given the proper construction.
Later, we introduce an alternate technique, which can also demonstrate RIP for large sparsity levels.

After considering the efficacy of these techniques to demonstrate RIP, it remains to find a deterministic construction that is amenable to analysis.
To this end, we discuss various properties of certain equiangular tight frames (ETFs).
Specifically, real ETFs can be characterized in terms of their Gram matrices using strongly regular graphs~\cite{Waldron:09}.
By applying our demonstration techniques to real ETFs, we derive equivalent combinatorial statements in graph theory.
By focussing on the ETFs which correspond to Paley graphs of prime order, we are able to make important statements about their clique numbers and provide some intuition for an open problem in number theory.
We conclude by conjecturing that the Paley ETFs are RIP in a manner similar to random matrices.

\section{Flat restricted orthogonality}

In~\cite{BourgainDFKK:11}, Bourgain et al.~provided a deterministic construction of $M\times N$ RIP matrices that support sparsity levels $K$ on the order of $M^{1/2+\varepsilon}$ for some small value of $\varepsilon$.
To date, this is the only known deterministic RIP construction that breaks the square-root bottleneck.
In this section, we analyze their technique for demonstrating RIP, but first, we provide some historical context.
We begin with a definition:

\begin{defn}
\label{defn.ro}
The matrix $\Phi$ has \emph{$(K,\theta)$-restricted orthogonality (RO)} if 
\begin{equation*}
|\langle \Phi x, \Phi y\rangle|
\leq\theta\|x\|\|y\|
\end{equation*}
for every pair of $K$-sparse vectors $x,y$ with disjoint support.
The smallest $\theta$ for which $\Phi$ has $(K,\theta)$-RO is the \emph{restricted orthogonality constant (ROC)} $\theta_K$.
\end{defn}

In the past, restricted orthogonality was studied to produce reconstruction performance guarantees for both $\ell_1$-minimization and the Dantzig selector~\cite{CandesT:05,CandesT:07}.
Intuitively, restricted orthogonality is important to compressed sensing because any stable inversion process for \eqref{eq.cs eqn} would require $\Phi$ to map vectors of disjoint support to particularly dissimilar measurements.
For the present chapter, we are interested in upper bounds on RICs; in this spirit, the following result illustrates some sort of equivalence between RICs and ROCs:

\begin{lem}[Lemma 1.2 in \cite{CandesT:05}]
$\theta_K
\leq \delta_{2K}
\leq \theta_K+\delta_K$.
\end{lem}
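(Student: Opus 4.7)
The plan is to prove the two inequalities separately, with the polarization identity driving the first bound and a support-splitting argument driving the second.

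For the first inequality $\theta_K\le\delta_{2K}$, I would start from $K$-sparse vectors $x,y$ with disjoint supports, observing that $x+y$ and $x-y$ are then both $2K$-sparse and satisfy $\|x+y\|^2=\|x-y\|^2=\|x\|^2+\|y\|^2$ because the disjoint-support condition forces $\langle x,y\rangle=0$. Writing the polarization identity $4\langle\Phi x,\Phi y\rangle=\|\Phi(x+y)\|^2-\|\Phi(x-y)\|^2$ and applying Definition~\ref{defn.rip} to each term with constant $\delta_{2K}$ immediately yields $|\langle\Phi x,\Phi y\rangle|\le\tfrac{\delta_{2K}}{2}(\|x\|^2+\|y\|^2)$. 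To convert the right-hand side into the geometric-mean form required by Definition~\ref{defn.ro}, I would use the bilinearity of both sides in $(x,y)$ to reduce to the normalized case $\|x\|=\|y\|=1$; alternatively, rescale $x\mapsto x/\|x\|$ and $y\mapsto y/\|y\|$ before applying the bound. Either route gives $|\langle\Phi x,\Phi y\rangle|\le\delta_{2K}\|x\|\|y\|$, so $\theta_K\le\delta_{2K}$.

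For the second inequality $\delta_{2K}\le\theta_K+\delta_K$, I would pick an arbitrary nonzero $2K$-sparse vector $z$ and split its support into two disjoint pieces of size at most $K$, writing $z=x+y$ for $K$-sparse $x,y$ with disjoint support. Expanding $\|\Phi z\|^2=\|\Phi x\|^2+2\langle\Phi x,\Phi y\rangle+\|\Phi y\|^2$, I would bound the first and third summands by $(1\pm\delta_K)\|x\|^2$ and $(1\pm\delta_K)\|y\|^2$ using Definition~\ref{defn.rip}, and the middle term by $2\theta_K\|x\|\|y\|$ using Definition~\ref{defn.ro}. Since the supports are disjoint, $\|x\|^2+\|y\|^2=\|z\|^2$, and by AM--GM $2\|x\|\|y\|\le\|x\|^2+\|y\|^2=\|z\|^2$. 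Combining gives $(1-\delta_K-\theta_K)\|z\|^2\le\|\Phi z\|^2\le(1+\delta_K+\theta_K)\|z\|^2$, so $\Phi$ is $(2K,\delta_K+\theta_K)$-RIP, which by Lemma~\ref{lem:delta_min_bnd} forces $\delta_{2K}\le\delta_K+\theta_K$.

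Neither step presents a real obstacle: the only subtlety is making sure the inner-product bound is in the correct homogeneous form $\theta_K\|x\|\|y\|$ rather than $\tfrac{\theta_K}{2}(\|x\|^2+\|y\|^2)$, which is handled by either the bilinearity-based normalization in the first part or the AM--GM step in the second. Throughout, the disjoint-support hypothesis is doing two jobs simultaneously, ensuring that $\langle x,y\rangle=0$ (so the parallelogram identity simplifies) and that $\|x\|^2+\|y\|^2=\|z\|^2$ (so the RIP bounds on the two pieces add cleanly).
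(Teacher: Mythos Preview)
The paper does not supply its own proof of this lemma; it simply cites it as Lemma~1.2 of Cand\`es--Tao~\cite{CandesT:05} and moves on. Your argument is the standard one found in that reference, and both halves are correct. One small caveat: since the ambient space may be complex, the expansion should read $\|\Phi z\|^2=\|\Phi x\|^2+2\,\mathrm{Re}\,\langle\Phi x,\Phi y\rangle+\|\Phi y\|^2$, and in the polarization step you should first multiply $y$ by a unimodular scalar so that $\langle\Phi x,\Phi y\rangle$ is real and nonnegative (this preserves sparsity, support, and norm). With that cosmetic adjustment the proof is complete.
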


To be fair, the above upper bound on $\delta_{2K}$ does not immediately help in estimating $\delta_{2K}$, as it requires one to estimate $\delta_K$.
Certainly, we may iteratively apply this bound to get
\begin{equation}
\label{eq.current bound}
\delta_{2K}
\leq\theta_K+\theta_{\lceil K/2\rceil}+\theta_{\lceil K/4\rceil}+\cdots+\theta_1+\delta_1
\leq(1+\lceil\log_2 K\rceil)\theta_K+\delta_1.
\end{equation}
Note that $\delta_1$ is particularly easy to calculate:
\begin{equation*}
\delta_1=\max_{n\in\{1,\ldots,N\}}\Big|\|\varphi_n\|^2-1\Big|,
\end{equation*}
which is zero when the columns of $\Phi$ have unit norm.
In pursuit of a better upper bound on~$\delta_{2K}$, we use techniques from \cite{BourgainDFKK:11} to remove the log factor from \eqref{eq.current bound}: 

\begin{lem}
\label{lem.ro to ri}
$\delta_{2K}
\leq 2\theta_K+\delta_1$.
\end{lem}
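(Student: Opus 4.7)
The plan is to invoke Lemma~\ref{lem:delta_min_bnd}. Because $\Phi_\mathcal{K}^*\Phi_\mathcal{K}-I_{2K}$ is Hermitian, its spectral norm equals $\sup|\|\Phi x\|^2-\|x\|^2|$ over unit vectors supported on $\mathcal{K}$, so it suffices to show
\[
\bigl|\|\Phi x\|^2-\|x\|^2\bigr|\;\leq\; 2\theta_K+\delta_1
\]
for every unit vector $x$ with $\mathrm{supp}(x)\subseteq\mathcal{K}$, $|\mathcal{K}|=2K$. Expanding the square and separating the diagonal from the off-diagonal contributions gives
\[
\|\Phi x\|^2-\|x\|^2 \;=\; \sum_{i\in\mathcal{K}}\bigl(\|\varphi_i\|^2-1\bigr)x_i^2 \;+\; 2T,
\qquad T\;:=\sum_{\{i,j\}\subseteq\mathcal{K}}\langle\varphi_i,\varphi_j\rangle x_ix_j.
\]
The diagonal sum is at most $\delta_1\|x\|^2=\delta_1$, so everything reduces to showing $|T|\leq\theta_K$.

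Fixing a single equipartition $\mathcal{K}=A\sqcup B$ with $|A|=|B|=K$ and bounding the cross term via $(K,\theta_K)$-RO leaves the intra-part sums uncontrolled, and iterating this idea is precisely what produces the logarithmic loss in \eqref{eq.current bound}. To avoid this, I would average the cross-term inequality over all $\binom{2K}{K}$ ordered equipartitions $(A,B)$ of $\mathcal{K}$. For each such partition, $x|_A$ and $x|_B$ are $K$-sparse with disjoint supports, so $(K,\theta_K)$-RO together with AM-GM yields
\[
\bigl|\langle\Phi(x|_A),\Phi(x|_B)\rangle\bigr| \;\leq\; \theta_K\,\|x|_A\|\,\|x|_B\| \;\leq\; \tfrac{1}{2}\theta_K.
\]
Meanwhile, double counting ordered pairs---each $(i,j)$ with $i\neq j$ in $\mathcal{K}$ lies in $A\times B$ for exactly $\binom{2K-2}{K-1}$ of the partitions---collapses the intra-part complications into the closed-form identity
\[
\sum_{(A,B)}\langle\Phi(x|_A),\Phi(x|_B)\rangle \;=\; \binom{2K-2}{K-1}\sum_{\substack{i,j\in\mathcal{K}\\ i\neq j}}\langle\varphi_i,\varphi_j\rangle x_ix_j \;=\; \binom{2K-2}{K-1}\cdot 2T.
\]

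Combining the per-partition bound with this identity and using the elementary identity $\binom{2K}{K}\big/\binom{2K-2}{K-1} = 2(2K-1)/K$ produces
\[
2|T| \;\leq\; \frac{\binom{2K}{K}}{\binom{2K-2}{K-1}}\cdot\tfrac{1}{2}\theta_K \;=\; \frac{2K-1}{K}\theta_K \;\leq\; 2\theta_K,
\]
and feeding this back into the earlier split completes the proof. The main obstacle I anticipate is simply spotting the averaging step: a single partition controls only the $A$-$B$ cross term and leaves an intra-part residue whose recursive bookkeeping is exactly what costs the logarithm, whereas averaging over all partitions makes that residue disappear and lets the bound fall out from one invocation of $(K,\theta_K)$-RO.
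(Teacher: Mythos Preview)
Your proof is correct and follows essentially the same route as the paper: split off the diagonal contribution (bounded by $\delta_1$), then handle the off-diagonal sum by averaging the restricted-orthogonality bound over all $\binom{2K}{K}$ size-$K$ subsets $\mathcal{I}\subseteq\mathcal{K}$ (your ordered equipartitions $(A,B)$), using the same $\binom{2K-2}{K-1}$ double count and the same AM--GM step $\|x|_A\|\,\|x|_B\|\leq\tfrac12$. The only cosmetic difference is that the paper writes the off-diagonal as the full double sum $\sum_{i\neq j}$ rather than $2T$ over unordered pairs; this also makes the argument go through verbatim in the complex case without worrying about conjugates.
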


\begin{proof}
Given a matrix $\Phi=[\varphi_1\cdots\varphi_N]$, we want to upper-bound the smallest $\delta$ for which $(1-\delta)\|x\|^2\leq\|\Phi x\|^2\leq(1+\delta)\|x\|^2$, or equivalently:
\begin{equation}
\label{eq.smallest delta}
\delta\geq\Big|\|\Phi\tfrac{x}{\|x\|}\|^2-1\Big|
\end{equation}
for every nonzero $2K$-sparse vector $x$.
We observe from \eqref{eq.smallest delta} that we may take $x$ to have unit norm without loss of generality.
Letting $\mathcal{K}$ denote a size-$2K$ set that contains the support of $x$, and letting $\{x_k\}_{k\in\mathcal{K}}$ denote the corresponding entries of $x$, the triangle inequality gives
\begin{align}
\nonumber
\Big|\|\Phi x\|^2-1\Big|
&=\bigg|\bigg\langle\sum_{i\in\mathcal{K}}x_i\varphi_i,\sum_{j\in\mathcal{K}}x_j\varphi_j\bigg\rangle-1\bigg|\\
\nonumber
&=\bigg|\sum_{i\in\mathcal{K}}\sum_{\substack{j\in\mathcal{K}\\j\neq i}}\langle x_i\varphi_i,x_j\varphi_j\rangle+\sum_{i\in\mathcal{K}}\|x_i\varphi_i\|^2-1\bigg|\\
\label{eq.smallest delta 2}
&\leq\bigg|\sum_{i\in\mathcal{K}}\sum_{\substack{j\in\mathcal{K}\\j\neq i}}\langle x_i\varphi_i,x_j\varphi_j\rangle\bigg|+\bigg|\sum_{i\in\mathcal{K}}\|x_i\varphi_i\|^2-1\bigg|.
\end{align}
Since $\sum_{i\in\mathcal{K}}|x_i|^2=1$, the second term of \eqref{eq.smallest delta 2} satisfies
\begin{equation}
\label{eq.smallest delta 3}
\bigg|\sum_{i\in\mathcal{K}}\|x_i\varphi_i\|^2-1\bigg|
\leq\sum_{i\in\mathcal{K}}|x_i|^2\Big|\|\varphi_i\|^2-1\Big|
\leq\sum_{i\in\mathcal{K}}|x_i|^2\delta_1
=\delta_1,
\end{equation}
and so it remains to bound the first term of \eqref{eq.smallest delta 2}.
To this end, we note that for each $i,j\in\mathcal{K}$ with $j\neq i$, the term $\langle x_i\varphi_i,x_j\varphi_j\rangle$ appears in
\begin{equation*}
\sum_{\substack{\mathcal{I}\subseteq\mathcal{K}\\|\mathcal{I}|=K}}\sum_{i\in\mathcal{I}}\sum_{j\in\mathcal{K}\setminus\mathcal{I}}\langle x_i\varphi_i,x_j\varphi_j\rangle
\end{equation*}
as many times as there are size-$K$ subsets of $\mathcal{K}$ which contain $i$ but not $j$, i.e., $\binom{2K-2}{K-1}$ times.
Thus, we use the triangle inequality and the definition of restricted orthogonality to get
\begin{align*}
\bigg|\sum_{i\in\mathcal{K}}\sum_{\substack{j\in\mathcal{K}\\j\neq i}}\langle x_i\varphi_i,x_j\varphi_j\rangle\bigg|
&=\bigg|\frac{1}{\binom{2K-2}{K-1}}\sum_{\substack{\mathcal{I}\subseteq\mathcal{K}\\|\mathcal{I}|=K}}\sum_{i\in\mathcal{I}}\sum_{j\in\mathcal{K}\setminus\mathcal{I}}\langle x_i\varphi_i,x_j\varphi_j\rangle\bigg|\\
&\leq\frac{1}{\binom{2K-2}{K-1}}\sum_{\substack{\mathcal{I}\subseteq\mathcal{K}\\|\mathcal{I}|=K}}\bigg|\bigg\langle \sum_{i\in\mathcal{I}}x_i\varphi_i,\sum_{j\in\mathcal{K}\setminus\mathcal{I}}x_j\varphi_j\bigg\rangle\bigg|\\
&\leq\frac{1}{\binom{2K-2}{K-1}}\sum_{\substack{\mathcal{I}\subseteq\mathcal{K}\\|\mathcal{I}|=K}}
\theta_K\bigg(\sum_{i\in\mathcal{I}}|x_i|^2\bigg)^{1/2}\bigg(\sum_{j\in\mathcal{K}\setminus\mathcal{I}}|x_j|^2\bigg)^{1/2}.
\end{align*}
At this point, $x$ having unit norm implies $(\sum_{i\in\mathcal{I}}|x_i|^2)^{1/2}(\sum_{j\in\mathcal{K}\setminus\mathcal{I}}|x_j|^2)^{1/2}\leq\frac{1}{2}$, and so
\begin{equation*}
\bigg|\sum_{i\in\mathcal{K}}\sum_{\substack{j\in\mathcal{K}\\j\neq i}}\langle x_i\varphi_i,x_j\varphi_j\rangle\bigg|
\leq\frac{1}{\binom{2K-2}{K-1}}\sum_{\substack{\mathcal{I}\subseteq\mathcal{K}\\|\mathcal{I}|=K}}
\frac{\theta_K}{2}
=\frac{\binom{2K}{K}}{\binom{2K-2}{K-1}}\frac{\theta_K}{2}
=\bigg(4-\frac{2}{K}\bigg)\frac{\theta_K}{2}.
\end{equation*}
Applying both this and \eqref{eq.smallest delta 3} to \eqref{eq.smallest delta 2} gives the result.
\end{proof}

Having discussed the relationship between restricted isometry and restricted orthogonality, we are now ready to introduce the property used in \cite{BourgainDFKK:11} to demonstrate RIP:

\begin{defn}
The matrix $\Phi=[\varphi_1\cdots\varphi_N]$ has \emph{$(K,\hat\theta)$-flat restricted orthogonality} if 
\begin{equation*}
\bigg|\bigg\langle \sum_{i\in\mathcal{I}}\varphi_i,\sum_{j\in\mathcal{J}}\varphi_j \bigg\rangle\bigg|
\leq\hat\theta(|\mathcal{I}||\mathcal{J}|)^{1/2}
\end{equation*}
for every disjoint pair of subsets $\mathcal{I},\mathcal{J}\subseteq\{1,\ldots,N\}$ with $|\mathcal{I}|,|\mathcal{J}|\leq K$.
\end{defn}

Note that $\Phi$ has $(K,\theta_K)$-flat restricted orthogonality (FRO) by taking $x$ and $y$ in Definition~\ref{defn.ro} to be the characteristic functions $\chi_\mathcal{I}$ and $\chi_\mathcal{J}$, respectively.
Also to be clear, \emph{flat restricted orthogonality} is called \emph{flat RIP} in~\cite{BourgainDFKK:11}; we feel the name change is appropriate considering the preceeding literature.
Moreover, the definition of flat RIP in \cite{BourgainDFKK:11} required $\Phi$ to have unit-norm columns, whereas we strengthen the corresponding results so as to make no such requirement.
Interestingly, FRO bears some resemblence to the cut-norm of the Gram matrix $\Phi^*\Phi$, defined as the maximum value of $|\sum_{i\in\mathcal{I}}\sum_{j\in\mathcal{J}}\langle\varphi_i,\varphi_j\rangle|$ over \emph{all} subsets $\mathcal{I},\mathcal{J}\subseteq\{1,\ldots,N\}$; the cut-norm has received some attention recently for the hardness of its approximation~\cite{AlonN:06}.
The following theorem illustrates the utility of flat restricted orthogonality as an estimate of the RIC:

\begin{thm}
\label{thm.fro}
A matrix with $(K,\hat\theta)$-flat restricted orthogonality has a restricted orthogonality constant $\theta_K$ which is $\leq C\hat\theta\log K$, and we may take $C=75$.
\end{thm}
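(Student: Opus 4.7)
The plan is to reduce an arbitrary pair of disjointly supported $K$-sparse vectors $x,y$ to nonnegative vectors, then use a layer-cake representation to write each as a nonnegative combination of indicators of nested sets, so that flat restricted orthogonality can be applied term by term.

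First I would reduce to the nonnegative real case. For complex $x$, write $x = \sum_{l=1}^{4} \alpha_l u_l$ with $\alpha_l \in \{1,-1,\mathrm{i},-\mathrm{i}\}$ and nonnegative vectors $u_l$ whose supports partition $\mathrm{supp}(x)$; in particular $\sum_l \|u_l\|^2 = \|x\|^2$, so by Cauchy--Schwarz $\sum_l \|u_l\| \leq 2\|x\|$. Do the same for $y$, with $\beta_m, v_m$. Expanding $\langle\Phi x,\Phi y\rangle$ and applying the triangle inequality reduces the problem to bounding $|\langle\Phi u,\Phi v\rangle|$ for nonnegative $K$-sparse $u,v$ with disjoint supports, losing only a factor of $4$.

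Second, I would run the sorted-magnitude decomposition. Let $u$ be supported on $\mathcal{I}$ with entries $u_1 \geq u_2 \geq \cdots \geq u_n > 0$ after reindexing ($n \leq K$), set $\mathcal{I}_s := \{1,\ldots,s\}$ and $c_s := u_s - u_{s+1} \geq 0$ with $u_{n+1} := 0$, so that $u = \sum_{s=1}^n c_s \chi_{\mathcal{I}_s}$. Decompose $v$ analogously into $\sum_t d_t \chi_{\mathcal{J}_t}$. Since $\mathcal{I}_s \cap \mathcal{J}_t = \emptyset$ and $|\mathcal{I}_s|,|\mathcal{J}_t| \leq K$, flat restricted orthogonality gives $|\langle\Phi\chi_{\mathcal{I}_s},\Phi\chi_{\mathcal{J}_t}\rangle| \leq \hat\theta\sqrt{st}$, hence
\begin{equation*}
|\langle\Phi u,\Phi v\rangle| \;\leq\; \hat\theta\,\Bigl(\sum_{s=1}^n c_s\sqrt{s}\Bigr)\Bigl(\sum_{t=1}^n d_t\sqrt{t}\Bigr).
\end{equation*}

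Third, each factor is bounded by Abel summation followed by Cauchy--Schwarz. Abel's identity and the estimate $\sqrt{s}-\sqrt{s-1} = 1/(\sqrt{s}+\sqrt{s-1}) \leq 1/\sqrt{s}$ yield
\begin{equation*}
\sum_{s=1}^n c_s\sqrt{s} \;=\; \sum_{s=1}^n u_s(\sqrt{s}-\sqrt{s-1}) \;\leq\; \sum_{s=1}^n \frac{u_s}{\sqrt{s}} \;\leq\; \|u\|\,\Bigl(\sum_{s=1}^n \tfrac{1}{s}\Bigr)^{1/2} \;\leq\; \|u\|\sqrt{1+\ln K}.
\end{equation*}
Multiplying the two factors and undoing the $4$-part decomposition of $x,y$ gives $|\langle\Phi x,\Phi y\rangle| \leq 4\hat\theta(1+\ln K)\|x\|\|y\|$, which is of the required form $C\hat\theta\log K$; the specific constant $75$ follows by a careful but routine chase that folds in the base change of the logarithm and the small-$K$ cases.

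The main obstacle I anticipate is the first step: flat restricted orthogonality is stated only for \emph{unsigned} indicator functions of disjoint index sets, so passing to general $x,y$ (whose entries have arbitrary signs or complex phases) requires splitting each into at most four nonnegative pieces, and then tracking how the resulting factor propagates through the bilinear estimate without degrading the $\log K$ scaling. Once this reduction is in place, the Abel-summation/Cauchy--Schwarz bound is the standard way to squeeze only a logarithmic loss out of the layer-cake representation, and the rest is constant chasing.
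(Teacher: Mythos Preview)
Your proof is correct and takes a genuinely different route from the paper's. Both arguments share the outer reduction from general complex $x,y$ to nonnegative vectors via the four-part decomposition, losing a factor of $4$. The difference is in how the nonnegative case is handled. The paper first extends the flat RO inequality from $\{0,1\}$-valued coefficients to $[0,1]$-valued coefficients by a convexity argument (replacing one coefficient at a time), and then partitions the support into dyadic magnitude bins $\mathcal{I}_k=\{i:2^{-(k+1)}<x_i\le 2^{-k}\}$; the resulting double sum is controlled by splitting at a cutoff $t\approx\log_2 K$ and applying Cauchy--Schwarz on the head and a crude tail bound. Your layer-cake decomposition $u=\sum_s c_s\chi_{\mathcal{I}_s}$ into \emph{nested} indicator functions writes a nonnegative vector directly as a positive combination of $\{0,1\}$-indicators, so flat RO applies immediately without the intermediate $[0,1]$-extension step, and the Abel-summation/Cauchy--Schwarz bound $\sum_s c_s\sqrt{s}\le\|u\|\sqrt{1+\ln K}$ is tighter than the dyadic computation. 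The upshot is that your argument is shorter and yields a substantially better constant: you get $\theta_K\le 4(1+\ln K)\hat\theta$, which for $K\ge 2$ gives $C\approx 10$ rather than $75$.

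One small inaccuracy: the supports of the $u_l$ do not partition $\mathrm{supp}(x)$ when $x$ is genuinely complex (an entry can have nonzero real and imaginary parts simultaneously). But this is harmless---all you actually use is that $\mathrm{supp}(u_l)\subseteq\mathrm{supp}(x)$ (so $u_l$ and $v_m$ remain disjointly supported) and that $\sum_l\|u_l\|^2=\|x\|^2$, both of which hold.
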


Indeed, when combined with Lemma~\ref{lem.ro to ri}, this result gives an upper bound on the RIC: $\delta_{2K}\leq 2C\hat\theta\log K + \delta_1$.
The noteworthy benefit of this upper bound is that the problem of estimating singular values of submatrices is reduced to a combinatorial problem of bounding the coherence of disjoint sums of columns.
Furthermore, this reduction comes at the price of a mere log factor in the estimate.
In~\cite{BourgainDFKK:11}, Bourgain et al.~managed to satisfy this combinatorial coherence property using techniques from additive combinatorics.
While we will not discuss their construction, we find the proof of Theorem~\ref{thm.fro} to be instructive; our proof is valid for all values of $K$ (as opposed to sufficiently large $K$ in the original~\cite{BourgainDFKK:11}), and it has near-optimal constants where appropriate.
The proof can be found in the Appendix.

To reiterate, Bourgain et al.~\cite{BourgainDFKK:11} used flat restricted orthogonality to build the only known deterministic construction of $M\times N$ RIP matrices that support sparsity levels $K$ on the order of $M^{1/2+\varepsilon}$ for some small value of $\varepsilon$.
We are particularly interested in the efficacy of FRO as a technique to demonstrate RIP in general.
Certainly, \cite{BourgainDFKK:11} shows that FRO can produce at least an $\varepsilon$ improvement over the Gershgorin technique discussed in the previous section, but it remains to be seen whether FRO can do better.

In the remainder of this section, we will show that flat restricted orthogonality is actually capable of demonstrating RIP with much higher sparsity levels than indicated by~\cite{BourgainDFKK:11}.
Hopefully, this realization will spur further research in deterministic constructions which satisfy FRO.
To evaluate FRO, we investigate how well it performs with random matrices; in doing so, we give an alternative proof that certain random matrices satisfy RIP with high probability:

\begin{thm}
\label{thm.fro to rip}
Construct an $M\times N$ matrix $\Phi$ by drawing each of its entries independently from a Gaussian distribution with mean zero and variance $\frac{1}{M}$, take $C$ to be the constant from Theorem~\ref{thm.fro}, and set $\alpha=0.01$.
Then $\Phi$ has $(K,\frac{(1-\alpha)\delta}{2C\log K})$-flat restricted orthogonality and $\delta_1\leq \alpha\delta$, and therefore the $(2K,\delta)$-restricted isometry property, with high probability provided $M\geq\frac{33C^2}{\delta^2}K\log^2 K\log N$.
\end{thm}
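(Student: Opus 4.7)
The plan is to reduce $(2K,\delta)$-RIP to two high-probability tail bounds on $\Phi$ using the machinery already set up in this section. Set $\hat\theta:=\frac{(1-\alpha)\delta}{2C\log K}$. If $\Phi$ satisfies $(K,\hat\theta)$-flat restricted orthogonality, then Theorem~\ref{thm.fro} promotes this to $\theta_K\leq C\hat\theta\log K=\frac{(1-\alpha)\delta}{2}$. Combining with $\delta_1\leq\alpha\delta$ via Lemma~\ref{lem.ro to ri} then yields $\delta_{2K}\leq 2\theta_K+\delta_1\leq(1-\alpha)\delta+\alpha\delta=\delta$, so $\Phi$ is $(2K,\delta)$-RIP. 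It therefore suffices to show that both properties hold with high probability under the hypothesized growth of $M$.

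The bound $\delta_1\leq\alpha\delta$ is routine. Each column $\varphi_n$ has iid $N(0,\frac{1}{M})$ entries, so $M\|\varphi_n\|^2\sim\chi^2_M$; a Laurent--Massart tail bound supplies an absolute constant $c>0$ with $\mathrm{Pr}\bigl[\bigl|\|\varphi_n\|^2-1\bigr|\geq\alpha\delta\bigr]\leq 2\exp(-cM\alpha^2\delta^2)$. Union bounding over $n=1,\ldots,N$ gives $\delta_1\leq\alpha\delta$ with failure probability at most $2N\exp(-cM\alpha^2\delta^2)$, which is negligible under $M\geq\frac{33C^2}{\delta^2}K\log^2 K\log N$.

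The heart of the argument is the FRO bound. Fix disjoint $\mathcal{I},\mathcal{J}\subseteq\{1,\ldots,N\}$ with $|\mathcal{I}|=s\leq K$, $|\mathcal{J}|=t\leq K$, and set $u:=\sum_{i\in\mathcal{I}}\varphi_i$, $v:=\sum_{j\in\mathcal{J}}\varphi_j$. Because the columns of $\Phi$ are independent and $\mathcal{I},\mathcal{J}$ are disjoint, $u$ and $v$ are independent with $u\sim N(0,(s/M)I_M)$ and $v\sim N(0,(t/M)I_M)$. Writing $u_m=\sqrt{s/M}\,\tilde u_m$ and $v_m=\sqrt{t/M}\,\tilde v_m$ for iid standard normals produces the Gaussian chaos representation
\begin{equation*}
\langle u,v\rangle=\frac{\sqrt{st}}{M}\sum_{m=1}^M\tilde u_m\tilde v_m.
\end{equation*}
The right-hand sum is a sum of $M$ iid centered unit-variance sub-exponential random variables, so a Bernstein (equivalently Hanson--Wright) inequality gives an absolute constant $c'>0$ with
\begin{equation*}
\mathrm{Pr}\bigl[|\langle u,v\rangle|\geq\hat\theta(st)^{1/2}\bigr]=\mathrm{Pr}\biggl[\Bigl|\sum_{m=1}^M\tilde u_m\tilde v_m\Bigr|\geq\hat\theta M\biggr]\leq 2\exp\bigl(-c'M\min(\hat\theta^2,\hat\theta)\bigr).
\end{equation*}
For the regime of interest $\hat\theta\leq 1$, the exponent equals $c'M\hat\theta^2$. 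A union bound over all pairs $(\mathcal{I},\mathcal{J})$, of which there are at most $\binom{N}{K}^2\leq N^{2K}$, shows $(K,\hat\theta)$-FRO fails with probability at most $2N^{2K}\exp(-c'M\hat\theta^2)$. This is $o(1)$ whenever $c'M\hat\theta^2$ exceeds a constant multiple of $K\log N$; substituting $\hat\theta=(1-\alpha)\delta/(2C\log K)$ gives exactly the claimed order $M=\mathrm{O}(K\log^2 K\log N/\delta^2)$.

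The principal obstacle is constant-chasing: fitting the absolute constants from the chi-squared tail bound, Bernstein's inequality, and the $\binom{N}{K}^2\leq N^{2K}$ slack into the explicit prefactor $33$ stated in the theorem. This forces invocation of sharp forms of these concentration inequalities and careful tracking of how the union-bound exponent $2K\log N$ plays against $c'M\hat\theta^2$. A mild regime check is also needed to confirm $\hat\theta\leq 1$ (so that Bernstein's tail is in its sub-Gaussian branch), which reduces to $\delta\leq 2C\log K$ and is automatic for any interesting sparsity level.
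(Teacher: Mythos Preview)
Your proposal is correct and follows essentially the same route as the paper: reduce $(2K,\delta)$-RIP to FRO plus a $\delta_1$ bound via Lemma~\ref{lem.ro to ri} and Theorem~\ref{thm.fro}, handle $\delta_1$ with a Laurent--Massart chi-squared tail, write $\langle\sum_{i\in\mathcal{I}}\varphi_i,\sum_{j\in\mathcal{J}}\varphi_j\rangle$ as a sum of $M$ iid products of independent Gaussians, apply a Bernstein-type moment inequality to this sum, and union bound over at most $N^{2K}$ pairs $(\mathcal{I},\mathcal{J})$. The only cosmetic differences are that the paper tracks the Bernstein constants explicitly (using Theorem~\ref{thm.bernstein} on $Z_m=X_mY_m$ with $L=2(|\mathcal{I}||\mathcal{J}|)^{1/2}/M$, yielding the clean exponent $M\hat\theta^2/4$) and uses the sub-exponential rather than sub-Gaussian form of the chi-squared tail for $\delta_1$, which is what makes the prefactor $33$ fall out.
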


In proving this result, we will make use of the following Bernstein inequality:
\begin{thm}[see \cite{Bernstein:46,Yurinskii:76}]
\label{thm.bernstein}
Let $\{Z_m\}_{m=1}^M$ be independent random variables of mean zero with bounded moments, and suppose there exists $L>0$ such that
\begin{equation}
\label{eq.bernstein requirement}
\mathbb{E}|Z_m|^k
\leq\frac{\mathbb{E}|Z_m|^2}{2}L^{k-2}k!
\end{equation}
for every $k\geq2$.
Then 
\begin{equation}
\label{eq.bernstein conclusion}
\mathrm{Pr}\bigg[\sum_{m=1}^M Z_m\geq2t\bigg(\sum_{m=1}^M\mathbb{E}|Z_m|^2\bigg)^{1/2}\bigg]
\leq e^{-t^2}
\end{equation}
provided $\displaystyle{t\leq\frac{1}{2L}\bigg(\sum_{m=1}^M\mathbb{E}|Z_m|^2\bigg)^{1/2}}$.
\end{thm}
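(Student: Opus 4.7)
The plan is to prove this Bernstein-type tail bound via the standard Chernoff/exponential-moment technique: combine an exponential Markov inequality with an estimate of the moment generating function of each $Z_m$ that is derived directly from the hypothesized moment growth condition, and then optimize the free parameter.

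First I would apply Markov's inequality to $\exp(\lambda \sum_m Z_m)$ for a parameter $\lambda>0$ to be chosen, giving
\begin{equation*}
\mathrm{Pr}\bigg[\sum_{m=1}^M Z_m \geq a\bigg] \leq e^{-\lambda a}\,\mathbb{E}\bigg[\exp\Big(\lambda\sum_{m=1}^M Z_m\Big)\bigg] = e^{-\lambda a}\prod_{m=1}^M \mathbb{E}\bigl[e^{\lambda Z_m}\bigr],
\end{equation*}
where the factorization uses independence of the $Z_m$'s. Next I would estimate each factor by Taylor-expanding the exponential, invoking $\mathbb{E}[Z_m]=0$ to kill the linear term, and then inserting the hypothesis \eqref{eq.bernstein requirement} to control the higher moments. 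This turns the Taylor series into a geometric series: for $\lambda L<1$,
\begin{equation*}
\mathbb{E}\bigl[e^{\lambda Z_m}\bigr] \leq 1 + \sum_{k=2}^{\infty}\frac{\lambda^k}{k!}\mathbb{E}|Z_m|^k \leq 1 + \frac{\mathbb{E}|Z_m|^2}{2}\sum_{k=2}^{\infty}\lambda^k L^{k-2} = 1 + \frac{\lambda^2\,\mathbb{E}|Z_m|^2}{2(1-\lambda L)}.
\end{equation*}
Using $1+x\leq e^x$ and taking the product over $m$, I obtain the compact MGF bound
\begin{equation*}
\prod_{m=1}^M \mathbb{E}\bigl[e^{\lambda Z_m}\bigr] \leq \exp\!\bigg(\frac{\lambda^2 S^2}{2(1-\lambda L)}\bigg), \qquad S^2 := \sum_{m=1}^M \mathbb{E}|Z_m|^2.
\end{equation*}

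Finally, setting $a = 2tS$ and choosing $\lambda = t/S$, the exponent in the Chernoff bound becomes
\begin{equation*}
-\lambda a + \frac{\lambda^2 S^2}{2(1-\lambda L)} = -2t^2 + \frac{t^2}{2(1-tL/S)}.
\end{equation*}
The hypothesis $t\leq S/(2L)$ gives $tL/S \leq 1/2$, hence $1/(2(1-tL/S)) \leq 1$, so the exponent is bounded above by $-2t^2 + t^2 = -t^2$. This yields exactly \eqref{eq.bernstein conclusion}.

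The main step requiring care is the MGF estimate: one must verify that the moment hypothesis \eqref{eq.bernstein requirement} is strong enough (with the factor of $k!$ that cancels the $1/k!$ in the Taylor series) to produce a geometric series, and the constraint $\lambda L <1$ must be respected in the subsequent optimization. The choice $\lambda = t/S$ is not optimal but is chosen precisely so that the two constraints --- geometric summability and the desired clean bound $e^{-t^2}$ --- are simultaneously satisfied under the single hypothesis $t\leq S/(2L)$. No other step is difficult; independence handles the product, Taylor expansion plus the moment hypothesis handles each factor, and the final bookkeeping is elementary.
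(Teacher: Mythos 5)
Your proof is correct and is the standard Chernoff argument for this form of Bernstein's inequality. Note that the paper does not supply a proof of Theorem~\ref{thm.bernstein} at all --- it is stated with a citation to Bernstein and Yurinskii --- so there is no in-paper argument to compare against; your steps (Markov on the MGF, Taylor expansion with the $k!$ in the moment hypothesis exactly cancelling the $1/k!$ so the tail becomes a geometric series, $1+x\leq e^x$, and the deliberately suboptimal $\lambda=t/S$, for which the hypothesis $t\leq S/(2L)$ forces $\lambda L\leq\frac{1}{2}<1$ and makes the exponent $-2t^2+\frac{t^2}{2(1-\lambda L)}\leq -t^2$) are all sound.
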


\begin{proof}[Proof of Theorem~\ref{thm.fro to rip}]
Considering Lemma~\ref{lem.ro to ri}, it suffices to show that $\Phi$ has restricted orthogonality and that $\delta_1$ is sufficiently small.
First, to demonstrate restricted orthogonality, it suffices to demonstrate FRO by Theorem~\ref{thm.fro}, and so we will ensure that the following quantity is small:
\begin{equation}
\label{eq.we want small}
\bigg\langle\sum_{i\in\mathcal{I}}\varphi_i,\sum_{j\in\mathcal{J}}\varphi_j\bigg\rangle
=\sum_{m=1}^M\bigg(\sum_{i\in\mathcal{I}}\varphi_i[m]\bigg)\bigg(\sum_{j\in\mathcal{J}}\varphi_j[m]\bigg).
\end{equation}
Notice that $X_m:=\sum_{i\in\mathcal{I}}\varphi_i[m]$ and $Y_m:=\sum_{j\in\mathcal{J}}\varphi_j[m]$ are mutually independent over all $m=1,\ldots,M$ since $\mathcal{I}$ and $\mathcal{J}$ are disjoint.
Also, $X_m$ is Gaussian with mean zero and variance $\frac{|\mathcal{I}|}{M}$, while $Y_m$ similarly has mean zero and variance $\frac{|\mathcal{J}|}{M}$.
Viewed this way, \eqref{eq.we want small} being small corresponds to the sum of independent random variables $Z_m:=X_mY_m$ having its probability measure concentrated at zero.
To this end, Theorem~\ref{thm.bernstein} is naturally applicable, as the absolute central moments of a Gaussian random variable $X$ with mean zero and variance $\sigma^2$ are well known:
\begin{equation*}
\mathbb{E}|X|^k
=\left\{\begin{array}{rl}\sqrt{\frac{2}{\pi}}\sigma^k(k-1)!!&\mbox{ if $k$ odd},\\\sigma^k(k-1)!!&\mbox{ if $k$ even}.\end{array}\right.
\end{equation*}
Since $Z_m=X_mY_m$ is a product of independent Gaussian random variables, this gives
\begin{equation*}
\mathbb{E}|Z_m|^k
=\mathbb{E}|X_m|^k~\mathbb{E}|Y_m|^k
\leq \Big(\frac{|\mathcal{I}|}{M}\Big)^{k/2}\Big(\frac{|\mathcal{J}|}{M}\Big)^{k/2}\Big((k-1)!!\Big)^2
\leq \bigg(\frac{(|\mathcal{I}||\mathcal{J}|)^{1/2}}{M}\bigg)^kk!.
\end{equation*}
Further since $\mathbb{E}|Z_m|^2=\frac{|\mathcal{I}||\mathcal{J}|}{M^2}$, we may define $L:=2\frac{(|\mathcal{I}||\mathcal{J}|)^{1/2}}{M}$ to get \eqref{eq.bernstein requirement}.
Later, we will take $\hat\theta<\delta<\sqrt{2}-1<\frac{1}{2}$.
Considering
\begin{equation*}
t
:=\frac{\hat{\theta}\sqrt{M}}{2}
<\frac{\sqrt{M}}{4}
=\frac{1}{2L}\Big(M\frac{|\mathcal{I}||\mathcal{J}|}{M^2}\Big)^{1/2}
=\frac{1}{2L}\bigg(\sum_{m=1}^M\mathbb{E}|Z_m|^2\bigg)^{1/2},
\end{equation*}
we therefore have \eqref{eq.bernstein conclusion}, which in this case has the form
\begin{equation*}
\mathrm{Pr}\Bigg[\bigg|\bigg\langle\sum_{i\in\mathcal{I}}\varphi_i,\sum_{j\in\mathcal{J}}\varphi_j\bigg\rangle\bigg|\geq\hat{\theta}(|\mathcal{I}||\mathcal{J}|)^{1/2}\Bigg]
\leq 2e^{-M\hat{\theta}^2/4},
\end{equation*}
where the probability is doubled due to the symmetric distribution of $\sum_{m=1}^M Z_m$.
Since we need to account for all possible choices of $\mathcal{I}$ and $\mathcal{J}$, we will perform a union bound.
The total number of choices is given by
\begin{equation*}
\sum_{|\mathcal{I}|=1}^K\sum_{|\mathcal{J}|=1}^K\binom{N}{|\mathcal{I}|}\binom{N-|\mathcal{I}|}{|\mathcal{J}|}
\leq K^2\binom{N}{K}^2
\leq N^{2K},
\end{equation*}
and so the union bound gives
\begin{equation}
\label{eq.probability bound 1}
\mathrm{Pr}\Big[\mbox{$\Phi$ does not have $(K,\hat{\theta})$-FRO}\Big]
\leq 2e^{-M\hat{\theta}^2/4}~N^{2K}
=2\exp\Big(-\frac{M\hat{\theta}^2}{4}+2K\log N\Big).
\end{equation}
Thus, Gaussian matrices tend to have FRO, and hence restricted orthogonality by Theorem~\ref{thm.fro}; this is made more precise below.

Again by Lemma~\ref{lem.ro to ri}, it remains to show that $\delta_1$ is sufficiently small.
To this end, we note that $M\|\varphi_n\|^2$ has chi-squared distribution with $M$ degrees of freedom, and so we can use another (simpler) concentration-of-measure result; see Lemma 1 of~\cite{LaurentM:00}:
\begin{equation*}
\mathrm{Pr}\bigg[\Big|\|\varphi_n\|^2-1\Big|\geq 2\Big(\sqrt{\frac{t}{M}}+\frac{t}{M}\Big)\bigg]\leq 2e^{-t}
\end{equation*}
for any $t>0$.
Specifically, we pick
\begin{equation*}
\delta'
:=2\Big(\sqrt{\frac{t}{M}}+\frac{t}{M}\Big)
\leq\frac{4t}{M},
\end{equation*}
and we perform a union bound over the $N$ choices for $\varphi_n$:
\begin{equation}
\label{eq.probability bound 2}
\mathrm{Pr}\Big[\delta_1>\delta'\Big]
\leq 2\exp\Big(-\frac{M\delta'}{4}+\log N\Big).
\end{equation}
To summarize, Lemma~\ref{lem.ro to ri}, the union bound, Theorem~\ref{thm.fro}, and \eqref{eq.probability bound 1} and \eqref{eq.probability bound 2} give
\begin{align*}
\mathrm{Pr}\Big[\delta_{2K}>\delta\Big]
&\leq\mathrm{Pr}\Big[\theta_K>\frac{(1-\alpha)\delta}{2}\mbox{ or }\delta_1>\alpha\delta\Big]\\
&\leq\mathrm{Pr}\Big[\theta_K>\frac{(1-\alpha)\delta}{2}\Big]+\mathrm{Pr}\Big[\delta_1>\alpha\delta\Big]\\
&\leq\mathrm{Pr}\Big[\mbox{$\Phi$ does not have $\displaystyle{\Big(K,\frac{(1-\alpha)\delta}{2C\log K}\Big)}$-FRO}\Big]+\mathrm{Pr}\Big[\delta_1>\alpha\delta\Big]\\
&\leq2\exp\Big(-\frac{M}{4}\Big(\frac{(1-\alpha)\delta}{2C\log K}\Big)^2+2K\log N\Big)+2\exp\Big(-\frac{M\alpha\delta}{4}+\log N\Big),
\end{align*}
and so $M\geq\frac{33C^2}{\delta^2}K\log^2 K\log N$ gives that $\Phi$ has $(2K,\delta)$-RIP with high probability.
\end{proof}

We note that a version of Theorem~\ref{thm.fro to rip} also holds for matrices whose entries are independent Bernoulli random variables taking values $\pm\frac{1}{\sqrt{M}}$ with equal probability.
In this case, one can again apply Theorem~\ref{thm.bernstein} by comparing moments with those of the Gaussian distribution; also, a union bound with $\delta_1$ will not be necessary since the columns have unit norm, meaning $\delta_1=0$.

\section{Restricted isometry by the power method}

In the previous section, we established the efficacy of flat restricted orthogonality as a technique to demonstrate RIP.
While flat restricted orthogonality has proven useful in the past~\cite{BourgainDFKK:11}, future deterministic RIP constructions might not use this technique.
Indeed, it would be helpful to have other techniques available that demonstrate RIP beyond the square-root bottleneck.
In pursuit of such techniques, we recall that the smallest $\delta$ for which $\Phi$ is $(K,\delta)$-RIP is given in terms of operator norms in \eqref{eq.delta min}.
In addition, we notice that for any self-adjoint matrix $A$,
\[ \|A\|_2=\|\lambda(A)\|_\infty\leq\|\lambda(A)\|_p, \]
where $\lambda(A)$ denotes the spectrum of $A$ with multiplicities.
Let $A=UDU^*$ be the eigenvalue decomposition of~$A$.
When $p$ is even, we can express $\|\lambda(A)\|_p$ in terms of an easy-to-calculate trace:
\[ \|\lambda(A)\|_{p}^{p}=\mathrm{Tr}[D^{p}]=\mathrm{Tr}[(UDU^*)^{p}]=\mathrm{Tr}[A^{p}]. \]
Combining these ideas with the fact that $\|\cdot\|_p\rightarrow\|\cdot\|_\infty$ pointwise leads to the following:

\begin{thm}
\label{thm.power method defn}
Given an $M\times N$ matrix $\Phi$, define
\[ \delta_{K;q}:=\max_{\substack{\mathcal{K}\subseteq\{1,\ldots,N\}\\|\mathcal{K}|=K}}\mathrm{Tr}[(\Phi_\mathcal{K}^*\Phi_\mathcal{K}^{}-I_K)^{2q}]^\frac{1}{2q}. \]
Then $\Phi$ has the $(K,\delta_{K;q})$-restricted isometry property for every $q\geq1$.  Moreover, the restricted isometry constant of $\Phi$ is approached by these estimates: $\lim_{q\rightarrow\infty}\delta_{K;q}=\delta_K$.
\end{thm}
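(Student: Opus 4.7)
The plan is to exploit the identities already assembled in the paragraph preceding the theorem. By Lemma~\ref{lem:delta_min_bnd}, the restricted isometry constant satisfies $\delta_K = \max_{|\mathcal{K}|=K}\|\Phi_\mathcal{K}^*\Phi_\mathcal{K}^{}-I_K\|_2$, so for each fixed $\mathcal{K}$ I will work with the self-adjoint matrix $A_\mathcal{K} := \Phi_\mathcal{K}^*\Phi_\mathcal{K}^{} - I_K$. Writing an eigendecomposition $A_\mathcal{K} = UDU^*$, one has $\|A_\mathcal{K}\|_2 = \|\lambda(A_\mathcal{K})\|_\infty$ and, for even exponents, $\mathrm{Tr}[A_\mathcal{K}^{2q}] = \mathrm{Tr}[D^{2q}] = \|\lambda(A_\mathcal{K})\|_{2q}^{2q}$. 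Thus
\begin{equation*}
\delta_{K;q} = \max_{|\mathcal{K}|=K}\|\lambda(A_\mathcal{K})\|_{2q},
\qquad
\delta_K = \max_{|\mathcal{K}|=K}\|\lambda(A_\mathcal{K})\|_\infty,
\end{equation*}
reducing the theorem to a comparison between $\ell^{2q}$ and $\ell^\infty$ norms of finitely many spectra living in $\mathbb{R}^K$.

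For the RIP claim, I would invoke the elementary inequality $\|x\|_{2q} \geq \|x\|_\infty$ valid for every $x \in \mathbb{R}^K$ (since if $|x_{i^*}|$ achieves the max, $\|x\|_{2q}^{2q} \geq |x_{i^*}|^{2q}$). Applying this to each $\lambda(A_\mathcal{K})$ and taking the maximum over $\mathcal{K}$ yields $\delta_{K;q} \geq \delta_K$. Since Lemma~\ref{lem:delta_min_bnd} already gives that $\Phi$ is $(K,\delta_K)$-RIP, and $(K,\delta)$-RIP is monotone in $\delta$, the matrix $\Phi$ is automatically $(K,\delta_{K;q})$-RIP.

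For the limit, I would combine the lower bound above with the matching upper bound $\|x\|_{2q}^{2q} = \sum_{i=1}^K |x_i|^{2q} \leq K\|x\|_\infty^{2q}$, which gives $\|x\|_{2q} \leq K^{1/(2q)}\|x\|_\infty$ on $\mathbb{R}^K$ with a constant independent of $\mathcal{K}$. Applying this uniformly and maximizing over the finite set of supports yields $\delta_K \leq \delta_{K;q} \leq K^{1/(2q)}\delta_K$. Letting $q \to \infty$ so that $K^{1/(2q)} \to 1$, a squeeze argument produces $\lim_{q\to\infty}\delta_{K;q} = \delta_K$.

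All ingredients are elementary, so there is no substantive obstacle; the only subtle point is to make sure that the $\max$ over $\mathcal{K}$ commutes with the limit in $q$, which is handled precisely by the uniform constant $K^{1/(2q)}$ in the sandwich inequality and by the finiteness of the collection of size-$K$ supports.
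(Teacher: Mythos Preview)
Your proposal is correct and follows essentially the same approach as the paper: the paper's proof is precisely the paragraph preceding the theorem, which uses the eigendecomposition to identify $\mathrm{Tr}[A^{2q}]^{1/(2q)}=\|\lambda(A)\|_{2q}$, the inequality $\|\lambda(A)\|_\infty\leq\|\lambda(A)\|_{2q}$, and the pointwise convergence $\|\cdot\|_p\to\|\cdot\|_\infty$. Your write-up is in fact slightly more careful, making the squeeze $\delta_K\leq\delta_{K;q}\leq K^{1/(2q)}\delta_K$ explicit to justify interchanging the limit with the maximum over $\mathcal{K}$.
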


Similar to flat restricted orthogonality, this \emph{power method} has a combinatorial aspect that prompts one to check every sub-Gram matrix of size $K$; one could argue that the power method is slightly \emph{less} combinatorial, as flat restricted orthogonality is a statement about all pairs of disjoint subsets of size $\leq K$.
Regardless, the work of Bourgain et al.~\cite{BourgainDFKK:11} illustrates that combinatorial properties can be useful, and there may exist constructions to which the power method would be naturally applied.
Moreover, we note that since $\delta_{K;q}$ approaches $\delta_K$, a sufficiently large choice of $q$ should deliver better-than-$\varepsilon$ improvement over the Gershgorin analysis.
How large should $q$ be?
If we assume $\Phi$ has unit-norm columns, taking $q=1$ gives
\begin{equation}
\label{eq.delta 1}
\delta_{K;1}^2
=\max_{\substack{\mathcal{K}\subseteq\{1,\ldots,N\}\\|\mathcal{K}|=K}}\mathrm{Tr}[(\Phi_\mathcal{K}^*\Phi_\mathcal{K}^{}-I_K)^{2}]
=\max_{\substack{\mathcal{K}\subseteq\{1,\ldots,N\}\\|\mathcal{K}|=K}}
\sum_{i\in\mathcal{K}}\sum_{\substack{j\in\mathcal{K}\\ j\neq i}}|\langle \varphi_i,\varphi_j\rangle|^2
\leq K(K-1)\mu^2,
\end{equation}
where $\mu$ is the worst-case coherence of $\Phi$.
Equality is achieved above whenever $\Phi$ is an ETF, in which case \eqref{eq.delta 1} along with reasoning similar to \eqref{eq.square root bottleneck} demonstrates that $\Phi$ is RIP with sparsity levels on the order of $\sqrt{M}$, as the Gershgorin analysis established.
It remains to be shown how $\delta_{K;2}$ compares.
To make this comparison, we apply the power method to random matrices:

\begin{thm}
\label{thm.power to rip}
Construct an $M\times N$ matrix $\Phi$ by drawing each of its entries independently from a Gaussian distribution with mean zero and variance $\frac{1}{M}$, and take $\delta_{K;q}$ to be as defined in Theorem~\ref{thm.power method defn}.
Then $\delta_{K;q}\leq\delta$, and therefore $\Phi$ has the $(K,\delta)$-restricted isometry property, with high probability provided $M\geq\frac{81}{\delta^2}K^{1+1/q}\log\frac{eN}{K}$.
\end{thm}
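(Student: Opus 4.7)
The plan is to reduce the Schatten-type quantity $\mathrm{Tr}[A^{2q}]^{1/(2q)}$ to the operator norm of $A:=\Phi_\mathcal{K}^*\Phi_\mathcal{K}^{}-I_K$, at the cost of a factor $K^{1/(2q)}$. For any $K\times K$ self-adjoint matrix $A$ one has $\mathrm{Tr}[A^{2q}]=\sum_{i=1}^K\lambda_i(A)^{2q}\leq K\|A\|_2^{2q}$, so
\begin{equation*}
\mathrm{Tr}[A^{2q}]^{1/(2q)}\leq K^{1/(2q)}\,\|A\|_2.
\end{equation*}
This single inequality is what makes the parameter $q$ useful: at $q=1$ we essentially recover the Gershgorin/coherence bound of Chapter~1, whereas $q\to\infty$ recovers the exact characterization~\eqref{eq.delta min}. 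The extra $K^{1/(2q)}$ is precisely the factor that will surface as the $K^{1+1/q}$ in the sample complexity.

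First I would fix a subset $\mathcal{K}\subseteq\{1,\ldots,N\}$ of size $K$ and control $\|A\|_2$. Since $\Phi_\mathcal{K}$ is an $M\times K$ matrix with i.i.d.\ Gaussian entries of mean zero and variance $1/M$, standard Gaussian singular-value concentration---either via the Davidson--Szarek bound, or equivalently a $\tfrac14$-net on the unit sphere of $\mathbb{R}^K$ combined with a Laurent--Massart chi-square tail at each net point, analogous to the argument leading to~\eqref{eq.probability bound 2}---gives that the singular values of $\Phi_\mathcal{K}$ all lie in $[1-\eta,1+\eta]$ with probability at least $1-2e^{-ct^2}$, where $\eta:=C\sqrt{K/M}+t/\sqrt{M}$ for absolute constants $c,C>0$. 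Squaring and subtracting the identity, the eigenvalues of $A$ then lie in $[-(2\eta+\eta^2),\,2\eta+\eta^2]$, so $\|A\|_2\leq 2\eta+\eta^2$, and combining with the opening reduction,
\begin{equation*}
\mathrm{Tr}[A^{2q}]^{1/(2q)}\leq K^{1/(2q)}\bigl(2\eta+\eta^2\bigr)\quad\text{on this event.}
\end{equation*}

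Next I would pick $t$ so that the right-hand side is $\leq\delta$. For $M$ large enough to make $\eta\ll1$, the $\eta^2$ term is absorbed and this requires $t$ of order $\delta\sqrt{M}/K^{1/(2q)}-C\sqrt{K}$, which makes the failure probability for a single $\mathcal{K}$ of order $\exp(-c'\delta^2 M/K^{1/q})$. A union bound over the $\binom{N}{K}\leq(eN/K)^K$ subsets of size~$K$ then yields a total failure probability bounded by $\exp\bigl(K\log\tfrac{eN}{K}-c'\delta^2 M/K^{1/q}\bigr)$, which is negligible provided $M\geq C_0\,K^{1+1/q}\log(eN/K)/\delta^2$; tuning the constants in the chi-square tail and the net cardinality yields the stated $C_0=81$.

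The main obstacle is quantitative rather than conceptual: the chain of reductions is straightforward, but pinning down the explicit constant $81$ requires careful bookkeeping of the net approximation factor, the Laurent--Massart chi-square constant, and the $\eta^2$ correction. Sloppiness at any of these stages inflates the constant but does not affect the scaling $K^{1+1/q}\log(eN/K)/\delta^2$, which is dictated purely by the $K^{1/(2q)}\|A\|_2$ reduction and the volumetric $\log\binom{N}{K}$ union bound, and should be viewed as the essential content of the theorem.
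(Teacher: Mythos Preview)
Your proposal is correct and follows essentially the same route as the paper: the paper also reduces via $\mathrm{Tr}[A^{2q}]^{1/(2q)}\leq K^{1/(2q)}\|A\|_2$, invokes the Davidson--Szarek singular-value concentration for $\Phi_\mathcal{K}$ (bounding $(1\pm\eta)^2$ crudely by $1\pm 3\eta$, which is the source of the $9^2=81$), and then union-bounds over $\binom{N}{K}\leq(eN/K)^K$ subsets with the explicit choice $t=\tfrac{\delta}{3K^{1/(2q)}}-\sqrt{K/M}$. Your optional net~+~Laurent--Massart route would work too, but the paper goes straight through Davidson--Szarek.
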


While flat restricted orthogonality comes with a negligible penalty of $\log^2K$ in the number of measurements, the power method has a penalty of $K^{1/q}$.
As such, the case $q=1$ uses the order of $K^2$ measurements, which matches our calculation in \eqref{eq.delta 1}.
Moreover, the power method with $q=2$ can demonstrate RIP with $K^{3/2}$ measurements, i.e., $K\sim M^{1/2+1/6}$, which is considerably better than an $\varepsilon$ improvement over the Gershgorin technique.

\begin{proof}[Proof of Theorem~\ref{thm.power to rip}]
Take $t:=\frac{\delta}{3K^{1/2q}}-(\frac{K}{M})^{1/2}$ and pick $\mathcal{K}\subseteq\{1,\ldots,N\}$.
Then Theorem~II.13 of~\cite{DavidsonS:01} states
\begin{equation*}
\mathrm{Pr}\bigg[1-\bigg(\sqrt{\frac{K}{M}}+t\bigg)\leq\sigma_{\min}(\Phi_\mathcal{K})\leq\sigma_{\max}(\Phi_\mathcal{K})\leq1+\bigg(\sqrt{\frac{K}{M}}+t\bigg)\bigg]
\geq 1-2e^{-Mt^2/2}.
\end{equation*}
Continuing, we use the fact that $\lambda(\Phi_\mathcal{K}^*\Phi_\mathcal{K}^{})=\sigma(\Phi_\mathcal{K})^2$ to get
\begin{align}
\nonumber
&1-2e^{-Mt^2/2}\\
\nonumber
&\leq \mathrm{Pr}\bigg[\bigg(1-\bigg(\sqrt{\frac{K}{M}}+t\bigg)\bigg)^2\leq\lambda_{\min}(\Phi_\mathcal{K}^*\Phi_\mathcal{K}^{})\leq\lambda_{\max}(\Phi_\mathcal{K}^*\Phi_\mathcal{K}^{})\leq\bigg(1+\bigg(\sqrt{\frac{K}{M}}+t\bigg)\bigg)^2\bigg]\\
\label{eq.eigenvalue interval}
&\leq \mathrm{Pr}\bigg[1-3\bigg(\sqrt{\frac{K}{M}}+t\bigg)\leq\lambda_{\min}(\Phi_\mathcal{K}^*\Phi_\mathcal{K}^{})\leq\lambda_{\max}(\Phi_\mathcal{K}^*\Phi_\mathcal{K}^{})\leq1+3\bigg(\sqrt{\frac{K}{M}}+t\bigg)\bigg],
\end{align}
where the last inequality follows from the fact that $(\frac{K}{M})^{1/2}+t<1$.
Since $\Phi_\mathcal{K}^*\Phi_\mathcal{K}^{}$ and $I_K$ are simultaneously diagonalizable, the spectrum of $\Phi_\mathcal{K}^*\Phi_\mathcal{K}^{}-I_K$ is given by $\lambda(\Phi_\mathcal{K}^*\Phi_\mathcal{K}^{}-I_K)=\lambda(\Phi_\mathcal{K}^*\Phi_\mathcal{K}^{})-1$.
Combining this with \eqref{eq.eigenvalue interval} then gives
\begin{equation*}
\mathrm{Pr}\bigg[\Big\|\lambda(\Phi_\mathcal{K}^*\Phi_\mathcal{K}^{}-I_K)\Big\|_\infty\leq 3\bigg(\sqrt{\frac{K}{M}}+t\bigg)\bigg]
\geq 1-2e^{-Mt^2/2}.
\end{equation*}
Considering $\mathrm{Tr}[A^{2q}]^\frac{1}{2q}=\|\lambda(A)\|_{2q}\leq K^\frac{1}{2q}\|\lambda(A)\|_\infty$, we continue:
\begin{equation*}
\mathrm{Pr}\bigg[\mathrm{Tr}[(\Phi_\mathcal{K}^*\Phi_\mathcal{K}^{}-I_K)^{2q}]^\frac{1}{2q}\leq \delta \bigg]
\geq \mathrm{Pr}\bigg[ K^\frac{1}{2q}\Big\|\lambda(\Phi_\mathcal{K}^*\Phi_\mathcal{K}^{}-I_K)\Big\|_\infty \leq \delta \bigg]
\geq 1-2e^{-Mt^2/2}.
\end{equation*}
From here, we perform a union bound over all possible choices of $\mathcal{K}$:
\begin{align}
\nonumber
\mathrm{Pr}\bigg[\exists\mathcal{K}\mbox{ s.t. }\mathrm{Tr}[(\Phi_\mathcal{K}^*\Phi_\mathcal{K}^{}-I_K)^{2q}]^\frac{1}{2q}> \delta\bigg]
&\leq\binom{N}{K}\mathrm{Pr}\bigg[\mathrm{Tr}[(\Phi_\mathcal{K}^*\Phi_\mathcal{K}^{}-I_K)^{2q}]^\frac{1}{2q}> \delta \bigg]\\
\label{eq.whp 1}
&\leq2\exp\Big(-\frac{Mt^2}{2}+K\log \frac{eN}{K}\Big).
\end{align}
Rearranging $M\geq\frac{81}{\delta^2}K^{1+1/q}\log\frac{eN}{K}$ gives $K^{1/2}\leq\frac{\delta M^{1/2}}{9K^{1/2q}\log^{1/2}(eN/K)}\leq\frac{\delta M^{1/2}}{9K^{1/2q}}$, and so
\begin{equation}
\label{eq.whp 2}
\frac{Mt^2}{2}
=\frac{1}{2}\bigg(\frac{\delta M^{1/2}}{3K^{1/2q}}-K^{1/2}\bigg)^2
\geq\frac{1}{2}\bigg(\frac{2\delta M^{1/2}}{9K^{1/2q}}\bigg)^2
\geq 2K\log\frac{eN}{K}.
\end{equation}
Combining \eqref{eq.whp 1} and \eqref{eq.whp 2} gives the result.
\end{proof}

\section{Equiangular tight frames as RIP candidates}

In Chapter~1, we observed that equiangular tight frames (ETFs) are optimal RIP matrices under the Gershgorin analysis.
In the present section, we reexamine ETFs as prospective RIP matrices.
Specifically, we consider the possibility that certain classes of $M\times N$ ETFs support sparsity levels $K$ larger than the order of $\sqrt{M}$.
Before analyzing RIP, let's first observe some important features of ETFs.
Recall that Section~\ref{section.intro to frame theory} characterized ETFs in terms of their rows and columns.
Interestingly, \emph{real} ETFs have a natural alternative characterization.

Let $\Phi$ be a real $M\times N$ ETF, and consider the corresponding Gram matrix $\Phi^*\Phi$.
Observing Section~\ref{section.intro to frame theory}, we have from (ii) that the diagonal entries of $\Phi^*\Phi$ are 1's.
Also, (iii) indicates that the off-diagonal entries are equal in absolute value (to the Welch bound); since $\Phi$ has real entries, the phase of each off-diagonal entry of $\Phi^*\Phi$ is either positive or negative.
Letting $\mu$ denote the absolute value of the off-diagonal entries, we can decompose the Gram matrix as $\Phi^*\Phi=I_N+\mu S$, where $S$ is a matrix of zeros on the diagonal and $\pm1$'s on the off-diagonal.
Here, $S$ is referred to as a \emph{Seidel adjacency matrix}, as $S$ encodes the adjacency rule of a simple graph with $i\leftrightarrow j$ whenever $S[i,j]=-1$; this correspondence originated in \cite{vanLintS:66}.

There is an important equivalence class amongst ETFs: given an ETF $\Phi$, one can negate any of the columns to form another ETF $\Phi'$.
Indeed, the ETF properties in Section~\ref{section.intro to frame theory} are easily verified to hold for this new matrix.
For obvious reasons, $\Phi$ and $\Phi'$ are called \emph{flipping equivalent}.
This equivalence plays a key role in the following result, which characterizes real ETFs in terms of a particular class of strongly regular graphs:

\begin{defn}
We say a simple graph $G$ is \emph{strongly regular} of the form $\mathrm{srg}(v,k,\lambda,\mu)$ if
\begin{itemize}
\item[(i)] $G$ has $v$ vertices,
\item[(ii)] every vertex has $k$ neighbors (i.e., $G$ is $k$\emph{-regular}),
\item[(iii)] every two adjacent vertices have $\lambda$ common neighbors, and
\item[(iv)] every two non-adjacent vertices have $\mu$ common neighbors.
\end{itemize}

\end{defn}

\begin{thm}[Corollary 5.6 in \cite{Waldron:09}]
\label{thm.etf to srg}
Every real $M\times N$ equiangular tight frame with $N>M+1$ is flipping equivalent to a frame whose Seidel adjacency matrix corresponds to the join of a vertex with a strongly regular graph of the form
\begin{equation*}
\mathrm{srg}\bigg(N-1,L,\frac{3L-N}{2},\frac{L}{2}\bigg),
\qquad L:=\frac{N}{2}-1+\bigg(1-\frac{N}{2M}\bigg)\sqrt{\frac{M(N-1)}{N-M}}.
\end{equation*}
Conversely, every such graph corresponds to flipping equivalence classes of equiangular tight frames in the same manner.
\end{thm}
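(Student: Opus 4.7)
The plan is to translate the ETF into a Seidel matrix $S$ on $N$ vertices, exploit the two-eigenvalue structure forced by tightness to deduce a quadratic identity $S^{2}=\alpha S+\beta I$, and then read off a strongly regular graph after Seidel-switching the first vertex into ``join'' position. To begin, I would recall from the discussion preceding the theorem that $\Phi^{*}\Phi=I_{N}+\mu S$, where $\mu=\sqrt{(N-M)/(M(N-1))}$ and $S$ is a Seidel adjacency matrix. Tightness forces $\Phi^{*}\Phi$ to have exactly two distinct eigenvalues, $N/M$ (multiplicity $M$) and $0$ (multiplicity $N-M$), so $S$ has exactly two distinct eigenvalues $\rho_{1}>0>\rho_{2}$, and consequently $S^{2}=\alpha S+\beta I$ with $\alpha:=\rho_{1}+\rho_{2}$ and $\beta:=-\rho_{1}\rho_{2}$; substituting the Welch value of $\mu$ shows $\beta=N-1$.

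Next I would use flipping to normalize the graph. Flipping column $j$ of $\Phi$ by $\varepsilon_{j}\in\{\pm1\}$ replaces $S$ by $DSD$ with $D=\mathrm{diag}(\varepsilon_{j})$, which is again a Seidel matrix. Taking $\varepsilon_{1}=1$ and $\varepsilon_{j}=-S_{1j}$ for $j\geq 2$ yields a representative whose first row equals $(0,-1,\ldots,-1)$; in the graph picture this makes vertex $1$ adjacent to every other vertex, so $S$ encodes the join of a vertex with some graph $H$ on $N-1$ vertices. Writing this normalized matrix in block form
\[ S=\begin{pmatrix} 0 & -\mathbf{1}^{T}\\ -\mathbf{1} & T\end{pmatrix}, \]
it remains only to show that the Seidel matrix $T$ corresponds to a strongly regular graph with the claimed parameters.

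The main work is to unpack $S^{2}=\alpha S+\beta I$ block-by-block. The $(1,1)$ entry reproduces $\beta=N-1$; the off-diagonal block gives $T\mathbf{1}=\alpha\mathbf{1}$, which forces $H$ to be regular of some common degree $k$ and, by computing the row sums of $T$ directly, identifies $k=(N-2-\alpha)/2$ with the quantity $L$ appearing in the theorem; and the lower-right block gives $T^{2}=\alpha T+(N-1)I-J$. Substituting $T=J-I-2A$, where $A$ is the ordinary adjacency matrix of $H$, and using $AJ=JA=kJ$ together with $J^{2}=(N-1)J$, this last identity collapses to the standard strongly regular relation $A^{2}=(k-\tfrac{k}{2})I+\tfrac{3k-N}{2}A+\tfrac{k}{2}(J-I-A)$, which is precisely the defining equation of an $\mathrm{srg}(N-1,L,(3L-N)/2,L/2)$.

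For the converse I would reverse the computation: given a strongly regular graph with the stated parameters, form $S$ as the Seidel matrix of the join of a vertex with it, run the block expansion backwards to verify $S^{2}=\alpha S+(N-1)I$, use $\mathrm{Tr}(S)=0$ together with $\mathrm{Tr}(I_N)=N$ to pin the multiplicities of $\rho_{1},\rho_{2}$ and thereby recover $M$, and finally factor the rank-$M$ positive semidefinite matrix $I_{N}+\mu S$ as $\Phi^{*}\Phi$ to obtain the ETF up to an orthogonal rotation. The main obstacle I anticipate is bookkeeping rather than conceptual: the formula for $L$ is opaque at first sight, so one must verify carefully that the explicit values of $\rho_{1},\rho_{2}$ coming from the Welch $\mu$ make $(N-2-\alpha)/2$ simplify to that expression, and one must disambiguate the notational clash between the Welch parameter $\mu$ and the strongly regular parameter $\mu=L/2$ throughout.
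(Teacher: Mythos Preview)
The paper does not supply a proof of this statement; it is quoted as Corollary~5.6 of Waldron~\cite{Waldron:09} and used as a black box. There is therefore no in-paper argument to compare against. Your plan is the standard route to this correspondence and is correct: the two-eigenvalue property of $S=(\Phi^{*}\Phi-I)/\mu$ gives the quadratic $S^{2}=\alpha S+(N-1)I$, flipping normalizes the first row to $-\mathbf{1}^{T}$, the off-diagonal block of the quadratic forces regularity with degree $L=(N-2-\alpha)/2$, and the bottom-right block reduces to $A^{2}=LI+\tfrac{3L-N}{2}A+\tfrac{L}{2}(J-I-A)$ after substituting $T=J-I-2A$ and $AJ=LJ$. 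Your check that $(N-2-\alpha)/2$ matches the stated formula for $L$ goes through once one writes $\alpha=(N-2M)/(M\mu)$ and inserts the Welch value of $\mu$. The converse sketch is likewise the standard one and works as written.
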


The first chapter illustrated the main issue with the Gershgorin analysis: it ignores important cancellations in the sub-Gram matrices.
We suspect that such cancellations would be more easily observed in a real ETF, since Theorem~\ref{thm.etf to srg} neatly represents the Gram matrix's off-diagonal oscillations in terms of adjacencies in a strongly regular graph.
The following result gives a taste of how useful this graph representation can be:

\begin{thm}
\label{thm.etf clique}
Take a real equiangular tight frame $\Phi$ with worst-case coherence $\mu$, and let $G$ denote the corresponding strongly regular graph in Theorem~\ref{thm.etf to srg}.
Then the restricted isometry constant of $\Phi$ is given by $\delta_K=(K-1)\mu$ for every $K\leq\omega(G)+1$, where $\omega(G)$ denotes the size of the largest clique in $G$.
\end{thm}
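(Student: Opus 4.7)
The plan is to combine the Gershgorin upper bound from Chapter~1 with a matching lower bound obtained by exhibiting an explicit $K$-subset $\mathcal{K}$ of columns whose sub-Gram matrix realizes that bound. Recall from Lemma~\ref{lem:delta_min_bnd} that
\[
\delta_K = \max_{\substack{\mathcal{K}\subseteq\{1,\ldots,N\}\\|\mathcal{K}|=K}}\bignorm{\Phi_\mathcal{K}^*\Phi_\mathcal{K}^{}-I_K}_2,
\]
and the Gershgorin analysis \eqref{eq.bound} already yields $\delta_K\leq(K-1)\mu$ for any frame with unit-norm columns and worst-case coherence $\mu$. So the only thing to verify is a matching lower bound $\delta_K\geq(K-1)\mu$ for every $K\leq\omega(G)+1$.

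First I would observe that flipping equivalence leaves each $\delta_K$ invariant: negating a column of $\Phi$ conjugates $\Phi^*\Phi$ by a $\pm1$ diagonal matrix $D$, and for any $\mathcal{K}$ this conjugation restricts to a unitary conjugation of $\Phi_\mathcal{K}^*\Phi_\mathcal{K}^{}-I_K$, preserving its operator norm. I may therefore assume, by Theorem~\ref{thm.etf to srg}, that $\Phi^*\Phi = I_N + \mu S$ where $S$ is the Seidel adjacency matrix of the graph $G^+$ obtained by joining a distinguished vertex $v_0$ to the strongly regular graph $G$. The key point is that this join always supplies a clique of size $\omega(G)+1$ in $G^+$: if $C\subseteq V(G)$ is a maximum clique, then $\mathcal{K}:=\{v_0\}\cup C$ is a clique in $G^+$ of size $\omega(G)+1$, since $v_0$ is adjacent to every other vertex.

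Now fix $2\leq K\leq\omega(G)+1$ and let $\mathcal{K}$ be any size-$K$ subset of this clique. Because all $\binom{K}{2}$ off-diagonal entries of $S_\mathcal{K}$ equal $-1$, we have
\[
\Phi_\mathcal{K}^*\Phi_\mathcal{K}^{}-I_K = \mu S_\mathcal{K} = -\mu(J_K-I_K),
\]
where $J_K$ denotes the $K\times K$ all-ones matrix. The spectrum of $J_K-I_K$ consists of $K-1$ (with the all-ones eigenvector) and $-1$ (with multiplicity $K-1$), so $\bignorm{\Phi_\mathcal{K}^*\Phi_\mathcal{K}^{}-I_K}_2=(K-1)\mu$. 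Combined with the Gershgorin upper bound, this gives $\delta_K=(K-1)\mu$; the case $K=1$ is immediate since $\Phi$ has unit-norm columns.

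The argument is essentially bookkeeping once one internalizes the graph-theoretic picture, and there is no serious analytic obstacle. The only place a reader might stumble is the $+1$ in the range $K\leq\omega(G)+1$: it would be tempting to think the bound holds only up to $\omega(G)$, but the flipping-equivalent representative guaranteed by Theorem~\ref{thm.etf to srg} always contains the extra ``cone vertex'' $v_0$ that is adjacent to everything else, and it is this vertex that promotes cliques of $G$ to cliques of $G^+$ one size larger. Any subsequent improvement of the theorem would have to exploit cancellations in subsets of columns that do not all share a common sign of inner product, something the clique construction sidesteps entirely.
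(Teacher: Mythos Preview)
Your proof is correct and follows essentially the same approach as the paper: Gershgorin gives the upper bound, and the lower bound comes from exhibiting a clique in the join graph whose Seidel submatrix is $I_K-J_K$, yielding $\|\mu S_\mathcal{K}\|_2=(K-1)\mu$. Your added remarks on flipping equivalence and on the cone vertex $v_0$ accounting for the $+1$ in the range $K\le\omega(G)+1$ make explicit what the paper's proof leaves implicit, but the argument is otherwise identical.
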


\begin{proof}
The Gershgorin analysis \eqref{eq.bound} gives the bound $\delta_K\leq(K-1)\mu$, and so it suffices to prove $\delta_K\geq(K-1)\mu$.
Since $K\leq\omega(G)+1$, there exists a clique of size $K$ in the join of $G$ with a vertex.
Let $\mathcal{K}$ denote the vertices of this clique, and take $S_\mathcal{K}$ to be the corresponding Seidel adjacency submatrix. 
In this case, $S_\mathcal{K}=I_K-J_K$, where $J_K$ is the $K\times K$ matrix of all~1's.
Observing the decomposition $\Phi_\mathcal{K}^*\Phi_\mathcal{K}^{}=I_K+\mu S_\mathcal{K}$, it follows from \eqref{eq.delta min} that
\begin{equation*}
\delta_K
\geq\|\Phi_\mathcal{K}^*\Phi_\mathcal{K}^{}-I_K\|_2
=\|\mu S_\mathcal{K}\|_2
=\mu\|I_K-J_K\|_2
=(K-1)\mu,
\end{equation*}
which concludes the proof.
\end{proof}

This result indicates that the Gershgoin analysis is tight for all real ETFs, at least for sufficiently small values of $K$.
In particular, in order for a real ETF to be RIP beyond the square-root bottleneck, its graph must have a small clique number.
As an example, note that the first four columns of the Steiner ETF in \eqref{eq.steiner etf example} have negative inner products with each other, and thus the corresponding subgraph is a clique.
In general, each block of an $M\times N$ Steiner ETF, whose size is guaranteed to be $\mathrm{O}(\sqrt{M})$, is a lower-dimensional simplex and therefore has this property; this is an alternative proof that the Gershgorin analysis of Steiner ETFs is tight for $K=\mathrm{O}(\sqrt{M})$.

\subsection{Equiangular tight frames with flat restricted orthogonality}

To find ETFs that are RIP beyond the square-root bottleneck, we must apply better techniques than Gershgorin.
We first consider what it means for an ETF to have $(K,\hat\theta)$-flat restricted orthogonality.
Take a real ETF $\Phi=[\varphi_1\cdots\varphi_N]$ with worst-case coherence $\mu$, and note that the corresponding Seidel adjacency matrix $S$ can be expressed in terms of the usual $\{0,1\}$-adjacency matrix~$A$ of the same graph: $S[i,j]=1-2A[i,j]$ whenever $i\neq j$.
Therefore, for every disjoint $\mathcal{I},\mathcal{J}\subseteq\{1,\ldots,N\}$ with $|\mathcal{I}|,|\mathcal{J}|\leq K$, we want
\begin{align}
\nonumber
\hat\theta(|\mathcal{I}||\mathcal{J}|)^{1/2}
&\geq \bigg|\bigg\langle\sum_{i\in\mathcal{I}}\varphi_i,\sum_{j\in\mathcal{J}}\varphi_j\bigg\rangle\bigg|
= \bigg|\sum_{i\in\mathcal{I}}\sum_{j\in\mathcal{J}}\mu S[i,j]\bigg|\\
\label{eq.edge distribution}
&\qquad = \mu\bigg||\mathcal{I}||\mathcal{J}|-2\sum_{i\in\mathcal{I}}\sum_{j\in\mathcal{J}}A[i,j]\bigg|
= 2\mu\bigg|E(\mathcal{I},\mathcal{J})-\frac{1}{2}|\mathcal{I}||\mathcal{J}|\bigg|,
\end{align}
where $E(\mathcal{I},\mathcal{J})$ denotes the number of edges between $\mathcal{I}$ and $\mathcal{J}$ in the graph.
This condition bears a striking resemblence to the following well-known result in graph theory:

\begin{lem}[Expander mixing lemma \cite{HooryLW:06}]
Given a $d$-regular graph of $n$ vertices, the second largest eigenvalue $\lambda$ of its adjacency matrix satisfies
\begin{equation*}
\bigg|E(\mathcal{I},\mathcal{J})-\frac{d}{n}|\mathcal{I}||\mathcal{J}|\bigg|
\leq \lambda(|\mathcal{I}||\mathcal{J}|)^{1/2}
\end{equation*}
for every pair of vertex subsets $\mathcal{I}, \mathcal{J}$.
\end{lem}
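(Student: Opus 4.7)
The plan is to prove the lemma by expanding the characteristic vectors $\chi_\mathcal{I}$ and $\chi_\mathcal{J}$ in an orthonormal eigenbasis of the adjacency matrix $A$. The starting observation is that the number of edges between $\mathcal{I}$ and $\mathcal{J}$ is the bilinear form $E(\mathcal{I},\mathcal{J}) = \ip{\chi_\mathcal{I}}{A\chi_\mathcal{J}}$, which turns the lemma into a purely spectral statement about how much the action of $A$ on $\chi_\mathcal{J}$ deviates from its action on the top eigenvector.

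First I would set up the spectral framework: since $A$ is real symmetric it has an orthonormal eigenbasis $u_1,\ldots,u_n$ with real eigenvalues $d=\lambda_1\geq\lambda_2\geq\cdots\geq\lambda_n$, and $d$-regularity forces $u_1 = \mathbf{1}/\sqrt{n}$ with eigenvalue $d$. Next, I would write $\chi_\mathcal{I}=\sum_k a_k u_k$ and $\chi_\mathcal{J}=\sum_k b_k u_k$, and compute explicitly $a_1 = \ip{\chi_\mathcal{I}}{u_1} = |\mathcal{I}|/\sqrt{n}$, $b_1 = |\mathcal{J}|/\sqrt{n}$, while Parseval gives $\sum_k a_k^2 = |\mathcal{I}|$ and $\sum_k b_k^2 = |\mathcal{J}|$.

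The heart of the argument is then the single line
\begin{equation*}
\ip{\chi_\mathcal{I}}{A\chi_\mathcal{J}} = \sum_{k=1}^n \lambda_k a_k b_k = \frac{d}{n}|\mathcal{I}||\mathcal{J}| + \sum_{k=2}^n \lambda_k a_k b_k,
\end{equation*}
so that the deviation from the ``expected'' edge count $\frac{d}{n}|\mathcal{I}||\mathcal{J}|$ is exactly the tail sum. To bound this tail I would use $|\lambda_k|\leq\lambda$ for all $k\geq 2$ (interpreting $\lambda$ as $\max\{|\lambda_2|,|\lambda_n|\}$, which is the quantity naturally controlled in the expander setting), followed by Cauchy--Schwarz:
\begin{equation*}
\biggabs{\sum_{k=2}^n \lambda_k a_k b_k}
\leq \lambda\biggparen{\sum_{k=2}^n a_k^2}^{1/2}\biggparen{\sum_{k=2}^n b_k^2}^{1/2}
\leq \lambda(|\mathcal{I}||\mathcal{J}|)^{1/2},
\end{equation*}
where the last step uses the Parseval identities above.

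The proof is entirely routine once the correct decomposition is in place; there is no genuine obstacle. The only subtle point is interpretive: the bound as stated must really be against $\max\{|\lambda_2|,|\lambda_n|\}$ rather than just $\lambda_2$, since a very negative $\lambda_n$ can dominate the tail (and this is precisely the ``$\lambda$'' that matches $\lambda(G)$ used earlier in the thesis for spectral expanders). I would flag this convention explicitly so that the comparison with the flat restricted orthogonality condition \eqref{eq.edge distribution} in the next section goes through cleanly.
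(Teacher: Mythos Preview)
Your proof is the standard spectral argument and is correct, including your careful observation about interpreting $\lambda$ as $\max\{|\lambda_2|,|\lambda_n|\}$. Note, however, that the paper does not actually prove this lemma: it is simply quoted from \cite{HooryLW:06} as a known result, so there is no ``paper's own proof'' to compare against. Your write-up is essentially the textbook proof one finds in that reference.
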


In words, the expander mixing lemma says that the number of edges between vertex subsets of a regular graph is roughly what you would expect in a \emph{random} regular graph.
For this lemma to be applicable to \eqref{eq.edge distribution}, we need the strongly regular graph of Theorem~\ref{thm.etf to srg} to satisfy $\frac{L}{N-1}=\frac{d}{n}\approx\frac{1}{2}$.
Using the formula for $L$, it is not difficult to show that $|\frac{L}{N-1}-\frac{1}{2}|=\mathrm{O}(M^{-1/2})$ provided $N=\mathrm{O}(M)$ and $N\geq 2M$.
Furthermore, the second largest eigenvalue of the strongly regular graph will be $\lambda\approx\frac{1}{2}N^{1/2}$, and so the expander mixing lemma says the optimal $\hat\theta$ is $\leq 2\mu\lambda\approx(\frac{N-M}{M})^{1/2}$ since $\mu=(\frac{N-M}{M(N-1)})^{1/2}$.
This is a rather weak estimate for $\hat\theta$ because the expander mixing lemma does not account for the sizes of $\mathcal{I}$ and $\mathcal{J}$ being $\leq K$.
Put in this light, a real ETF that has flat restricted orthogonality corresponds to a strongly regular graph that satisfies a particularly strong version of the expander mixing lemma.

\subsection{Equiangular tight frames and the power method}
Next, we try applying the power method to ETFs.
Given a real ETF $\Phi=[\varphi_1\cdots\varphi_N]$, let $H:=\Phi^*\Phi-I_N$ denote the ``hollow'' Gram matrix.
Also, take $E_\mathcal{K}$ to be the $N\times K$ matrix built from the columns of $I_N$ that are indexed by $\mathcal{K}$.
Then
\begin{equation*}
\mathrm{Tr}[(\Phi_\mathcal{K}^*\Phi_\mathcal{K}^{}-I_K)^{2q}]
=\mathrm{Tr}[(E_\mathcal{K}^*\Phi^*\Phi E_\mathcal{K}^{}-I_K)^{2q}]
=\mathrm{Tr}[(E_\mathcal{K}^*H E_\mathcal{K}^{})^{2q}]
=\mathrm{Tr}[(H E_\mathcal{K}^{}E_\mathcal{K}^*)^{2q}].
\end{equation*}
Since $E_\mathcal{K}^{}E_\mathcal{K}^*=\sum_{k\in\mathcal{K}}\delta_k^{}\delta_k^*$, where $\delta_k$ is the $k$th identity basis element, we continue:
\begin{align}
\nonumber
\mathrm{Tr}[(\Phi_\mathcal{K}^*\Phi_\mathcal{K}^{}-I_K)^{2q}]
&=\mathrm{Tr}\bigg[\bigg(H \sum_{k\in\mathcal{K}}\delta_k^{}\delta_k^*\bigg)^{2q}\bigg]\\
\nonumber
&=\sum_{k_0\in\mathcal{K}}\cdots\sum_{k_{2q-1}\in\mathcal{K}}\mathrm{Tr}[H \delta_{k_0}^{}\delta_{k_0}^*\cdots H \delta_{k_{2q-1}}^{}\delta_{k_{2q-1}}^*]\\
\label{eq.zero terms}
&=\sum_{k_0\in\mathcal{K}}\cdots\sum_{k_{2q-1}\in\mathcal{K}}\delta_{k_0}^*H \delta_{k_{1}}^{}\cdots \delta_{k_{2q-1}}^*H \delta_{k_0}^{},
\end{align}
where the last step used the cyclic property of the trace.
From here, note that $H$ has a zero diagonal, meaning several of the terms in \eqref{eq.zero terms} are zero, namely, those for which $k_{\ell+1}=k_\ell$ for some $\ell\in\mathbb{Z}_{2q}$.
To simplify \eqref{eq.zero terms}, take $\mathcal{K}^{(2q)}$ to be the set of $2q$-tuples satisfying $k_{\ell+1}\neq k_\ell$ for every $\ell\in\mathbb{Z}_{2q}$:
\begin{equation}
\label{eq.power method combinatorics}
\mathrm{Tr}[(\Phi_\mathcal{K}^*\Phi_\mathcal{K}^{}-I_K)^{2q}]
=\sum_{\{k_\ell\}\in\mathcal{K}^{(2q)}} \prod_{\ell\in\mathbb{Z}_{2q}}\langle\varphi_{k_\ell},\varphi_{k_{\ell+1}}\rangle=\mu^{2q}\sum_{\{k_\ell\}\in\mathcal{K}^{(2q)}} \prod_{\ell\in\mathbb{Z}_{2q}}S[k_{\ell},k_{\ell+1}],
\end{equation}
where $\mu$ is the wost-case coherence of $\Phi$, and $S$ is the corresponding Seidel adjacency matrix.
Note that the left-hand side is necessarily nonnegative, while it is not immediate why the right-hand side should be.
This indicates that more simplification can be done, but for the sake of clarity, we will perform this simplification in the special case where $q=2$; the general case is very similar.
When $q=2$, we are concerned with 4-tuples $\{k_0,k_1,k_2,k_3\}\in\mathcal{K}^{(4)}$.
Let's partition these 4-tuples according to the value taken by $k_0$ and $k_q=k_2$.
Note, for a fixed $k_0$ and $k_2$, that $k_1$ can be any value other than $k_0$ or $k_2$, as can $k_3$.
This leads to the following simplification:
\begin{align*}
\sum_{\{k_\ell\}\in\mathcal{K}^{(4)}} \prod_{\ell\in\mathbb{Z}_{4}}S[k_{\ell},k_{\ell+1}]
&=\sum_{k_0\in\mathcal{K}}\sum_{k_2\in\mathcal{K}}\bigg(\sum_{\substack{k_1\in\mathcal{K}\\k_0\neq k_1\neq k_2}}S[k_0,k_1]S[k_1,k_2]\bigg)\bigg(\sum_{\substack{k_3\in\mathcal{K}\\k_2\neq k_3\neq k_0}}S[k_2,k_3]S[k_3,k_0]\bigg)\\
&=\sum_{k_0\in\mathcal{K}}\sum_{k_2\in\mathcal{K}}~~~~\bigg|\!\!\!\!\sum_{\substack{k\in\mathcal{K}\\k_0\neq k\neq k_2}}S[k_0,k]S[k,k_2]\bigg|^2\\
&=\sum_{k_0\in\mathcal{K}}\bigg|\sum_{\substack{k\in\mathcal{K}\\k\neq k_0}}S[k_0,k]S[k,k_0]\bigg|^2+\sum_{k_0\in\mathcal{K}}\sum_{\substack{k_2\in\mathcal{K}\\k_2\neq k_0}}~~~~\bigg|\!\!\!\!\sum_{\substack{k\in\mathcal{K}\\k_0\neq k\neq k_2}}S[k_0,k]S[k,k_2]\bigg|^2.
\end{align*}
The first term above is $K(K-1)^2$, while the other term is not as easy to analyze, as we expect a certain degree of cancellation.
Substituting this simplification into \eqref{eq.power method combinatorics} gives
\begin{equation*}
\mathrm{Tr}[(\Phi_\mathcal{K}^*\Phi_\mathcal{K}^{}-I_K)^4]
=\mu^4\bigg(K(K-1)^2+\sum_{k_0\in\mathcal{K}}\sum_{\substack{k_2\in\mathcal{K}\\k_2\neq k_1}}~~~~\bigg|\!\!\!\!\sum_{\substack{k\in\mathcal{K}\\k_0\neq k\neq k_2}}S[k_0,k]S[k,k_2]\bigg|^2\bigg).
\end{equation*}
If there were no cancellations in the second term, then it would equal $K(K-1)(K-2)^2$, thereby dominating the expression.
However, if oscillations occured as a $\pm1$ Bernoulli random variable, we could expect this term to be on the order of $K^3$, matching the order of the first term.
In this hypothetical case, since $\mu\leq M^{-1/2}$, the parameter $\delta_{K;2}^4$ defined in Theorem~\ref{thm.power method defn} scales as
$\frac{K^3}{M^2}$, and so $M\sim K^{3/2}$; this corresponds to the behavior exhibited in Theorem~\ref{thm.power to rip}.
To summarize, much like flat restricted orthogonality, applying the power method to ETFs leads to interesting combinatorial questions regarding subgraphs, even when $q=2$.

\subsection{The Paley equiangular tight frame as an RIP candidate}

Pick some prime $p\equiv 1\bmod 4$, and build an $M\times p$ matrix $H$ by selecting the $M:=\frac{p+1}{2}$ rows of the $p\times p$ discrete Fourier transform matrix which are indexed by $Q$, the quadratic residues modulo~$p$ (including zero).
To be clear, the entries of $H$ are scaled to have unit modulus.
Next, take $D$ to be an $M\times M$ diagonal matrix whose zeroth diagonal entry is $\sqrt{\frac{1}{p}}$, and whose remaining $M-1$ entries are $\sqrt{\frac{2}{p}}$.
Now build the matrix $\Phi$ by concatenating $DH$ with the zeroth identity basis element; for example, when $p=5$, we have a $3\times 6$ matrix:
\begin{equation*}
\Phi
=\left[\begin{array}{llllll} 
\sqrt{\frac{1}{5}}\quad\quad\quad&\sqrt{\frac{1}{5}}\quad\quad\quad&\sqrt{\frac{1}{5}}&\sqrt{\frac{1}{5}}\quad\quad\quad&\sqrt{\frac{1}{5}}\quad\quad\quad&1\\
\sqrt{\frac{2}{5}}&\sqrt{\frac{2}{5}}e^{-2\pi\mathrm{i}/5}&\sqrt{\frac{2}{5}}e^{-2\pi\mathrm{i}2/5}&\sqrt{\frac{2}{5}}e^{-2\pi\mathrm{i}3/5}&\sqrt{\frac{2}{5}}e^{-2\pi\mathrm{i}4/5}&0\\
\sqrt{\frac{2}{5}}&\sqrt{\frac{2}{5}}e^{-2\pi\mathrm{i}4/5}&\sqrt{\frac{2}{5}}e^{-2\pi\mathrm{i}3/5}&\sqrt{\frac{2}{5}}e^{-2\pi\mathrm{i}2/5}&\sqrt{\frac{2}{5}}e^{-2\pi\mathrm{i}/5}&0\\     
       \end{array}\right].
\end{equation*}
We claim that in general, this process produces an $M\times 2M$ equiangular tight frame, which we call the \emph{Paley ETF}~\cite{Renes:07}.
Presuming for the moment that this claim is true, we have the following result which lends hope for the Paley ETF as an RIP matrix:

\begin{lem}
\label{lem.paley ric}
An $M\times 2M$ Paley equiangular tight frame has restricted isometry constant $\delta_K<1$ for all $K\leq M$.
\end{lem}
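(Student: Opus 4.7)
The plan is to observe that the Paley ETF is a specific instance of the construction already analyzed in Theorem~\ref{thm.harmonic plus identity}. With the theorem's $N$ set to $p$, its $M$ to $\tfrac{p+1}{2}$, and its $K$ to $1$, the prescribed scaling factors in $D$ become exactly $\sqrt{1/p}$ and $\sqrt{2/p}$, and concatenating $DH$ with a single identity basis vector produces the Paley matrix on the nose, regardless of which $M$ rows (here, those indexed by the quadratic residues including zero) are selected. Theorem~\ref{thm.harmonic plus identity} then delivers two crucial facts for free: $\Phi$ is a unit-norm tight frame with frame bound $N/M = 2$, and $\Phi$ is full spark.

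With these in hand, the lemma reduces to showing that for every $\mathcal{K}\subseteq\{1,\dotsc,2M\}$ with $|\mathcal{K}|=K\leq M$, the spectrum of $\Phi_\mathcal{K}^*\Phi_\mathcal{K}^{}$ lies strictly inside the open interval $(0,2)$, since Lemma~\ref{lem:delta_min_bnd} identifies $\delta_K$ with the maximum of $\|\Phi_\mathcal{K}^*\Phi_\mathcal{K}^{}-I_K\|_2$ over the finite family of $K$-subsets. The lower endpoint is immediate: full spark together with $K\leq M$ forces the $K$ columns of $\Phi_\mathcal{K}$ to be linearly independent, so $\lambda_{\min}(\Phi_\mathcal{K}^*\Phi_\mathcal{K}^{})>0$.

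The strict upper bound is the only step requiring a small idea, and I would obtain it from tightness. Because $\Phi\Phi^*=2I_M$, the complementary submatrices satisfy
\begin{equation*}
\Phi_\mathcal{K}^{}\Phi_\mathcal{K}^*+\Phi_{\mathcal{K}^\mathrm{c}}^{}\Phi_{\mathcal{K}^\mathrm{c}}^*=2I_M,
\end{equation*}
so that $\lambda_{\max}(\Phi_\mathcal{K}^*\Phi_\mathcal{K}^{})=\lambda_{\max}(\Phi_\mathcal{K}^{}\Phi_\mathcal{K}^*)=2-\lambda_{\min}(\Phi_{\mathcal{K}^\mathrm{c}}^{}\Phi_{\mathcal{K}^\mathrm{c}}^*)$. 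Since $|\mathcal{K}^\mathrm{c}|=2M-K\geq M$, full spark guarantees that $\Phi_{\mathcal{K}^\mathrm{c}}$ contains $M$ linearly independent columns, hence spans $\mathbb{C}^M$, so $\Phi_{\mathcal{K}^\mathrm{c}}^{}\Phi_{\mathcal{K}^\mathrm{c}}^*$ is positive definite and $\lambda_{\min}(\Phi_{\mathcal{K}^\mathrm{c}}^{}\Phi_{\mathcal{K}^\mathrm{c}}^*)>0$. Therefore $\lambda_{\max}(\Phi_\mathcal{K}^*\Phi_\mathcal{K}^{})<2$, which combined with the lower bound yields $\|\Phi_\mathcal{K}^*\Phi_\mathcal{K}^{}-I_K\|_2<1$ for each of the finitely many $\mathcal{K}$, and maximizing gives $\delta_K<1$.

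There is no substantive obstacle in this argument: once Theorem~\ref{thm.harmonic plus identity} is invoked, both extremes of the sub-Gram spectrum are controlled by a single common mechanism---full spark plus tightness---applied to $\mathcal{K}$ and to $\mathcal{K}^\mathrm{c}$, respectively. The mild care required is simply to note that the Gershgorin bound $(K-1)\mu\sim\sqrt{K^2/M}$ is useless when $K$ is close to $M$, which is precisely why we route around it through the Naimark-style complementary identity above.
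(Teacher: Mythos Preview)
Your proof is correct and follows essentially the same approach as the paper: invoke Theorem~\ref{thm.harmonic plus identity} to obtain full spark and tightness, then use full spark for the lower spectral bound and the complementary identity $\Phi_\mathcal{K}^{}\Phi_\mathcal{K}^*+\Phi_{\mathcal{K}^\mathrm{c}}^{}\Phi_{\mathcal{K}^\mathrm{c}}^*=2I_M$ for the strict upper bound. The only cosmetic difference is that the paper argues the upper bound by picking the maximizing unit vector $x$ for $\|\Phi_\mathcal{K}^*\|_2$ and noting $\|\Phi_{\mathcal{K}^\mathrm{c}}^*x\|>0$, whereas you phrase the same complementarity directly in eigenvalue language; the content is identical.
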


\begin{proof}
First, we note that Theorem~\ref{thm.harmonic plus identity} used Chebotar\"{e}v's theorem~\cite{StevenhagenL:mi96} to prove that the spark of the $M\times 2M$ Paley ETF $\Phi$ is $M+1$, that is, every size-$M$ subcollection of columns of $\Phi$ forms a spanning set.
Thus, for every $\mathcal{K}\subseteq\{1,\ldots,2M\}$ of size $\leq M$, the smallest singular value of $\Phi_\mathcal{K}$ is positive.
It remains to show that the square of the largest singular value is strictly less than 2.
Let $x$ be a unit vector for which $\|\Phi_\mathcal{K}^*x\|=\|\Phi_\mathcal{K}^*\|_2$.
Then since the spark of $\Phi$ is $M+1$, the columns of $\Phi_{\mathcal{K}^\mathrm{c}}$ span, and so
\begin{equation*}
\|\Phi_\mathcal{K}\|_2^2
=\|\Phi_\mathcal{K}^*\|_2^2
=\|\Phi_\mathcal{K}^*x\|^2
<\|\Phi_\mathcal{K}^*x\|^2+\|\Phi_{\mathcal{K}^\mathrm{c}}^*x\|^2
=\|\Phi^* x\|^2
\leq\|\Phi^*\|_2^2
=\|\Phi\Phi^*\|_2
= 2,
\end{equation*}
where the final step follows from (i) and (ii) of Section~\ref{section.intro to frame theory}, which imply $\Phi\Phi^*=2I_M$.
\end{proof}

Now that we have an interest in the Paley ETF $\Phi$, we wish to verify that it is, in fact, an ETF.
It suffices to show that the columns of $\Phi$ have unit norm, and that the inner products between distinct columns equal the Welch bound in absolute value.
Certainly, the zeroth identity basis element is unit-norm, while the squared norm of each of the other columns is given by $\frac{1}{p}+(M-1)\frac{2}{p}=\frac{2M-1}{p}=1$.
Also, the inner product between the zeroth identity basis element and any other column equals the zeroth entry of that column: $p^{-1/2}=
(\frac{N-M}{M(N-1)})^{1/2}$.
It remains to calculate the inner product between distinct columns which are not identity basis elements.
To this end, note that since $a^2=b^2$ if and only if $a=\pm b$, the sequence $\{k^2\}_{k=1}^{p-1}\subseteq\mathbb{Z}_p$ doubly covers $Q\setminus\{0\}$, and so
\begin{equation*}
\langle\varphi_n,\varphi_{n'}\rangle
=\frac{1}{p}+\sum_{m\in Q\setminus\{0\}}\bigg(\sqrt{\frac{2}{p}}e^{-2\pi\mathrm{i}mn/p}\bigg)\bigg(\sqrt{\frac{2}{p}}e^{2\pi\mathrm{i}mn'/p}\bigg)
=\frac{1}{p}\sum_{k=0}^{p-1}e^{2\pi\mathrm{i}(n'-n)k^2/p}.
\end{equation*}
This well-known expression is called a quadratic Gauss sum, and since $p\equiv 1\bmod 4$, its value is determined by the Legendre symbol in the following way: $\langle\varphi_n,\varphi_{n'}\rangle=\frac{1}{\sqrt{p}}(\frac{n'-n}{p})$ for every $n,n'\in\mathbb{Z}_p$ with $n\neq n'$, where
\begin{equation*}
\bigg(\frac{k}{p}\bigg)
:=\left\{\begin{array}{rl}+1&\mbox{ if $k$ is a nonzero quadratic residue modulo $p$,}\\0&\mbox{ if $k=0$,}\\-1&\mbox{ otherwise.} \end{array}\right.
\end{equation*}

Having established that $\Phi$ is an ETF, we notice that the inner products between distinct columns of $\Phi$ are real.
This implies that the columns of $\Phi$ can be unitarily rotated to form a real ETF $\Psi$; indeed, one may take $\Psi$ to be the $M\times2M$ matrix formed by taking the nonzero rows of $L^\mathrm{T}$ in the Cholesky factorization $\Phi^*\Phi=LL^\mathrm{T}$.
As such, we consider the Paley ETF to be real.
From here, Theorem~\ref{thm.etf to srg} prompts us to find the corresponding strongly regular graph.
First, we can flip the identity basis element so that its inner products with the other columns of $\Phi$ are all negative.
As such, the corresponding vertex in the graph will be adjacent to each of the other vertices; naturally, this will be the vertex to which the strongly regular graph is joined.
For the remaining vertices, $n\leftrightarrow n'$ precisely when $(\frac{n'-n}{p})=-1$, that is, when $n'-n$ is not a quadratic residue.
The corresponding subgraph is therefore the complement of the Paley graph, namely, the Paley graph \cite{Sachs:62}.
In general, Paley graphs of order $p$ necessarily have $p\equiv 1\bmod 4$, and so this correspondence is particularly natural.

One interesting thing about the Paley ETF's restricted isometry is that it lends insight into important properties of the Paley graph.
The following is the best known upper bound for the clique number of the Paley graph of prime order (see Theorem 13.14 of \cite{Bollobas:01} and discussion thereafter), and we give a new proof of this bound using restricted isometry:

\begin{thm}
\label{thm.clique upper bound}
Let $G$ denote the Paley graph of prime order $p$.
Then the size of the largest clique is $\omega(G)<\sqrt{p}$. 
\end{thm}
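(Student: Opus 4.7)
The strategy is to combine Theorem \ref{thm.etf clique} and Lemma \ref{lem.paley ric}, both applied to the Paley ETF $\Phi$. From the preceding discussion, $\Phi$ is $M\times 2M$ with $M=(p+1)/2$, its worst-case coherence is $\mu=1/\sqrt{p}$, and after flipping the identity-basis column so that it is adjacent to every other vertex, the strongly regular graph associated to $\Phi$ via Theorem \ref{thm.etf to srg} is precisely the Paley graph $G$.

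The plan is to evaluate both estimates at $K=\omega(G)+1$. Theorem \ref{thm.etf clique} then gives $\delta_K=(K-1)\mu=\omega(G)/\sqrt{p}$, while Lemma \ref{lem.paley ric} gives $\delta_K<1$ whenever $K\leq M$. Putting these together yields $\omega(G)/\sqrt{p}<1$, which is the desired inequality. The interplay here is essential: Theorem \ref{thm.etf clique} asserts that the Gershgorin estimate is tight in the clique regime, but on its own carries no combinatorial content; Lemma \ref{lem.paley ric} bounds $\delta_K$ but knows nothing about graph structure. The Paley ETF is precisely the matrix for which both statements can be invoked at the same sparsity level, and the two bounds cross at $K\approx\sqrt{p}+1$, which is the arithmetic source of the $\sqrt{p}$ upper bound.

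The one ingredient not already present in the excerpt is the elementary inequality $\omega(G)\leq (p-1)/2 = M-1$, which is what ensures $K=\omega(G)+1\leq M$ so that Lemma \ref{lem.paley ric} applies. This follows from a simple counting argument: in any connected $d$-regular graph, a clique on $d+1$ vertices uses up every neighbor of each of its vertices, so the clique must be an entire connected component; since the Paley graph is connected and $(p-1)/2$-regular on $p>(p+1)/2$ vertices, no such clique can exist, and we conclude $\omega(G)\leq (p-1)/2$. I expect no serious obstacles beyond this small graph-theoretic observation, since the identification of the underlying strongly regular graph as the Paley graph itself, via the Gauss-sum evaluation of the Paley ETF's inner products, has already been carried out in the paragraphs preceding the theorem.
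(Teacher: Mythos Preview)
Your proof is correct and follows the same core strategy as the paper: set $K=\omega(G)+1$, use Theorem~\ref{thm.etf clique} to get $\delta_K=\omega(G)/\sqrt{p}$, use Lemma~\ref{lem.paley ric} to get $\delta_K<1$, and combine. The only difference is how you establish the preliminary inequality $\omega(G)+1\leq M$: the paper argues spectrally (a clique of size $M+1$ in the join would force the sub-Gram matrix $\Phi_\mathcal{K}^*\Phi_\mathcal{K}^{}=(1+\mu)I_{M+1}-\mu J_{M+1}$ to have a negative eigenvalue, contradicting positive semidefiniteness), whereas you give a purely combinatorial argument based on $(p-1)/2$-regularity and connectedness of the Paley graph. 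Both arguments are valid and of comparable length; yours is perhaps more elementary in that it avoids any linear algebra, while the paper's argument stays entirely within the frame-theoretic setting and does not require knowing that the Paley graph is connected.
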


\begin{proof}
We start by showing $\omega(G)+1\leq M$.
Suppose otherwise: that there exists a clique $\mathcal{K}$ of size $M+1$ in the join of a vertex with $G$.
Then the corresponding sub-Gram matrix of the Paley ETF has the form $\Phi_\mathcal{K}^*\Phi_\mathcal{K}^{}=(1+\mu)I_{M+1}-\mu J_{M+1}$, where $\mu=p^{-1/2}$ is the worst-case coherence and $J_{M+1}$ is the $(M+1)\times (M+1)$ matrix of 1's.
Since the largest eigenvalue of $J_{M+1}$ is $M+1$, the smallest eigenvalue of $\Phi_\mathcal{K}^*\Phi_\mathcal{K}^{}$ is $1+p^{-1/2}-(M+1)p^{-1/2}=1-\frac{1}{2}(p+1)p^{-1/2}$, which is negative when $p\geq 5$, contradicting the fact that $\Phi_\mathcal{K}^*\Phi_\mathcal{K}^{}$ is positive semidefinite.

Since $\omega(G)+1\leq M$, we can apply Lemma~\ref{lem.paley ric} and Theorem~\ref{thm.etf clique} to get
\begin{equation}
\label{eq.clique vs 1}
1>\delta_{\omega(G)+1}=\Big(\omega(G)+1-1\Big)\mu=\frac{\omega(G)}{\sqrt{p}},
\end{equation}
and rearranging gives the result.
\end{proof}

It is common to apply probabilistic and heuristic reasoning to gain intuition in number theory. 
For example, consecutive entries of the Legendre symbol are known to mimic certain properties of a $\pm1$ Bernoulli random variable \cite{Peralta:92}.
Moreover, Paley graphs enjoy a certain quasi-random property that was studied in \cite{ChungGW:89}.
On the other hand, Graham and Ringrose~\cite{GrahamR:90} showed that, while random graphs of size $p$ have an expected clique number of $(1+o(1))2\log p/\log 2$, Paley graphs of prime order deviate from this random behavior, having a clique number $\geq c\log p\log\log\log p$ infinitely often.
The best known universal lower bound, $(1/2+o(1))\log p/\log 2$, is given in \cite{Cohen:88}, which indicates that the random graph analysis is at least tight in some sense.
Regardless, this has a significant difference from the upper bound $\sqrt{p}$ in Theorem~\ref{thm.clique upper bound}, and it would be nice if probabilistic arguments could be leveraged to improve this bound, or at least provide some intuition.

Note that our proof \eqref{eq.clique vs 1} hinged on the fact that $\delta_{\omega(G)+1}<1$, courtesy of Lemma~\ref{lem.paley ric}.
Hence, any improvement to our estimate for $\delta_{\omega(G)+1}$ would directly lead to the best known upper bound on the Paley graph's clique number.
To approach such an improvement, note that for large $p$, the Fourier portion of the Paley ETF $DH$ is not significatly different from the normalized partial Fourier matrix $(\frac{2}{p+1})^{1/2}H$; indeed, $\|H_\mathcal{K}^*D^2H_\mathcal{K}^{}-\frac{2}{p+1}H_\mathcal{K}^*H_\mathcal{K}^{}\|_2\leq\frac{2}{p}$ for every $\mathcal{K}\subseteq\mathbb{Z}_p$ of size $\leq\frac{p+1}{2}$, and so the difference vanishes.
If we view the quadratic residues modulo $p$ (the row indices of $H$) as random, then a random partial Fourier matrix serves as a proxy for the Fourier portion of the Paley ETF.
This in mind, we appeal to the following:

\begin{thm}[Theorem 3.2 in \cite{Rauhut:08}]
Draw rows from the $N\times N$ discrete Fourier transform matrix uniformly at random with replacement to construct an $M\times N$ matrix, and then normalize the columns to form $\Phi$.
Then $\Phi$ has restricted isometry constant $\delta_K\leq\delta$ with probability $1-\varepsilon$ provided $\frac{M}{\log M}\geq \frac{C}{\delta^2}K\log^2 K\log N\log\varepsilon^{-1}$, where $C$ is a universal constant.
\end{thm}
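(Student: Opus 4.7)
My plan is to follow the Rudelson--Vershynin / Rauhut chaining framework for random partial Fourier matrices. Let $F$ be the unitary $N\times N$ DFT, let $r_1,\ldots,r_M$ be row indices drawn i.i.d. uniformly from $\{0,\ldots,N-1\}$, and let $\xi_j\in\mathbb{C}^N$ denote the vector with entries $\xi_j[n]=\omega^{r_j n}$. After column-normalization the (squared) restricted isometry constant may be written as
\begin{equation*}
\delta_K
=\sup_{\substack{\|x\|_2=1\\ \|x\|_0\le K}}\biggabs{\bignorm{\Phi x}^2-\|x\|^2}
=\sup_{\substack{\|x\|_2=1\\ \|x\|_0\le K}}\biggabs{\tfrac{1}{M}\sum_{j=1}^M\bigabs{\ip{\xi_j}{x}}^2-1},
\end{equation*}
and for each fixed $x$ we have $\mathbb{E}\abs{\ip{\xi_j}{x}}^2=\|x\|^2=1$. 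So the task reduces to bounding the supremum of a centered empirical process indexed by the set $T_K:=\{x\in\mathbb{C}^N:\|x\|_2=1,\|x\|_0\le K\}$, i.e.\ the $K$-sparse unit sphere.

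First I would symmetrize in the standard way: letting $\epsilon_j$ be i.i.d.\ Rademachers independent of the $\xi_j$'s, one has
\begin{equation*}
\mathbb{E}\sup_{x\in T_K}\biggabs{\tfrac{1}{M}\sum_{j=1}^M\Bigparen{\bigabs{\ip{\xi_j}{x}}^2-1}}
\le 2\,\mathbb{E}\sup_{x\in T_K}\biggabs{\tfrac{1}{M}\sum_{j=1}^M\epsilon_j\bigabs{\ip{\xi_j}{x}}^2}.
\end{equation*}
Conditioning on $(\xi_j)$, the right-hand side is the supremum of a Rademacher process indexed by $T_K$, whose natural (sub-Gaussian) pseudometric is
$d_{\xi}(x,y)^2=\tfrac{1}{M^2}\sum_{j=1}^M\paren{|\ip{\xi_j}{x}|^2-|\ip{\xi_j}{y}|^2}^2$. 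This metric is dominated by the product of the $\ell_2$-diameter and a ``Fourier-restricted'' diameter $\max_j|\ip{\xi_j}{x-y}|/\sqrt M\le \|\Phi_T^*\Phi_T\|^{1/2}\|x-y\|_\infty$ times factors involving $\|\Phi\|_{2\to\infty}\le 1/\sqrt M$.

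Next I would apply Dudley's inequality to the conditional chaos, splitting the entropy integral at the scale where the $\ell_\infty$-to-$\ell_2$ metric takes over from the $\ell_2$ metric. Covering numbers for $T_K$ satisfy the classical estimates $\log\mathcal{N}(T_K,\|\cdot\|_2,u)\lesssim K\log(N/(Ku))$ and, using a volumetric/empirical argument of Rudelson, one can trade the large-scale portion of the integral for a $\sqrt{\log M}$-sized $\ell_\infty$ term that feeds into itself recursively. Closing this recursion à la Rudelson--Vershynin yields
\begin{equation*}
\mathbb{E}\,\delta_K\lesssim \sqrt{\tfrac{K\log^2 K\,\log N\,\log M}{M}}\,+\,\tfrac{K\log^2 K\,\log N\,\log M}{M}.
\end{equation*}
Finally I would upgrade this expectation bound to the claimed high-probability statement. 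Two routes are available: compute $p$-th moments throughout the chaining to obtain $(\mathbb{E}\,\delta_K^p)^{1/p}$ with explicit $p$-dependence and then use Markov's inequality with $p\sim \log\varepsilon^{-1}$, or invoke a Talagrand-type deviation inequality for suprema of empirical processes (the bounded-difference version suffices because each summand $|\ip{\xi_j}{x}|^2\le K$ is bounded on $T_K$). Either route contributes the additional $\log\varepsilon^{-1}$ factor and leads to the stated condition $M/\log M\gtrsim \delta^{-2}K\log^2 K\log N\log\varepsilon^{-1}$.

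The main obstacle is the chaining step that produces the clean $K\log^2 K\log N$ scaling rather than the $K^2$-type scaling delivered by a straightforward matrix Bernstein union bound: one has to exploit the boundedness of the Fourier entries in $\ell_\infty$ and the recursive self-improvement of the Rudelson empirical-process bound. Once that is in place, converting moments to probabilities and tracking the $\log M$ factor to produce $M/\log M$ on the left-hand side is a routine (if somewhat delicate) book-keeping exercise.
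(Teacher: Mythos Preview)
The paper does not prove this statement at all: it is quoted verbatim as Theorem~3.2 of \cite{Rauhut:08} and used as a black box to motivate a heuristic bound on the Paley graph clique number. There is no ``paper's own proof'' to compare against.

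That said, your outline is an accurate summary of how the result is actually established in the Rudelson--Vershynin and Rauhut line of work: the symmetrization, the Dudley chaining with the mixed $\ell_2/\ell_\infty$ metric, the recursive self-bounding that turns a naive $K^2$ into $K\log^2 K\log N$, and the upgrade from expectation to high probability via moment estimates or a Talagrand inequality. If your goal were to supply a self-contained proof here, the sketch identifies the right ingredients; the main gap in turning it into a proof is the step you flag yourself, namely the careful covering-number and recursion argument that produces the $K\log^2 K$ factor, which you have described only at the level of a plan rather than carried out.
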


In our case, both $M$ and $N$ scale as $p$, and so picking $\delta$ to achieve equality above gives
\begin{equation*}
\delta^2
=\frac{C'}{p}K\log^2K\log^2 p\log\varepsilon^{-1}.
\end{equation*}
Continuing as in \eqref{eq.clique vs 1}, denote $\omega=\omega(G)$ and take $K=\omega$ to get
\begin{equation*}
\frac{C'}{p}\omega\log^2\omega\log^2 p\log\varepsilon^{-1}
\geq\delta_{\omega}^2
=\frac{(\omega-1)^2}{p}
\geq\frac{\omega^2}{2p},
\end{equation*}
and then rearranging gives  $\omega/\log^2\omega\leq C''\log^2p\log\varepsilon^{-1}$ with probability $1-\varepsilon$.
Interestingly, having $\omega/\log^2\omega=\mathrm{O}(\log^3p)$ with high probability (again, under the model that quadratic residues are random) agrees with the results of Graham and Ringrose~\cite{GrahamR:90}.
This gives some intuition for what we can expect the size of the Paley graph's clique number to be, while at the same time demonstrating the power of Paley ETFs as RIP candidates. 
We conclude with the following, which can be reformulated in terms of both flat restricted orthogonality and the power method:

\begin{conj}
The Paley equiangular tight frame has the $(K,\delta)$-restricted isometry property with some $\delta<\sqrt{2}-1$ whenever $K\leq\frac{Cp}{\log^\alpha p}$, for some universal constants $C$ and $\alpha$.
\end{conj}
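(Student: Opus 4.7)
The plan is to use the power method framework of Theorem~\ref{thm.power method defn} to bound the restricted isometry constant, since this technique converts the spectral problem into an algebraic sum that is amenable to character sum estimates. Concretely, it suffices to show $\delta_{2K;q} < \sqrt{2}-1$ for a suitable $q$ growing mildly with $p$.

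Step 1 (Reduction to a character sum): For the Paley ETF $\Phi$, combining the computation in \eqref{eq.power method combinatorics} with the identification $S[i,j]=\bigl(\tfrac{j-i}{p}\bigr)$ (treating the single ``identity basis element'' vertex as a rank-one perturbation handled separately), we obtain, for any $\mathcal{K}\subseteq\mathbb{Z}_p$,
\begin{equation*}
\mathrm{Tr}\bigl[(\Phi_\mathcal{K}^*\Phi_\mathcal{K}-I_K)^{2q}\bigr]
\;=\;\frac{1}{p^q}\sum_{(k_0,\dotsc,k_{2q-1})\in\mathcal{K}^{(2q)}}\prod_{\ell\in\mathbb{Z}_{2q}}\Bigl(\tfrac{k_{\ell+1}-k_\ell}{p}\Bigr)\;+\;E_\mathcal{K},
\end{equation*}
where $E_\mathcal{K}$ is a small correction coming from walks through the identity-basis vertex.

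Step 2 (Classify tuples by collision pattern): Partition $\mathcal{K}^{(2q)}$ according to the equivalence relation $\ell\sim \ell'\iff k_\ell=k_{\ell'}$, giving a set partition $\pi$ of $\mathbb{Z}_{2q}$. For each $\pi$, the inner character sum becomes a product over the distinct values $x_1,\dotsc,x_r$ of $\prod_\ell\bigl(\tfrac{k_{\ell+1}-k_\ell}{p}\bigr)$, which can be rewritten as $\chi\bigl(f_\pi(x_1,\dotsc,x_r)\bigr)$ for an explicit polynomial $f_\pi$ of degree at most $2q$.

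Step 3 (Weil-type bounds): Apply the Weil bound on character sums over curves: if $f_\pi$, viewed as a polynomial in any one variable with the others fixed, is not a perfect square times a constant, then the sum over that variable is $O(q\sqrt{p})$. Iterating gives a bound of the form $O(q^{2q}p^{r/2})$ for generic patterns. The ``degenerate'' patterns---those for which $f_\pi$ is forced to be a square---are precisely those arising from closed-walk pairings of length $2q$; their total contribution is bounded combinatorially by the Catalan-type count $\leq (2q)!\,K^q\,p^{?}$ familiar from the moment method.

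Step 4 (Balance and take $2q$-th root): Summing over all set partitions of $\mathbb{Z}_{2q}$ and dividing by $p^q$ to account for the $\mu^{2q}$ prefactor, the dominant contribution is $\lesssim (Cq)^{2q}(K/p)^q$, which yields
\begin{equation*}
\delta_{2K;q}\;\lesssim\;Cq\sqrt{K/p}.
\end{equation*}
A union bound over $\mathcal{K}\subseteq\{1,\dotsc,p+1\}$ of size $2K$ costs a factor $\binom{p+1}{2K}\leq(ep/K)^{2K}$; choosing $q\asymp\log(p/K)$ (as in the proof of Theorem~\ref{thm.power to rip}) absorbs this factor while keeping $K^{1/q}=O(1)$. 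This delivers $\delta_{2K}<\sqrt{2}-1$ whenever $K\leq C p/\log^\alpha p$ for suitable absolute constants $C,\alpha$.

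The principal obstacle is Step~3: many collision patterns produce polynomials $f_\pi$ that are \emph{neither} generic \emph{nor} a clean square---for instance, patterns where a long block of indices collapses leaves a sum along a lower-dimensional variety whose character-sum cancellation is governed by subtler estimates (Burgess-type bounds over short intervals, or Chang's structure theorem for large character sums over sumsets). Quantifying the combined contribution of these intermediate patterns---and showing that their number, weighted by the Weil savings they do enjoy, is dominated by the pure-pairing count---is exactly the quasi-random heuristic of Chung--Graham--Wilson that Paley graphs are known to satisfy only in a weak sense; this is also precisely where the connection to the Paley clique number conjecture emerges, and why proving the stated RIP conjecture appears to be at least as hard as improving the $\sqrt{p}$ bound of Theorem~\ref{thm.clique upper bound}.
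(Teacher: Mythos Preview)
This statement is a \emph{conjecture} in the paper, not a theorem; there is no proof to compare against. The paper motivates it by a heuristic argument---treating the set of quadratic residues as if it were a random set of size $\approx p/2$ and invoking the random-partial-Fourier RIP result of Rauhut---and explicitly observes that proving it would in particular sharpen the best known upper bound on the Paley clique number (Theorem~\ref{thm.clique upper bound}), a well-known open problem in number theory.

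Your outline correctly picks up the power-method reformulation the paper records in \eqref{eq.power method combinatorics}, but there is a gap more basic than the one you flag at the end. In Step~3 you invoke Weil bounds, but those control \emph{complete} character sums over $\mathbb{F}_p$; the sum in the trace runs over tuples from an \emph{arbitrary} subset $\mathcal{K}\subseteq\mathbb{Z}_p$ of size $2K$, and there is no mechanism by which a $\sqrt{p}$ saving per variable survives restriction to $\mathcal{K}$. Indeed, if $\mathcal{K}$ happens to be a clique in the Paley graph then every product $\prod_\ell\bigl(\tfrac{k_{\ell+1}-k_\ell}{p}\bigr)$ has the same sign and there is no cancellation whatsoever---this is exactly the content of Theorem~\ref{thm.etf clique}. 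Relatedly, the ``union bound over $\mathcal{K}$'' in Step~4 is misplaced: the problem is deterministic, so one needs a bound valid for \emph{every} $\mathcal{K}$ simultaneously, and that worst case is precisely what resists attack. Your closing paragraph is right that the obstruction is tied to the Paley clique-number problem; that is why the paper states this as a conjecture.
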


\section{Appendix}

In this section, we prove Theorem~\ref{thm.fro}, which states that a matrix with $(K,\hat\theta)$-flat restricted orthogonality has $\theta_K\leq C\hat\theta\log K$, that is, it has restricted orthogonality.
The proof below is adapted from the proof of Lemma 3 in~\cite{BourgainDFKK:11}. 
Our proof has the benefit of being valid for all values of $K$ (as opposed to sufficiently large $K$ in the original~\cite{BourgainDFKK:11}), and it has near-optimal constants where appropriate.
Moreover in this version, the columns of the matrix are not required to have unit norm.

\begin{proof}[Proof of Theorem~\ref{thm.fro}]
Given arbitrary disjoint subsets $\mathcal{I},\mathcal{J}\subseteq\{1,\ldots,N\}$ with $|\mathcal{I}|,|\mathcal{J}|\leq K$, we will bound the following quantity three times, each time with different constraints on $\{x_i\}_{i\in\mathcal{I}}$ and $\{y_j\}_{j\in\mathcal{J}}$:
\begin{equation}
\label{eq.to bound}
\bigg|\bigg\langle \sum_{i\in\mathcal{I}}x_i\varphi_i,\sum_{j\in\mathcal{J}}y_j\varphi_j \bigg\rangle\bigg|.
\end{equation}
To be clear, our third bound will have no constraints on $\{x_i\}_{i\in\mathcal{I}}$ and $\{y_j\}_{j\in\mathcal{J}}$, thereby demonstrating restricted orthogonality.
Note that by assumption, \eqref{eq.to bound} is $\leq\hat\theta(|\mathcal{I}||\mathcal{J}|)^{1/2}$ whenever the $x_i$'s and $y_j$'s are in $\{0,1\}$.
We first show that this bound is preserved when we relax the $x_i$'s and $y_j$'s to lie in the interval $[0,1]$.

Pick a disjoint pair of subsets $\mathcal{I}',\mathcal{J}'\subseteq\{1,\ldots,N\}$ with $|\mathcal{I}'|,|\mathcal{J}'|\leq K$.
Starting with some $k\in\mathcal{I}'$, note that flat restricted orthogonality gives that
\begin{align*}
\bigg|\bigg\langle \sum_{i\in\mathcal{I}}\varphi_i,\sum_{j\in\mathcal{J}}\varphi_j \bigg\rangle\bigg|
&\leq\hat\theta(|\mathcal{I}||\mathcal{J}|)^{1/2},\\
\bigg|\bigg\langle \sum_{i\in\mathcal{I}\setminus\{k\}}\varphi_i,\sum_{j\in\mathcal{J}}\varphi_j \bigg\rangle\bigg|
&\leq\hat\theta(|\mathcal{I}\setminus\{k\}||\mathcal{J}|)^{1/2}
\leq\hat\theta(|\mathcal{I}||\mathcal{J}|)^{1/2}
\end{align*}
for every disjoint $\mathcal{I},\mathcal{J}\subseteq\{1,\ldots,N\}$ with $|\mathcal{I}|,|\mathcal{J}|\leq K$ and $k\in\mathcal{I}$.
Thus, we may take any $x_k\in[0,1]$ to form a convex combination of these two expressions, and then the triangle inequality gives
\begin{align}
\nonumber
\hat\theta(|\mathcal{I}||\mathcal{J}|)^{1/2}
&\geq x_k\bigg|\bigg\langle \sum_{i\in\mathcal{I}}\varphi_i,\sum_{j\in\mathcal{J}}\varphi_j \bigg\rangle\bigg|+(1-x_k)\bigg|\bigg\langle \sum_{i\in\mathcal{I}\setminus\{k\}}\varphi_i,\sum_{j\in\mathcal{J}}\varphi_j \bigg\rangle\bigg|\\
\nonumber
&\geq\bigg|x_k\bigg\langle \sum_{i\in\mathcal{I}}\varphi_i,\sum_{j\in\mathcal{J}}\varphi_j \bigg\rangle+(1-x_k)\bigg\langle \sum_{i\in\mathcal{I}\setminus\{k\}}\varphi_i,\sum_{j\in\mathcal{J}}\varphi_j \bigg\rangle\bigg|\\
\label{eq.convex trick}
&=\bigg|\bigg\langle \sum_{i\in\mathcal{I}}\bigg\{\begin{array}{cc}x_k,&i=k\\1,&i\neq k\end{array}\bigg\}\varphi_i,\sum_{j\in\mathcal{J}}\varphi_j \bigg\rangle\bigg|.
\end{align}
Since \eqref{eq.convex trick} holds for every disjoint $\mathcal{I},\mathcal{J}\subseteq\{1,\ldots,N\}$ with $|\mathcal{I}|,|\mathcal{J}|\leq K$ and $k\in\mathcal{I}$, we can do the same thing with an additional index $i\in\mathcal{I}'$ or $j\in\mathcal{J}'$, and replace the corresponding unit coefficient with some $x_i$ or $y_j$ in $[0,1]$.
Continuing in this way proves the claim that \eqref{eq.to bound} is $\leq\hat\theta(|\mathcal{I}||\mathcal{J}|)^{1/2}$ whenever the $x_i$'s and $y_j$'s lie in the interval $[0,1]$.

For the second bound, we assume the $x_i$'s and $y_j$'s are nonnegative with unit norm: $\sum_{i\in\mathcal{I}}x_i^2=\sum_{j\in\mathcal{J}}y_j^2=1$.
To bound \eqref{eq.to bound} in this case, we partition $\mathcal{I}$ and $\mathcal{J}$ according to the size of the corresponding coefficients:
\begin{equation*}
\mathcal{I}_k:=\{i\in\mathcal{I}:2^{-(k+1)}<x_i\leq 2^{-k}\},
\qquad
\mathcal{J}_k:=\{j\in\mathcal{J}:2^{-(k+1)}<y_j\leq 2^{-k}\}.
\end{equation*}
Note the unit-norm constraints ensure that $\mathcal{I}=\bigcup_{k=0}^\infty\mathcal{I}_k$ and $\mathcal{J}=\bigcup_{k=0}^\infty\mathcal{J}_k$.
The triangle inequality thus gives
\begin{align}
\nonumber
\bigg|\bigg\langle \sum_{i\in\mathcal{I}}x_i\varphi_i,\sum_{j\in\mathcal{J}}y_j\varphi_j \bigg\rangle\bigg|
&=\bigg|\bigg\langle \sum_{k_1=0}^\infty\sum_{i\in\mathcal{I}_{k_1}}x_i\varphi_i,\sum_{k_2=0}^\infty\sum_{j\in\mathcal{J}_{k_2}}y_j\varphi_j \bigg\rangle\bigg|\\
\label{eq.partition across sizes}
&\leq\sum_{k_1=0}^\infty\sum_{k_2=0}^\infty 2^{-(k_1+k_2)} \bigg|\bigg\langle \sum_{i\in\mathcal{I}_{k_1}}\frac{x_i}{2^{-k_1}}\varphi_i,\sum_{j\in\mathcal{J}_{k_2}}\frac{y_j}{2^{-k_2}}\varphi_j \bigg\rangle\bigg|.
\end{align}
By the definitions of $\mathcal{I}_{k_1}$ and $\mathcal{J}_{k_2}$, the coefficients of $\varphi_i$ and $\varphi_j$ in \eqref{eq.partition across sizes} all lie in $[0,1]$.
As such, we continue by applying our first bound:
\begin{align}
\nonumber
\bigg|\bigg\langle \sum_{i\in\mathcal{I}}x_i\varphi_i,\sum_{j\in\mathcal{J}}y_j\varphi_j \bigg\rangle\bigg|
&\leq\sum_{k_1=0}^\infty\sum_{k_2=0}^\infty 2^{-(k_1+k_2)} \hat\theta (|\mathcal{I}_{k_1}||\mathcal{J}_{k_2}|)^{1/2}\\
\label{eq.partition across sizes 2}
&=\hat\theta\bigg(\sum_{k=0}^\infty2^{-k}|\mathcal{I}_{k}|^{1/2}\bigg)\bigg(\sum_{k=0}^\infty2^{-k}|\mathcal{J}_{k}|^{1/2}\bigg).
\end{align}
We now observe from the definition of $\mathcal{I}_k$ that
\begin{equation*}
1
=\sum_{i\in\mathcal{I}}x_i^2
=\sum_{k=0}^\infty\sum_{i\in\mathcal{I}_k}x_i^2
>\sum_{k=0}^\infty 4^{-(k+1)}|\mathcal{I}_k|.
\end{equation*}
Thus for any positive integer $t$, the Cauchy-Schwarz inequality gives
\begin{align}
\nonumber
\sum_{k=0}^\infty 2^{-k}|\mathcal{I}_k|^{1/2}
&=\sum_{k=0}^{t-1} 2^{-k}|\mathcal{I}_k|^{1/2}+\sum_{k=t}^\infty 2^{-k}|\mathcal{I}_k|^{1/2}\\
\nonumber
&\leq t^{1/2}\bigg(\sum_{k=0}^{t-1} 4^{-k}|\mathcal{I}_k|\bigg)^{1/2}+\sum_{k=t}^\infty 2^{-k}K^{1/2}\\
\label{eq.partition across sizes 3}
&< 2(t^{1/2}+K^{1/2}2^{-t}),
\end{align}
and similarly for the $\mathcal{J}_k$'s.
For a fixed $K$, we note that \eqref{eq.partition across sizes 3} is minimized when $K^{1/2}2^{-t}=\frac{t^{-1/2}}{2\log 2}$, and so we pick $t$ to be the smallest positive integer such that $K^{1/2}2^{-t}\leq\frac{t^{-1/2}}{2\log 2}$.
With this, we continue \eqref{eq.partition across sizes 2}:
\begin{align}
\nonumber
\bigg|\bigg\langle \sum_{i\in\mathcal{I}}x_i\varphi_i,\sum_{j\in\mathcal{J}}y_j\varphi_j \bigg\rangle\bigg|
&<\hat\theta\Big(2(t^{1/2}+K^{1/2}2^{-t})\Big)^2\\
\label{eq.partition across sizes 4}
&\leq 4\hat\theta \bigg(t^{1/2}+\frac{t^{-1/2}}{2\log 2}\bigg)^2
=4\hat\theta\bigg(t+\frac{1}{\log 2}+\frac{1}{(2\log 2)^2t}\bigg).
\end{align}
From here, we claim that $t\leq\lceil\frac{\log K}{\log 2}\rceil$.
Considering the definition of $t$, this is easily verified for $K=2,3,\ldots,7$ by showing $K^{1/2}2^{-s}\leq\frac{s^{-1/2}}{2\log 2}$ for $s=\lceil\frac{\log K}{\log 2}\rceil$.
For $K\geq8$, one can use calculus to verify the second inequality of the following:
\begin{equation*}
K^{1/2}2^{-\lceil\frac{\log K}{\log 2}\rceil}
\leq K^{1/2}2^{-\frac{\log K}{\log 2}}
\leq \frac{1}{2\log 2}\bigg(\frac{\log K}{\log 2}+1\bigg)^{-1/2}
\leq \frac{1}{2\log 2}\bigg\lceil\frac{\log K}{\log 2}\bigg\rceil^{-1/2},
\end{equation*}
meaning $t\leq\lceil\frac{\log K}{\log 2}\rceil$.
Substituting $t\leq\frac{\log K}{\log 2}+1$ and $t\geq1$ into \eqref{eq.partition across sizes 4} then gives
\begin{equation*}
\bigg|\bigg\langle \sum_{i\in\mathcal{I}}x_i\varphi_i,\sum_{j\in\mathcal{J}}y_j\varphi_j \bigg\rangle\bigg|
<4\hat\theta\bigg(\frac{\log K}{\log2}+1+\frac{1}{\log 2}+\frac{1}{(2\log 2)^2}\bigg)\\
=\hat\theta(C_0\log K+C_1),
\end{equation*}
with $C_0\approx 5.77$, $C_1\approx 11.85$.
As such, \eqref{eq.to bound} is $\leq C'\hat\theta\log K$ with $C'=C_0+\frac{C_1}{\log 2}$ in this case.

We are now ready for the final bound on \eqref{eq.to bound} in which we apply no constraints on the $x_i$'s and $y_j$'s.
To do this, we consider the positive and negative real and imaginary parts of these coefficients:
\begin{equation*}
x_i=\sum_{k=0}^3x_{i,k}\mathrm{i}^k\quad \mbox{s.t.}\quad x_{i,k}\geq0\quad \forall k,
\end{equation*}
and similarly for the $y_j$'s.
With this decomposition, we apply the triangle inequality to get
\begin{align*}
\bigg|\bigg\langle \sum_{i\in\mathcal{I}}x_i\varphi_i,\sum_{j\in\mathcal{J}}y_j\varphi_j \bigg\rangle\bigg|
&=\bigg|\bigg\langle \sum_{i\in\mathcal{I}}\sum_{k_1=0}^3x_{i,k_1}\mathrm{i}^{k_1}\varphi_i,\sum_{j\in\mathcal{J}}\sum_{k_2=0}^3y_{j,k_2}\mathrm{i}^{k_2}\varphi_j \bigg\rangle\bigg|\\
&\leq\sum_{k_1=0}^3\sum_{k_2=0}^3\bigg|\bigg\langle \sum_{i\in\mathcal{I}}x_{i,k_1}\varphi_i,\sum_{j\in\mathcal{J}}y_{j,k_2}\varphi_j \bigg\rangle\bigg|.
\end{align*}
Finally, we normalize the coefficients by $(\sum_{i\in\mathcal{I}}x_{i,k_1}^2)^{1/2}$ and $(\sum_{j\in\mathcal{J}}y_{j,k_2}^2)^{1/2}$ so we can apply our second bound:
\begin{align*}
\bigg|\bigg\langle \sum_{i\in\mathcal{I}}x_i\varphi_i,\sum_{j\in\mathcal{J}}y_j\varphi_j \bigg\rangle\bigg|
&\leq\sum_{k_1=0}^3\sum_{k_2=0}^3\bigg(\sum_{i\in\mathcal{I}}x_{i,k_1}^2\bigg)^{1/2}\bigg(\sum_{j\in\mathcal{J}}y_{j,k_2}^2\bigg)^{1/2} C'\hat\theta\log K\\
&\leq(C\hat\theta\log K)\|x\|\|y\|,
\end{align*}
where $C=4C'\approx74.17$ by the Cauchy-Schwarz inequality, and so we are done.
\end{proof}

\chapter{Two fundamental parameters of frame coherence}

Chapters 1--3 of this thesis were dedicated to a particularly popular understanding of compressed sensing: that matrices which satisfy the restricted isometry property (RIP) are very well-suited as sensing matrices.
However, as these chapters show, it is very difficult to deterministically construct matrices which are provably RIP.
It is therefore desirable to find a worthy alternative to RIP which admits deterministic sensing matrices.
The present chapter is dedicated to one such alternative, namely the \emph{strong coherence property}, but before we define this property, we first motivate it in the context of a support recovery method known as \emph{one-step thresholding (OST)}.

The main idea behind OST is that the noiseless measurement vector $y=\Phi x$ will look similar to the active columns of $\Phi=[\varphi_1\cdots\varphi_N]$, provided the sparsity level is sufficiently small and the nonzero members of $x$ are sufficiently large in some sense.
Using this intuition, it makes sense to find the support of $x$ by finding the large values of
\begin{equation*}
|\langle \varphi_i,y\rangle|
=\bigg|\bigg\langle \varphi_i,\sum_{j=1}^{N}x_j\varphi_j\bigg\rangle\bigg|
=\bigg|\sum_{j=1}^{N}x_j\langle \varphi_i,\varphi_j\rangle\bigg|
=\bigg|x_i+\sum_{\substack{j=1\\j\neq i}}^{N}x_j\langle \varphi_i,\varphi_j\rangle\bigg|,
\end{equation*}
assuming the columns of $\Phi$ have unit norm.
Indeed, if the nonzero entries of $x$ are larger than the contribution of the cross-column interactions, then the above calculation serves as a reasonable test for the support of $x$.
The magnitude of this contribution can be assessed using two measures of coherence.
Indeed, if the columns are incoherent, then each term of this sum is small, and so it makes sense to consider the worst-case coherence of $\Phi$:
\begin{equation}
\label{eq.mu defn}
\mu:=\max_{\substack{i,j\in\{1,\ldots,N\}\\i\neq j}}|\langle \varphi_i,\varphi_j\rangle|.
\end{equation}
However, this measure of coherence does not account for sign fluxuations in the inner products, which should bring significant cancellations in the sum.
If we assume the support of $x$ is drawn randomly, then by a concentration-of-measure argument, this sum will typically be close to its expectation, and so its size will rarely exceed some multiple of $\|x\|_1$ times the following maximum average:
\begin{equation}
\label{eq.nu defn}
\nu:=\max_{i\in\{1,\ldots,N\}}\bigg|\frac{1}{N-1}\sum_{\substack{j=1\\j\neq i}}^N\langle \varphi_i,\varphi_j\rangle\bigg|.
\end{equation}
For this reason, this notion of coherence, called \emph{average coherence}, was recently introduced in~\cite{bajwa:jcn10}.

Intuitively, worst-case coherence is a measure of dissimilarity between frame elements, whereas average coherence measures how well the frame elements are distributed in the unit hypersphere.
As we will see, both worst-case and average coherence play an important role in various portions of sparse signal processing, provided we describe the sparse signal's support with a probabilistic model.
In fact, \cite{bajwa:jcn10} used worst-case and average coherence to produce probabilistic reconstruction guarantees for OST, permitting sparsity levels on the order of $\smash{\frac{M}{\log N}}$ (akin to the RIP-based guarantees).
In accordance with our motivation above, these probabilistic guarantees require that worst-case and average coherence together satisfy the following property:

\begin{defn}
We say an $M\times N$ unit norm frame $\Phi$ satisfies the \emph{strong coherence property} if
\begin{equation*}
\mbox{(SCP-1)}~~~\mu\leq\frac{1}{164\log N}\qquad\mbox{and}\qquad\mbox{(SCP-2)}~~~\nu\leq\frac{\mu}{\sqrt{M}},
\end{equation*}
where $\mu$ and $\nu$ are given by \eqref{eq.mu defn} and \eqref{eq.nu defn}, respectively.
\end{defn}

The reader should know that the constant $164$ is not particularly essential to the above definition; it is used in \cite{bajwa:jcn10} to simplify some analysis and make certain performance guarantees explicit, but the constant is by no means optimal.
In the next section, we will use the strong coherence property to continue the work of \cite{bajwa:jcn10}.
Where \cite{bajwa:jcn10} provided guarantees for noiseless reconstruction, we will produce near-optimal guarantees for signal detection and reconstruction from \emph{noisy} measurements of sparse signals.
These guarantees are related to those in \cite{candes:annstat09,DonohoET:06,tropp:cras08,Tropp:acha08}, and we will also elaborate on this relationship.

The results given in \cite{bajwa:jcn10} and the following section, as well as the applications discussed in \cite{candes:annstat09,DonohoET:06,HolmesP:04,MixonQKF:11,StrohmerH:03,Tropp:04,Tropp:acha08,zahedi:acc10}
demonstrate a pressing need for nearly tight frames with small worst-case and average coherence, especially in sparse signal processing.
This chapter offers three additional contributions in this regard~\cite{BajwaCM:12,MixonBC:11}.
In Section~4.2, we provide a sizable catalog of frames that exhibit small spectral norm, worst-case coherence, and average coherence.
With all three frame parameters provably small, these frames are guaranteed to perform well in relevant applications.
Next, performance in many applications is dictated by worst-case coherence.
It is therefore particularly important to understand which worst-case coherence values are achievable.
To this end, the Welch bound (Theorem~\ref{thm.welch bound}) is commonly used in the literature.
However, the Welch bound is only tight when the number of frame elements $N$ is less than the square of the spatial dimension $M$ \cite{StrohmerH:03}.
Another lower bound, given in \cite{MSEA03,XiaZG:05}, beats the Welch bound when there are more frame elements, but it is known to be loose for real frames \cite{CHS96}.
Given this context, Section~4.3 gives a new lower bound on the worst-case coherence of real frames.
Our bound beats both the Welch bound and the bound in \cite{MSEA03,XiaZG:05} when the number of frame elements far exceeds the spatial dimension.
Finally, since average coherence is so new, there is currently no intuition as to when (SCP-2) is satisfied.
In Section~4.4, we use ideas akin to the switching equivalence of graphs to transform a frame that satisfies (SCP-1) into another frame with the same spectral norm and worst-case coherence that additionally satisfies (SCP-2).

\section{Implications of worst-case and average coherence}

Frames with small spectral norm, worst-case coherence, and/or average coherence have found use in recent years with applications involving sparse signals.
Donoho et al.~used the worst-case coherence in \cite{DonohoET:06} to provide uniform bounds on the signal and support recovery performance of combinatorial and convex optimization methods and greedy algorithms.
Later, Tropp \cite{Tropp:acha08} and Cand\`{e}s and Plan \cite{candes:annstat09} used both the spectral norm and worst-case coherence to provide tighter bounds on the signal and support recovery performance of convex optimization methods for most support sets under the additional assumption that the sparse signals have independent nonzero entries with zero median.
Recently, Bajwa et al.~\cite{bajwa:jcn10} made use of the spectral norm and both coherence parameters to report tighter bounds on the noisy model selection and noiseless signal recovery performance of an incredibly fast greedy algorithm called \emph{one-step thresholding (OST)} for most support sets and \emph{arbitrary} nonzero entries.
In this section, we discuss further implications of the spectral norm and worst-case and average coherence of frames in applications involving sparse signals.

\subsection{The weak restricted isometry property}
A common task in signal processing applications is to test whether a collection of measurements corresponds to mere noise \cite{kay:98b}.
For applications involving sparse signals, one can test measurements $y \in \mathbb{C}^M$ against the null hypothsis $H_0: y = z$ and alternative hypothesis $H_1: y = \Phi x+z$, where the entries of the noise vector $z\in \mathbb{C}^M$ are independent, identical zero-mean complex-Gaussian random variables and the signal $x\in\mathbb{C}^N$ is $K$-sparse.
The performance of such signal detection problems is directly proportional to the energy in $\Phi x$ \cite{davenport:jstsp10,haupt:icassp07,kay:98b}.
In particular, existing literature on the detection of sparse signals \cite{davenport:jstsp10,haupt:icassp07} leverages the fact that $\|\Phi x\|^2 \approx \|x\|^2$ when $\Phi$ satisfies the restricted isometry property (RIP) of order $K$.
In contrast, we now show that the strong coherence property also guarantees $\|\Phi x\|^2 \approx \|x\|^2$ for most $K$-sparse vectors.
We start with a definition:

\begin{defn}
\label{def:WRIP}
We say an $M\times N$ frame $\Phi$ satisfies the \emph{$(K,\delta,p)$-weak restricted isometry property (weak RIP)} if for every $K$-sparse vector $y \in \mathbb{C}^N$, a random permutation $x$ of $y$'s entries satisfies
\begin{equation}
\label{thmeqn:REP}
(1-\delta)\|x\|^2 \leq \|\Phi x\|^2 \leq (1+\delta)\|x\|^2
\end{equation}
with probability exceeding $1-p$.
\end{defn}

At first glance, it may seem odd that we introduce a random permutation when we might as well define weak RIP in terms of a $K$-sparse vector whose support is drawn randomly from all $\smash{\binom{N}{K}}$ possible choices.
In fact, both versions would be equivalent in distribution, but we stress that in the present definition, the values of the nonzero entries of $x$ are \emph{not} random; rather, the only randomness we have is in the locations of the nonzero entries.
We wish to distinguish our results from those in \cite{candes:annstat09}, which explicitly require randomness in the values of the nonzero entries.
We also note the distinction between RIP and weak RIP---weak RIP requires that $\Phi$ preserves the energy of \emph{most} sparse vectors.
Moreover, the manner in which we quantify ``most'' is important.
For each sparse vector, $\Phi$ preserves the energy of most permutations of that vector, but for different sparse vectors, $\Phi$ might not preserve the energy of permutations with the same support.
That is, unlike RIP, weak RIP is \emph{not} a statement about the singular values of submatrices of $\Phi$.
Certainly, matrices for which most submatrices are well-conditioned, such as those discussed in \cite{tropp:cras08,Tropp:acha08}, will satisfy weak RIP, but weak RIP does not require this.
That said, the following theorem shows, in part, the significance of the strong coherence property.

\begin{thm}
\label{thm.WRIP}
Any $M\times N$ unit norm frame $\Phi$ with the strong coherence property
satisfies the $(K,\delta,\frac{4K}{N^2})$-weak restricted isometry property
provided $N \geq 128$ and $\smash{2K\log{N} \leq \min\{\frac{\delta^2}{100\mu^2},M\}}$.
\end{thm}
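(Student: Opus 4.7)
The plan is to reduce the weak RIP inequality to a concentration statement for a random quadratic form, and then exploit the two parts of the strong coherence property separately. Since $\Phi$ has unit-norm columns, the ``hollow'' Gram matrix $H := \Phi^*\Phi - I$ has zero diagonal, so
\begin{equation*}
\|\Phi x\|^2 - \|x\|^2 = x^*Hx = \sum_{\substack{i,j=1\\i\neq j}}^N x_i\overline{x_j}\,H_{ij}.
\end{equation*}
Thus I need to show that $|x^*Hx| \leq \delta \|x\|^2$ with probability at least $1 - 4K/N^2$ when $x$ is a uniformly random permutation of a fixed $K$-sparse $y$. Equivalently, the support $\mathcal{K}$ of $x$ is a uniformly random size-$K$ subset of $\{1,\ldots,N\}$, and conditional on $\mathcal{K}$, the entries of $x_\mathcal{K}$ are a uniformly random arrangement of the nonzero entries of $y$.

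First I would decouple the permutation. A standard reduction is to replace the uniform random support by independent Bernoulli selectors $\delta_1,\ldots,\delta_N$ with $\mathbb{E}\delta_i = K/N$ (the ``Poissonization'' trick of Tropp \cite{Tropp:acha08}), at the cost of a universal constant factor in the tail. Having done this, the quadratic form becomes a second-order Rademacher-type chaos: conditioning further on a random sign sequence absorbed into the entries of $y$ (which is valid because $H$ is Hermitian with zero diagonal and the permutation-induced arrangement of $y$'s entries is exchangeable), one obtains a sum of independent mean-zero terms to which a Bernstein or Hoeffding-type inequality applies. The expected value of the relevant linear statistic is controlled by the average coherence $\nu$ through
\begin{equation*}
\mathbb{E}\Big[\sum_{j\neq i,\,j\in\mathcal{K}} H_{ij}\Big] = \tfrac{K-1}{N-1}\sum_{j\neq i} H_{ij},
\end{equation*}
whose magnitude is $\leq (K-1)\nu$; SCP-2 then buys the $1/\sqrt{M}$ factor that makes this bias negligible on the scale of $\delta\|x\|^2$ provided $K \lesssim M/\log N$. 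The second-moment/range parameter in the Bernstein bound is controlled by $\mu^2$, and SCP-1 ($\mu \lesssim 1/\log N$) is exactly what is needed to make the resulting Gaussian-tail exponent of the form $-c\delta^2/(\mu^2 K)$ exceed $2\log N$ when $2K\log N \leq \delta^2/(100\mu^2)$. This yields a per-coordinate failure probability on the order of $N^{-2}$, and a union bound over the (at most $K$) pairwise interactions of the active indices gives the advertised $4K/N^2$.

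The principal obstacle will be the bookkeeping: the weak RIP notion here fixes the values of $y$ but randomizes only the support (together with the arrangement on the support), which is a less-clean randomness model than i.i.d.\ entries. Making the selector-Poissonization rigorous requires showing that the event $\{|\mathcal{K}|\neq K\}$ contributes only a constant factor in the tail, and that the chaos bound can be applied after conditioning on $|\mathcal{K}|=K$. A second careful point is that both the diagonal part (the $x_i\overline{x_j}$ weights after selector substitution) and the off-diagonal part (the $H_{ij}$ weights) must be controlled simultaneously, which is typically handled by Tropp's ``decoupling then concentration'' strategy. Assuming these technical steps proceed as in \cite{Tropp:acha08,bajwa:jcn10}, the two pieces of the strong coherence property fit together in exactly the right way: SCP-1 controls the variance and SCP-2 kills the bias, and the hypothesis $2K\log N \leq \min\{\delta^2/(100\mu^2),M\}$ is the threshold at which both of these controls become effective at level~$\delta$.
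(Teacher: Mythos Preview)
Your proposal takes a genuinely different route from the paper, and the difference is instructive.

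The paper does \emph{not} analyze the quadratic form $x^*Hx$ directly as a second-order chaos. Instead it reduces to a first-order problem via two applications of Cauchy--Schwarz:
\[
\big|x_\mathcal{K}^*(\Phi_\mathcal{K}^*\Phi_\mathcal{K}^{}-I_K)x_\mathcal{K}\big|
\;\leq\; \|x_\mathcal{K}\|\cdot\|(\Phi_\mathcal{K}^*\Phi_\mathcal{K}^{}-I_K)x_\mathcal{K}\|
\;\leq\; \sqrt{K}\,\|x_\mathcal{K}\|\cdot\|(\Phi_\mathcal{K}^*\Phi_\mathcal{K}^{}-I_K)x_\mathcal{K}\|_\infty.
\]
It then cites Lemma~3 of~\cite{bajwa:jcn10} as a black box: that lemma gives, under the random-support model, a tail bound on $\|(\Phi_\mathcal{K}^*\Phi_\mathcal{K}^{}-I_K)x_\mathcal{K}\|_\infty$ of the form $4K\exp\big(-(\varepsilon-\sqrt{K}\nu)^2/(16(2+a^{-1})^2\mu^2)\big)$, valid whenever $K\leq\min\{\varepsilon^2\nu^{-2},(1+a)^{-1}N\}$. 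The proof then just plugs in $\varepsilon=10\mu\sqrt{2\log N}$ and $a=2\log 128-1$ and checks arithmetic: SCP-1 forces $\varepsilon<1$; SCP-2 together with $2K\log N\leq M$ forces $K\leq\varepsilon^2\nu^{-2}/9$; the assumption $N\geq128$ handles the $(1+a)^{-1}N$ constraint; and $2K\log N\leq\delta^2/(100\mu^2)$ is exactly $\sqrt{K}\varepsilon\leq\delta$. The whole argument is a few lines.

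Your chaos approach is plausible in spirit but has a gap as written. The step where you ``condition on a random sign sequence absorbed into the entries of $y$'' is not justified: in the weak-RIP model the values of $y$ are deterministic and only the permutation is random, so there are no signs to condition on. Turning a permutation-quadratic-form into a Rademacher chaos requires an explicit symmetrization or decoupling lemma that you have not supplied, and the exchangeability remark does not by itself produce one. Even if this is repaired, generic second-order chaos bounds carry looser constants and would not obviously reproduce the exact failure probability $4K/N^2$ or the constant $100$, both of which in the paper fall out of the specific constants in Lemma~3 of~\cite{bajwa:jcn10} combined with the $\sqrt{K}$ loss from the Cauchy--Schwarz step. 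The paper's linearization is both shorter and what actually delivers the stated numbers.
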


\begin{proof}
Let $x$ be as in Definition~\ref{def:WRIP}.
Note that \eqref{thmeqn:REP} is equivalent to $\big|\|\Phi x\|^2-\|x\|^2\big|\leq\delta\|x\|^2$.
Defining $\mathcal{K}:=\{n:|x_n|>0\}$, then the Cauchy-Schwarz inequality gives
\begin{align}
\nonumber
\big|\|\Phi x\|^2-\|x\|^2\big|
&=|x_\mathcal{K}^*(\Phi_\mathcal{K}^*\Phi_\mathcal{K}^{}-I_K)x_\mathcal{K}|\\
\label{pfeqn:REP_1}
&\leq\|x_\mathcal{K}\|\|(\Phi_\mathcal{K}^*\Phi_\mathcal{K}^{}-I_K)x_\mathcal{K}\|
\leq\sqrt{K}\|x_\mathcal{K}\|\|(\Phi_\mathcal{K}^*\Phi_\mathcal{K}^{}-I_K)x_\mathcal{K}\|_\infty,
\end{align}
where the last inequality uses the fact that $\|\cdot\|\leq\sqrt{K}\|\cdot\|_\infty$ in $\mathbb{C}^K$.
We now consider Lemma~3 of~\cite{bajwa:jcn10}, which states that for any $\varepsilon \in [0,1)$ and $a \geq 1$, $\|(\Phi_\mathcal{K}^*\Phi_\mathcal{K}^{}-I_K)x_\mathcal{K}\|_\infty \leq \varepsilon \|x_\mathcal{K}\|$ with probability exceeding $1-4Ke^{- (\varepsilon-\sqrt{K}\nu)^2/16(2+a^{-1})^2\mu^2}$ provided $K \leq \min\{\varepsilon^2\nu^{-2}, (1+a)^{-1}N\}$. 
We claim that \eqref{pfeqn:REP_1} together with Lemma~3 of~\cite{bajwa:jcn10} guarantee $\big|\|\Phi x\|^2-\|x\|^2\big|\leq\delta\|x\|^2$ with probability exceeding $1-\frac{4K}{N^2}$.
In order to establish this claim, we fix $\varepsilon=10\mu\sqrt{2\log{N}}$ and $a=2\log{128}-1$. 
It is then easy to see that (SCP-1) gives $\varepsilon < 1$, and also that (SCP-2) and $2K\log{N} \leq M$ give $K \leq \varepsilon^2\nu^{-2}/9$. 
Therefore, since the assumption that $N \geq 128$ together with $2K\log{N} \leq M$ implies $K \leq (1+a)^{-1}N$, we obtain $e^{- (\varepsilon - \sqrt{K}\nu)^2/16(2+a^{-1})^2\mu^2} \leq \frac{1}{N^2}$. 
The result now follows from the observation that $2K\log{N} \leq \frac{\delta^2}{100\mu^2}$ implies $\sqrt{K}\varepsilon \leq \delta$.
\end{proof}

This theorem shows that having small worst-case and average coherence is enough to guarantee weak RIP.
This contrasts with related results by Tropp \cite{tropp:cras08,Tropp:acha08} that require $\Phi$ to be nearly tight.
In fact, the proof of Theorem~\ref{thm.WRIP} does not even use the full power of the strong coherence property; instead of (SCP-1), it suffices to have $\mu\leq1/(15\sqrt{\log N})$, part of what \cite{bajwa:jcn10} calls the coherence property.
Also, if $\Phi$ has worst-case coherence $\mu=\mathrm{O}(1/\sqrt{M})$ and average coherence $\nu=\mathrm{O}(1/M)$, then even if $\Phi$ has large spectral norm, Theorem~\ref{thm.WRIP} states that $\Phi$ preserves the energy of most $K$-sparse vectors with $K=\mathrm{O}(M/\log N)$, i.e., the sparsity regime which is linear in the number of measurements.

\subsection{Reconstruction of sparse signals from noisy measurements}
Another common task in signal processing applications is to reconstruct a
$K$-sparse signal $x\in\mathbb{C}^N$ from a small collection of linear
measurements $y\in\mathbb{C}^M$. Recently, Tropp \cite{Tropp:acha08} used both
the worst-case coherence and spectral norm of frames to find bounds on the
reconstruction performance of \emph{basis pursuit (BP)} \cite{donoho:siamjsc98}
for most support sets under the assumption that the nonzero entries of $x$ are
independent with zero median. In contrast, \cite{bajwa:jcn10} used the spectral
norm and worst-case and average coherence of frames to find bounds on the
reconstruction performance of OST for most support sets and \emph{arbitrary}
nonzero entries. However, both \cite{bajwa:jcn10} and \cite{Tropp:acha08} limit
themselves to recovering $x$ in the absence of noise, corresponding to $y =
\Phi x$, a rather ideal scenario.

Our goal in this section is to provide guarantees for the reconstruction of
sparse signals from noisy measurements $y=\Phi x+z$, where the entries of the noise
vector $z\in \mathbb{C}^M$ are independent, identical complex-Gaussian random
variables with mean zero and variance $\sigma^2$. In particular, and in
contrast with \cite{DonohoET:06}, our guarantees will hold for arbitrary unit norm
frames $\Phi$ without requiring the signal's sparsity level to satisfy $K=\mathrm{O}(\mu^{-1})$. 
The reconstruction algorithm that we analyze here is the OST
algorithm of \cite{bajwa:jcn10}, which is described in
Algorithm~\ref{alg:OST_recon}. The following theorem extends the analysis of
\cite{bajwa:jcn10} and shows that the OST algorithm leads to near-optimal
reconstruction error for certain important classes of sparse signals.

Before proceeding further, we first define some notation. We use
$\textsc{snr}:=\|x\|^2/\mathbb{E}[\|z\|^2]$ to denote the
\emph{signal-to-noise ratio} associated with the signal reconstruction problem.
Also, we use 
\begin{equation*}
\mathcal{T}_\sigma(t)
:=\bigg\{n:|x_n|>\frac{2\sqrt{2}}{1-t}\sqrt{2 \sigma^2 \log{N}}\bigg\}
\end{equation*}
for any $t \in (0,1)$ to denote the locations of all the entries of $x$ that, roughly speaking, lie above the \emph{noise floor} $\sigma$. Finally, we use
\begin{equation*}
\mathcal{T}_\mu(t)
:=\bigg\{n: |x_n| > \frac{20}{t}\mu\|x\|\sqrt{2\log{N}}\bigg\}
\end{equation*}
to denote the locations of entries that, roughly speaking,
lie above the \emph{self-interference floor} $\mu\|x\|$.
\begin{algorithm*}[t]
\caption{One-Step Thresholding (OST) for sparse signal reconstruction \cite{bajwa:jcn10}}
\label{alg:OST_recon}
\textbf{Input:} An $M \times N$ unit norm frame $\Phi$, a vector $y=\Phi x+z$, and a threshold $\lambda > 0$\\
\textbf{Output:} An estimate $\hat{x} \in \mathbb{C}^N$ of the true sparse signal $x$
\begin{algorithmic}
\STATE $\hat{x} \leftarrow 0$ \hfill \COMMENT{Initialize}
\STATE $\tilde{x} \leftarrow \Phi^* y$ \hfill \COMMENT{Form signal proxy}
\STATE $\hat{\mathcal{K}} \leftarrow \{n : |\tilde{x}_n| > \lambda\}$ \hfill \COMMENT{Select indices via OST}
\STATE $\hat{x}_{\hat{\mathcal{K}}} \leftarrow (\Phi_{\hat{\mathcal{K}}})^\dagger y$ \hfill \COMMENT{Reconstruct signal via least-squares}
\end{algorithmic}
\end{algorithm*}

\begin{thm}[Reconstruction of sparse signals]
\label{thm:RSP}
Take an $M\times N$ unit norm frame $\Phi$ which satisfies the strong coherence property, pick $t\in(0,1)$, and choose $\lambda = \sqrt{2\sigma^2\log{N}}\max \{\frac{10}{t}\mu\sqrt{M\textsc{snr}}, \frac{\sqrt{2}}{1-t}\}$. 
Further, suppose $x \in \mathbb{C}^N$ has support $\mathcal{K}$ drawn uniformly at random from all possible $K$-subsets of $\{1,\ldots,N\}$.
Then provided
\begin{equation}
\label{thmeqn:RSP}
K \leq \frac{N}{c_1^2\|\Phi\|_2^2\log{N}},
\end{equation}
Algorithm~\ref{alg:OST_recon} produces $\hat{\mathcal{K}}$ such that $\mathcal{T}_\sigma(t) \cap \mathcal{T}_\mu(t) \subseteq \hat{\mathcal{K}} \subseteq \mathcal{K}$ and $\hat{x}$ such that
\begin{equation}
\label{thmeqn:RSP_2}
\|x-\hat{x}\| \leq c_2 \sqrt{\sigma^2|\hat{\mathcal{K}}|\log{N}} + c_3\|x_{\mathcal{K} \setminus \hat{\mathcal{K}}}\|
\end{equation}
with probability exceeding $1 - 10N^{-1}$. Finally, defining $T:=|\mathcal{T}_\sigma(t) \cap \mathcal{T}_\mu(t)|$, we further have
\begin{equation}
\label{thmeqn:RSP_3}
\|x-\hat{x}\| \leq c_2 \sqrt{\sigma^2 K \log{N}} + c_3\|x - x_T\|
\end{equation}
in the same probability event.
Here, $c_1 = 37e$, $c_2 = \frac{2}{1-e^{-1/2}}$, and $c_3 = 1 + \frac{e^{-1/2}}{1-e^{-1/2}}$ are numerical constants.
\end{thm}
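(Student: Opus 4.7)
The plan is to analyze the OST algorithm in two stages: first control the thresholding step by showing the signal proxy $\tilde{x}=\Phi^*y$ separates entries above the combined noise and self-interference floor from entries outside the support, and second, bound the least-squares error on the data-driven index set $\hat{\mathcal{K}}$. I would begin by decomposing, for each $n$, the proxy entry $\tilde{x}_n = x_n\chi_{\mathcal{K}}(n) + \sum_{k\in\mathcal{K},\,k\neq n}x_k\langle\varphi_n,\varphi_k\rangle + \langle\varphi_n,z\rangle$, using that the columns are unit norm. The Gaussian term $\langle\varphi_n,z\rangle \sim \mathcal{CN}(0,\sigma^2)$, so a standard tail bound and union bound over $n=1,\dotsc,N$ gives $\max_n|\langle\varphi_n,z\rangle|\leq\sqrt{2\sigma^2\log N}$ with probability $\geq 1-2/N$. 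For the self-interference term I would invoke Lemma~3 of~\cite{bajwa:jcn10} directly: under the strong coherence property and the sparsity hypothesis~\eqref{thmeqn:RSP}, with probability at least $1-\mathrm{O}(1/N)$, every $n$ satisfies $\bigl|\sum_{k\in\mathcal{K},\,k\neq n}x_k\langle\varphi_n,\varphi_k\rangle\bigr|\leq 10\mu\|x\|\sqrt{2\log N}$, modulo an average-coherence contribution rendered negligible by (SCP-2). Combining these with the chosen $\lambda$ guarantees $|\tilde{x}_n|<\lambda$ for every $n\notin\mathcal{K}$ (so $\hat{\mathcal{K}}\subseteq\mathcal{K}$), while for any $n\in\mathcal{T}_\sigma(t)\cap\mathcal{T}_\mu(t)$ the signal term $x_n$ dominates both error contributions by a factor of at least $1/(1-t)$ and $1/t$ respectively, yielding $|\tilde{x}_n|>\lambda$ and hence $\mathcal{T}_\sigma(t)\cap\mathcal{T}_\mu(t)\subseteq\hat{\mathcal{K}}$.

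Once $\hat{\mathcal{K}}\subseteq\mathcal{K}$ is in hand, the least-squares estimate satisfies $\hat{x}_{\hat{\mathcal{K}}}-x_{\hat{\mathcal{K}}}=\Phi_{\hat{\mathcal{K}}}^{\dagger}\bigl(\Phi_{\mathcal{K}\setminus\hat{\mathcal{K}}}x_{\mathcal{K}\setminus\hat{\mathcal{K}}}+z\bigr)$, and since $\hat{x}$ is zero off $\hat{\mathcal{K}}$,
\begin{equation*}
\|x-\hat{x}\|^2=\|x_{\mathcal{K}\setminus\hat{\mathcal{K}}}\|^2+\bigl\|\Phi_{\hat{\mathcal{K}}}^{\dagger}\bigl(\Phi_{\mathcal{K}\setminus\hat{\mathcal{K}}}x_{\mathcal{K}\setminus\hat{\mathcal{K}}}+z\bigr)\bigr\|^2.
\end{equation*}
Using $\sqrt{a^2+b^2}\leq a+b$ and the triangle inequality peels off three terms. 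The Pythagorean part $\|x_{\mathcal{K}\setminus\hat{\mathcal{K}}}\|$ already appears in~\eqref{thmeqn:RSP_2}. I would handle the noise term by conditioning on the high-probability event $\|\Phi_{\mathcal{K}}^{*}\Phi_{\mathcal{K}}-I_K\|_2\leq e^{-1/2}$, which by Cauchy interlacing passes to every principal submatrix indexed by $\hat{\mathcal{K}}\subseteq\mathcal{K}$; this yields $\|\Phi_{\hat{\mathcal{K}}}^{\dagger}\|_2\leq(1-e^{-1/2})^{-1/2}$, and then a $\chi^2$-concentration bound on $\|\Phi_{\hat{\mathcal{K}}}^{*}z\|^2$ (a Gaussian vector of dimension $|\hat{\mathcal{K}}|$ with diagonal covariance $\sigma^2 I$) delivers the $c_2\sqrt{\sigma^2|\hat{\mathcal{K}}|\log N}$ contribution. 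The cross-term factors as $\|\Phi_{\hat{\mathcal{K}}}^{\dagger}\|_2\cdot\|\Phi_{\mathcal{K}\setminus\hat{\mathcal{K}}}x_{\mathcal{K}\setminus\hat{\mathcal{K}}}\|$; the same spectral event bounds the first factor, and $(1+e^{-1/2})^{1/2}\|x_{\mathcal{K}\setminus\hat{\mathcal{K}}}\|$ bounds the second, producing the $c_3\|x_{\mathcal{K}\setminus\hat{\mathcal{K}}}\|$ term. Assembling these gives~\eqref{thmeqn:RSP_2}. The refinement~\eqref{thmeqn:RSP_3} then follows from the sandwich $T\subseteq\hat{\mathcal{K}}\subseteq\mathcal{K}$: replacing $\hat{\mathcal{K}}$ by $\mathcal{K}$ enlarges the first summand to $c_2\sqrt{\sigma^2 K\log N}$, and replacing it by $T$ shrinks $\|x_{\mathcal{K}\setminus\hat{\mathcal{K}}}\|$ to $\|x-x_T\|$.

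The main obstacle is the spectral bound on $\Phi_{\mathcal{K}}^{*}\Phi_{\mathcal{K}}-I_K$ for the uniformly random support $\mathcal{K}$, since this is what calibrates the constant $c_1=37e$ appearing in the sparsity hypothesis~\eqref{thmeqn:RSP}. This is precisely where the spectral norm $\|\Phi\|_2$ enters: a Tropp-style moment inequality for column-sampled submatrices (as used in~\cite{tropp:cras08,Tropp:acha08}) gives $\|\Phi_{\mathcal{K}}^{*}\Phi_{\mathcal{K}}-I_K\|_2\leq e^{-1/2}$ with probability $\geq 1-\mathrm{O}(N^{-1})$ exactly when $K\|\Phi\|_2^2\log N\lesssim N$, and the numerical constant $37e$ is chosen to absorb the Tropp prefactors together with a contribution of order $\mu\log N$ that is harmless under (SCP-1). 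That $\hat{\mathcal{K}}$ is data-dependent poses no additional difficulty because of the Cauchy interlacing observation above. The final probability budget of $10N^{-1}$ in the conclusion is obtained by union-bounding over the Gaussian noise event ($2/N$), the self-interference concentration of Lemma~3 in~\cite{bajwa:jcn10} ($\mathrm{O}(1/N)$), the Tropp spectral event ($\mathrm{O}(1/N)$), and the $\chi^2$ concentration for $\|\Phi_{\hat{\mathcal{K}}}^{*}z\|^2$ ($\mathrm{O}(1/N)$); tracking the constants to land exactly on $10N^{-1}$ is the closing bookkeeping step.
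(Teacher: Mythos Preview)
Your overall architecture matches the paper's: cite the thresholding guarantee from \cite{bajwa:jcn10} for the sandwich $\mathcal{T}_\sigma(t)\cap\mathcal{T}_\mu(t)\subseteq\hat{\mathcal{K}}\subseteq\mathcal{K}$, invoke Tropp's random-submatrix bound for $\|\Phi_\mathcal{K}^*\Phi_\mathcal{K}-I_K\|_2<e^{-1/2}$, and then estimate the least-squares error. But two of your estimates in the least-squares stage do not go through as written.

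The serious one is the noise term. You propose a $\chi^2$-concentration bound on $\|\Phi_{\hat{\mathcal{K}}}^*z\|^2$, asserting that the data-dependence of $\hat{\mathcal{K}}$ is harmless by Cauchy interlacing. It is not. Cauchy interlacing is a \emph{deterministic} eigenvalue monotonicity statement; it lets the spectral event on $\Phi_\mathcal{K}$ propagate to $\Phi_{\hat{\mathcal{K}}}$, but it says nothing about the distribution of $\Phi_{\hat{\mathcal{K}}}^*z$. Since $\hat{\mathcal{K}}$ is selected using $y=\Phi x+z$, the random set $\hat{\mathcal{K}}$ and the Gaussian vector $z$ are dependent, so $\Phi_{\hat{\mathcal{K}}}^*z$ is \emph{not} a fixed linear image of a Gaussian and you cannot invoke $\chi^2$ concentration directly (nor is its covariance $\sigma^2 I$; it is $\sigma^2\Phi_{\hat{\mathcal{K}}}^*\Phi_{\hat{\mathcal{K}}}$). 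The paper sidesteps this by bounding $\|\Phi_{\hat{\mathcal{K}}}^*z\|\leq|\hat{\mathcal{K}}|^{1/2}\|\Phi_{\hat{\mathcal{K}}}^*z\|_\infty\leq|\hat{\mathcal{K}}|^{1/2}\|\Phi^*z\|_\infty$ and then controlling the \emph{data-independent} quantity $\|\Phi^*z\|_\infty\leq 2\sqrt{\sigma^2\log N}$ via a Gaussian max bound (Lemma~6 of \cite{bajwa:jcn10}). That is precisely where the factor $2$ in $c_2=\tfrac{2}{1-e^{-1/2}}$ and the $\sqrt{\log N}$ come from.

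The second issue is your cross-term factorization $\|\Phi_{\hat{\mathcal{K}}}^\dagger\|_2\cdot\|\Phi_{\mathcal{K}\setminus\hat{\mathcal{K}}}x_{\mathcal{K}\setminus\hat{\mathcal{K}}}\|$: this gives $1+\sqrt{(1+e^{-1/2})/(1-e^{-1/2})}\approx 3.02$, not the stated $c_3=1+\tfrac{e^{-1/2}}{1-e^{-1/2}}\approx 2.54$. The paper instead writes $(\Phi_{\hat{\mathcal{K}}})^\dagger\Phi_{\mathcal{K}\setminus\hat{\mathcal{K}}}=(\Phi_{\hat{\mathcal{K}}}^*\Phi_{\hat{\mathcal{K}}})^{-1}\,\Phi_{\hat{\mathcal{K}}}^*\Phi_{\mathcal{K}\setminus\hat{\mathcal{K}}}$ and observes that $\Phi_{\hat{\mathcal{K}}}^*\Phi_{\mathcal{K}\setminus\hat{\mathcal{K}}}$ is an off-diagonal block of $\Phi_\mathcal{K}^*\Phi_\mathcal{K}-I_K$, hence $\|\Phi_{\hat{\mathcal{K}}}^*\Phi_{\mathcal{K}\setminus\hat{\mathcal{K}}}\|_2\leq e^{-1/2}$ directly, yielding the stated constant. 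Both points are easy to fix once identified, but as written your argument does not deliver \eqref{thmeqn:RSP_2} with the advertised $c_2,c_3$.
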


\begin{proof}
To begin, note that since $\|\Phi\|_2^2\geq\frac{N}{M}$, we have from \eqref{thmeqn:RSP} that $K\leq M/(2\log{N})$. 
It is then easy to conclude from Theorem~5 of~\cite{bajwa:jcn10} that $\hat{\mathcal{K}}$ satisfies $\mathcal{T}_\sigma(t) \cap \mathcal{T}_\mu(t) \subseteq \hat{\mathcal{K}} \subseteq \mathcal{K}$ with probability exceeding $1 - 6N^{-1}$. Therefore, conditioned on the event $\mathcal{E}_1 := \{\mathcal{T}_\sigma(t) \cap \mathcal{T}_\mu(t) \subseteq \hat{\mathcal{K}} \subseteq \mathcal{K}\}$, we can make use of the triangle inequality to write
\begin{equation}
\label{pfeqn:RSP_1}
\|x - \hat{x}\|\leq \|x_{\hat{\mathcal{K}}} - \hat{x}_{\hat{\mathcal{K}}}\| + \|x_{\mathcal{K}\setminus\hat{\mathcal{K}}}\|.
\end{equation}
Next, we may use \eqref{thmeqn:RSP} and the fact that $\Phi$ satisfies the strong coherence property to conclude from~\cite{tropp:cras08} (see, e.g., Proposition~3 of~\cite{bajwa:jcn10}) that 
$\|\Phi_\mathcal{K}^*\Phi_\mathcal{K}^{}-I_K\|_2 < e^{-1/2}$ with probability exceeding $1 - 2N^{-1}$. 
Hence, conditioning on $\mathcal{E}_1$ and $\mathcal{E}_2 := \{\|\Phi_\mathcal{K}^*\Phi_\mathcal{K}^{}-I_K\|_2 < e^{-1/2}\}$, we have that $(\Phi_{\hat{\mathcal{K}}})^\dagger = (\Phi_{\hat{\mathcal{K}}}^* \Phi_{\hat{\mathcal{K}}}^{})^{-1} \Phi_{\hat{\mathcal{K}}}^*$ since $\Phi_{\hat{\mathcal{K}}}$ is a submatrix of a full column rank matrix $\Phi_\mathcal{K}$.
Therefore, given $\mathcal{E}_1$ and $\mathcal{E}_2$, we may write
\begin{equation}
\label{pfeqn:RSP_1.5}
\hat{x}_{\hat{\mathcal{K}}} 
= (\Phi_{\hat{\mathcal{K}}})^\dagger (\Phi x+z) 
=  x_{\hat{\mathcal{K}}} + (\Phi_{\hat{\mathcal{K}}})^\dagger \Phi_{\mathcal{K} \setminus \hat{\mathcal{K}}}x_{\mathcal{K} \setminus \hat{\mathcal{K}}} + (\Phi_{\hat{\mathcal{K}}})^\dagger z,
\end{equation}
and so substituting \eqref{pfeqn:RSP_1.5} into \eqref{pfeqn:RSP_1} and applying the triangle inequality gives
\begin{align}
\nonumber
\|x - \hat{x}\| 
&\leq \|(\Phi_{\hat{\mathcal{K}}})^\dagger \Phi_{\mathcal{K} \setminus \hat{\mathcal{K}}}x_{\mathcal{K} \setminus \hat{\mathcal{K}}}\| + \|(\Phi_{\hat{\mathcal{K}}})^\dagger z\| + \|x_{\mathcal{K} \setminus \hat{\mathcal{K}}}\|\\
\label{pfeqn:RSP_2}
&\leq \Big(1 + \|(\Phi_{\hat{\mathcal{K}}}^* \Phi_{\hat{\mathcal{K}}}^{})^{-1}\|_2 \|\Phi_{\hat{\mathcal{K}}}^* \Phi_{\mathcal{K} \setminus \hat{\mathcal{K}}}^{}\|_2\Big)\|x_{\mathcal{K} \setminus \hat{\mathcal{K}}}^{}\| + \|(\Phi_{\hat{\mathcal{K}}}^* \Phi_{\hat{\mathcal{K}}}^{})^{-1}\|_2 \|\Phi_{\hat{\mathcal{K}}}^* z\|.
\end{align}
Since, given $\mathcal{E}_1$, we have that $\Phi_{\hat{\mathcal{K}}}^* \Phi_{\hat{\mathcal{K}}}^{} - I_K$ and $\Phi_{\hat{\mathcal{K}}}^* \Phi_{\mathcal{K} \setminus \hat{\mathcal{K}}}^{}$ are submatrices of $\Phi_\mathcal{K}^* \Phi_\mathcal{K}^{} - I_K$, and since the spectral norm of a matrix provides an upper bound for the spectral norms of its submatrices, we have the following given $\mathcal{E}_1$ and $\mathcal{E}_2$:
$\|\Phi_{\hat{\mathcal{K}}}^* \Phi_{\mathcal{K} \setminus \hat{\mathcal{K}}}^{}\|_2 \leq e^{-1/2}$
and
$\|(\Phi_{\hat{\mathcal{K}}}^* \Phi_{\hat{\mathcal{K}}}^{})^{-1}\|_2 \leq \frac{1}{1-e^{-1/2}}$.
We can now substitute these bounds into \eqref{pfeqn:RSP_2} and make use of the fact that $\|\Phi_{\hat{\mathcal{K}}}^* z\| \leq |\hat{\mathcal{K}}|^{1/2}\|\Phi_{\hat{\mathcal{K}}}^* z\|_\infty$ to conclude that
\begin{equation*}
\|x - \hat{x}\| \leq \frac{|\hat{\mathcal{K}}|^{1/2}}{1-e^{-1/2}} \|\Phi_{\hat{\mathcal{K}}}^* z\|_\infty + \Big(1 + \frac{e^{-1/2}}{1-e^{-1/2}}\Big)\|x_{\mathcal{K} \setminus \hat{\mathcal{K}}}\|,
\end{equation*}
given $\mathcal{E}_1$ and $\mathcal{E}_2$. 
At this point, define the event $\mathcal{E}_3 = \{\|\Phi_{\hat{\mathcal{K}}}^* z\|_\infty \leq 2\sqrt{\sigma^2 \log{N}}\}$ and note from Lemma~6 of~\cite{bajwa:jcn10} that $\Pr(\mathcal{E}_3^\mathrm{c}) \leq 2(\sqrt{2\pi\log{N}}~N)^{-1}$. 
A union bound therefore gives \eqref{thmeqn:RSP_2} with probability exceeding $1 - 10N^{-1}$. 
For \eqref{thmeqn:RSP_3}, note that $\hat{\mathcal{K}} \subseteq \mathcal{K}$ implies $|\hat{\mathcal{K}}| \leq K$, and so $\mathcal{T}_\sigma(t) \cap \mathcal{T}_\mu(t) \subseteq \hat{\mathcal{K}}$ implies that $\|x_{\mathcal{K} \setminus \hat{\mathcal{K}}}\| \leq \|x_{\mathcal{K} \setminus (\mathcal{T}_\sigma(t) \cap \mathcal{T}_\mu(t))}\| = \|x - x_T\|$.
\end{proof}

A few remarks are in order now for Theorem~\ref{thm:RSP}. First, if $\Phi$
satisfies the strong coherence property \emph{and} $\Phi$ is nearly tight, then
OST handles sparsity that is almost linear in $M$: $K = \mathrm{O}(M/\log{N})$ from
\eqref{thmeqn:RSP}. 
Second, we do not impose any control over the size of $T$, but rather we state the result in generality in terms of $T$; its size is determined by the signal class $x$ belongs to, the worst-case coherence of the frame $\Phi$ we use to measure $x$, and the magnitude of the noise that perturbs $\Phi x$.
Third, the $\ell_2$ error associated with the OST
algorithm is the near-optimal (modulo the $\log$ factor) error of
$\sqrt{\sigma^2 K \log{N}}$ \emph{plus} the best $T$-term approximation
error caused by the inability of the OST algorithm to recover signal entries
that are smaller than $\mathrm{O}(\mu\|x\|\sqrt{2 \log{N}})$. 
In particular, if the $K$-sparse signal $x$, the worst-case coherence $\mu$, and the noise $z$ together satisfy $\|x - x_T\| = \mathrm{O}(\sqrt{\sigma^2 K \log{N}})$,
then the OST algorithm succeeds with a near-optimal $\ell_2$ error of
$\|x-\hat{x}\| = \mathrm{O}(\sqrt{\sigma^2 K \log{N}})$. 
To see why this error is near-optimal, note that a $K$-dimension vector of random entries with mean zero and variance $\sigma^2$ has expected squared norm $\sigma^2 K$; in our case, we pay an additional log factor to find the locations of the $K$ nonzero entries among the entire $N$-dimensional signal.
It is important to recognize that the optimality condition $\|x - x_T\| = \mathrm{O}(\sqrt{\sigma^2 K \log{N}})$
depends on the signal class, the noise variance, and the worst-case coherence of the frame; in particular, the condition is satisfied whenever $\|x_{\mathcal{K} \setminus
\mathcal{T}_\mu(t)}\| = \mathrm{O}(\sqrt{\sigma^2 K \log{N}})$, since
\begin{equation*}
    \|x - x_T\| \leq \|x_{\mathcal{K} \setminus \mathcal{T}_\sigma(t)}\| +
\|x_{\mathcal{K} \setminus \mathcal{T}_\mu(t)}\| = \mathrm{O}\Big(\sqrt{\sigma^2
K \log{N}}\Big) + \|x_{\mathcal{K} \setminus \mathcal{T}_\mu(t)}\|.
\end{equation*}
The following lemma provides classes of sparse signals that satisfy
$\|x_{\mathcal{K} \setminus \mathcal{T}_\mu(t)}\| =
\mathrm{O}(\sqrt{\sigma^2 K \log{N}})$
given sufficiently small noise variance and worst-case coherence, and consequently the OST
algorithm is near-optimal for the reconstruction of such signal classes.

\begin{lem}\label{lem:OST_opt_cond}
Take an $M \times N$ unit norm frame $\Phi$ with worst-case coherence
$\mu\leq\frac{c_0}{\sqrt{M}}$ for some $c_0>0$, and suppose that $K\leq\frac{N}{c_1^2\|\Phi\|_2^2\log N}$ for some $c_1>0$.
Fix a constant $\beta \in (0,1]$, and suppose the magnitudes of $\beta K$ nonzero entries of $x$ are some $\alpha =
\Omega(\sqrt{\sigma^2 \log{N}})$, while the magnitudes of the remaining
$(1-\beta)K$ nonzero entries are not necessarily same, but are smaller than $\alpha$ and scale as $\mathrm{O}(\sqrt{\sigma^2 \log{N}})$. 
Then $\|x_{\mathcal{K} \setminus \mathcal{T}_\mu(t)}\| = \mathrm{O}(\sqrt{\sigma^2 K \log{N}})$, provided $c_0\leq\frac{tc_1}{20\sqrt{2}}$.
\end{lem}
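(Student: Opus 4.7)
The plan is to unpack the definition of $\mathcal{T}_\mu(t)$ and show two things: first, that all of the ``large'' nonzero entries of $x$ (those of magnitude $\alpha$) automatically lie in $\mathcal{T}_\mu(t)$, so they drop out of $\mathcal{K}\setminus\mathcal{T}_\mu(t)$; and second, that what remains, namely (a subset of) the $(1-\beta)K$ small entries, has squared norm that is trivially $O(\sigma^2 K\log N)$ from the assumed pointwise bound.

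First I would estimate $\|x\|$. Since the $\beta K$ large entries have magnitude exactly $\alpha$ and the remaining $(1-\beta)K$ entries have magnitude strictly less than $\alpha$, we get $\|x\|^2\leq K\alpha^2$, so $\|x\|\leq\sqrt{K}\,\alpha$. Next I would substitute the coherence hypothesis $\mu\leq c_0/\sqrt M$ into the definition of the threshold in $\mathcal{T}_\mu(t)$:
\begin{equation*}
\frac{20}{t}\mu\|x\|\sqrt{2\log N}
\leq \frac{20}{t}\cdot\frac{c_0}{\sqrt M}\cdot\sqrt{K}\,\alpha\cdot\sqrt{2\log N}
=\alpha\sqrt{\frac{800\,c_0^{2}\,K\log N}{t^{2}M}}.
\end{equation*}
The sparsity assumption $K\leq N/(c_1^{2}\|\Phi\|_2^{2}\log N)$, combined with $\|\Phi\|_2^{2}\geq N/M$ for any unit norm frame (a consequence of $\mathrm{Tr}(\Phi\Phi^{*})=N$ and the fact that $\Phi\Phi^{*}$ has $M$ eigenvalues), gives $K\leq M/(c_1^{2}\log N)$. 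Plugging this in, the quantity in the square root is at most $800\,c_0^{2}/(t^{2}c_1^{2})$, which is $\leq 1$ precisely under the hypothesis $c_0\leq tc_1/(20\sqrt2)$. Therefore the threshold is $\leq\alpha$, and since the large entries have magnitude exactly $\alpha$, they lie in $\mathcal{T}_\mu(t)$ (modulo a harmless strict/non-strict inequality that the $O$ notation absorbs).

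Consequently $\mathcal{K}\setminus\mathcal{T}_\mu(t)$ is contained in the index set of the $(1-\beta)K$ small entries. Each such entry has magnitude $O(\sqrt{\sigma^{2}\log N})$, so
\begin{equation*}
\|x_{\mathcal{K}\setminus\mathcal{T}_\mu(t)}\|^{2}
\leq (1-\beta)K\cdot O(\sigma^{2}\log N)
= O(\sigma^{2}K\log N),
\end{equation*}
and taking square roots yields $\|x_{\mathcal{K}\setminus\mathcal{T}_\mu(t)}\|=O(\sqrt{\sigma^{2}K\log N})$, as claimed.

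The main obstacle is bookkeeping rather than any deep step: one has to keep track of the constants so that the chain $\mu\leq c_0/\sqrt M$, $\|x\|\leq\sqrt K\alpha$, and $K\leq M/(c_1^{2}\log N)$ combine into the clean bound threshold $\leq\alpha$ under the precise hypothesis $c_0\leq tc_1/(20\sqrt2)$. The only nontrivial ingredient invoked from outside is the universal inequality $\|\Phi\|_2^{2}\geq N/M$ for unit norm frames, which is what converts the sparsity bound on $K$ (stated in terms of $\|\Phi\|_2$) into the needed bound in terms of $M$.
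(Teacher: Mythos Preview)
Your proposal is correct and follows essentially the same approach as the paper: bound $\|x\|\leq\sqrt{K}\,\alpha$, combine $\mu\leq c_0/\sqrt{M}$ with $K\leq M/(c_1^{2}\log N)$ (obtained via $\|\Phi\|_2^{2}\geq N/M$) to show the threshold defining $\mathcal{T}_\mu(t)$ is at most $\alpha$, and then observe that $\mathcal{K}\setminus\mathcal{T}_\mu(t)$ is contained in the set of small entries. The paper's proof is organized identically, only with the constants unrolled slightly differently.
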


\begin{proof}
Let $\mathcal{K}$ be the support of $x$, and define $\mathcal{I} := \{n : |x_n| = \alpha\}$.
We wish to show that $\mathcal{I}\subseteq\mathcal{T}_\mu(t)$, since this implies $\|x_{\mathcal{K} \setminus
\mathcal{T}_\mu(t)}\| \leq \|x_{\mathcal{K} \setminus \mathcal{I}}\| =
\mathrm{O}(\sqrt{\sigma^2 K\log{N}})$.
In order to prove $\mathcal{I}\subseteq\mathcal{T}_\mu(t)$, notice that
\begin{equation*}
\|x\|^2 
= \|x_{\mathcal{I}}\|^2+\|x_{\mathcal{K}\setminus\mathcal{I}}\|^2 
< \beta K \alpha^2 + (1-\beta)K\alpha^2 
= K\alpha^2,
\end{equation*}
and so combining this with the fact that $\|\Phi\|_2^2\geq\frac{N}{M}$ gives
\begin{equation*}
\mu \|x\| \sqrt{\log{N}}
< \frac{c_0}{\sqrt{M}} \sqrt{K} \alpha \sqrt{\log{N}}
\leq \frac{c_0}{\sqrt{M}} \sqrt{\frac{N}{c_1^2\|\Phi\|_2^2\log N}} ~\alpha\sqrt{\log{N}}
\leq \frac{c_0}{c_1}\alpha.
\end{equation*}
Therefore, provided $c_0\leq\frac{tc_1}{20\sqrt{2}}$, we have that $\mathcal{I}\subseteq\mathcal{T}_\mu(t)$.
\end{proof}

In words, Lemma~\ref{lem:OST_opt_cond} implies that OST is near-optimal for
those $K$-sparse signals whose entries above the noise floor have roughly the
same magnitude. This subsumes a very important class of signals
that appears in applications such as multi-label prediction
\cite{HsuNips2009}, in which all the nonzero entries take values $\pm \alpha$.
Theorem~\ref{thm:RSP} is the first result in the sparse signal
processing literature that does not require RIP and still provides near-optimal
reconstruction guarantees for such signals from noisy measurements, while
using either random or deterministic frames, even when $K = \mathrm{O}(M/\log{N})$.

Note that our techniques can be extended to reconstruct noisy signals, that is, we may consider measurements of the form $y=\Phi(x+n)+z$, where $n\in\mathbb{C}^N$ is also a noise vector of independent, identical zero-mean complex-Gaussian random variables.
In particular, if the frame $\Phi$ is tight, then our measurements will not color the noise, and so noise in the signal may be viewed as noise in the measurements: $y=\Phi x+(\Phi n+z)$; if the frame is not tight, then the noise will become correlated in the measurements, and performance would be depend nontrivially on the frame's Gram matrix.
Also, Theorem~\ref{thm:RSP} can be generalized to approximately sparse signals; the analysis follows similiar lines, but is rather cumbersome, and it appears as though the end result is only strong enough in the case of very nearly sparse signals.
As such, we omit this result.

\section{Frame constructions}

In this section, we consider a range of nearly tight frames with small worst-case and average coherence.  
We investigate various ways of selecting frames at random from different libraries, and we show that for each of these frames, the spectral norm, worst-case coherence, and average coherence are all small with high probability.
Later, we will consider deterministic constructions that use Gabor and chirp systems, spherical designs, equiangular tight frames, and error-correcting codes.
For the reader's convenience, all of these constructions are summarized in Table~\ref{table.constructions}.
Before we go any further, we consider the following lemma, which gives three different sufficient conditions for a frame to satisfy (SCP-2).
These conditions will prove quite useful in this section and throughout the chapter.

\begin{lem} 
\label{lem.sufficient conditions}
For any $M\times N$ unit norm frame $\Phi$, each of the following conditions implies $\nu\leq\frac{\mu}{\sqrt{M}}$:
\begin{enumerate}
\item[(i)] $\langle \varphi_k,\sum_{n=1}^N \varphi_n\rangle=\frac{N}{M}$ for every $k=1,\ldots,N$,
\item[(ii)] $N\geq2M$ and $\sum_{n=1}^N \varphi_n=0$,
\item[(iii)] $N\geq M^2+3M+3$ and $\|\sum_{n=1}^N \varphi_n\|^2\leq N$.
\end{enumerate}
\end{lem}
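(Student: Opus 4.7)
My plan is to handle each of the three conditions separately, reducing in every case to a direct application of the Welch bound $\mu \geq \sqrt{(N-M)/(M(N-1))}$ from Theorem~\ref{thm.welch bound}.

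First, for each condition I will derive a clean upper bound on $|\sum_{n \neq k}\langle \varphi_k,\varphi_n\rangle|$. Under (i), the hypothesis $\langle\varphi_k,\sum_n\varphi_n\rangle = N/M$ together with $\|\varphi_k\|^2 = 1$ gives $\sum_{n\neq k}\langle\varphi_k,\varphi_n\rangle = (N-M)/M$ exactly, so $\nu = (N-M)/(M(N-1))$. Under (ii), the vanishing sum $\sum_n\varphi_n = 0$ implies $\sum_{n\neq k}\langle\varphi_k,\varphi_n\rangle = -\|\varphi_k\|^2 = -1$ for every $k$, so $\nu = 1/(N-1)$. Under (iii), writing $s := \sum_n\varphi_n$ and applying the triangle inequality and Cauchy--Schwarz together with $\|s\|^2 \leq N$ gives $\nu \leq (\|s\|+1)/(N-1) \leq (\sqrt{N}+1)/(N-1) = 1/(\sqrt{N}-1)$.

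Next, I will combine each bound on $\nu$ with the Welch bound, so that the desired inequality $\sqrt{M}\nu \leq \mu$ is reduced in each case to a purely algebraic statement in $M$ and $N$. For (i), after squaring, this reduces to the trivial inequality $N-1 \geq N-M$. For (ii), it becomes $M^2 \leq (N-M)(N-1)$, which follows from $N \geq 2M$ by an elementary factorization. For (iii), the reduction yields $M^2(N-1) \leq (N-M)(\sqrt{N}-1)^2$.

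The main obstacle lies in the case (iii) inequality. My plan is to substitute $u := \sqrt{N}$ and rewrite it as $g(u) := (u^2-M)(u-1) - M^2(u+1) \geq 0$, then exploit the fact that $N \geq M^2+3M+3 > (M+\tfrac{3}{2})^2$ to ensure $u \geq M+\tfrac{3}{2}$. A direct computation at $u = M+\tfrac{3}{2}$ will show $g(M+\tfrac{3}{2}) = \tfrac{13M}{4} + \tfrac{9}{8} > 0$, and I will verify that $g'(u) = 3u^2 - 2u - M(M+1) > 0$ on $u \geq M+\tfrac{3}{2}$, so that $g$ remains nonnegative for all larger $u$. This last step is the only place where the hypothesis $N \geq M^2+3M+3$ enters in a nontrivial way, and its precise form is evidently tailored so that the cubic-in-$\sqrt{N}$ inequality (iii) produces is just barely satisfied.
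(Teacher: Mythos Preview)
Your proposal is correct and follows essentially the same route as the paper: in each case you compute or bound $\nu$ exactly as the paper does, then reduce the desired inequality $\sqrt{M}\,\nu\leq\mu$ to the Welch bound plus an elementary algebraic check in $M$ and $N$. The only cosmetic difference is in case~(iii): the paper squares the inequality $\frac{\sqrt{N}+1}{N-1}\leq\frac{1}{\sqrt{M}}\sqrt{\frac{N-M}{M(N-1)}}$ directly to obtain a quartic $x^4-(M^2+M+1)x^2-2M^2x-M(M-1)\geq 0$ in $x=\sqrt N$, whereas you first cancel the factor $\sqrt{N}+1$ (writing $\frac{\sqrt{N}+1}{N-1}=\frac{1}{\sqrt{N}-1}$) to get the cubic $g(u)=(u^2-M)(u-1)-M^2(u+1)$; your cubic is exactly the paper's quartic divided by $(u+1)$, so the two arguments are equivalent, with yours slightly cleaner.
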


\begin{proof}
For condition (i), we have
\begin{equation*}
\nu
=\frac{1}{N-1}\max_{i\in\{1,\ldots,N\}}\bigg|\sum_{\substack{j=1\\j\neq i}}^N\langle \varphi_i,\varphi_j\rangle\bigg|
=\frac{1}{N-1}\max_{i\in\{1,\ldots,N\}}\bigg|\bigg\langle \varphi_i,\sum_{j=1}^N \varphi_j\bigg\rangle-1\bigg|
=\frac{1}{N-1}\bigg(\frac{N}{M}-1\bigg).
\end{equation*}
The Welch bound (Theorem~\ref{thm.welch bound}) therefore gives
$\nu=\frac{1}{N-1}\big(\frac{N}{M}-1\big)=\frac{N-M}{M(N-1)}\leq\mu\sqrt{\frac{N-M}{M(N-1)}}
\leq\frac{\mu}{\sqrt{M}}$.
For condition (ii), we have
\begin{equation*}
\nu
=\frac{1}{N-1}\max_{i\in\{1,\ldots,N\}}\bigg|\sum_{\substack{j=1\\j\neq i}}^N\langle \varphi_i,\varphi_j\rangle\bigg|
=\frac{1}{N-1}\max_{i\in\{1,\ldots,N\}}\bigg|\bigg\langle \varphi_i,\sum_{j=1}^N \varphi_j\bigg\rangle-1\bigg|
=\frac{1}{N-1}.
\end{equation*}
Considering the Welch bound, it suffices to show $\frac{1}{N-1}\leq\frac{1}{\sqrt{M}}\sqrt{\frac{N-M}{M(N-1)}}$.
Rearranging gives
\begin{equation}
\label{eq.lemma ii.2}
N^2-(M+1)N-M(M-1)\geq0.
\end{equation}
When $N=2M$, the left-hand side of \eqref{eq.lemma ii.2} becomes $(M-1)^2$, which is trivially nonnegative.  Otherwise, we have
\begin{equation*}
N\geq2M+1\geq M+1+\sqrt{M(M-1)}\geq\frac{M+1}{2}+\sqrt{\Big(\frac{M+1}{2}\Big)^2+M(M-1)}.
\end{equation*}
In this case, by the quadratic formula and the fact that the left-hand side of \eqref{eq.lemma ii.2} is concave up in $N$, we have that \eqref{eq.lemma ii.2} is indeed satisfied.
For condition (iii), we use the triangle and Cauchy-Schwarz inequalities to get
\begin{equation*}
\nu
=\frac{1}{N-1}\max_{i\in\{1,\ldots,N\}}\bigg|\bigg\langle \varphi_i,\sum_{j=1}^N \varphi_j\bigg\rangle-1\bigg|
\leq\frac{1}{N-1}\bigg(\max_{i\in\{1,\ldots,N\}}\bigg|\bigg\langle \varphi_i,\sum_{j=1}^N \varphi_j\bigg\rangle\bigg|+1\bigg)
\leq\frac{\sqrt{N}+1}{N-1}.
\end{equation*}
Considering the Welch bound, it suffices to show $\frac{\sqrt{N}+1}{N-1}\leq\frac{1}{\sqrt{M}}\sqrt{\frac{N-M}{M(N-1)}}$.
Taking $x:=\sqrt{N}$ and rearranging gives a polynomial: 
$x^4-(M^2+M+1)x^2-2M^2x-M(M-1)\geq0$.
By convexity and monotonicity of the polynomial in $[M+\frac{3}{2},\infty)$, it can be shown that the largest real root of this polynomial is always smaller than $M+\frac{3}{2}$.
Also, considering it is concave up in $x$, it suffices that $\sqrt{N}=x\geq M+\frac{3}{2}$, which we have since
$N\geq M^2+3M+3\geq(M+\frac{3}{2})^2$.
\end{proof}

\subsection{Normalized Gaussian frames}

Construct a matrix with independent, Gaussian-distributed entries that have zero mean and unit variance.
By normalizing the columns, we get a matrix called a \emph{normalized Gaussian frame}.
This is perhaps the most widely studied type of frame in the signal processing and statistics literature.
To be clear, the term ``normalized'' is intended to distinguish the results presented here from results reported in earlier works, such as \cite{bajwa:jcn10,BaraniukDDW:08,CandesT:05,wainwright:tit09}, 
which only ensure that Gaussian frame elements have unit norm in expectation. 
In other words, normalized Gaussian frame elements are independently and uniformly distributed on the unit hypersphere in $\mathbb{R}^M$.
The following theorem characterizes the spectral norm and the worst-case and average coherence of normalized Gaussian frames.

\begin{thm}[Geometry of normalized Gaussian frames]
\label{thm.normalized gaussian frames}
Build a real $M\times N$ frame $\Psi$ by drawing entries independently at random from a Gaussian distribution of zero mean and unit variance.
Next, construct a normalized Gaussian frame $\Phi$ by taking $\varphi_n:=\frac{\psi_n}{\|\psi_n\|}$ for every $n=1,\ldots,N$.
Provided $60\log{N}\leq M\leq\frac{N-1}{4\log{N}}$, then the following simultaneously hold with probability exceeding $1 - 11N^{-1}$:
\begin{enumerate}
\item[(i)] $\mu \leq \frac{\sqrt{15\log{N}}}{\sqrt{M} - \sqrt{12\log{N}}}$,
\item[(ii)] $\nu \leq \frac{\sqrt{15\log{N}}}{M - \sqrt{12M\log{N}}}$,
\item[(iii)] $\|\Phi\|_2 \leq \frac{\sqrt{M} + \sqrt{N} + \sqrt{2\log{N}}}{\sqrt{M - \sqrt{8M\log{N}}}}$.
\end{enumerate}
\end{thm}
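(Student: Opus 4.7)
The three estimates share a common structure: every coherence expression involves $\varphi_n=\psi_n/\|\psi_n\|$, so the first order of business is to control the denominators uniformly. The plan is to run three largely independent concentration arguments for (i), (ii), and (iii), and then combine them with a union bound. The Laurent--Massart tail bound for the $\chi^2_M$-distributed $\|\psi_i\|^2$ will give $\|\psi_i\|^2\ge M-\sqrt{8M\log N}$ for every $i=1,\ldots,N$ with probability at least $1-N^{-1}$, and this bound on $\min_i\|\psi_i\|$ will be reused in all three parts.

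For (i), I would condition on $\psi_i$ and exploit that $\langle\psi_i,\psi_j\rangle$ is then a one-dimensional Gaussian of variance $\|\psi_i\|^2$. A standard Gaussian tail bound yields $|\langle\psi_i,\psi_j\rangle|\le t\|\psi_i\|$ with probability $1-2e^{-t^2/2}$, and after dividing through by $\|\psi_i\|\|\psi_j\|$ we obtain an estimate on $|\langle\varphi_i,\varphi_j\rangle|$ involving the already-controlled minimum column norm. Taking a union bound over the $\binom{N}{2}$ pairs with $t$ of order $\sqrt{\log N}$ (adjusting the numerical constant so that the total failure probability stays $O(N^{-1})$) gives bound (i).

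For (ii), the central observation is that $\psi_i$ is independent of $v_i:=\sum_{j\neq i}\varphi_j$, and $(N-1)\nu_i=\langle\psi_i,v_i\rangle/\|\psi_i\|$. Equivalently, conditioning on $\varphi_i$ makes $\{\langle\varphi_i,\varphi_j\rangle\}_{j\neq i}$ i.i.d., symmetric, bounded by $1$ in absolute value, and of variance $1/M$ by rotational invariance. This is the setting for Bernstein's inequality, and it is crucial to exploit the small variance $1/M$ rather than the trivial bound $1$ on the range, since that is what converts $\sqrt{\log N/(N-1)}$ into $\sqrt{\log N/(M(N-1))}$. Then, after dividing by $N-1$, applying the standing assumption $N\ge 4M\log N+1$ turns this quantity into the stated bound of order $\sqrt{\log N}/M$. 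Union-bounding over $i$ and combining with the minimum-column-norm bound from the first step completes (ii). This is the step I expect to be the most delicate, because the specific denominator $M-\sqrt{12M\log N}$ in the theorem (rather than something like $\sqrt{M(N-1)}$) requires carefully balancing the quadratic and linear regimes of Bernstein and then invoking the lower bound on $N$ at exactly the right moment.

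Part (iii) is the quickest: the Davidson--Szarek singular value bound gives $\|\Psi\|_2\le\sqrt{M}+\sqrt{N}+\sqrt{2\log N}$ with probability at least $1-2e^{-\log N}$, and $\|\Phi\|_2\le\|\Psi\|_2/\min_i\|\psi_i\|$ immediately yields the stated estimate once $\min_i\|\psi_i\|$ is controlled from Step~1. Finally, I would combine the failure events for Steps~1--4 with a single union bound, totalling probability at most $11N^{-1}$ once the numerical constants are tracked. The only genuine difficulty, as noted, is the constant-chasing in (ii); everything else is standard sub-Gaussian machinery.
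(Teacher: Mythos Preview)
Your plan for (i) and (iii) is essentially the paper's: a Gaussian tail bound on $\langle\psi_i,\psi_j\rangle$ combined with a Laurent--Massart lower bound on $\|\psi_j\|$ for (i), and Davidson--Szarek plus the same norm control for (iii). The only cosmetic discrepancy is that the paper folds the norm bound into the per-pair event in (i), which is why it uses the slightly larger deviation $\sqrt{12M\log N}$ there (to survive a union over $\binom{N}{2}$ pairs at level $N^{-3}$), while retaining your $\sqrt{8M\log N}$ in (iii).

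For (ii), your route is correct but genuinely different from the paper's. The paper does \emph{not} use Bernstein with the variance $1/M$. Instead it conditions on the event $\mathcal{E}_4$ that every $|\langle\varphi_i,\varphi_j\rangle|$ already satisfies the bound from (i), and then applies Hoeffding's inequality for bounded variables with range $\mu_{\mathrm{bound}}:=\sqrt{15\log N}/(\sqrt{M}-\sqrt{12\log N})$. Hoeffding on $N-1$ terms of range $\mu_{\mathrm{bound}}$ gives tail probability $2\exp(-(N-1)\varepsilon_3^2/(2\mu_{\mathrm{bound}}^2))$, and setting $\varepsilon_3=\mu_{\mathrm{bound}}/\sqrt{M}$ makes the exponent exactly $(N-1)/(2M)\geq 2\log N$. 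This is why the stated denominator is what it is: $M-\sqrt{12M\log N}=\sqrt{M}\,(\sqrt{M}-\sqrt{12\log N})$, so the bound in (ii) is literally $\mu_{\mathrm{bound}}/\sqrt{M}$, and no balancing of Bernstein regimes is needed. Your Bernstein argument actually yields a \emph{tighter} raw bound of order $1/M$ once $N-1\geq 4M\log N$ is used, so recovering the stated estimate is immediate rather than delicate; what you lose is the transparent $\nu\leq\mu/\sqrt{M}$ structure that the paper's argument produces directly and that ties into (SCP-2).
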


\begin{proof}
Theorem~\ref{thm.normalized gaussian frames}(i) can be shown to hold with probability exceeding $1 - 2N^{-1}$ by using a bound on the norm of a Gaussian random vector in Lemma~1 of~\cite{LaurentM:00} and a bound on the magnitude of the inner product of two independent Gaussian random vectors in Lemma~6 of~\cite{haupt:tit10}. Specifically, pick any two distinct indices $i, j \in \{1,\dots,N\}$, and define probability events $\mathcal{E}_1 := \{|\langle \psi_i,\psi_j\rangle| \leq \varepsilon_1\}$, $\mathcal{E}_2 := \{\|\psi_i\|^2 \geq M(1 - \varepsilon_2)\}$, and $\mathcal{E}_3 := \{\|\psi_j\|^2 \geq M(1 - \varepsilon_2)\}$ for $\varepsilon_1 = \sqrt{15M\log{N}}$ and $\varepsilon_2 = \sqrt{(12\log{N})/M}$. Then it follows from the union bound that
\begin{equation*}
\Pr\bigg(|\langle \varphi_i,\varphi_j\rangle| > \frac{\varepsilon_1}{M(1-\varepsilon_2)} \bigg)
= \Pr\bigg(\frac{|\langle \psi_i,\psi_j\rangle|}{\|\psi_i\|\|\psi_j\|} > \frac{\varepsilon_1}{M(1-\varepsilon_2)} \bigg)
\leq \Pr(\mathcal{E}_1^\mathrm{c}) + \Pr(\mathcal{E}_2^\mathrm{c}) + \Pr(\mathcal{E}_3^\mathrm{c}).
\end{equation*}
One can verify that $\Pr(\mathcal{E}_2^\mathrm{c}) = \Pr(\mathcal{E}_3^\mathrm{c}) \leq N^{-3}$ because of Lemma~1 of~\cite{LaurentM:00}, and we further have $\Pr(\mathcal{E}_1^\mathrm{c}) \leq 2N^{-3}$ because of Lemma~6 of~\cite{haupt:tit10} and the fact that $M \geq 60\log{N}$. 
Thus, for any fixed $i$ and $j$, 
$|\langle \varphi_i,\varphi_j\rangle| \leq \sqrt{15\log{N}}/(\sqrt{M} - \sqrt{12\log{N}})$
with probability exceeding $1 - 4N^{-3}$. 
It therefore follows by taking a union bound over all $\binom{N}{2}$ choices for $i$ and $j$ that Theorem~\ref{thm.normalized gaussian frames}(i) holds with probability exceeding $1 - 2N^{-1}$.

Theorem~\ref{thm.normalized gaussian frames}(ii) can be shown to hold with probability exceeding $1 - 6N^{-1}$ by appealing to the preceding analysis and Hoeffding's inequality for a sum of independent, bounded random variables~\cite{hoeffding:jasa63}. Specifically, fix any index $i \in \{1,\dots,N\}$, and define random variables $Z_{ij} := \frac{1}{N-1}\langle \varphi_i, \varphi_j\rangle$. Next, define the probability event 
\begin{equation*}
\mathcal{E}_4 := \bigcap_{\substack{j=1\\j\neq i}}^N\bigg\{|Z_{ij}| \leq \frac{1}{N-1}\frac{\sqrt{15\log{N}}}{\sqrt{M} - \sqrt{12\log{N}}}\bigg\}. 
\end{equation*}
Using the analysis for the worst-case coherence of $\Phi$ and taking a union bound over the $N-1$ possible $j$'s gives $\Pr(\mathcal{E}_4^\mathrm{c}) \leq 4N^{-2}$. 
Furthermore, taking $\varepsilon_3 := \sqrt{15\log{N}}/(M - \sqrt{12M\log{N}})$, then elementary probability analysis gives
\begin{align}
\nonumber
\Pr\Bigg(\bigg|\sum_{\substack{j=1\\j \not= i}}^{N} Z_{ij}\bigg| > \varepsilon_3\Bigg) 
&\leq \Pr\Bigg(\bigg|\sum_{\substack{j=1\\j \not= i}}^{N} Z_{ij}\bigg| > \varepsilon_3 ~\Bigg|~ \mathcal{E}_4\Bigg) + \Pr(\mathcal{E}_4^\mathrm{c})\\
\label{pfeqn:gaussian_avc1}
&\leq \int_{\mathbb{S}^{M-1}} \!\!\! \Pr\Bigg(\bigg|\sum_{\substack{j=1\\j \not= i}}^{N} Z_{ij}\bigg| > \varepsilon_3 ~\Bigg|~ \mathcal{E}_4, \varphi_i=x\Bigg)~p_{\varphi_i}(x)~\mathrm{dH}^{M-1}(x) + 4N^{-2},
\end{align}
where $\mathbb{S}^{M-1}$ denotes the unit hypersphere in $\mathbb{R}^M$, $\mathrm{H}^{M-1}$ denotes the $(M-1)$-dimensional Hausdorff measure on $\mathbb{S}^{M-1}$, and $p_{\varphi_i}(x)$ denotes the probability density function for the random vector $\varphi_i$. The first thing to note here is that the random variables $\{Z_{ij}: j\not=i\}$ are bounded and jointly independent when conditioned on $\mathcal{E}_4$ and $\varphi_i$. This assertion mainly follows from Bayes' rule and the fact that $\{\varphi_j:j\not=i\}$ are jointly independent when conditioned on $\varphi_i$. The second thing to note is that $\mathbb{E}[Z_{ij}~|~\mathcal{E}_4, \varphi_i] = 0$ for every $j\neq i$. This comes from the fact that the random vectors $\{\varphi_n\}_{n=1}^N$ are independent and have a uniform distribution over $\mathbb{S}^{M-1}$, which in turn guarantees that the random variables $\{Z_{ij}: j\not=i\}$ have a symmetric distribution around zero when conditioned on $\mathcal{E}_4$ and $\varphi_i$. We can therefore make use of Hoeffding's inequality \cite{hoeffding:jasa63} to bound the probability expression inside the integral in \eqref{pfeqn:gaussian_avc1} as
\begin{equation}
\label{eq.prob sum bound}
\Pr\Bigg(\bigg|\sum_{\substack{j=1\\j \not= i}}^{N} Z_{ij}\bigg| > \varepsilon_3 ~\Bigg|~ \mathcal{E}_4, \varphi_i=x\Bigg)
\leq 2e^{-(N-1)/2M},
\end{equation}
which is bounded above by $2N^{-2}$ provided $M \leq \frac{N-1}{4\log{N}}$. We can now substitute \eqref{eq.prob sum bound} into \eqref{pfeqn:gaussian_avc1} and take the union bound over the $N$ possible choices for $i$ to conclude that Theorem~\ref{thm.normalized gaussian frames}(ii) holds with probability exceeding $1 - 6N^{-1}$.

Lastly, Theorem~\ref{thm.normalized gaussian frames}(iii) can be shown to hold with probability exceeding $1 - 3N^{-1}$ by using a bound on the spectral norm of standard Gaussian random matrices reported in~\cite{rudelson:icm10} along with Lemma~1 of~\cite{LaurentM:00}. Specifically, define an $N \times N$ diagonal matrix $D := \mathrm{diag}(\|\psi_1\|^{-1}, \dots, \|\psi_N\|^{-1})$, and note that the entries of $\Psi := \Phi D^{-1}$ are independently and normally distributed with zero mean and unit variance. We therefore have from (2.3) in \cite{rudelson:icm10} that
\begin{equation}
\label{pfeqn:gaussian_spnorm1}
    \Pr\Big(\|\Psi\|_2 > \sqrt{M} + \sqrt{N} + \sqrt{2\log{N}}\Big) \leq 2 N^{-1}.
\end{equation}
In addition, we can appeal to the preceding analysis for the probability bound on Theorem~\ref{thm.normalized gaussian frames}(i) and conclude using Lemma~1 of~\cite{LaurentM:00} and a union bound over the $N$ possible choices for $i$ that
\begin{equation}
\label{pfeqn:gaussian_spnorm2}
    \Pr\Big(\|D\|_2 > \Big(M - \sqrt{8M\log{N}}\Big)^{-1/2}\Big) \leq N^{-1}.
\end{equation}
Finally, since $\|\Phi\|_2 \leq \|\Psi\|_2 \|D\|_2$, we can take a union bound over \eqref{pfeqn:gaussian_spnorm1} and \eqref{pfeqn:gaussian_spnorm2} to argue that Theorem~\ref{thm.normalized gaussian frames}(iii) holds with probability exceeding $1 - 3N^{-1}$. 

The complete result now follows by taking a union bound over the failure probabilities for the conditions (i)-(iii) in Theorem~\ref{thm.normalized gaussian frames}.
\end{proof}

\begin{exmp}
To illustrate the bounds in Theorem~\ref{thm.normalized gaussian frames}, we ran simulations in MATLAB.
Picking $N=50000$, we observed $30$ realizations of normalized Gaussian frames for each $M=700,900,1100$.
The distributions of $\mu$, $\nu$, and $\|\Phi\|_2$ were rather tight, so we only report the ranges of values attained, along with the bounds given in Theorem~\ref{thm.normalized gaussian frames}:
\begin{equation*}
\begin{array}{rrcll}
M=700: &  \qquad\mu & \in & [0.1849,0.2072]                & \qquad\leq0.8458 \\
       &  \qquad\nu & \in & [0.5643,0.6613]\times10^{-3}  & \qquad\leq0.0320 \\
       &  \qquad\|\Phi\|_2 & \in & [8.0521,8.0835]              & \qquad\leq11.9565\\
\\
M=900: &  \qquad\mu & \in & [0.1946,0.2206]                & \qquad\leq0.6848 \\
       &  \qquad\nu & \in & [0.5800,0.7501]\times10^{-3}  & \qquad\leq0.0229 \\
       &  \qquad\|\Phi\|_2 & \in & [8.4352,8.4617]              & \qquad\leq10.3645\\
\\
M=1100: &  \qquad\mu & \in & [0.1807,0.1988]                & \qquad\leq0.5852 \\
       &  \qquad\nu & \in & [0.5260,0.6713]\times10^{-3}  & \qquad\leq0.0177 \\
       &  \qquad\|\Phi\|_2 & \in & [7.7262,7.7492]              & \qquad\leq9.2927
\end{array}
\end{equation*}
These simulations seem to indicate that our bounds on $\mu$ and $\|\Phi\|_2$ reflect real-world behavior, at least within an order of magnitude, whereas the bound on $\nu$ is rather loose.
\end{exmp}

\subsection{Random harmonic frames}

Random harmonic frames, constructed by randomly selecting rows of a discrete Fourier transform (DFT) matrix and normalizing the resulting columns, have received considerable attention lately in the compressed sensing literature \cite{CandesRT:06,CandesT:06,RudelsonV:08}.
However, there is no result in the literature that gives the worst-case coherence of random harmonic frames.
To fill this gap, the following theorem gives the spectral norm and the worst-case and average coherence of random harmonic frames.

\begin{thm}[Geometry of random harmonic frames]
\label{thm.random harmonic frames}
Let $F$ be an $N\times N$ non-normalized discrete Fourier transform matrix, explicitly, $F_{k\ell}:= e^{2\pi\mathrm{i}k\ell/N}$ for each $k,\ell=0,\ldots,N-1$.
Next, let $\{B_i\}_{i=0}^{N-1}$ be a collection of independent Bernoulli random variables with mean $\frac{M}{N}$, and take $\mathcal{M}:=\{i:B_i=1\}$.
Finally, construct an $|\mathcal{M}|\times N$ harmonic frame $\Phi$ by collecting rows of $F$ which correspond to indices in $\mathcal{M}$ and normalizing the columns.
Then $\Phi$ is a unit norm tight frame: $\|\Phi\|_2^2=\frac{N}{|\mathcal{M}|}$.
Also, provided $16\log{N}\leq M\leq \frac{N}{3}$, the following simultaneously hold with probability exceeding $1 - 4N^{-1} - N^{-2}$:
\begin{enumerate}
\item[(i)] $\frac{1}{2}M \leq |\mathcal{M}| \leq \frac{3}{2}M$,
\item[(ii)] $\nu\leq\frac{\mu}{\sqrt{|\mathcal{M}|}}$,
\item[(iii)] $\mu \leq \sqrt{\frac{118(N-M)\log{N}}{MN}}$.
\end{enumerate}
\end{thm}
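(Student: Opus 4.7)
The plan is to verify the four claims in order. For the unit norm tight frame property, note first that the columns of $\Phi$ have unit norm by construction, so it suffices to check that the rows are orthogonal with common squared-norm $N/|\mathcal{M}|$. The rows of the selected $|\mathcal{M}|\times N$ submatrix of $F$ are rows of a non-normalized DFT; distinct such rows are orthogonal via the standard geometric-sum identity, and each has squared-norm $N$. After dividing each column by $\sqrt{|\mathcal{M}|}$ to form $\Phi$, each row still has constant squared-norm, now equal to $N/|\mathcal{M}|$, and orthogonality is preserved. Hence $\Phi\Phi^*=(N/|\mathcal{M}|)I$, giving both tightness and $\|\Phi\|_2^2=N/|\mathcal{M}|$.

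For (i), I would apply a standard Chernoff bound to $|\mathcal{M}|=\sum_{i=0}^{N-1}B_i$, a sum of independent Bernoulli random variables with mean $M$. Multiplicative deviations of size $\tfrac12$ yield failure probability at most $2e^{-M/12}$, which the hypothesis $M\geq 16\log N$ renders polynomially small in $N$ (easily absorbed into the $4N^{-1}+N^{-2}$ budget).

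For (ii), I would invoke Lemma~\ref{lem.sufficient conditions}. A direct entrywise computation of $s:=\sum_{n=0}^{N-1}\varphi_n$ gives $s[k]=\frac1{\sqrt{|\mathcal{M}|}}\sum_{n=0}^{N-1}e^{2\pi\rmi kn/N}$, which vanishes for every $k\in\mathcal{M}\setminus\{0\}$. If $0\notin\mathcal{M}$, then $s=0$ outright, and since $|\mathcal{M}|\leq\tfrac32 M\leq N/2$ by (i) together with the hypothesis $M\leq N/3$, condition (ii) of Lemma~\ref{lem.sufficient conditions} delivers $\nu\leq\mu/\sqrt{|\mathcal{M}|}$. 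If instead $0\in\mathcal{M}$, then $s[0]=N/\sqrt{|\mathcal{M}|}$ and $\varphi_n[0]=1/\sqrt{|\mathcal{M}|}$ is real for every $n$, so $\langle\varphi_n,s\rangle=N/|\mathcal{M}|$ uniformly; condition (i) of Lemma~\ref{lem.sufficient conditions} then yields the same bound.

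The crux is (iii). Writing $d:=j-i$ and using $\sum_{k=0}^{N-1}e^{2\pi\rmi kd/N}=0$ for $d\not\equiv 0\pmod N$, I would rewrite the off-diagonal Gram entry as
\[
\langle\varphi_i,\varphi_j\rangle
=\frac{1}{|\mathcal{M}|}\sum_{k=0}^{N-1}\Bigl(B_k-\tfrac{M}{N}\Bigr)e^{2\pi\rmi kd/N},
\]
a mean-zero sum of independent, bounded complex summands. I would apply Theorem~\ref{thm.bernstein} separately to the real and imaginary parts. For the real part, the summands $Z_k=(B_k-M/N)\cos(2\pi kd/N)$ obey $|Z_k|\leq 1-M/N$, and using $\sum_{k=0}^{N-1}\cos^2(2\pi kd/N)=N/2$, the total variance equals $M(N-M)/(2N)$; the imaginary part behaves identically. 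Taking $t=\sqrt{2\log N}$, the Bernstein admissibility condition $t\leq \frac{1}{2(1-M/N)}\sqrt{M(N-M)/(2N)}$ rearranges to $16\log N\cdot(N-M)/N\leq M$, which is precisely guaranteed by the hypothesis $M\geq 16\log N$. The conclusion bounds each part by $2\sqrt{M(N-M)\log N/N}$ with failure probability $\leq 2N^{-2}$ per part (after a symmetrization step for two-sidedness), so $|\sum_k(B_k-M/N)e^{2\pi\rmi kd/N}|\leq\sqrt{8M(N-M)\log N/N}$ with probability at least $1-4N^{-2}$. Union-bounding over the $N-1$ nonzero $d$ and combining with the lower bound $|\mathcal{M}|\geq M/2$ from (i) yields $\mu\leq\sqrt{32(N-M)\log N/(MN)}$, comfortably below the stated $\sqrt{118(N-M)\log N/(MN)}$. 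The main difficulty will be bookkeeping the constants so that the aggregate failure probability for (i) and (iii) matches the stated $4N^{-1}+N^{-2}$; this is a routine but somewhat delicate exercise of trading off the Chernoff deviation parameter in (i) against the sharpness needed for the Bernstein threshold in (iii).
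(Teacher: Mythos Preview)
Your approach is essentially the same as the paper's: tightness from DFT row orthogonality, a Chernoff-type bound for (i), Lemma~\ref{lem.sufficient conditions} (cases (i) or (ii) depending on whether $0\in\mathcal{M}$) for (ii), and for (iii) the centering trick $\sum_k B_k e^{2\pi\rmi kd/N}=\sum_k(B_k-p)e^{2\pi\rmi kd/N}$ followed by Bernstein on real and imaginary parts, then a union bound. Two small points deserve attention.

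First, the variance computation $\sum_{k=0}^{N-1}\cos^2(2\pi kd/N)=N/2$ fails when $N$ is even and $d=N/2$: there the cosines are $(-1)^k$, so the sum is $N$ and the imaginary part vanishes entirely. This single value of $d$ is harmless---you can simply bound the variance by $Np(1-p)=M(N-M)/N$ uniformly, which is exactly what the paper does---but as written your claim is not quite correct.

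Second, the standard multiplicative Chernoff bound $2e^{-M/12}$ with $M\geq 16\log N$ gives only $2N^{-4/3}$, which does not fit under $N^{-2}$. The paper uses the sharper one-sided bounds $\Pr(|\mathcal{M}|>(1+\varepsilon)M)\leq e^{-M\varepsilon^2(1-\varepsilon)/2}$ and $\Pr(|\mathcal{M}|<(1-\varepsilon)M)\leq e^{-M\varepsilon^2/2}$, which at $\varepsilon=\tfrac12$ yield $e^{-M/16}\leq N^{-1}$ and $e^{-M/8}\leq N^{-2}$, matching the stated budget exactly. Your acknowledgment that this is ``routine but somewhat delicate'' is apt: you will need these sharper constants, not the textbook $1/3$ version.

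On the other hand, your union bound over the $N-1$ nonzero residues $d$ (rather than over all $\binom{N}{2}$ pairs $(i,j)$, as the paper does) is a genuine efficiency gain, since $\langle\varphi_i,\varphi_j\rangle$ depends only on $j-i\bmod N$; this is why your final constant $32$ comfortably beats the paper's $118$.
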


\begin{proof}
The claim that $\Phi$ is tight follows trivially from the fact that the rows of $F$ are orthogonal and that the rows of $\Phi$ correspond to a subset of the rows of $F$. 
Next, we define the probability events $\mathcal{E}_1 := \{|\mathcal{M}| \leq \frac{3}{2}M\}$ and $\mathcal{E}_2 := \{|\mathcal{M}| \geq \frac{1}{2}M\}$, and claim that $\Pr(\mathcal{E}_1^\mathrm{c} \cup \mathcal{E}_2^\mathrm{c}) \leq N^{-1} + N^{-2}$. The proof of this claim follows from a Bernstein-like large deviation inequality. 
Specifically, note that $|\mathcal{M}| = \sum_{i=0}^{N-1} B_i$ with $\mathbb{E}[|\mathcal{M}|] = M$, and so we have from Theorems~A.1.12 and~A.1.13 of~\cite{alon:00} and page~4 of~\cite{RudelsonV:08} that for any $\varepsilon_1 \in [0,1)$,
\begin{equation}
\label{pfeqn:size_dft}
\Pr\Big(|\mathcal{M}| > (1 + \varepsilon_1)M\Big) 
\leq e^{-M\varepsilon_1^2(1-\varepsilon_1)/2}
\qquad\mbox{and}\qquad
\Pr\Big(|\mathcal{M}| < (1 - \varepsilon_1)M\Big) 
\leq e^{-M\varepsilon_1^2/2}.
\end{equation}
Taking $\varepsilon_1 := \frac{1}{2}$, then a union bound gives $\Pr(\mathcal{E}_1^\mathrm{c} \cup \mathcal{E}_2^\mathrm{c}) \leq N^{-1} + N^{-2}$ provided $M \geq 16 \log{N}$. 
Conditioning on $\mathcal{E}_1 \cap \mathcal{E}_2$, we have that Theorem~\ref{thm.random harmonic frames}(i) holds trivially, while Theorem~\ref{thm.random harmonic frames}(ii) follows from Lemma~\ref{lem.sufficient conditions}. 
Specifically, we have that $\frac{N}{3} \geq M$ guarantees $N \geq 2|\mathcal{M}|$ because of the conditioning on $\mathcal{E}_1 \cap \mathcal{E}_2$, which in turn implies that $\Phi$ satisfies either condition (i) or (ii) of Lemma~\ref{lem.sufficient conditions}, depending on whether $0\in\mathcal{M}$. 
This therefore establishes that Theorem~\ref{thm.random harmonic frames}(i)-(ii) simultaneously hold with probability exceeding $1 - N^{-1} - N^{-2}$.

The only remaining claim is that $\mu \leq \varepsilon_2 := \sqrt{(118(N-M)\log{N})/MN}$ with high probability. 
To this end, define $p:=\frac{M}{N}$, and pick any two distinct indices $i, j \in \{0,\dots,N-1\}$.
Note that
\begin{equation}
\label{eq.ip of random dft fs}
\langle \varphi_i,\varphi_j\rangle 
=\frac{1}{|\mathcal{M}|}\sum_{k=0}^{N-1} B_kF_{ki}\overline{F_{kj}} 
=\frac{1}{|\mathcal{M}|}\sum_{k=0}^{N-1} (B_k-p)F_{ki}\overline{F_{kj}},
\end{equation}
where the last equality follows from the fact that $F$ has orthogonal columns.
Next, we write $F_{ki}\overline{F_{kj}}=\cos(\theta_k)+\mathrm{i}\sin(\theta_k)$ for some $\theta_k\in[0,2\pi)$.
Then applying the union bound to \eqref{eq.ip of random dft fs} and to the real and imaginary parts of $F_{ki}\overline{F_{kj}}$ gives
\begin{align}
\nonumber
&\Pr\Big(|\langle \varphi_i,\varphi_j\rangle| > \varepsilon_2\Big) \\
\nonumber
&\leq \Pr\bigg(\Big|\sum_{k=0}^{N-1} (B_k - p)F_{ki}\overline{F_{kj}}\Big| > \frac{M\varepsilon_2}{2\sqrt{2}}\bigg) + \Pr\Big(|\mathcal{M}| < \frac{M}{2\sqrt{2}}\Big)\\
\label{pfeqn:wc_dft}
&\leq \Pr\bigg(\Big|\sum_{k=0}^{N-1} (B_k - p)\cos(\theta_k) \Big| > \frac{M\varepsilon_2}{4}\bigg) + \Pr\bigg(\Big|\sum_{k=0}^{N-1} (B_k - p)\sin(\theta_k) \Big| > \frac{M\varepsilon_2}{4}\bigg) + N^{-3},
\end{align}
where the last term follows from \eqref{pfeqn:size_dft} and the fact that $M \geq 16 \log{N}$. 
Define random variables $Z_k:=(B_k-p)\cos(\theta_k)$.
Note that the $Z_k$'s have zero mean and are jointly independent.
Also, the $Z_k$'s are bounded by $1-p$ almost surely since $|(B_k-p)\cos(\theta_k)|\leq\max\{p,1-p\}$ and $N\geq 2M$.
Moreover, the variance of each $Z_k$ is bounded: $\mathrm{Var}(Z_\ell)\leq p(1-p)$.
Therefore, we may use the Bernstein inequality for a sum of independent, bounded random variables \cite{bennett:jasa62} to bound the probability that $|\sum_{k=0}^{N-1} Z_k|$ deviates from $\varepsilon_3 := \frac{M\varepsilon_2}{4}$:
\begin{equation*}
\Pr\bigg(\Big|\sum_{k=0}^{N-1} (B_k - p)\cos(\theta_k) \Big| > \varepsilon_3\bigg)
\leq 2e^{-\varepsilon_3^2/(2Np(1-p) + 2(1-p)\varepsilon_3/3)} \leq 2N^{-3}.
\end{equation*}
Similarly, the probability that $|\sum_{k=0}^{N-1} (B_k - p)\sin(\theta_k)| > \varepsilon_3$ is also bounded above by $2N^{-3}$. 
Substituting these probability bounds into \eqref{pfeqn:wc_dft} gives $|\langle \varphi_i,\varphi_j\rangle| > \varepsilon_2$ with probability at most $5N^{-3}$ provided $M \geq 16\log{N}$. 
Finally, we take a union bound over the $\binom{N}{2}$ possible choices for $i$ and $j$ to get that Theorem~\ref{thm.random harmonic frames}(iii) holds with probability exceeding $1 - 3N^{-1}$. 

The result now follows by taking a final union bound over $\mathcal{E}_1^\mathrm{c} \cup \mathcal{E}_2^\mathrm{c}$ and $\{\mu > \varepsilon_2\}$.
\end{proof}

As stated earlier, random harmonic frames are not new to sparse signal processing.
Interestingly, for the application of compressed sensing, \cite{CandesT:05,RudelsonV:08}  provides performance guarantees for both random harmonic and Gaussian frames, but requires more rows in a random harmonic frame to accommodate the same level of sparsity.
This suggests that random harmonic frames may be inferior to Gaussian frames as compressed sensing matrices, but practice suggests otherwise \cite{donoho:ptrsa09}.
In a sense, Theorem~\ref{thm.random harmonic frames} helps to resolve this gap in understanding; there exist compressed sensing algorithms whose performance is dictated by worst-case coherence \cite{bajwa:jcn10,DonohoET:06,Tropp:04,Tropp:acha08}, and Theorem~\ref{thm.random harmonic frames} states that random harmonic frames have near-optimal worst-case coherence, being on the order of the Welch bound with an additional $\sqrt{\log N}$ factor.

\begin{exmp}
To illustrate the bounds in Theorem~\ref{thm.random harmonic frames}, we ran simulations in MATLAB.
Picking $N=5000$, we observed $30$ realizations of random harmonic frames for each $M=1000,1250,1500$.
The distributions of $|\mathcal{M}|$, $\nu$, and $\mu$ were rather tight, so we only report the ranges of values attained, along with the bounds given in Theorem~\ref{thm.random harmonic frames}.
Notice that Theorem~\ref{thm.random harmonic frames} gives a bound on $\nu$ in terms of both $|\mathcal{M}|$ and $\mu$.
To simplify matters, we show that $\nu\leq\frac{\min\mu}{\sqrt{\max|\mathcal{M}|}}\leq\frac{\mu}{\sqrt{|\mathcal{M}|}}$, where the minimum and maximum are taken over all realizations in the sample:
\begin{equation*}
\begin{array}{rrcll}
M=1000: &  \qquad|\mathcal{M}| & \in & [961,1052]                & \qquad\subseteq[500,1500] \\
        &  \qquad\nu & \in & [0.2000,0.8082]\times10^{-3}  & \qquad\leq0.0023\approx\tfrac{0.0746}{\sqrt{1052}} \\
        &  \qquad\mu & \in & [0.0746,0.0890]                & \qquad\leq0.8967 \\
\\
M=1250: &  \qquad|\mathcal{M}| & \in & [1207,1305]                & \qquad\subseteq[625,1875] \\
        &  \qquad\nu & \in & [0.2000,0.6273]\times10^{-3}  & \qquad\leq0.0018\approx\tfrac{0.0623}{\sqrt{1305}} \\
        &  \qquad\mu & \in & [0.0623,0.0774]                & \qquad\leq0.7766 \\
\\
M=1500: &  \qquad|\mathcal{M}| & \in & [1454,1590]                & \qquad\subseteq[750,2250] \\
        &  \qquad\nu & \in & [0.2000,0.4841]\times10^{-3}  & \qquad\leq0.0015\approx\tfrac{0.0571}{\sqrt{1590}} \\
        &  \qquad\mu & \in & [0.0571,0.0743]                & \qquad\leq0.6849
\end{array}
\end{equation*}
The reader may have noticed how consistently the average coherence value of $\nu\approx0.2000\times10^{-3}$ was realized.
This occurs precisely when the zeroth row of the DFT is not selected, as the frame elements sum to zero in this case:
\begin{equation*}
\nu
:=\frac{1}{N-1}\max_{i\in\{1,\ldots,N\}}\bigg|\sum_{\substack{j=1\\j\neq i}}^N\langle \varphi_i,\varphi_j\rangle\bigg|
=\frac{1}{N-1}\max_{i\in\{1,\ldots,N\}}\bigg|\bigg\langle \varphi_i,\sum_{j=1}^N\varphi_j\bigg\rangle-\|\varphi_i\|^2\bigg|
=\frac{1}{N-1}.
\end{equation*}
These simulations seem to indicate that our bounds on $|\mathcal{M}|$, $\nu$, and $\mu$ leave room for improvement.
The only bound that lies within an order of magnitude of real-world behavior is our bound on $|\mathcal{M}|$.
\end{exmp}

\subsection{Gabor and chirp frames}
Gabor frames constitute an important class of frames, as they appear in a variety of applications such as radar \cite{herman:sp09}, speech processing \cite{wolfe:spaa01}, and quantum information theory \cite{ScottG:10}.
Given a nonzero seed function $f:\mathbb{Z}_M\rightarrow\mathbb{C}$, we produce all time- and frequency-shifted versions: $f_{xy}(t):=f(t-x)e^{2\pi\mathrm{i}yt/M}$, $t\in\mathbb{Z}_M$.
Viewing these shifted functions as vectors in $\mathbb{C}^M$ gives an $M\times M^2$ Gabor frame.
The following theorem characterizes the spectral norm and the worst-case and average coherence of Gabor frames generated from either a deterministic Alltop vector~\cite{alltop:tit80} or a random Steinhaus vector.

\begin{thm}[Geometry of Gabor frames]
\label{thm.gabor}
Take an Alltop function defined by $f(t):=\frac{1}{\sqrt{M}}e^{2\pi\mathrm{i}t^3/M}$, $t\in\mathbb{Z}_M$.
Also, take a random Steinhaus function defined by $g(t):=\frac{1}{\sqrt{M}}e^{2\pi\mathrm{i}\theta_t}$, $t\in\mathbb{Z}_M$, where the $\theta_t$'s are independent random variables distributed uniformly on the unit interval.
Then the $M\times M^2$ Gabor frames $\Phi$ and $\Psi$ generated by $f$ and $g$, respectively, are unit norm and tight, i.e., $\|\Phi\|_2=\|\Psi\|_2=\sqrt{M}$.  Also, both frames have average coherence $\leq\frac{1}{M+1}$.
Furthermore, if $M\geq5$ is prime, then $\mu_\Phi=\frac{1}{\sqrt{M}}$, while if $M\geq13$, then $\mu_\Psi\leq\sqrt{(13\log{M})/M}$ with probability exceeding $1 - 4M^{-1}$.
\end{thm}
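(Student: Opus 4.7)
The unit-norm and tightness claims reduce at once to orthogonality of additive characters on $\bbZ_M$. Since $|f(t)|=|g(t)|=1/\sqrt{M}$ for every $t$, each time-frequency shift has squared norm $\sum_t|f(t-x)|^2=1$. The $(s,t)$-entry of $\Phi\Phi^*$ is $\sum_{x,y}f(s-x)\overline{f(t-x)}e^{2\pi\rmi y(s-t)/M}$; summing over $y$ first collapses this to $M\delta_{s,t}\sum_x|f(s-x)|^2=M\delta_{s,t}$, so $\Phi\Phi^*=MI_M$, and identically $\Psi\Psi^*=MI_M$. In particular, $\|\Phi\|_2=\|\Psi\|_2=\sqrt{M}$.

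For $\mu_\Phi$, I would expand
\begin{equation*}
\langle f_{xy},f_{x'y'}\rangle = \sum_t f(t-x)\overline{f(t-x')}e^{2\pi\rmi(y-y')t/M}
\end{equation*}
and split into the cases $x=x'$ and $x\neq x'$. The first case forces $y\neq y'$, and character orthogonality on $\bbZ_M$ immediately yields an inner product of $0$. In the second case, substituting the Alltop form and expanding $(t-x)^3-(t-x')^3=-3(x-x')t^2+3(x^2-x'^2)t-(x^3-x'^3)$ exhibits the inner sum as a quadratic Gauss sum in $t$ whose leading coefficient $-3(x-x')$ is a unit in $\bbZ_M$ for every prime $M\geq 5$. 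Completing the square reduces this to the standard Gauss sum $\sum_u e^{2\pi\rmi a u^2/M}$, which has magnitude exactly $\sqrt{M}$ for odd prime $M$ and $a$ coprime to $M$; after the overall $1/M$ normalization, $|\langle f_{xy},f_{x'y'}\rangle|=1/\sqrt{M}$, and hence $\mu_\Phi=1/\sqrt{M}$.

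For the average coherence of either frame, the pivotal identity is that summing over $y$ first forces $\sum_{x,y}f_{xy}$ to be supported only at $t=0$: setting $S:=\sum_x f(-x)$, we have $\sum_{x,y}f_{xy}=MS\,\delta_0$, and therefore $\langle\varphi_{xy},\sum_j\varphi_j\rangle=\overline{f(-x)}MS$ has magnitude $\sqrt{M}|S|$. This feeds into the definition of $\nu$ to give
\begin{equation*}
\nu\leq\frac{\sqrt{M}|S|+1}{M^2-1}.
\end{equation*}
For the Alltop window with prime $M\geq 5$, $S$ is a cubic exponential sum controlled by Weil's bound by $|S|\leq 2$; the small cases $M=5$ (where $S=0$ because $x\mapsto x^3$ is a bijection on $\bbZ_5$) and $M=7$ (where $|S|\approx 1.79$ by direct evaluation) are handled separately, and rearranging $(2\sqrt{M}+1)(M+1)\leq M^2-1$ yields $\nu\leq 1/(M+1)$. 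For the Steinhaus window, $|S_g|=M^{-1/2}\bigl|\sum_x e^{-2\pi\rmi\theta_{-x}}\bigr|$ is a scaled sum of $M$ iid unit-modulus variables, so Hoeffding applied to its real and imaginary parts gives $|S_g|=O(1)$ with probability absorbable into the probability event used in the next paragraph.

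The principal technical obstacle is the worst-case coherence of the Steinhaus frame. For $x\neq x'$,
\begin{equation*}
\langle g_{xy},g_{x'y'}\rangle=\frac{1}{M}\sum_t e^{2\pi\rmi(\theta_{t-x}-\theta_{t-x'})}e^{2\pi\rmi(y-y')t/M},
\end{equation*}
a sum of $M$ unimodular terms whose dependence structure is dictated by the shift $d=x-x'\in\bbZ_M\setminus\{0\}$: reindexing along the orbits of $t\mapsto t+d$ groups the terms into $\gcd(d,M)$ chains in which consecutive summands share exactly one $\theta$. The plan is to condition on a maximal independent set of the $\theta$'s in each chain and apply a Bernstein-type inequality to the resulting linear form in iid unit-circle randomness, yielding subgaussian tails of order $\sqrt{\log M/M}$ for each fixed pair $(xy),(x'y')$ with probability at least $1-O(M^{-4})$. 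A union bound over the at most $\binom{M^2}{2}$ pairs then produces $\mu_\Psi\leq\sqrt{13\log M/M}$ with probability at least $1-4M^{-1}$, with the constant $13$ arising from balancing the union-bound overhead against the Bernstein constants, and the hypothesis $M\geq 13$ needed to absorb lower-order terms.
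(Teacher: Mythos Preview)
Your arguments for tightness and for $\mu_\Phi$ are correct and simply make explicit what the paper obtains by citing \cite{lawrence:jfaa05} and \cite{StrohmerH:03}; likewise, your concentration-plus-union-bound plan for $\mu_\Psi$ is in the spirit of the result the paper invokes from \cite{pfander:tsp08}.

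There is, however, a genuine gap in your average-coherence argument. The theorem asserts $\nu\leq\tfrac{1}{M+1}$ \emph{deterministically} for \emph{both} frames and for all $M$, with no primality hypothesis. Your route---bounding $\bigl|\langle\varphi_{xy},\sum_j\varphi_j\rangle-1\bigr|$ by $\sqrt{M}\,|S|+1$ and then controlling $|S|$ via Weil (Alltop) or Hoeffding (Steinhaus)---does not deliver this: Weil only applies for prime $M$ (and even then your inequality $2\sqrt{M}+1\leq M-1$ fails for $M=5,7$), and for the Steinhaus seed your bound on $|S_g|$ is only probabilistic, whereas the claim is deterministic. The fix is a one-line rearrangement that the paper gets from \cite{bajwa:jcn10}: since $|f(-x)|^2=1/M$, one has $1=M|f(-x)|^2$ and hence
\[
\Big\langle \varphi_{xy},\textstyle\sum_j\varphi_j\Big\rangle-1
= Mf(-x)\,\overline{S}-Mf(-x)\,\overline{f(-x)}
= Mf(-x)\,\overline{\bigl(S-f(-x)\bigr)}.
\]
Now $|S-f(-x)|=\bigl|\sum_{u\neq x}f(-u)\bigr|\leq (M-1)/\sqrt{M}$ by the triangle inequality, so the right-hand side has modulus at most $M\cdot\tfrac{1}{\sqrt{M}}\cdot\tfrac{M-1}{\sqrt{M}}=M-1$, giving $\nu\leq \tfrac{M-1}{M^2-1}=\tfrac{1}{M+1}$. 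This holds for \emph{any} unimodular seed, hence for both the Alltop and Steinhaus frames, for every $M$, with no appeal to Weil or to randomness.
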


\begin{proof}
The tightness claim follows from~\cite{lawrence:jfaa05}, in which it was shown that Gabor frames generated by nonzero seed vectors are tight. The bound on average coherence is a consequence of Theorem~7 of~\cite{bajwa:jcn10} concerning arbitrary Gabor frames. The claim concerning $\mu_\Phi$ follows directly from \cite{StrohmerH:03}, while the claim concerning $\mu_\Psi$ is a simple consequence of Theorem~5.1 of~\cite{pfander:tsp08}.
\end{proof}

Instead of taking all translates and modulates of a seed function, \cite{CF06} constructs \emph{chirp frames} by taking all powers and modulates of a chirp function.
Picking $M$ to be prime, we start with a chirp function $h_M:\mathbb{Z}_M\rightarrow\mathbb{C}$ defined by $h_M(t):=e^{\pi\mathrm{i}t(t-M)/M}$, $t\in\mathbb{Z}_M$.
The $M^2$ frame elements are then defined entrywise by $h_{ab}(t):=\frac{1}{\sqrt{M}}h_M(t)^ae^{2\pi\mathrm{i}bt/M}$, $t\in\mathbb{Z}_M$.
Certainly, chirp frames are, at the very least, similar in spirit to Gabor frames.
As a matter of fact, the chirp frame is in some sense equivalent to the Gabor frame generated by the Alltop function:
it is easy to verify that $h_{(-6x,y-3x^2)}(t)=e^{2\pi\mathrm{i}(t^3+x^3)/M}f_{xy}(t)$, and when $M\geq 5$, the map $(x,y)\mapsto(-6x,y-3x^2)$ is a permutation over $\mathbb{Z}_M^2$.
Using terminology from Definition~\ref{def.flipping and wiggling}, we say the chirp frame is \emph{wiggling equivalent} to a unitary rotation of permuted Alltop Gabor frame elements.
As such, by Lemma~\ref{lem:geom_eqframes}, the chirp frame has the same spectral norm and worst-case coherence as the Alltop Gabor frame, but the average coherence may be different.
In this case, the average coherence still satisfies (SCP-2).
Indeed, adding the frame elements gives
\begin{align*}
\sum_{a=0}^{M-1}\sum_{b=0}^{M-1}h_{ab}(t)
&=\frac{1}{\sqrt{M}}\sum_{a=0}^{M-1}h_M(t)^a\sum_{b=0}^{M-1}e^{2\pi\mathrm{i}bt/M}\\
&=\frac{1}{\sqrt{M}}\sum_{a=0}^{M-1}h_M(t)^aM\delta_0(t)
=\sqrt{M}\bigg(\sum_{a=0}^{M-1}h_M(0)^a\bigg)~\delta_0(t)
=M^{3/2}\delta_0(t),
\end{align*}
and so $\langle h_{a'b'},\sum_{a=0}^{M-1}\sum_{b=0}^{M-1}h_{ab}\rangle=\langle h_{a'b'},M^{3/2}\delta_0\rangle=M^{3/2}h_{a'b'}(0)=M=\frac{M^2}{M}$.
Therefore, applying Lemma~\ref{lem.sufficient conditions}(i) gives the result:

\begin{thm}[Geometry of chirp frames]
\label{thm.chirp}
Pick $M$ prime, and let $\Phi$ be the $M\times M^2$ frame of all powers and modulates of the chirp function $h_M$.
Then $\Phi$ is a unit norm tight frame with $\|\Phi\|_2=\sqrt{M}$, and has worst case coherence $\mu=\frac{1}{\sqrt{M}}$ and average coherence $\nu\leq\frac{\mu}{\sqrt{M}}$.
\end{thm}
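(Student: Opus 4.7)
The plan is to piggy-back on what we already know about the Alltop Gabor frame from Theorem~\ref{thm.gabor}, and then verify the average-coherence bound by a direct sum computation.

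First I would establish the pointwise identity $h_{(-6x,\,y-3x^2)}(t)=e^{2\pi\mathrm{i}(t^3+x^3)/M}\,f_{xy}(t)$ for every $t\in\mathbb{Z}_M$, which is a routine completion-of-the-square calculation in the exponent modulo $M$; here one uses that $M\geq 5$ is prime so that $6$ is invertible mod $M$ and hence $(x,y)\mapsto(-6x,\,y-3x^2)$ is a bijection on $\mathbb{Z}_M^2$. This exhibits each chirp frame element as a unimodular multiple of a permuted Alltop Gabor frame element, i.e.\ the chirp frame is obtained from the Alltop Gabor frame by left-multiplying by a diagonal unitary (the entrywise phase $e^{2\pi\mathrm{i}t^3/M}$) and right-multiplying by a unimodular diagonal (the phase $e^{2\pi\mathrm{i}x^3/M}$) together with a column permutation. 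Since such operations preserve singular values and preserve the moduli of column inner products, they preserve both $\|\Phi\|_2$ and $\mu$. Consequently, the values $\|\Phi\|_2=\sqrt{M}$ and $\mu=1/\sqrt{M}$ transfer directly from Theorem~\ref{thm.gabor}, and the unit-norm property is obvious from the definition of $h_{ab}$.

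Next I would verify the average-coherence bound by computing $\sum_{a,b}h_{ab}(t)$ directly, exactly as in the paragraph preceding the theorem. The inner sum over $b$ is a geometric sum equal to $M\delta_0(t)$, so only the $t=0$ term survives; at $t=0$ the chirp factor $h_M(0)^a=1$, so the remaining sum over $a$ gives $M$, yielding
\begin{equation*}
\sum_{a=0}^{M-1}\sum_{b=0}^{M-1}h_{ab}=M^{3/2}\delta_0.
\end{equation*}
Taking the inner product of this sum with an arbitrary frame element $h_{a'b'}$ and using $h_{a'b'}(0)=1/\sqrt{M}$ gives $\langle h_{a'b'},\sum_{a,b}h_{ab}\rangle=M$, which is exactly the ratio $N/M=M^2/M$ required by condition (i) of Lemma~\ref{lem.sufficient conditions}. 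Applying that lemma yields $\nu\leq\mu/\sqrt{M}$ and completes the proof.

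The only mildly delicate step is the algebraic identity linking chirp and Alltop-Gabor elements; it must be checked that the exponents match modulo $M$ (not just over $\mathbb{Z}$) and that the reindexing map is a genuine permutation, which is where the primality of $M\geq 5$ (ensuring invertibility of $6$) comes in. Once this identity is in hand, the rest of the argument is essentially bookkeeping: one transfers $\|\Phi\|_2$ and $\mu$ through the unitary/diagonal/permutation equivalence, and the average-coherence bound reduces to the one-line Gauss-sum-free computation above combined with Lemma~\ref{lem.sufficient conditions}(i).
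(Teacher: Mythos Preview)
Your proposal is correct and follows essentially the same approach as the paper: the paper likewise uses the identity $h_{(-6x,\,y-3x^2)}(t)=e^{2\pi\mathrm{i}(t^3+x^3)/M}f_{xy}(t)$ together with the bijectivity of $(x,y)\mapsto(-6x,y-3x^2)$ to transfer $\|\Phi\|_2$ and $\mu$ from the Alltop Gabor frame (citing Lemma~\ref{lem:geom_eqframes} for the invariance), and then performs exactly the same sum computation $\sum_{a,b}h_{ab}=M^{3/2}\delta_0$ to invoke Lemma~\ref{lem.sufficient conditions}(i) for the average-coherence bound. The only cosmetic difference is that the paper names the right-diagonal/permutation step ``wiggling equivalence'' and the left-diagonal step a ``unitary rotation,'' whereas you describe the same decomposition in slightly different language.
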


\begin{exmp}
To illustrate the bounds in Theorems~\ref{thm.gabor} and~\ref{thm.chirp}, we consider the examples of an Alltop Gabor frame and a chirp frame, each with $M=5$.
In this case, the Gabor frame has $\nu\approx0.1348\leq0.1667\approx\frac{1}{M+1}$, while the chirp frame has $\nu=\frac{1}{6}\leq\frac{1}{5}=\frac{\mu}{\sqrt{M}}$.
Note the Gabor and chirp frames have different average coherences despite being equivalent in some sense.
For the random Steinhaus Gabor frame, we ran simulations in MATLAB and observed $30$ realizations for each $M=60,70,80$.
The distributions of $\nu$ and $\mu$ were rather tight, so we only report the ranges of values attained, along with the bounds given in Theorem~\ref{thm.gabor}:
\begin{equation*}
\begin{array}{rrcll}
M=60:   &  \qquad\nu & \in & [0.3916,0.5958]\times10^{-2}  & \qquad\leq0.0164 \\
        &  \qquad\mu & \in & [0.3242,0.4216]                & \qquad\leq0.9419 \\
\\
M=70:   &  \qquad\nu & \in & [0.3151,0.4532]\times10^{-2}  & \qquad\leq0.0141 \\
        &  \qquad\mu & \in & [0.2989,0.3814]                & \qquad\leq0.8883 \\
\\
M=80:   &  \qquad\nu & \in & [0.2413,0.3758]\times10^{-2}  & \qquad\leq0.0124 \\
        &  \qquad\mu & \in & [0.2711,0.3796]                & \qquad\leq0.8439 
\end{array}
\end{equation*}
These simulations seem to indicate that bound on $\nu$ is conservative by an order of magnitude.
\end{exmp}

\subsection{Spherical 2-designs}

Lemma~\ref{lem.sufficient conditions}(ii) leads one to consider frames of vectors that sum to zero.
In \cite{HolmesP:04}, it is proved that real unit norm tight frames with this property make up another well-studied class of vector packings: spherical 2-designs.  To be clear, a collection of unit-norm vectors $\Phi\subseteq\mathbb{R}^M$ is called a spherical $t$-design if, for every polynomial $g(x_1,\ldots,x_M)$ of degree at most $t$, we have
\begin{equation*}
\frac{1}{\mathrm{H}^{M-1}(\mathbb{S}^{M-1})}\int_{\mathbb{S}^{M-1}}g(x)~\mathrm{d}\mathrm{H}^{M-1}(x)=\frac{1}{|\Phi|}\sum_{\varphi\in \Phi}g(\varphi),
\end{equation*}
where $\mathbb{S}^{M-1}$ is the unit hypersphere in $\mathbb{R}^M$ and $\mathrm{H}^{M-1}$ denotes the $(M-1)$-dimensional Hausdorff measure on $\mathbb{S}^{M-1}$.
In words, vectors that form a spherical $t$-design serve as good representatives when calculating the average value of a degree-$t$ polynomial over the unit hypersphere.
Today, such designs find application in quantum state estimation \cite{HHH05}.

Since real unit norm tight frames always exist for $N\geq M+1$, one might suspect that spherical 2-designs are equally common, but this intuition is faulty---the sum-to-zero condition introduces certain issues.
For example, there is no spherical 2-design when $M$ is odd and $N=M+2$.
In~\cite{M90}, spherical 2-designs are explicitly characterized by construction.
The following theorem gives a construction based on harmonic frames:

\begin{thm}[Geometry of spherical 2-designs]
\label{thm.spherical 2-designs}
Pick $M$ even and $N\geq2M$.
Take an $\frac{M}{2}\times N$ harmonic frame $\Psi$ by collecting rows from a discrete Fourier transform matrix according to a set of nonzero indices $\mathcal{M}$ and normalizing the columns.
Let $m(n)$ denote $n$th largest index in $\mathcal{M}$, and define a real $M\times N$ frame $\Phi$ by
\begin{equation*}
\Phi_{k\ell}:=\left\{ \begin{array}{ll}\sqrt{\frac{2}{M}}\cos(\frac{2\pi m((k+1)/2)\ell}{N}),&k\mbox{ odd}\\\sqrt{\frac{2}{M}}\sin(\frac{2\pi m(k/2)\ell}{N}),&k\mbox{ even}\end{array} \right., \qquad k=1,\ldots,M, ~\ell=0,\ldots,N-1.
\end{equation*}
Then $\Phi$ is unit norm and tight, i.e., $\|\Phi\|_2^2=\frac{N}{M}$, with worst-case coherence $\mu_\Phi\leq\mu_\Psi$ and average coherence $\nu\leq\frac{\mu}{\sqrt{M}}$.
\end{thm}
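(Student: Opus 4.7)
The plan is to recognize that $\Phi$ is the ``realification'' of $\Psi$: writing $\psi_{n\ell}=\sqrt{2/M}\,e^{2\pi\mathrm{i}m(n)\ell/N}$ for the $(n,\ell)$ entry of $\Psi$, rows $2n-1$ and $2n$ of $\Phi$ are exactly the real and imaginary parts of the $n$-th row of $\Psi$. Expanding $\mathrm{Re}(\bar{a}b)=\mathrm{Re}(a)\mathrm{Re}(b)+\mathrm{Im}(a)\mathrm{Im}(b)$ entrywise in the column inner products then immediately yields the key identity
\begin{equation*}
\langle\varphi_\ell,\varphi_{\ell'}\rangle_{\mathbb{R}}=\mathrm{Re}\,\langle\psi_\ell,\psi_{\ell'}\rangle_{\mathbb{C}}.
\end{equation*}
Specializing to $\ell=\ell'$ gives $\|\varphi_\ell\|^2=\mathrm{Re}(1)=1$, so $\Phi$ is unit norm, and for $\ell\neq\ell'$ the identity yields $|\langle\varphi_\ell,\varphi_{\ell'}\rangle|\leq|\langle\psi_\ell,\psi_{\ell'}\rangle|$, hence $\mu_\Phi\leq\mu_\Psi$.

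For tightness, I will compute $\Phi\Phi^*$ in its natural $2\times 2$ blocks indexed by $(j,k)$. A generic entry is of the form $\frac{2}{M}\sum_\ell \cos(2\pi m(j)\ell/N)\cos(2\pi m(k)\ell/N)$, or the analogous sine--sine or sine--cosine variant. Product-to-sum identities reduce each such sum to a linear combination of $\sum_\ell \cos(2\pi(m(j)\pm m(k))\ell/N)$ and $\sum_\ell \sin(2\pi(m(j)\pm m(k))\ell/N)$, and since $\sum_{\ell\in\mathbb{Z}_N} e^{2\pi\mathrm{i}r\ell/N}$ equals $N$ when $r\equiv 0\pmod N$ and vanishes otherwise, each piece is either $N$ or zero. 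Provided the index set $\mathcal{M}$ is chosen so that no two of its elements are additive inverses modulo $N$ (in particular $N/2\notin\mathcal{M}$ when $N$ is even), the off-diagonal blocks vanish and each diagonal block reduces to $(N/M)I_2$; put differently, $\Phi$ is then a unitary change of basis of the $M\times N$ harmonic frame indexed by $\mathcal{M}\cup(-\mathcal{M})$, which is manifestly tight.

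Finally, each row of $\Phi$ consists of samples of a sinusoid with nonzero integer frequency $m(n)\in\mathbb{Z}_N\setminus\{0\}$, and so its entries sum to zero across $\ell$; hence $\sum_\ell\varphi_\ell=0$. Combined with the assumption $N\geq 2M$, Lemma~\ref{lem.sufficient conditions}(ii) immediately delivers $\nu\leq\mu/\sqrt{M}$. The one subtle point is in the tightness step: the statement as written leaves implicit the disjointness condition $\mathcal{M}\cap(-\mathcal{M})=\emptyset$ in $\mathbb{Z}_N$. Without it, some rows of $\Phi$ can have squared norm $0$ or $2N/M$ and $\Phi$ fails to be tight, so the proof must either verify this is implicit in the construction or record it as a standing hypothesis on $\mathcal{M}$.
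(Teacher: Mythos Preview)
Your approach is essentially the same as the paper's: both establish the key identity $\langle\varphi_\ell,\varphi_{\ell'}\rangle=\mathrm{Re}\langle\psi_\ell,\psi_{\ell'}\rangle$ (the paper via the cosine addition formula, you via the realification viewpoint), derive unit norm and $\mu_\Phi\leq\mu_\Psi$ from it, verify tightness with geometric sums, and invoke Lemma~\ref{lem.sufficient conditions}(ii) after noting the columns sum to zero.

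Where you go beyond the paper is in flagging the implicit hypothesis $\mathcal{M}\cap(-\mathcal{M})=\emptyset$ in $\mathbb{Z}_N$. The paper's proof simply asserts ``it is easy to verify that $\Phi$ is a unit norm tight frame using the geometric sum formula'' without addressing this, but your concern is legitimate: if $m$ and $N-m$ both lie in $\mathcal{M}$ (or if $N/2\in\mathcal{M}$ when $N$ is even), the product-to-sum computation of $\Phi\Phi^*$ picks up nonzero contributions from the $m(j)+m(k)\equiv 0$ terms, and $\Phi$ fails to be tight (indeed, rows can become linearly dependent). The unit-norm and coherence claims survive regardless, since $\cos^2+\sin^2=1$ makes $\|\varphi_\ell\|^2=1$ unconditional and the key identity is purely algebraic, but tightness genuinely needs this extra condition on $\mathcal{M}$. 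You are right to record it as a standing hypothesis.
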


\begin{proof}
It is easy to verify that $\Phi$ is a unit norm tight frame using the geometric sum formula.
Also, since the frame elements sum to zero and $N\geq 2M$, the claim regarding average coherence follows from Lemma~\ref{lem.sufficient conditions}(ii).
It remains to prove $\mu_\Phi\leq\mu_\Psi$.  For each pair of indices $i,j\in\{1,\ldots,N\}$, we have
\begin{align*}
\langle \varphi_i,\varphi_j\rangle
&=\frac{2}{M}\sum_{m\in\mathcal{M}}\bigg(\cos\Big(\frac{2\pi mi}{N}\Big)\cos\Big(\frac{2\pi mj}{N}\Big)+\sin\Big(\frac{2\pi mi}{N}\Big)\sin\Big(\frac{2\pi mj}{N}\Big)\bigg)\\
&=\frac{2}{M}\sum_{m\in\mathcal{M}}\cos\Big(\frac{2\pi m(i-j)}{N}\Big)\\
&=\mathrm{Re}\langle \psi_i,\psi_j\rangle,
\end{align*}
and so $|\langle \varphi_i,\varphi_j\rangle|=|\mathrm{Re}\langle \psi_i,\psi_j\rangle|\leq|\langle \psi_i,\psi_j\rangle|$.
This gives the result.
\end{proof}

\begin{exmp}
To illustrate the bounds in Theorem~\ref{thm.spherical 2-designs}, we consider the spherical 2-design constructed from a $9\times 37$ harmonic equiangular tight frame~\cite{XiaZG:05}.
Specifically, we take a $37\times 37$ DFT matrix, choose nonzero row indices
\begin{equation*}
\mathcal{M}=\{1,7,9,10,12,16,26,33,34\},
\end{equation*}
and normalize the columns to get a harmonic frame $\Psi$ whose worst-case coherence achieves the Welch bound: $\smash{\mu_\Psi=\sqrt{\frac{37-9}{9(37-1)}}\approx0.2940}$.
Following Theorem~\ref{thm.spherical 2-designs}, we produce a spherical 2-design $\Phi$ with $\mu_\Phi\approx0.1967\leq\mu_\Psi$ and $\nu\approx0.0278\leq0.0464\approx\frac{\mu}{\sqrt{M}}$.
\end{exmp}

\subsection{Steiner equiangular tight frames}

We now consider the construction of Chapter~1: Steiner equiangular tight frames (ETFs).
Recall that these fail to break the square-root bottleneck as deterministic RIP matrices.
By contrast, Steiner ETFs are particularly well-suited as sensing matrices for one-step thresholding.
To be clear, every Steiner ETF satisfies $N\geq2M$.
Moreover, if in step (iii) of Theorem~\ref{theorem.steiner etfs}, we choose the distinct rows to be the $\frac{v-1}{k-1}$ rows of the (complex) Hadamard matrix $H$ that are not all-ones, then the sum of columns of each $F_j$ is zero, meaning the sum of columns of $F$ is also zero.
This was done in \eqref{eq.steiner etf example}, and the columns sum to zero, accordingly.
Therefore, by Lemma~\ref{lem.sufficient conditions}(ii), Steiner ETFs satisfy (SCP-2).
This gives the following theorem:

\begin{thm}[Geometry of Steiner equiangular tight frames]
\label{thm.steiner etf coherence}
Build an $M\times N$ matrix $\Phi$ according to Theorem~\ref{theorem.steiner etfs}, and in step (iii), choose rows from the (complex) Hadamard matrix $H$ that are not all-ones.
Then $\Phi$ is an equiangular tight frame, meaning $\|\Phi\|_2^2=\frac{N}{M}$ and $\mu^2=\frac{N-M}{M(N-1)}$, and has average coherence $\nu\leq\frac{\mu}{\sqrt{M}}$.
\end{thm}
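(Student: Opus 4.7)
The claim decomposes into two parts: the ETF properties, which are immediate, and the average coherence bound $\nu \le \mu/\sqrt{M}$, which is the real content.

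The plan is to invoke Theorem~\ref{theorem.steiner etfs} to obtain that $\Phi$ is an ETF (and hence inherit the formulas $\|\Phi\|_2^2 = N/M$ and $\mu^2 = (N-M)/(M(N-1))$ for free), and then use Lemma~\ref{lem.sufficient conditions}(ii) to obtain the average coherence estimate. To apply Lemma~\ref{lem.sufficient conditions}(ii), I need to verify two things: that $N \geq 2M$, and that $\sum_{n=1}^{N} \varphi_n = 0$.

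For the redundancy, Theorem~\ref{theorem.steiner etfs} gives $N/M = k\bigl(1 + (k-1)/(v-1)\bigr) \geq k \geq 2$, since any $(2,k,v)$-Steiner system has $k \geq 2$. So $N \geq 2M$ is automatic for every Steiner ETF.

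The main (and only nontrivial) step is showing that the columns of $\Phi$ sum to zero. Since $\Phi = (\tfrac{k-1}{v-1})^{1/2}[\Phi_1 \cdots \Phi_v]$, it suffices to show that the columns of each block $\Phi_j$ sum to zero. By construction, the nonzero rows of $\Phi_j$ sit in the positions of the $\frac{v-1}{k-1}$ one-valued entries of the $j$th column of $A^{\mathrm{T}}$, and in those positions the entries of $\Phi_j$ agree (row by row) with the distinct rows of $H$ selected in step (iii). Under our prescription, these are the $\frac{v-1}{k-1}$ rows of the $(1+\frac{v-1}{k-1}) \times (1+\frac{v-1}{k-1})$ (complex) Hadamard matrix $H$ other than the all-ones row. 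Because $H$ has orthogonal rows, each non-all-ones row is orthogonal to the all-ones row; equivalently, its entries sum to zero. Hence every nonzero row of $\Phi_j$ has entries summing to zero, which says precisely that the columns of $\Phi_j$ sum to zero. Summing over $j$ gives $\sum_{n=1}^{N}\varphi_n = 0$.

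With both hypotheses of Lemma~\ref{lem.sufficient conditions}(ii) in hand, we conclude $\nu \leq \mu/\sqrt{M}$. The only subtlety worth flagging is that one needs the Hadamard matrix $H$ to contain an all-ones row in the first place, so that the row selection in step (iii) is well-defined; this is standard for the (complex) Hadamard matrices used in the Steiner construction (e.g., the DFT of order $1+\frac{v-1}{k-1}$, or any real Hadamard matrix after suitable row/column sign normalization).
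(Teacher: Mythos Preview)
Your proof is correct and follows essentially the same approach as the paper: invoke Theorem~\ref{theorem.steiner etfs} for the ETF properties, then verify the two hypotheses of Lemma~\ref{lem.sufficient conditions}(ii) (namely $N\ge 2M$ from the redundancy formula, and $\sum_n \varphi_n=0$ because each non-all-ones row of $H$ is orthogonal to the all-ones row and hence has entries summing to zero). The paper presents this argument in the paragraph immediately preceding the theorem rather than in a separate proof environment, but the content is the same.
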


\begin{exmp}
To illustrate the bound in Theorem~\ref{thm.steiner etf coherence}, we note that the example given in \eqref{eq.steiner etf example} has $\nu=\frac{1}{11}\leq\frac{1}{3\sqrt{2}}=\frac{\mu}{\sqrt{M}}$.
\end{exmp}

\subsection{Code-based frames}

Many structures in coding theory are also useful in frame theory.
In this section, we build frames from a code that originally emerged with Berlekamp in~\cite{B70}, and found recent reincarnation with~\cite{YG06}.
We build a $2^m\times2^{(t+1)m}$ frame, indexing rows by elements of $\mathbb{F}_{2^m}$ and indexing columns by $(t+1)$-tuples of elements from $\mathbb{F}_{2^m}$.
For $x\in\mathbb{F}_{2^m}$ and $\alpha\in\mathbb{F}_{2^m}^{t+1}$, the corresponding entry of the matrix $\Phi$ is given by
\begin{equation}
\label{eq.matrix defn}
\Phi_{x\alpha}=\frac{1}{\sqrt{2^{m}}}(-1)^{\mathrm{Tr}\big[\alpha_0x+\sum_{i=1}^t\alpha_ix^{2^i+1}\big]},
\end{equation}
where $\mathrm{Tr}:\mathbb{F}_{2^m}\rightarrow\mathbb{F}_2$ denotes the trace map, defined by $\mathrm{Tr}(z)=\sum_{i=0}^{m-1}z^{2^i}$.
The following theorem gives the spectral norm and the worst-case and average coherence of this frame.

\begin{table}
\begin{center}
\begin{scriptsize}
\begin{tabular}{lllll}
\hline
Name &$\mathbb{R}/\mathbb{C}$	&Size &$\mu_F$ &$\nu_F$ \\
\hline
Normalized Gaussian &$\mathbb{R}$ &$M\times N$ &$\leq \frac{\sqrt{15\log{N}}}{\sqrt{M} - \sqrt{12\log{N}}}$ &$\leq \frac{\sqrt{15\log{N}}}{M - \sqrt{12M\log{N}}}$ \\

Random harmonic &$\mathbb{C}$ &$|\mathcal{M}|\times N$, $\frac{1}{2}M\leq|\mathcal{M}|\leq\frac{3}{2}M$ &$\leq \sqrt{\frac{118(N-M)\log{N}}{MN}}$ &$\leq\frac{\mu_F}{\sqrt{|\mathcal{M}|}}$ \\

Alltop Gabor &$\mathbb{C}$ &$M\times M^2$ &$=\frac{1}{\sqrt{M}}$ &$\leq\frac{1}{M+1}$ \\

Steinhaus Gabor &$\mathbb{C}$ &$M\times M^2$ &$\leq\sqrt{\frac{13\log M}{M}}$ &$\leq\frac{1}{M+1}$ \\

Chirp &$\mathbb{C}$ &$M\times M^2$ &$=\frac{1}{\sqrt{M}}$ &$\leq\frac{\mu_F}{\sqrt{M}}$ \\

$\overset{\mbox{Spherical 2-design}}{\mbox{from harmonic }G}$ &$\mathbb{R}$ &$M\times N$ &$\leq\mu_G$ &$\leq\frac{\mu_F}{\sqrt{M}}$ \\

Steiner &$\mathbb{C}$ &$M\times N$, $M=\frac{v(v-1)}{k(k-1)}$, $N=v(1+\frac{v-1}{k-1})$ &$=\sqrt{\frac{N-M}{M(N-1)}}$ &$\leq\frac{\mu_F}{\sqrt{M}}$ \\

Code-based &$\mathbb{R}$ &$2^m\times 2^{(t+1)m}$ &$\leq\frac{1}{\sqrt{2^{m-2t-1}}}$ &$\leq\frac{\mu_F}{\sqrt{2^m}}$ \\

\hline
\end{tabular}
\end{scriptsize}
\caption{\label{table.constructions}
Eight constructions detailed in this chapter.
The bounds given for the normalized Gaussian, random harmonic and Steinhaus Gabor frames are satisfied with high probability.
All of the frames above are unit norm tight frames except for the normalized Gaussian frame, which has squared spectral norm $\|\Phi\|_2^2\leq(\!\sqrt{M}+\!\sqrt{N}+\!\sqrt{2\log{N}})^2/(M-\!\sqrt{8M\log{N}})$ in the same probability event.
}
\end{center}
\end{table}

\begin{thm}[Geometry of code-based frames]
\label{thm.code-based coherence}
The $2^m\times2^{(t+1)m}$ frame defined by \eqref{eq.matrix defn} is unit norm and tight, i.e., $\|\Phi\|_2^2=2^{tm}$, with worst-case coherence $\mu\leq \frac{1}{\sqrt{2^{m-2t-1}}}$ and average coherence $\nu\leq\frac{\mu}{\sqrt{2^{m}}}$.
\end{thm}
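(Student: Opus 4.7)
The plan is to verify the four claims (unit norm, tightness, worst-case coherence, average coherence) separately by exploiting the $\mathbb{F}_2$-linearity of the exponent in \eqref{eq.matrix defn} together with a classical character-sum estimate for quadratic forms over $\mathbb{F}_{2^m}$. The unit norm condition is immediate: each of the $2^m$ entries of a column $\varphi_\alpha$ has magnitude $2^{-m/2}$, so $\|\varphi_\alpha\|^2 = 1$.

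For tightness, I would compute $(\Phi\Phi^*)_{xy}$ by noting that the exponent $\Tr[\alpha_0(x+y) + \sum_{i=1}^t \alpha_i(x^{2^i+1}+y^{2^i+1})]$ is $\mathbb{F}_2$-linear in $\alpha$. Hence the sum over $\alpha\in\mathbb{F}_{2^m}^{t+1}$ factors as a product of $t+1$ independent additive-character sums, the $i$th of which equals $2^m$ when the ``coefficient'' $c_i(x,y)$ vanishes and $0$ otherwise. Since $c_0(x,y)=x+y=0$ forces $x=y$ in characteristic $2$, the off-diagonal entries vanish and the diagonal entries equal $2^{-m}\cdot(2^m)^{t+1}=2^{tm}$. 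Thus $\Phi\Phi^*=2^{tm}I$, proving that $\Phi$ is tight with $\|\Phi\|_2^2=2^{tm}$.

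For the worst-case coherence, with $\gamma_i:=\alpha_i+\beta_i$ (some $\gamma_i\neq0$ since $\alpha\neq\beta$), one has
\begin{equation*}
\langle \varphi_\alpha,\varphi_\beta\rangle
=\frac{1}{2^m}\sum_{x\in\mathbb{F}_{2^m}}(-1)^{\Tr\bigl[\gamma_0 x+Q_\gamma(x)\bigr]},
\qquad Q_\gamma(x):=\sum_{i=1}^t\gamma_i x^{2^i+1}.
\end{equation*}
Viewing $\mathbb{F}_{2^m}$ as $\mathbb{F}_2^m$, $Q_\gamma$ is a quadratic form whose associated bilinear form satisfies $B_\gamma(x,y)=\Tr[x\cdot L_\gamma(y)]$ with linearized polynomial $L_\gamma(y)=\sum_{i=1}^t(\gamma_i y^{2^i}+\gamma_i^{2^{m-i}}y^{2^{m-i}})$ (the second sum arising from the Frobenius invariance of the trace). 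Raising $L_\gamma$ to the $2^t$th power and reducing exponents modulo $y^{2^m}-y$ produces a linearized polynomial $\tilde L_\gamma$ on $\mathbb{F}_{2^m}$ of degree at most $2^{2t}$ having the same zero set as $L_\gamma$; hence $\dim_{\mathbb{F}_2}\ker L_\gamma\leq 2t$, so $B_\gamma$ has rank $\geq m-2t$ and the radical of $Q_\gamma$ has dimension $d\leq 2t+1$. The standard character-sum estimate then gives $|\sum_x(-1)^{\Tr[\gamma_0 x+Q_\gamma(x)]}|\leq 2^{(m+d)/2}\leq 2^{(m+2t+1)/2}$, so $|\langle\varphi_\alpha,\varphi_\beta\rangle|\leq 2^{-m}\cdot 2^{(m+2t+1)/2}=1/\sqrt{2^{m-2t-1}}$.

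Finally, the average coherence bound will follow from Lemma~\ref{lem.sufficient conditions}(i). Repeating the factorization argument used for tightness, but now for $\sum_\beta\varphi_\beta[x]$, each factor over $\beta_i$ equals $2^m$ when the corresponding monomial $x^{2^i+1}$ (or $x$, for $i=0$) vanishes and is otherwise zero; all factors vanish precisely when $x=0$, giving $\sum_\beta\varphi_\beta=2^{tm+m/2}\delta_0$. Pairing with $\varphi_\alpha$ and using $\varphi_\alpha[0]=2^{-m/2}$ yields $\langle\varphi_\alpha,\sum_\beta\varphi_\beta\rangle=2^{tm}=N/M$ for every $\alpha$, which is exactly hypothesis (i) of Lemma~\ref{lem.sufficient conditions}; the conclusion $\nu\leq\mu/\sqrt{2^m}$ is then immediate. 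I expect the main technical obstacle to be the rank estimate for the quadratic form, specifically the reduction of $L_\gamma$ to a linearized polynomial of degree $2^{2t}$ on $\mathbb{F}_{2^m}$; the remaining pieces are orthogonality of additive characters.
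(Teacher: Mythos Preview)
Your argument is correct, and for unit norm, tightness, and average coherence it is essentially identical to the paper's: both factor the relevant sums over $\alpha$ (or $\beta$) using $\mathbb{F}_2$-linearity of the trace exponent and then invoke orthogonality of additive characters, with the average-coherence claim finished via Lemma~\ref{lem.sufficient conditions}(i) after computing $\sum_\beta\varphi_\beta=2^{tm+m/2}\delta_0$.

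The worst-case coherence argument, however, is genuinely different. The paper squares the correlation sum, performs the change of variables $u=x+y$, $v=xy$ (valid pairs being those with $\mathrm{Tr}(v/u^2)=0$), and then bounds the number of $u$ contributing a nonzero inner summand by counting roots of two auxiliary polynomials $p(u)=0$ and $p(u)=1/u^2$, each of degree controlled by $2^{2t-1}\pm 2^{t-1}$. Your route instead identifies $\Tr[\gamma_0 x+\sum_i\gamma_i x^{2^i+1}]$ as a quadratic form on $\mathbb{F}_2^m$, bounds the dimension of the radical of its polarization via the degree of the linearized polynomial $L_\gamma$ (after a Frobenius twist), and applies the standard estimate $|\sum_x(-1)^{Q(x)+\ell(x)}|\in\{0,2^{(m+r)/2}\}$. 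This is the cleaner, more structural argument and in fact yields $r=\dim\ker L_\gamma\le 2t$ (your stated $d\le 2t+1$ overcounts by one; the radical of $B_\gamma$ is exactly $\ker L_\gamma$), so you actually recover the slightly sharper bound $\mu\le 2^{-(m-2t)/2}$. The paper's approach is more self-contained, avoiding any appeal to the theory of quadratic forms in characteristic two, at the cost of a longer computation. Both arguments tacitly need $m>2t$ for the degree bound on $\tilde L_\gamma$ (respectively for the root counts) to be meaningful, but the stated coherence inequality is vacuous otherwise, so this is harmless.
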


\begin{proof}
For the tightness claim, we use the linearity of the trace map to write the inner product of rows $x$ and $y$:
\begin{align*} 
&\sum_{\alpha\in\mathbb{F}_{2^m}^{t+1}}\!\!\frac{1}{\sqrt{2^{m}}}(-1)^{\mathrm{Tr}\big[\alpha_0x+\sum_{i=1}^t\alpha_ix^{2^i+1}\big]}\frac{1}{\sqrt{2^{m}}}(-1)^{\mathrm{Tr}\big[\alpha_0y+\sum_{i=1}^t\alpha_iy^{2^i+1}\big]}\\
&\qquad=\frac{1}{2^m}\bigg(\!\sum_{\alpha_0\in\mathbb{F}_{2^m}}(-1)^{\mathrm{Tr}[\alpha_0(x+y)]}\bigg)\!\!\sum_{\alpha_1\in\mathbb{F}_{2^m}}\!\!\cdots\!\!\sum_{\alpha_t\in\mathbb{F}_{2^m}}\!\!(-1)^{\mathrm{Tr}\big[\sum_{i=1}^t\alpha_i(x^{2^i+1}+y^{2^i+1})\big]}.
\end{align*}
This expression is $2^{tm}$ when $x=y$.  
Otherwise, note that $\alpha_0\mapsto(-1)^{\mathrm{Tr}[\alpha_0(x+y)]}\in\{\pm1\}$ defines a homomorphism on $\mathbb{F}_{2^m}$.
Since $(x+y)^{-1}\mapsto-1$, the inverse images of $\pm1$ under this homomorphism must form two cosets of equal size, and so $\sum_{\alpha_0\in\mathbb{F}_{2^m}}(-1)^{\mathrm{Tr}[\alpha_0(x+y)]}=0$, meaning distinct rows in $\Phi$ are orthogonal.  
Thus, $\Phi$ is a unit norm tight frame.

For the worst-case coherence claim, we first note that the linearity of the trace map gives
\begin{equation*} (-1)^{\mathrm{Tr}\big[\alpha_0x+\sum_{i=1}^t\alpha_ix^{2^i+1}\big]}(-1)^{\mathrm{Tr}\big[\alpha'_0x+\sum_{i=1}^t\alpha'_ix^{2^i+1}\big]}=(-1)^{\mathrm{Tr}\big[(\alpha_0+\alpha'_0)x+\sum_{i=1}^t(\alpha_i+\alpha'_i)x^{2^i+1}\big]},
\end{equation*}
i.e., every inner product between columns of $\Phi$ is a sum over another column.
Thus, there exists $\alpha\in\mathbb{F}_{2^m}^{t+1}$ such that
\begin{align*}
2^{2m}\mu^2
&=\bigg(\sum_{x\in\mathbb{F}_{2^m}}(-1)^{\mathrm{Tr}\big[\alpha_0x+\sum_{i=1}^t\alpha_ix^{2^i+1}\big]}\bigg)^2\\
&=2^m+\sum_{x\in\mathbb{F}_{2^m}}\sum_{\substack{y\in\mathbb{F}_{2^m}\\y\neq x}}(-1)^{\mathrm{Tr}\big[\alpha_0(x+y)+\sum_{i=1}^t\alpha_i\big((x+y)^{2^i+1}+\sum_{j=0}^{i-1}(xy)^{2^j}(x+y)^{2^i-2^{j+1}+1}\big)\big]},
\end{align*}
where the last equality is by the identity
$(x+y)^{2^i+1}=x^{2^i+1}+y^{2^i+1}+\sum_{j=0}^{i-1}(xy)^{2^j}(x+y)^{2^i-2^{j+1}+1}$, whose proof is a simple exercise of induction.
From here, we perform a change of variables: $u:= x+y$ and $v:= xy$.
Notice that $(u,v)$ corresponds to $(x,y)$ for some $x\neq y$ whenever $(z+x)(z+y)=z^2+uz+v$ has two solutions, that is, whenever $\smash{\mathrm{Tr}(\frac{v}{u^2})=0}$.
Since $(u,v)$ corresponds to both $(x,y)$ and $(y,x)$, we must correct for under-counting:
\begin{align}
2^{2m}\mu^2
\nonumber
&=2^m+2\sum_{\substack{u\in\mathbb{F}_{2^m}\\u\neq0}} \sum_{\substack{v\in\mathbb{F}_{2^m}\\\mathrm{Tr}(v/u^2)=0}}(-1)^{\mathrm{Tr}\big[\alpha_0u+\sum_{i=1}^t\alpha_i\big(u^{2^i+1}+\sum_{j=0}^{i-1}v^{2^j}u^{2^i-2^{j+1}+1}\big)\big]}\\
\nonumber
&=2^m+2\sum_{\substack{u\in\mathbb{F}_{2^m}\\u\neq0}}(-1)^{\mathrm{Tr}\big[\alpha_0u+\sum_{i=1}^t\alpha_iu^{2^i+1}\big]}\sum_{\substack{v\in\mathbb{F}_{2^m}\\\mathrm{Tr}(v/u^2)=0}
}(-1)^{\mathrm{Tr}\big[\big(\sum_{i=1}^t\sum_{j=0}^{i-1}\alpha_i^{2^{-j}}u^{2^{i-j}-2+2^{-j}}\big)v\big]}\\
\label{eq.big}
&\leq2^m+2\sum_{\substack{u\in\mathbb{F}_{2^m}\\u\neq0}}~\bigg|\!\!\!\sum_{\substack{v\in\mathbb{F}_{2^m}
\\\mathrm{Tr}(v/u^2)=0}}\!\!\!(-1)^{\mathrm{Tr}[p(u)v]}~\bigg|,
\end{align}
where the second equality is by repeated application of $\mathrm{Tr}(z)=\mathrm{Tr}(z^2)$, and 
\begin{equation*}
p(u):=\sum_{i=1}^t\sum_{j=0}^{i-1}\alpha_i^{2^{-j}}u^{2^{i-j}-2+2^{-j}}.
\end{equation*}
To bound $\mu$, we will count the $u$'s that produce nonzero summands in \eqref{eq.big}.

For each $u\neq0,$ we have a homomorphism $\chi_u\colon\{v\in\mathbb{F}_{2^m}:\mathrm{Tr}(\frac{v}{u^2})=0\}\rightarrow\{\pm1\}$ defined by
$\chi_u(v):=(-1)^{\mathrm{Tr}[p(u)v]}$.
Pick $u\neq0$ for which there exists a $v$ such that both $\smash{\mathrm{Tr}(\frac{v}{u^2})=0}$ and $\mathrm{Tr}[p(u)v]=1$.
Then $\chi_u(v)=-1$, and so the kernel of $\chi_u$ is the same size as the coset $\smash{\{v\in\mathbb{F}_{2^m}:\mathrm{Tr}(\frac{v}{u^2})=0,\chi_u(v)=-1\}}$, meaning the summand associated with $u$ in \eqref{eq.big} is zero.
Hence, the nonzero summands in \eqref{eq.big} require $\smash{\mathrm{Tr}(\frac{v}{u^2})=0}$ and $\mathrm{Tr}[p(u)v]=0$.
This is certainly possible whenever $p(u)=0$.
Exponentiation gives 
\begin{equation*}
p(u)^{2^{t-1}}=\sum_{i=1}^t\sum_{j=0}^{i-1}\alpha_i^{2^{t-j-1}}u^{2^{t+i-j-1}-2^t+2^{t-j-1}}, 
\end{equation*}
which has degree $2^{2t-1}-2^{t-1}$.
Thus, $p(u)=0$ has at most $2^{2t-1}-2^{t-1}$ solutions, and each such $u$ produces a summand in \eqref{eq.big} of size $2^{m-1}$.
Next, we consider the $u$'s for which $\smash{\mathrm{Tr}(\frac{v}{u^2})=0}$, $\mathrm{Tr}[p(u)v]=0$, and $p(u)\neq0$.
In this case, the hyperplanes defined by $\smash{\mathrm{Tr}(\frac{v}{u^2})=0}$ and $\mathrm{Tr}[p(u)v]=0$ are parallel, and so $\smash{p(u)=\frac{1}{u^2}}$.
Here, 
\begin{equation*}
1=(u^2p(u))^{2^{t-1}}=\sum_{i=1}^t\sum_{j=0}^{i-1}\alpha_i^{2^{t-j-1}}u^{2^{t+i-j-1}+2^{t-j-1}}, 
\end{equation*}
which has degree $2^{2t-1}+2^{t-1}$.
Thus, $\smash{p(u)=\frac{1}{u^2}}$ has at most $2^{2t-1}+2^{t-1}$ solutions, and each such $u$ produces a summand in \eqref{eq.big} of size $2^{m-1}$.
We can now continue the bound from \eqref{eq.big}: $2^{2m}\mu^2\leq 2^m+2(2^{2t-1}-2^{t-1}+2^{2t-1}+2^{t-1})2^{m-1}\leq 2^{m+2t+1}$. 
From here, isolating $\mu$ gives the claim.

Lastly, for average coherence, pick some $x\in\mathbb{F}_{2^m}$.
Then summing the entries in the $x$th row gives
\begin{align*}
&\sum_{\alpha\in\mathbb{F}_{2^m}^{t+1}}\frac{1}{\sqrt{2^{m}}}(-1)^{\mathrm{Tr}\big[\alpha_0x+\sum_{i=1}^t\alpha_ix^{2^i+1}\big]}\\
&\qquad=\frac{1}{\sqrt{2^{m}}}\bigg(\sum_{\alpha_0\in\mathbb{F}_{2^m}}(-1)^{\mathrm{Tr}(\alpha_0x)}\bigg)\sum_{\alpha_1\in\mathbb{F}_{2^m}}\cdots\sum_{\alpha_t\in\mathbb{F}_{2^m}}(-1)^{\mathrm{Tr}\big[\sum_{i=1}^t\alpha_ix^{2^i+1}\big]}\\
&\qquad=\left\{\begin{array}{lc}2^{(t+1/2)m},&x=0\\0,&x\neq0\end{array}\right. .
\end{align*}
That is, the frame elements sum to a multiple of an identity basis element: $\sum_{\alpha\in\mathbb{F}_{2^m}^{t+1}}\varphi_\alpha=2^{(t+1/2)m}\delta_0$.
Since every entry in row $x=0$ is $\smash{\frac{1}{\sqrt{2^{m}}}}$, we have  
$\langle \varphi_{\alpha'},\sum_{\alpha\in\mathbb{F}_{2^m}^{t+1}}\varphi_\alpha \rangle=\frac{2^{(t+1)m}}{2^m}$
for every $\alpha'\in\mathbb{F}_{2^m}^{t+1}$, and so by Lemma~\ref{lem.sufficient conditions}(i), we are done.
\end{proof}

\begin{exmp}
To illustrate the bounds in Theorem~\ref{thm.code-based coherence}, we consider the example where $m=4$ and $t=1$.
This is a $16\times 256$ code-based frame $\Phi$ with $\smash{\mu=\frac{1}{2}\leq\frac{1}{\sqrt{2}}=\frac{1}{\sqrt{2^{m-2t-1}}}}$ and $\smash{\nu=\frac{1}{17}\leq\frac{1}{8}=\frac{\mu}{\sqrt{2^m}}}$.
\end{exmp}

\section{Fundamental limits on worst-case coherence}

In many applications of frames, performance is dictated by worst-case coherence \cite{bajwa:jcn10,candes:annstat09,DonohoET:06,HolmesP:04,MixonQKF:11,StrohmerH:03,Tropp:04,Tropp:acha08,zahedi:acc10}. 
It is therefore particularly important to understand which worst-case coherence values are achievable.
To this end, the Welch bound is commonly used in the literature.
When worst-case coherence achieves the Welch bound, the frame is equiangular and tight \cite{StrohmerH:03}.
However, equiangular tight frames cannot have more vectors than the square of the spatial dimension \cite{StrohmerH:03}, meaning the Welch bound is not tight whenever $N>M^2$.
When the number of vectors $N$ is exceedingly large, the following theorem gives a better bound:

\begin{thm}[\cite{alon:dm03,nelson:jat11}]
\label{thm.asymptotic bound}
Every sufficiently large $M\times N$ unit norm frame with $N\geq2M$ and worst-case coherence $\mu<\frac{1}{2}$ satisfies
\begin{equation}
\mu^2\log\frac{1}{\mu}\geq\frac{C\log N}{M}
\end{equation}
for some constant $C>0$.
\end{thm}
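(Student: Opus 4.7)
The plan is to translate the frame condition into a statement about the Gram matrix $G := \Phi^*\Phi$: this is an $N\times N$ positive semidefinite matrix with $G_{ii}=1$ and $|G_{ij}|\leq\mu$ for $i\neq j$, and crucially it has rank at most $M$. I would then exploit the tension between low rank and near-identity structure by amplifying the contrast through an entrywise operation, following the strategy of Alon's ``perturbed identity matrices have high rank'' lemma.

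The central idea is to form the Schur power $B := G^{\circ t}$ for an integer $t$ to be optimized. Since $G_{ij}=\langle\varphi_i,\varphi_j\rangle$, one has $G_{ij}^t = \langle \varphi_i^{\otimes t},\varphi_j^{\otimes t}\rangle$, and each tensor $\varphi_i^{\otimes t}$ lies in the symmetric subspace of $(\mathbb{C}^M)^{\otimes t}$, which has dimension $\binom{M+t-1}{t}$. This yields the key rank bound $\mathrm{rank}(B)\leq \binom{M+t-1}{t}$, while $|B_{ij}|\leq\mu^t$ off-diagonal and $B_{ii}=1$; moreover $B$ remains positive semidefinite by the Schur product theorem.

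Next I would apply Cauchy--Schwarz to the eigenvalues of $B$ to obtain $N^2 = (\mathrm{tr}\,B)^2 \leq \mathrm{rank}(B)\cdot\|B\|_F^2 \leq \binom{M+t-1}{t}\bigl(N + N^2\mu^{2t}\bigr)$, where I used $\|B\|_F^2 = N + \sum_{i\neq j}|G_{ij}|^{2t}$. Choosing $t$ to be the least integer with $\mu^{2t}\leq 1/N$, namely $t = \lceil \log N/(2\log(1/\mu))\rceil$, absorbs the off-diagonal contribution to the Frobenius norm and reduces the estimate to $N \leq 2\binom{M+t-1}{t}$.

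The final step is to convert this binomial inequality into the claimed lower bound on $\mu$. The crude estimate $\binom{M+t-1}{t}\leq(e(M+t)/t)^t$ is effective when $t\leq M$, and the ``sufficiently large'' hypothesis together with $N\geq 2M$ and $\mu<\tfrac12$ is exactly what keeps us in this regime (the latter ensuring $\log(1/\mu)$ is bounded below so $t$ is not too large relative to $M$). Taking logarithms yields $\log(N/2)\leq t\log(2eM/t)$; substituting the value of $t$ and rearranging should produce $\log(2eM\log(1/\mu)/\log N)\gtrsim \log(1/\mu)$, which exponentiates to $M\gtrsim \log N/(\mu^2\log(1/\mu))$, the stated inequality with some explicit constant (something like $C=1/(4e)$ should fall out). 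The main obstacle I anticipate is the careful asymptotic bookkeeping at this last step: handling the ceiling in $t$, verifying $t\leq M$ across the parameter range, and absorbing the gap between $\log N$ and $\log(N/2)$ into the constant. These manipulations are routine but fiddly, and they are precisely where the hypotheses $N\geq 2M$, $\mu<\tfrac12$, and ``sufficiently large'' do their work.
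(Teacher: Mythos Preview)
The paper does not actually prove this theorem: it is stated with citations to \cite{alon:dm03,nelson:jat11} and used only as context before the paper's own contribution, Theorem~\ref{thm.bound}. So there is no ``paper's proof'' to compare against.

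That said, your proposal is a faithful reconstruction of Alon's original argument from \cite{alon:dm03}: form the Hadamard (Schur) power $G^{\circ t}$ of the Gram matrix, bound its rank by the dimension $\binom{M+t-1}{t}$ of the symmetric tensor space, apply $(\mathrm{tr}\,B)^2\leq\mathrm{rank}(B)\,\|B\|_F^2$, and choose $t\approx \log N/(2\log(1/\mu))$ to kill the off-diagonal Frobenius contribution. The final bookkeeping you flag (ceiling in $t$, ensuring $t\leq M$, absorbing constants) is exactly where the hypotheses $\mu<\tfrac12$, $N\geq 2M$, and ``sufficiently large'' enter, and it goes through as you describe. Your sketch is correct and is essentially the proof in the cited reference.
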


For a fixed worst-case coherence $\mu<\frac{1}{2}$, this bound indicates that the number of vectors $N$ cannot exceed some exponential in the spatial dimension $M$, that is, $N\leq a^M$ for some $a>0$.
However, since the constant $C$ is not established in this theorem, it is unclear which base $a$ is appropriate for each $\mu$.
The following theorem is a little more explicit in this regard:

\begin{thm}[\cite{MSEA03,XiaZG:05}]
\label{thm.complex bound}
Every $M\times N$ unit norm frame has worst-case coherence $\mu\geq1-2N^{-1/(M-1)}$.
Furthermore, taking $N=\Theta(a^M)$, this lower bound goes to $1-\frac{2}{a}$ as $M\rightarrow\infty$.
\end{thm}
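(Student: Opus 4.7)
The plan is to prove the first inequality by a spherical packing argument on $S^{M-1}$, then obtain the limit statement as an immediate computation. Since $|\langle\varphi_i,\varphi_j\rangle|\leq\mu$ implies in particular $\langle\varphi_i,\varphi_j\rangle\leq\mu$, any two of the $N$ unit vectors have angular separation at least $\alpha:=\arccos\mu$, so that $\{\varphi_n\}_{n=1}^N$ forms an $\alpha$-separated set on the unit sphere.

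First I would verify that the open spherical caps $\Omega_n:=\{x\in S^{M-1}:\langle x,\varphi_n\rangle>\cos(\alpha/2)\}$ are pairwise disjoint. Suppose to the contrary that $x\in\Omega_i\cap\Omega_j$ for some $i\neq j$. Decompose $\varphi_i=ax+u$ and $\varphi_j=bx+v$ with $u,v\perp x$; the cap conditions force $a,b>\cos(\alpha/2)$, and consequently $\|u\|^2=1-a^2<\sin^2(\alpha/2)$ and similarly for $\|v\|$, so Cauchy--Schwarz yields
\begin{equation*}
\langle\varphi_i,\varphi_j\rangle=ab+\langle u,v\rangle>\cos^2(\alpha/2)-\sin^2(\alpha/2)=\cos\alpha=\mu,
\end{equation*}
contradicting $\langle\varphi_i,\varphi_j\rangle\leq\mu$.

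Next I would apply the volumetric inequality $N\cdot A(\alpha/2)\leq A(S^{M-1})$, where $A(\beta)$ denotes the surface area of a spherical cap of half-angle $\beta$. Orthogonal projection of the cap onto the tangent hyperplane at its pole is area non-increasing and maps the cap surjectively onto the $(M-1)$-dimensional Euclidean ball of radius $\sin\beta$, yielding $A(\beta)\geq V_{M-1}\sin^{M-1}\beta$, where $V_{M-1}$ denotes the volume of the Euclidean unit ball in $\mathbb{R}^{M-1}$. Combining this with the standard identity $A(S^{M-1})=MV_M$ and the half-angle formula $\sin(\alpha/2)=\sqrt{(1-\mu)/2}$, the inequality rearranges to a bound of the form $1-\mu\leq C_M/N^{2/(M-1)}$, with $C_M=2(MV_M/V_{M-1})^{2/(M-1)}\to 2$ as $M\to\infty$. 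Because the claim $\mu\geq 1-2N^{-1/(M-1)}$ is vacuous unless $N>2^{M-1}$, in the only regime of interest the derived estimate dominates $2N^{-1/(M-1)}$ and the stated bound follows.

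For the second statement, write $N=\Theta(a^M)$ so that $\log N=M\log a+O(1)$ and $N^{1/(M-1)}=\exp(\log N/(M-1))\to a$ as $M\to\infty$; hence $1-2N^{-1/(M-1)}\to1-2/a$. The main obstacle lies in the third step: the projection estimate actually yields the ostensibly stronger exponent $2/(M-1)$, so one must be careful when absorbing the geometric constants $MV_M/V_{M-1}$ (which grow like $\sqrt{2\pi M}$ but raised to a power tending to $1$) to confirm that the resulting inequality genuinely implies the claimed form with constant $2$. If tighter control on these constants is required, a sharper lower bound on $A(\beta)$ via the regularized incomplete beta function representation $A(\beta)/A(S^{M-1})=\tfrac{1}{2}I_{\sin^2\beta}(\tfrac{M-1}{2},\tfrac{1}{2})$ could be substituted without altering the geometric skeleton of the argument.
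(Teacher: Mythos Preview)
The paper does not prove this theorem at all; it is quoted as a known result from \cite{MSEA03,XiaZG:05}, so there is no ``paper's own proof'' to compare against. What the paper \emph{does} prove is the next result, Theorem~\ref{thm.bound}, which is a sharper bound for \emph{real} frames, and the argument there is a hyperspherical cap packing on $S^{M-1}$ via antipodal points---essentially the same geometric idea you are using.

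That is precisely the gap in your proposal. Your argument lives on the real sphere $S^{M-1}$: the step ``$|\langle\varphi_i,\varphi_j\rangle|\leq\mu$ implies $\langle\varphi_i,\varphi_j\rangle\leq\mu$'' and the decomposition $\varphi_i=ax+u$ with scalar $a>\cos(\alpha/2)$ presuppose real inner products and real coordinates. But Theorem~\ref{thm.complex bound} is stated for arbitrary (in particular, complex) unit norm frames---indeed the surrounding discussion in the paper explicitly treats it as the complex bound, noting that it is ``loose for real frames'' and then improving it in the real case via Theorem~\ref{thm.bound}. For a complex frame the vectors sit on $S^{2M-1}\subset\mathbb{R}^{2M}$, and running your cap argument there yields an exponent $2/(2M-1)$ rather than $1/(M-1)$, which does not recover the stated bound. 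To obtain the correct exponent one must exploit the phase freedom $\varphi_n\mapsto e^{i\theta}\varphi_n$, i.e., pack in $\mathbb{CP}^{M-1}$ (real dimension $2(M-1)$) rather than on the sphere; this is what the cited references do. So as written, your argument proves a version of Theorem~\ref{thm.bound}, not Theorem~\ref{thm.complex bound}.

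A secondary issue you already flag: even in the real case, your volumetric estimate produces $1-\mu\leq 2\bigl(MV_M/(NV_{M-1})\bigr)^{2/(M-1)}$, and absorbing the factor $(MV_M/V_{M-1})^{2/(M-1)}$ into the constant $2$ requires $N\geq 2\pi M$ or so, whereas the claim $\mu\geq 1-2N^{-1/(M-1)}$ becomes nontrivial already at $N>2^{M-1}$. For small $M$ these ranges do not nest, so the reduction in your third paragraph is incomplete.
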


For many applications, it does not make sense to use a complex frame, but the bound in Theorem~\ref{thm.complex bound} is known to be loose for real frames \cite{CHS96}.
We therefore improve Theorems~\ref{thm.asymptotic bound} and~\ref{thm.complex bound} for the case of real unit norm frames:

\begin{thm}
\label{thm.bound}
Every real $M\times N$ unit norm frame has worst-case coherence
\begin{equation}
\label{eq.my bound}
\mu\geq\cos\bigg[\pi\bigg(\frac{M-1}{N\pi^{1/2}}\cdot\frac{\Gamma(\frac{M-1}{2})}{\Gamma(\frac{M}{2})}\bigg)^{\frac{1}{M-1}}\bigg].
\end{equation}
Furthermore, taking $N=\Theta(a^M)$, this lower bound goes to $\cos(\frac{\pi}{a})$ as $M\rightarrow\infty$.
\end{thm}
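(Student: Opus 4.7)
The plan is to use a classical sphere packing argument applied to the antipodal lift of the frame. Since $\Phi$ is real, the $2N$ points $\{\pm\varphi_n\}_{n=1}^N\subset S^{M-1}$ are well-defined, and the coherence bound $|\langle\varphi_i,\varphi_j\rangle|\leq\mu$ implies that any two distinct such points have pairwise great-circle distance at least $\theta:=\arccos(\mu)\in[0,\pi/2]$: indeed, $\arccos(\pm\langle\varphi_i,\varphi_j\rangle)$ is either $\arccos(|\langle\varphi_i,\varphi_j\rangle|)\geq\theta$ or its supplement, which is at least $\pi/2\geq\theta$. Consequently, open spherical caps of angular radius $\theta/2$ centered at each of these $2N$ points are pairwise disjoint in $S^{M-1}$.

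Next I will translate disjointness into an area inequality. Let $\omega_{M-1}=2\pi^{M/2}/\Gamma(M/2)$ denote the total surface area of $S^{M-1}$, and let $A(\alpha)=\omega_{M-2}\int_0^\alpha\sin^{M-2}\phi\,d\phi$ denote the area of a cap of angular radius $\alpha$. Disjointness yields $2N\cdot A(\theta/2)\leq\omega_{M-1}$. Since $\theta/2\leq\pi/4$, Jordan's inequality $\sin\phi\geq(2/\pi)\phi$ furnishes the lower bound
\begin{equation*}
A(\theta/2)\geq\omega_{M-2}\left(\frac{2}{\pi}\right)^{M-2}\frac{(\theta/2)^{M-1}}{M-1}.
\end{equation*}
Substituting this into the packing inequality, solving for $\theta$, and consolidating the resulting powers of $2$ with the help of the identity $\omega_{M-1}/\omega_{M-2}=\pi^{1/2}\Gamma((M-1)/2)/\Gamma(M/2)$ will collapse the estimate into
\begin{equation*}
\theta\leq\pi\left(\frac{M-1}{N\pi^{1/2}}\cdot\frac{\Gamma((M-1)/2)}{\Gamma(M/2)}\right)^{1/(M-1)}.
\end{equation*}
Because cosine is decreasing on $[0,\pi/2]$, this upper bound on $\theta$ converts directly into the stated lower bound on $\mu=\cos\theta$; if the right-hand side happens to exceed $\pi/2$, then its cosine is nonpositive and the bound is trivially satisfied by $\mu\geq 0$.

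For the asymptotic statement with $N=\Theta(a^M)$, I will invoke Stirling's formula to obtain $\Gamma((M-1)/2)/\Gamma(M/2)\sim\sqrt{2/M}$, so that the quantity inside the $(M-1)$-th root scales like $\sqrt{M}/a^M$; its $(M-1)$-th root then tends to $1/a$, while $\pi^{(M-2)/(M-1)}\to\pi$, so the argument of cosine converges to $\pi/a$ and $\mu\geq\cos(\pi/a)$ in the limit. The main obstacle will be purely algebraic: carefully tracking the exponents of $2$ and $\pi$ when converting the raw sphere-packing estimate into the normalized form in the theorem, since the clean collapse $2^{\,1-(M-2)/(M-1)-1/(M-1)}=1$ that leaves the constant $\pi$ out front is easy to miscount if one is careless about where the factor of two in $\theta/2$ ends up.
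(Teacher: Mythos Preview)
Your proposal is correct and follows essentially the same approach as the paper: lift to the $2N$ antipodal points on $\mathbb{S}^{M-1}$, apply a cap-packing/pigeonhole argument to bound the minimum pairwise angle by twice the radius of a cap of area $\omega_{M-1}/(2N)$, estimate the cap area via Jordan's inequality $\sin\phi\geq 2\phi/\pi$, and finish with Stirling for the asymptotics. The only cosmetic difference is that the paper phrases the packing step as ``some cap of area $\omega_{M-1}/(2N)$ contains two points'' rather than ``$2N$ disjoint caps of radius $\theta/2$ fit in the sphere,'' but these are equivalent, and your handling of the edge case where the bound exceeds $\pi/2$ is actually slightly more explicit than the paper's.
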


Before proving this theorem, we first consider the special case where the dimension is $M=3$:

\begin{lem}
\label{lem.3d points}
Given $N$ points on the unit sphere $\mathbb{S}^{2}\subseteq\mathbb{R}^3$, the smallest angle between points is $\leq2\cos^{-1}\big(1-\frac{2}{N}\big)$.
\end{lem}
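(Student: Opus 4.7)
The plan is to use a classical area-packing argument on the sphere. Let $\alpha$ denote the minimum pairwise angular distance between the $N$ given points, so that the goal reduces to showing $\alpha \leq 2\cos^{-1}(1 - \frac{2}{N})$.

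First, I would place around each of the $N$ points a spherical cap of half-angle $\alpha/2$, where ``half-angle'' means the geodesic radius of the cap on $\mathbb{S}^2$. By the definition of $\alpha$ and the triangle inequality for the spherical (geodesic) metric, the interiors of these $N$ caps are pairwise disjoint: if two caps centered at points separated by angle $\geq \alpha$ both contained an interior point $p$, then the triangle inequality would force their centers to lie at angular distance strictly less than $\alpha/2 + \alpha/2 = \alpha$, a contradiction.

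Next, I would invoke the standard formula for the area of a spherical cap of half-angle $\theta$ on the unit sphere, namely $2\pi(1-\cos\theta)$, and compare the sum of the cap areas with the total area $4\pi$ of $\mathbb{S}^2$:
\begin{equation*}
N \cdot 2\pi\bigl(1-\cos(\alpha/2)\bigr) \leq 4\pi.
\end{equation*}
Rearranging gives $\cos(\alpha/2) \geq 1 - \frac{2}{N}$, and since $\cos^{-1}$ is decreasing on $[-1,1]$, this yields $\alpha/2 \leq \cos^{-1}(1-\frac{2}{N})$, hence the desired bound.

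There is essentially no serious obstacle here: the only mild subtlety is making sure that ``disjoint interiors'' really does follow from the minimum-angle hypothesis (which is the geodesic triangle inequality on $\mathbb{S}^2$) and that the cap-area formula is applied with the correct notion of half-angle. Once those are set, the bound drops out of a one-line area comparison.
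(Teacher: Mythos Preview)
Your proposal is correct and follows essentially the same sphere-packing argument as the paper: both place disjoint caps of angular radius $\alpha/2$ around the $N$ points, invoke the cap-area formula $2\pi(1-\cos\theta)$, and compare total cap area to $4\pi$. The paper dresses this up as a proof by contradiction (assuming no cap of area $4\pi/N$ contains two points), but the underlying inequality and computation are identical to yours.
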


\begin{proof}
We first claim there exists a closed spherical cap in $\mathbb{S}^{2}$ with area $\smash{\frac{4\pi}{N}}$ that contains two of the $N$ points.  
Suppose otherwise, and take $\gamma$ to be the angular radius of a spherical cap with area $\smash{\frac{4\pi}{N}}$.  
That is, $\gamma$ is the angle between the center of the cap and every point on the boundary.  
Since the cap is closed, we must have that the smallest angle $\alpha$ between any two of our $N$ points satisfies $\alpha>2\gamma$.  
Let $C(p,\theta)$ denote the closed spherical cap centered at $p\in \mathbb{S}^2$ of angular radius $\theta$, and let $P$ denote our set of $N$ points.  
Then we know for $p\in P$, the $C(p,\gamma)$'s are disjoint, $\frac{\alpha}{2}>\gamma$, and $\bigcup_{p\in P}C(p,\tfrac{\alpha}{2})\subseteq \mathbb{S}^2$, and so taking 2-dimensional Hausdorff measures on the sphere gives
\begin{equation*} 
\mathrm{H}^2(\mathbb{S}^2)=4\pi=\mathrm{H}^2\bigg(\bigcup_{p\in P}C(p,\gamma)\bigg)<\mathrm{H}^2\bigg(\bigcup_{p\in P}C(p,\tfrac{\alpha}{2})\bigg)\leq\mathrm{H}^2(\mathbb{S}^2),
\end{equation*} 
a contradiction.  

Since two of the points reside in a spherical cap of area $\smash{\frac{4\pi}{N}}$, we know $\alpha$ is no more than twice the radius of this cap.  
We use spherical coordinates to relate the cap's area to the radius:
$\smash{\mathrm{H}^2(C(\cdot,\gamma))=2\pi\int_0^\gamma\sin\phi~\mathrm{d}\phi=2\pi(1-\cos\gamma)}$. 
Therefore, when $\smash{\mathrm{H}^2(C(\cdot,\gamma))=\frac{4\pi}{N}}$, we have $\gamma=\cos^{-1}(1-\frac{2}{N})$, and so $\alpha\leq2\gamma$ gives
the result.
\end{proof}

\begin{thm}
\label{thm.3d points}
Every real $3\times N$ unit norm frame has worst-case coherence $\mu\geq1-\frac{4}{N}+\frac{2}{N^2}$.
\end{thm}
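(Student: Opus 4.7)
The plan is to reduce the problem to Lemma~\ref{lem.3d points} by working with the $2N$ antipodal copies of the frame vectors. First I would observe that since $\Phi$ is a real unit norm frame, every inner product satisfies $\langle\varphi_i,\varphi_j\rangle=\cos\theta_{ij}$, where $\theta_{ij}$ is the angle between $\varphi_i$ and $\varphi_j$. The worst-case coherence is therefore $\mu=\max_{i\neq j}|\cos\theta_{ij}|=\cos\bigparen{\min_{i\neq j}\min\{\theta_{ij},\pi-\theta_{ij}\}}$. In other words, $\arccos\mu$ is precisely the smallest angle realized between two \emph{lines} spanned by distinct frame vectors.

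Next I would pass from lines to points on the sphere by considering the set $P:=\{\pm\varphi_1,\ldots,\pm\varphi_N\}\subseteq\mathbb{S}^2$. In the nontrivial case (otherwise $\mu=1$ already and the bound holds trivially), these $2N$ points are distinct. The smallest angular separation between two distinct points of $P$ equals exactly $\arccos\mu$, because the antipode $-\varphi_j$ realizes angle $\pi-\theta_{ij}$ with $\varphi_i$.

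Now I would apply Lemma~\ref{lem.3d points} to $P$, which consists of $2N$ points on $\mathbb{S}^2$. The lemma yields
\begin{equation*}
\arccos\mu\leq2\cos^{-1}\Bigparen{1-\tfrac{2}{2N}}=2\cos^{-1}\Bigparen{1-\tfrac{1}{N}}.
\end{equation*}
Applying $\cos$ to both sides (and noting $\cos$ is decreasing on $[0,\pi]$) gives
\begin{equation*}
\mu\geq\cos\Bigparen{2\cos^{-1}\bigparen{1-\tfrac{1}{N}}}.
\end{equation*}
Finally I would use the double angle identity $\cos(2\theta)=2\cos^2\theta-1$ with $\theta=\cos^{-1}(1-\tfrac{1}{N})$, so that $\cos(2\theta)=2\bigparen{1-\tfrac{1}{N}}^2-1=1-\tfrac{4}{N}+\tfrac{2}{N^2}$, which is exactly the claimed bound.

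I do not anticipate a real obstacle here: the main conceptual point, converting the coherence problem into a packing problem on $\mathbb{S}^2$ by doubling to antipodal pairs, is where all the work lies, and Lemma~\ref{lem.3d points} does the heavy lifting. The only subtlety is ensuring the $2N$ points are distinct so that the lemma applies, which is handled by the trivial case where two frame vectors are parallel or antiparallel (giving $\mu=1$ automatically).
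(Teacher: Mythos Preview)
Your proposal is correct and follows essentially the same approach as the paper's proof: pass from $N$ lines to $2N$ antipodal points on $\mathbb{S}^2$, apply Lemma~\ref{lem.3d points} with $2N$ in place of $N$, and finish with the double angle formula. You have simply filled in more of the details (the identification of $\arccos\mu$ with the minimum angular separation in the doubled set, and the trivial case where two frame vectors are parallel) than the paper does.
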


\begin{proof}
Packing $N$ unit vectors in $\mathbb{R}^3$ corresponds to packing $2N$ antipodal points in $\mathbb{S}^2$, 
and so Lemma~\ref{lem.3d points} gives $\alpha\leq2\cos^{-1}(1-\frac{1}{N})$.
Applying the double angle formula to 
\begin{equation*}
\mu=\cos\alpha\geq\cos[2\cos^{-1}(1-\tfrac{1}{N})]
\end{equation*}
gives the result.
\end{proof}

\begin{figure}[t]
\centering
\begin{picture}(320,193)(0,0)
\put(0,0){\includegraphics[width=4.5in]{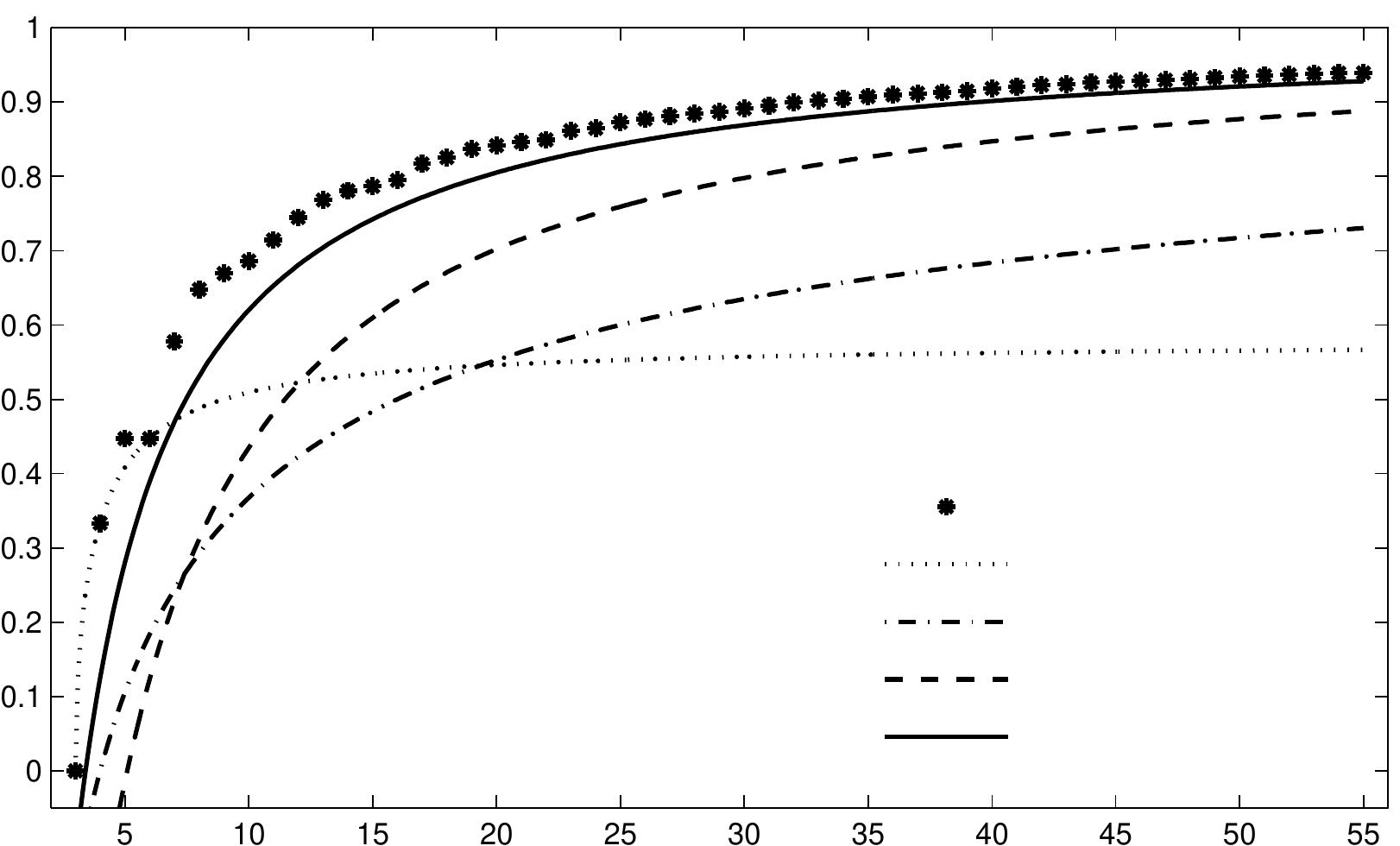}}

\put(330,0){$N$}
\put(-12,185){$\mu_F$}

\put(240,76){\footnotesize{Numerically optimal}}
\put(240,63){\footnotesize{Welch bound}}
\put(240,49){\footnotesize{Theorem~\ref{thm.complex bound}}}
\put(240,36){\footnotesize{Theorem~\ref{thm.bound}}}
\put(240,23){\footnotesize{Theorem~\ref{thm.3d points}}}

\end{picture}

\caption{Different bounds on worst-case coherence for $M=3$, $N=3,\ldots,55$.
Stars give numerically determined optimal worst-case coherence of $N$ real unit vectors, found in \cite{CHS96}.
Dotted curve gives Welch bound, dash-dotted curve gives bound from Theorem~\ref{thm.complex bound}, dashed curve gives bound from Theorem~\ref{thm.bound}, and solid curve gives bound from Theorem~\ref{thm.3d points}.
 \label{4 figure}}
\end{figure}

Now that we understand the special case where $M=3$, we tackle the general case:

\begin{proof}[Proof of Theorem~\ref{thm.bound}]
As in the proof of Theorem~\ref{thm.3d points}, we relate packing $N$ unit vectors to packing $2N$ points in the hypersphere $\mathbb{S}^{M-1}\subseteq\mathbb{R}^M$.
The argument in the proof of Lemma~\ref{lem.3d points} generalizes so that two of the $2N$ points must reside in some closed hyperspherical cap of hypersurface area $\frac{1}{2N}\mathrm{H}^{M-1}(\mathbb{S}^{M-1})$.
Therefore, the smallest angle $\alpha$ between these points is no more than twice the radius of this cap.
Let $C(\gamma)$ denote a hyperspherical cap of angular radius $\gamma$.
Then we use hyperspherical coordinates to get
\begin{align}
\nonumber
\mathrm{H}^{M-1}(C(\gamma))
&=\int_{\phi_1=0}^\gamma\int_{\phi_2=0}^\pi\cdots\int_{\phi_{M-2}=0}^\pi\int_{\phi_{M-1}=0}^{2\pi}\sin^{M-2}(\phi_1)\cdots\sin^1(\phi_{M-2})~\mathrm{d}\phi_{M-1}\cdots\mathrm{d}\phi_1\\
\nonumber
&=2\pi\bigg(\prod_{j=1}^{M-3}\pi^{1/2}\frac{\Gamma(\frac{j+1}{2})}{\Gamma(\frac{j}{2}+1)}\bigg)\int_0^\gamma\sin^{M-2}\phi ~\mathrm{d}\phi\\
\label{eq.gamma}
&=\frac{2\pi^{(M-1)/2}}{\Gamma(\frac{M-1}{2})}\int_0^\gamma\sin^{M-2}\phi ~\mathrm{d}\phi.
\end{align}
We wish to solve for $\gamma$, but analytically inverting $\int_0^\gamma\sin^{M-2}\phi ~\mathrm{d}\phi$ is difficult.
Instead, we use $\sin\phi\geq\frac{2\phi}{\pi}$ for $\phi\in[0,\frac{\pi}{2}]$.
Note that we do not lose generality by forcing $\gamma\leq\frac{\pi}{2}$, since this is guaranteed with $N\geq2$.
Continuing \eqref{eq.gamma} gives
\begin{equation}
\label{eq.cap hypersurface area} 
\mathrm{H}^{M-1}(C(\gamma))
\geq\frac{2\pi^{(M-1)/2}}{\Gamma(\frac{M-1}{2})}\int_0^\gamma\Big(\frac{2\phi}{\pi}\Big)^{M-2}\mathrm{d}\phi
=\frac{(2\gamma)^{M-1}}{(M-1)\pi^{(M-3)/2}\Gamma(\frac{M-1}{2})}.
\end{equation}
Using the formula for a hypersphere's hypersurface area, we can express the left-hand side of \eqref{eq.cap hypersurface area}:
\begin{equation*}
\frac{(2\gamma)^{M-1}}{(M-1)\pi^{(M-3)/2}\Gamma(\frac{M-1}{2})}
\leq \mathrm{H}^{M-1}(C(\gamma))
=\frac{1}{2N}\mathrm{H}^{M-1}(\mathbb{S}^{M-1})
=\frac{\pi^{M/2}}{N\Gamma(\frac{d}{2})}.
\end{equation*}
Isolating $2\gamma$ above and using $\alpha\leq2\gamma$ and $\mu=\cos\alpha$ gives \eqref{eq.my bound}.
The second part of the result comes from a simple application of Stirling's approximation.
\end{proof}

In \cite{CHS96}, numerical results are given for $M=3$, and we compare these results to Theorems~\ref{thm.complex bound} and~\ref{thm.bound} in Figure~\ref{4 figure}.
Considering this figure, we note that the bound in Theorem~\ref{thm.complex bound} is inferior to the maximum of the Welch bound and the bound in Theorem~\ref{thm.bound}, at least when $M=3$.
This illustrates the degree to which Theorem~\ref{thm.bound} improves the bound in Theorem~\ref{thm.complex bound} for real frames.
In fact, since $\cos(\frac{\pi}{a})\geq 1-\frac{2}{a}$ for all $a\geq2$, the bound for real frames in Theorem~\ref{thm.bound} is asymptotically better than the bound for complex frames in Theorem~\ref{thm.complex bound}.
Moreover, for $M=2$, Theorem~\ref{thm.bound} says $\mu\geq\cos(\frac{\pi}{N})$, and~\cite{BenedettoK:06} proved this bound to be tight for every $N\geq2$.
Lastly, Figure~\ref{4 figure} illustrates that Theorem~\ref{thm.3d points} improves the bound in Theorem~\ref{thm.bound} for the case $M=3$.

In many applications, large dictionaries are built to obtain sparse reconstruction, but the known guarantees on sparse reconstruction place certain requirements on worst-case coherence.
Asymptotically, the bounds in Theorems~\ref{thm.complex bound} and \ref{thm.bound} indicate that certain exponentially large dictionaries will not satisfy these requirements.
For example, if $N=\Theta(3^M)$, then $\mu_F=\Omega(\frac{1}{3})$ by Theorem~\ref{thm.complex bound}, and if the frame is real, we have $\mu=\Omega(\frac{1}{2})$ by Theorem~\ref{thm.bound}.
Such a dictionary will only work for sparse reconstruction if the sparsity level $K$ is sufficiently small; deterministic guarantees require $K<\mu^{-1}$ \cite{DonohoET:06,Tropp:04}, while probabilistic guarantees require $K<\mu^{-2}$ \cite{bajwa:jcn10,tropp:cras08}, and so in this example, the dictionary can, at best, only accommodate sparsity levels that are smaller than 10.
Unfortunately, in real-world applications, we can expect the sparsity level to scale with the signal dimension.
This in mind, Theorems~\ref{thm.complex bound} and \ref{thm.bound} tell us that dictionaries can only be used for sparse reconstruction if $N=O((2+\varepsilon)^M)$ for some sufficiently small $\varepsilon>0$.  
To summarize, the Welch bound is known to be tight only if $N\leq M^2$, and Theorems~\ref{thm.complex bound} and \ref{thm.bound} give bounds which are asympotically better than the Welch bound whenever $N=\Omega(2^M)$.
When $N$ is between $M^2$ and $2^M$, the best bound to date is the (loose) Welch bound, and so more work needs to be done to bound worst-case coherence in this parameter region.

\section{Reducing average coherence}

In \cite{bajwa:jcn10}, average coherence is used to derive a number of guarantees on sparse signal processing.
Since average coherence is so new to the frame theory literature, this section will investigate how average coherence relates to worst-case coherence and the spectral norm.
We start with a definition:

\begin{defn}[Wiggling and flipping equivalent frames]
\label{def.flipping and wiggling}
We say the frames $\Phi$ and $\Psi$ are \emph{wiggling equivalent} if there exists a diagonal matrix $D$ of unimodular entries such that $\Psi=\Phi D$.
Furthermore, they are \emph{flipping equivalent} if $D$ is real, having only $\pm1$'s on the diagonal.
\end{defn}

The terms ``wiggling'' and ``flipping'' are inspired by the fact that individual frame elements of such equivalent frames are related by simple unitary operations.
Note that every frame with $N$ nonzero frame elements belongs to a flipping equivalence class of size $2^N$, while being wiggling equivalent to uncountably many frames.
The importance of this type of frame equivalence is, in part, due to the following lemma, which characterizes the shared geometry of wiggling equivalent frames:

\begin{lem}[Geometry of wiggling equivalent frames]\label{lem:geom_eqframes}
Wiggling equivalence preserves the norms of frame elements, the worst-case coherence, and the spectral norm.
\end{lem}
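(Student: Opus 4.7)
The plan is to unpack the definition of wiggling equivalence and verify each of the three invariances by direct calculation. Writing $\Psi = \Phi D$ with $D = \mathrm{diag}(d_1, \ldots, d_N)$ and $|d_n| = 1$ for every $n$, the columns of $\Psi$ are simply $\psi_n = d_n \varphi_n$. This description will drive all three verifications.

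First I would check the column norms: since $\|\psi_n\| = |d_n|\,\|\varphi_n\| = \|\varphi_n\|$, frame element norms are preserved entry-by-entry. Next, for worst-case coherence, I would compute
\begin{equation*}
|\langle \psi_i, \psi_j\rangle| = |d_i\,\overline{d_j}|\,|\langle \varphi_i, \varphi_j\rangle| = |\langle \varphi_i, \varphi_j\rangle|
\end{equation*}
for every pair $i \neq j$, so taking the maximum over such pairs yields $\mu_\Psi = \mu_\Phi$. Note that equal column norms ensure we are comparing coherences under the same unit-norm convention (if we choose to normalize, we could do so identically in both frames).

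For the spectral norm, I would use the observation that $D$, being diagonal with unimodular diagonal entries, satisfies $D^*D = DD^* = I_N$ and is therefore unitary. Consequently,
\begin{equation*}
\|\Psi\|_2 = \|\Phi D\|_2 = \sup_{\|x\| = 1} \|\Phi D x\|,
\end{equation*}
and substituting $y = Dx$, which ranges over all unit vectors as $x$ does (since $D$ is unitary), gives $\|\Psi\|_2 = \sup_{\|y\|=1} \|\Phi y\| = \|\Phi\|_2$.

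There is no real obstacle here; the lemma is essentially a bookkeeping statement. The only thing worth emphasizing in the writeup is why average coherence is conspicuously absent from the list: the quantity $\nu$ depends on signed sums $\sum_{j \neq i}\langle \varphi_i, \varphi_j\rangle$ rather than absolute values, so the phase factors $d_i\overline{d_j}$ genuinely alter $\nu$. This is precisely the leverage the subsequent section exploits to reduce average coherence while preserving the three quantities above.
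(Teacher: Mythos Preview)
Your proof is correct and follows essentially the same approach as the paper's: both write $\Psi=\Phi D$ with $D$ diagonal unimodular, observe that column norms and the moduli of off-diagonal Gram entries are unchanged, and invoke the unitarity of $D$ for the spectral norm (the paper does this via $\Psi\Psi^*=\Phi DD^*\Phi^*=\Phi\Phi^*$, you via a change of variables in the sup, which amounts to the same thing).
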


\begin{proof}
Take two frames $\Phi$ and $\Psi$ such that $\Psi=\Phi D$.
The first claim is immediate.
Next, the Gram matrices are related by $\Psi^*\Psi=D^*\Phi^*\Phi D$.
Since corresponding off-diagonal entries are equal in modulus, we know the worst-case coherences are equal.
Finally, $\|\Psi\|_2^2=\|\Psi\Psi^*\|_2^2=\|\Phi DD^*\Phi^*\|_2^{}=\|\Phi\Phi^*\|_2^{}=\|\Phi\|_2^2$, and so we are done.
\end{proof}

Wiggling and flipping equivalence are not entirely new to frame theory.
For a real equiangular tight frame $\Phi$, the Gram matrix $\Phi^*\Phi$ is completely determined by the sign pattern of the off-diagonal entries, which can in turn be interpreted as the Seidel adjacency matrix of a graph $G_\Phi$.
As such, flipping a frame element $\varphi\in \Phi$ has the effect of negating the corresponding row and column in the Gram matrix, which further corresponds to \emph{switching} the adjacency rule for that vertex $v_\varphi\in V(G_\Phi)$ in the graph---vertices are adjacent to $v_\varphi$ after switching precisely when they were not adjacent before switching. 
Graphs are called \emph{switching equivalent} if there is a sequence of switching operations that produces one graph from the other; this equivalence was introduced in~\cite{vanLintS:66} and was later extensively studied by Seidel in \cite{Seidel:73,seidel:laa68}.
Since flipping equivalent real equiangular tight frames correspond to switching equivalent graphs, the terms have become interchangeable.
For example,~\cite{bodmann:jfa10} uses switching (i.e., wiggling and flipping) equivalence to make progress on an important problem in frame theory called the \emph{Paulsen problem}, which asks how close a nearly unit norm, nearly tight frame must be to a unit norm tight frame.

Now that we understand wiggling and flipping equivalence, we are ready for the main idea behind this section.
Suppose we are given a unit norm frame with acceptable spectral norm and worst-case coherence, but we also want the average coherence to satisfy (SCP-2).
Then by Lemma~\ref{lem:geom_eqframes}, all of the wiggling equivalent frames will also have acceptable spectral norm and worst-case coherence, and so it is reasonable to check these frames for good average coherence.
In fact, the following theorem guarantees that at least one of the flipping equivalent frames will have good average coherence, with only modest requirements on the original frame's redundancy.

\begin{thm}[Constructing frames with low average coherence]\label{thm:avc_rand}
Let $\Phi$ be an $M\times N$ unit norm frame with $\smash{M < \frac{N-1}{4\log 4N}}$.
Then there exists a frame $\Psi$ that is flipping equivalent to $\Phi$ and satisfies $\smash{\nu\leq\frac{\mu}{\sqrt{M}}}$.
\end{thm}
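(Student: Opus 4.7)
The plan is to use a probabilistic argument over flippings. Assign independent Rademacher signs $\{\varepsilon_n\}_{n=1}^N$ to the columns and define $\Psi:=\Phi D$ where $D=\mathrm{diag}(\varepsilon_1,\ldots,\varepsilon_N)$. Since $\psi_n=\varepsilon_n\varphi_n$, for every $i$ we have
\begin{equation*}
\sum_{\substack{j=1\\j\neq i}}^N\langle\psi_i,\psi_j\rangle
=\varepsilon_i\sum_{\substack{j=1\\j\neq i}}^N\varepsilon_j\langle\varphi_i,\varphi_j\rangle,
\end{equation*}
so $|\varepsilon_i|=1$ makes the absolute value depend only on the Rademacher sum $S_i:=\sum_{j\neq i}\varepsilon_j\langle\varphi_i,\varphi_j\rangle$. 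Our goal is to show that, with positive probability, $|S_i|\leq(N-1)\mu/\sqrt{M}$ for every $i$ simultaneously, which by the definition of $\nu$ yields $\nu\leq\mu/\sqrt{M}$ for $\Psi$.

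First I would fix $i$ and condition on $\varepsilon_i$; the remaining terms $X_j:=\varepsilon_j\langle\varphi_i,\varphi_j\rangle$ ($j\neq i$) are independent, zero mean, and bounded by $|X_j|\leq|\langle\varphi_i,\varphi_j\rangle|\leq\mu$. Hoeffding's inequality then yields
\begin{equation*}
\mathrm{Pr}\bigg[|S_i|>(N-1)\tfrac{\mu}{\sqrt{M}}\bigg]
\leq 2\exp\!\bigg(-\frac{(N-1)^2\mu^2/M}{2\sum_{j\neq i}|\langle\varphi_i,\varphi_j\rangle|^2}\bigg)
\leq 2\exp\!\bigg(-\frac{N-1}{2M}\bigg),
\end{equation*}
since $\sum_{j\neq i}|\langle\varphi_i,\varphi_j\rangle|^2\leq(N-1)\mu^2$. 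A union bound over the $N$ rows gives
\begin{equation*}
\mathrm{Pr}\bigg[\max_i|S_i|>(N-1)\tfrac{\mu}{\sqrt{M}}\bigg]
\leq 2N\exp\!\bigg(-\frac{N-1}{2M}\bigg).
\end{equation*}

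Finally I would verify that the hypothesis $M<\frac{N-1}{4\log 4N}$ forces this failure probability below $1$: plugging in gives $2N\exp(-2\log 4N)=2N/(4N)^{2}=1/(8N)<1$. Hence some realization of the signs produces a flipping equivalent frame $\Psi$ with $\nu\leq\mu/\sqrt{M}$; by Lemma~\ref{lem:geom_eqframes}, $\Psi$ retains the unit-norm property, worst-case coherence $\mu$, and spectral norm of $\Phi$. There is no real obstacle here beyond bookkeeping with Hoeffding and the union bound; the only mild subtlety is realizing that one should work with the Rademacher sum $S_i$ rather than directly with $\sum_{j\neq i}\langle\psi_i,\psi_j\rangle$, so that the outer sign $\varepsilon_i$ drops out and the $X_j$'s are genuinely mean zero.
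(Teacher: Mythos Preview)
Your approach is identical to the paper's: random Rademacher flipping, Hoeffding for each fixed $i$, union bound, and the probabilistic method. The only wrinkle is that the frame may be complex, so the inner products $\langle\varphi_i,\varphi_j\rangle$ are complex-valued and the real Hoeffding bound you quote is not directly applicable; splitting into real and imaginary parts gives the complex version $\Pr[|S_i|>t]\leq 4\exp\bigl(-t^2/(4\sum_{j\neq i}|\langle\varphi_i,\varphi_j\rangle|^2)\bigr)\leq 4e^{-(N-1)/(4M)}$, which is exactly the paper's bound and exactly what the hypothesis $M<\frac{N-1}{4\log 4N}$ is calibrated to (it yields $4Ne^{-(N-1)/(4M)}<4N/(4N)=1$).
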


\begin{proof}
Take $\{R_n\}_{n=1}^N$ to be a Rademacher sequence that independently takes values $\pm1$, each with probability $\frac{1}{2}$.
We use this sequence to randomly flip $\Phi$; define $Z:=\Phi~\mathrm{diag}\{R_n\}_{n=1}^N$.
Note that if $\smash{\Pr(\nu_Z\leq\frac{\mu_\Phi}{\sqrt{M}})>0}$, we are done.
Fix some $i\in\{1,\ldots,N\}$.
Then
\begin{equation}
\label{pfeqn:avc_rand_tail1}
\Pr\Bigg(\frac{1}{N-1} \bigg|\sum_{\substack{j=1\\ j\neq i}}^N \langle z_i,z_j\rangle\bigg| > \frac{\mu_\Phi}{\sqrt{M}}\Bigg) 
=\Pr\Bigg(\bigg|\sum_{\substack{j=1\\ j\neq i}}^N R_j\langle \varphi_i,\varphi_j\rangle\bigg| > \frac{(N-1)\mu_\Phi}{\sqrt{M}}\Bigg). 
\end{equation}
We can view $\sum_{j\neq i} R_j\langle \varphi_i,\varphi_j\rangle$ as a sum of $N-1$ independent zero-mean complex random variables that are bounded by $\mu_\Phi$.
We can therefore use a complex version of Hoeffding's inequality \cite{hoeffding:jasa63} (see, e.g., Lemma~3.8 of~\cite{bajwa:thesis}) to bound the probability expression in \eqref{pfeqn:avc_rand_tail1} as $\leq4e^{-(N-1)/4M}$.
From here, a union bound over all $N$ choices for $i$ gives $\Pr(\nu_Z\leq\frac{\mu_\Phi}{\sqrt{M}})\geq 1-4Ne^{-(N-1)/4M}$,
and so $M < \frac{N-1}{4\log 4N}$ implies $\Pr(\nu_Z\leq\frac{\mu_\Phi}{\sqrt{M}})>0$, as desired.
\end{proof}

While Theorem~\ref{thm:avc_rand} guarantees the existence of a flipping equivalent frame with good average coherence, the result does not describe how to find it.
Certainly, one could check all $2^N$ frames in the flipping equivalence class, but such a procedure is computationally slow.
As an alternative, we propose a linear-time flipping algorithm (Algorithm~\ref{alg:flipping}).
The following theorem guarantees that linear-time flipping will produce a frame with good average coherence, but it requires the original frame's redundancy to be higher than what suffices in Theorem~\ref{thm:avc_rand}.

\begin{algorithm*}[t]
\caption{Linear-time flipping}
\label{alg:flipping}
\textbf{Input:} An $M\times N$ unit norm frame $\Phi$\\
\textbf{Output:} An $M\times N$ unit norm frame $\Psi$ that is flipping equivalent to $\Phi$
\begin{algorithmic}
\STATE $\psi_1\leftarrow \varphi_1$ \hfill \COMMENT{Keep first frame element}
\FOR{$n=2$ to $N$}
\IF{$\|\sum_{i=1}^{n-1}\psi_i+\varphi_n\|\leq\|\sum_{i=1}^{n-1}\psi_i-\varphi_n\|$} 
\STATE $\psi_n\leftarrow \varphi_n$ \hfill \COMMENT{Keep frame element to make sum length shorter}
\ELSE
\STATE $\psi_n\leftarrow -\varphi_n$ \hfill \COMMENT{Flip frame element to make sum length shorter}
\ENDIF
\ENDFOR
\end{algorithmic}
\end{algorithm*}

\begin{thm}
\label{thm.alg}
Suppose $N\geq M^2+3M+3$.
Then Algorithm~\ref{alg:flipping} outputs an $M\times N$ frame $\Psi$ that is flipping equivalent to $\Phi$ and satisfies $\nu\leq\frac{\mu}{\sqrt{M}}$.
\end{thm}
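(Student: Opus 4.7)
The plan is to invoke condition (iii) of Lemma~\ref{lem.sufficient conditions}: since $N\geq M^2+3M+3$, it suffices to show that the output frame $\Psi$ is unit norm and satisfies $\|\sum_{n=1}^N\psi_n\|^2\leq N$. Unit-normness and flipping equivalence are immediate from the construction, so by Lemma~\ref{lem:geom_eqframes} we also get $\mu_\Psi=\mu_\Phi=\mu$; once the sum bound is in hand, Lemma~\ref{lem.sufficient conditions}(iii) delivers the conclusion.

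To control $\|\sum_{n=1}^N\psi_n\|^2$, I would write $S_n:=\sum_{i=1}^n\psi_i$ and prove by induction on $n$ that $\|S_n\|^2\leq n$. The base case $\|S_1\|^2=\|\varphi_1\|^2=1$ is trivial. For the inductive step, I would use the parallelogram identity
\begin{equation*}
\|S_{n-1}+\varphi_n\|^2+\|S_{n-1}-\varphi_n\|^2 = 2\|S_{n-1}\|^2+2\|\varphi_n\|^2 = 2\|S_{n-1}\|^2+2,
\end{equation*}
which implies that the smaller of the two summands on the left is at most $\|S_{n-1}\|^2+1$. Since Algorithm~\ref{alg:flipping} picks precisely the sign that minimizes $\|S_n\|$, the inductive hypothesis gives $\|S_n\|^2\leq\|S_{n-1}\|^2+1\leq n$, completing the induction. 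Setting $n=N$ yields $\|\sum_{n=1}^N\psi_n\|^2\leq N$, and then Lemma~\ref{lem.sufficient conditions}(iii) finishes the proof.

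I do not expect any substantive obstacle here: the entire argument is a two-line greedy induction together with a direct appeal to the previously proved sufficient conditions for (SCP-2). The only item requiring any care is verifying that the parallelogram identity still yields the $+1$ increment in the complex setting (it does, because $\|S_{n-1}\pm\varphi_n\|^2$ is real regardless of whether the inner products are complex), and noting that linear-time flipping preserves unit-normness and worst-case coherence via Lemma~\ref{lem:geom_eqframes}, so the hypothesis $\mu_\Psi=\mu$ needed for the final estimate $\nu\leq\mu/\sqrt{M}$ is automatic.
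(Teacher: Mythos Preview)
Your proof is correct and follows essentially the same approach as the paper: reduce to Lemma~\ref{lem.sufficient conditions}(iii) and prove $\|S_n\|^2\le n$ by greedy induction. The only cosmetic difference is that you invoke the parallelogram identity to get the $+1$ increment, whereas the paper expands the squares directly and observes $\mathrm{Re}\langle S_{k},\psi_{k+1}\rangle\le 0$; these are the same computation.
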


\begin{proof}
Considering Lemma~\ref{lem.sufficient conditions}(iii), it suffices to have $\|\sum_{n=1}^N \psi_n\|^2\leq N$.
We will use induction to show $\|\sum_{n=1}^k \psi_n\|^2\leq k$ for $k=1,\ldots,N$.
Clearly, $\|\sum_{n=1}^1 \psi_n\|^2=\|\varphi_n\|^2=1\leq1$.
Now assume $\|\sum_{n=1}^k \psi_n\|^2\leq k$.
Then by our choice for $\psi_{k+1}$ in Algorithm~\ref{alg:flipping}, we know that
$\|\sum_{n=1}^k\psi_n+\psi_{k+1}\|^2\leq\|\sum_{n=1}^k\psi_n-\psi_{k+1}\|^2$.
Expanding both sides of this inequality gives
\begin{equation*}
\bigg\|\sum_{n=1}^k\psi_n\bigg\|^2+2\mathrm{Re}\bigg\langle\sum_{n=1}^k\psi_n,\psi_{k+1}\bigg\rangle+\|\psi_{k+1}\|^2
\leq\bigg\|\sum_{n=1}^k\psi_n\bigg\|^2-2\mathrm{Re}\bigg\langle\sum_{n=1}^k\psi_n,\psi_{k+1}\bigg\rangle+\|\psi_{k+1}\|^2,
\end{equation*}
and so $\mathrm{Re}\langle\sum_{n=1}^k\psi_n,\psi_{k+1}\rangle\leq0$.
Therefore,
\begin{equation*} 
\bigg\|\sum_{n=1}^{k+1}\psi_n\bigg\|^2
=\bigg\|\sum_{n=1}^k\psi_n\bigg\|^2+2\mathrm{Re}\bigg\langle\sum_{n=1}^k\psi_n,\psi_{k+1}\bigg\rangle+\|\psi_{k+1}\|^2
\leq\bigg\|\sum_{n=1}^k\psi_n\bigg\|^2+\|\psi_{k+1}\|^2
\leq k+1,
\end{equation*}
where the last inequality uses the inductive hypothesis.
\end{proof}

\begin{exmp}
Apply linear-time flipping to reduce average coherence in the following matrix:
\begin{equation*}
\Phi:= \frac{1}{\sqrt{5}}\left[ \begin{array}{cccccccccc} +&+&+&+&-&+&+&+&+&-\\+&-&+&+&+&-&-&-&+&-\\+&+&+&+&+&+&+&+&-&+\\-&-&-&+&-&+&+&-&-&-\\-&+&+&-&-&+&-&-&-&- \end{array} \right].
\end{equation*}
Here, $\smash{\nu_\Phi\approx0.3778>0.2683\approx\frac{\mu_\Phi}{\sqrt{M}}}$, and linear-time flipping produces the flipping pattern $D:=\mathrm{diag}(+-+--++-++)$.
Then $\Phi D$ has average coherence $\smash{\nu_{\Phi D}\approx0.1556<\frac{\mu_{\Phi}}{\sqrt{M}}=\frac{\mu_{\Phi D}}{\sqrt{M}}}$.
This illustrates that the condition $N\geq M^2+3M+3$ in Theorem~\ref{thm.alg} is sufficient but not necessary.
\end{exmp}

%
%
%
%
%


\begin{thebibliography}{WWW}

\bibitem{AbelG:07}
R.J.R. Abel, M. Greig,
BIBDs with small block size,
In: C.J. Colbourn, J.H. Dinitz (Eds.), Handbook of Combinatorial Designs (2007) 72--79.

\bibitem{AlexeevCM:arxiv11}
B. Alexeev, J. Cahill, D.G. Mixon,
Full spark frames,
Available online: arXiv:1110.3548
 
\bibitem{alltop:tit80} 
W. Alltop, 
Complex sequences with low periodic correlations, 
IEEE Trans. Inform. Theory 26 (1980) 350--354.

\bibitem{Alon:86}
N. Alon,
Eigenvalues and expanders,
Combinatorica 6 (1986) 83--96.

\bibitem{alon:dm03}
N. Alon, 
Problems and results in extremal combinatorics---I, 
Discrete Math. 273 (2003) 31--53.

\bibitem{AlonN:06}
N. Alon, A. Naor,
Approximating the cut-norm via Grothendieck's inequality,
SIAM J. Comput. 35 (2006) 787--803.

\bibitem{alon:00} 
N. Alon, J. H. Spencer, 
The Probabilistic Method, 
second ed., Wiley, New York, 2000.

\bibitem{ApplebaumHSC:09}
L. Applebaum, S.D. Howard, S. Searle, R. Calderbank,
Chirp sensing codes: Deterministic compressed sensing measurements for fast recovery,
Appl. Comp. Harmon. Anal. 26 (2009) 283--290.

\bibitem{Appleby:05}
D.M. Appleby,
Symmetric informationally complete-positive operator valued measures and the extended Clifford group,
J. Math. Phys. 46 (2005) 052107/1--29.

\bibitem{bajwa:thesis} 
W.U. Bajwa, 
New information processing theory and methods for exploiting sparsity in wireless systems, 
Ph.D. thesis, University of Wisconsin-Madison, 2009.

\bibitem{bajwa:jcn10} 
W.U. Bajwa, R. Calderbank, S. Jafarpour, 
Why Gabor frames? Two fundamental measures of coherence and their role in model selection, 
J. Commun. Netw. 12 (2010) 289--307.

\bibitem{BajwaCM:12}
W.U. Bajwa, R. Calderbank, D.G. Mixon, 
Two are better than one: Fundamental parameters of frame coherence, 
Appl. Comput. Harmon. Anal. (in press)

\bibitem{BalanBCE:spie07}
R. Balan, B.G. Bodmann, P.G. Casazza, D. Edidin,
Fast algorithms for signal reconstruction without phase,
Proc. SPIE, 67011L (2007) 1--9.

\bibitem{BalanBCE:jfaa09}
R. Balan, B.G. Bodmann, P.G. Casazza, D. Edidin,
Painless reconstruction from magnitudes of frame coefficients,
J. Fourier Anal. Appl. 15 (2009) 488--501.

\bibitem{BalanCE:acha06}
R. Balan, P. Casazza, D. Edidin,
On signal reconstruction without phase,
Appl. Comput. Harmon. Anal. 20 (2006) 345--356.

\bibitem{BandeiraFMW:12}
A.S. Bandeira, M. Fickus, D.G. Mixon, P. Wong, 
The road to deterministic matrices with the restricted isometry property, 
Available online: arXiv:1202.1234

\bibitem{BaraniukDDW:08}
R. Baraniuk, M. Davenport, R. DeVore, M. Wakin,
A simple proof of the restricted isometry property for random matrices,
Constr. Approx. 28 (2008) 253--263.

\bibitem{BenedettoF:03}
J.J. Benedetto, M. Fickus, 
Finite normalized tight frames, 
Adv. Comput. Math. 18 (2003) 357--385.

\bibitem{BenedettoK:06}
J.J. Benedetto, J.D. Kolesar, 
Geometric Properties of Grassmannian Frames for $\mathbb{R}^2$ and $\mathbb{R}^3$,
EURASIP J. Appl. Signal Process. 2006 (2006) 1--17.

\bibitem{BenHaimEE:10}
Z. Ben-Haim, Y.C. Eldar, M. Elad,
Coherence-based perfromance guarantees for estimating a sparse vector under random noise,
IEEE Trans. Signal Process. 58 (2010) 5030--5043.

\bibitem{bennett:jasa62} 
G. Bennett, 
Probability inequalities for the sum of independent random variables, 
J. Amer. Statist. Assoc. 57 (1962) 33--45.

\bibitem{B70} 
E.R. Berlekamp, 
The weight enumerators for certain subcodes of the second order binary Reed-Muller codes, 
Inform. Control 17 (1970) 485--500.

\bibitem{Bernstein:46}
S.N. Bernstein,
Theory of Probability, 4th ed., Moscow-Leningrad, 1946.

\bibitem{bodmann:jfa10}
B.G. Bodmann, P.G. Casazza, 
The road to equal-norm Parseval frames, 
J. Funct. Anal., 258 (2010), 397--420.

\bibitem{BodmannE:10} 
B.G. Bodmann, H.J. Elwood,
Complex equiangular Parseval frames and Seidel matrices containing $p$th roots of unity,
Proc. Amer. Math. Soc. 138 (2010) 4387--4404.

\bibitem{BodmannP:05}
B.G. Bodmann, V.I. Paulsen,
Frames, graphs and erasures,
Linear Algebra Appl. 404 (2005) 118--146.

\bibitem{BodmannPT:09}
B.G. Bodmann, V.I. Paulsen, M. Tomforde,
Equiangular tight frames from complex {S}eidel matrices containing cube roots of unity,
Linear Algebra Appl. 430 (2009) 396--417.

\bibitem{Bollobas:01}
B. Bollob\'{a}s,
Random Graphs, 2nd ed., Cambridge, 2001.

\bibitem{BourgainDFKK:11}
J. Bourgain, S. Dilworth, K. Ford, S. Konyagin, D. Kutzarova,
Explicit constructions of RIP matrices and related problems,
Duke Math. J. 159 (2011) 145--185.

\bibitem{BourguignonCI:07}
S. Bourguignon, H. Carfantan, J. Idier,
A Sparsity-Based Method for the Estimation of Spectral Lines From Irregularly Sampled Data,
IEEE J. Sel. Topics Signal Process. 1 (2007) 575--585.

\bibitem{Brouwer:07}
A.E. Brouwer,
Strongly regular graphs,
In: C.J. Colbourn, J.H. Dinitz (Eds.), Handbook of Combinatorial Designs (2007) 852--868.

\bibitem{CahillCH:sampta11}
J. Cahill, P.G. Casazza, A. Heinecke,
A notion of redundancy for infinite frames, 
Proc. Sampl. Theory Appl. (2011) 

\bibitem{CahillFMPS:11}
J. Cahill, M. Fickus, D.G. Mixon, M.J. Poteet, N. Strawn,
Constructing finite frames of a given spectrum and set of lengths,
Available online: arXiv:1106.0921

\bibitem{Candes:08}
E.J. Cand\`{e}s,
The restricted isometry property and its implications for compressed sensing,
C. R. Acad. Sci. Paris, Ser. I 346 (2008) 589--592.

\bibitem{candes:annstat09}
E.J. Cand\`{e}s, Y. Plan, 
Near-ideal model selection by $\ell_1$ minimization,
Ann. Statist. 37 (2009) 2145--2177.

\bibitem{CandesRT:06}
E.J. Cand\`{e}s, J. Romberg, T. Tao,
Robust uncertainty principles: exact signal reconstruction from highly incomplete frequency information,
IEEE Trans. Inform. Theory 52 (2006) 489--509.

\bibitem{CandesSV:arxiv11}
E.J. Cand\`{e}s, T. Strohmer, V. Voroninski,
PhaseLift: Exact and stable signal recovery from magnitude measurements via convex programming.
Available online: arXiv:1109.4499

\bibitem{CandesT:05}
E.J. Cand\`{e}s, T. Tao,
Decoding by linear programming,
IEEE Trans. Inform. Theory 44 (2005) 4203--4215.

\bibitem{CandesT:06}
E.J. Cand\`{e}s, T. Tao,
Near-optimal signal recovery from random projections: Universal encoding strategies?
IEEE Trans. Inform. Theory 52 (2006) 5406--5425.

\bibitem{CandesT:07}
E.J. Cand\`{e}s, T. Tao, 
The Dantzig selector: Statistical estimation when $p$ is much larger than $n$,
Ann. Statist. 35 (2007) 2313--2351.

\bibitem{CF06} 
P.G. Casazza, M. Fickus, 
Fourier transforms of finite chirps, 
EURASIP J. Appl. Signal Processing (2006) 7 pages.

\bibitem{CasazzaFM:12}
P.G. Casazza, M. Fickus, D.G. Mixon,
Auto-tuning unit norm frames,
Appl. Comp. Harmon. Anal. 32 (2012) 1--15.

\bibitem{CasazzaFMWZ:11}
P.G. Casazza, M. Fickus, D.G. Mixon, Y. Wang, Z. Zhou,
Constructing tight fusion frames,
Appl. Comput. Harmon. Anal. 30 (2011) 175--187.

\bibitem{CasazzaHKK:10}
P.G. Casazza, A. Heinecke, F. Krahmer, G. Kutyniok, 
Optimally sparse frames,
IEEE Trans. Inform. Theory (2011) 7279--7287.

\bibitem{CasazzaK:03}
P.G. Casazza, J. Kova\v{c}evi\'c,
Equal-norm tight frames with erasures,
Adv. Comput. Math. 18 (2003) 387--430.

\bibitem{CasazzaRT:08}
P.G. Casazza, D. Redmond, J.C. Tremain,
Real equiangular frames,
In: Proc. Conf. Inf. Sci. Syst. (2008) 715--720.

\bibitem{CasazzaT:06}
P.G. Casazza, J.C. Tremain, 
The Kadison-Singer problem in mathematics and engineering,
Proc. Natl. Acad. Sci. USA 103 (2006) 2032--2039.

\bibitem{donoho:siamjsc98}
S.S. Chen, D.L. Donoho, M.A. Saunders, 
Atomic decomposition by basis pursuit,
SIAM J. Scientific Comput. 20 (1998) 33--61.

\bibitem{Christensen:02}
O. Christensen,
Introduction to Frames and Riesz Bases,
Cambridge: MA, Birkh\'{a}user, 2002.

\bibitem{ChungGW:89}
F.R.K. Chung, R.L. Graham, R.M. Wilson,
Quasi-random graphs,
Combinat. 9 (1989) 345--362.

\bibitem{Cohen:88}
S.D. Cohen, 
Clique numbers of Paley graphs, 
Quaestiones Math. 11 (1988), 225--231.

\bibitem{ColbournM:07}
C.J. Colbourn, R. Mathon,
Steiner systems,
In: C.J. Colbourn, J.H. Dinitz (Eds.), Handbook of Combinatorial Designs (2007) 102--110.

\bibitem{CHS96} 
J.H. Conway, R.H. Hardin, N.J.A. Sloane, 
Packing lines, planes, etc.: Packings in Grassmannian spaces, 
Experiment. Math. 5 (1996) 139--159.

\bibitem{cox1997secure}
I. Cox, J. Kilian, F. Leighton, T. Shamoon, 
Secure spread spectrum watermarking for multimedia,
IEEE Trans. Image Process. 6 (1997) 1673--1687.

\bibitem{DaubechiesGM:86}
I. Daubechies, A. Grossmann, Y. Meyer, 
Painless nonorthogonal expansions,
J. Math. Phys. 27 (1986) 1271--1283.

\bibitem{davenport:jstsp10}
M.A. Davenport, P.T. Boufounos, M.B. Wakin, R.G. Baraniuk,
Signal processing with compressive measurements,
IEEE J. Select. Topics Signal Processing 4 (2010) 445--460.

\bibitem{DavenportDEK:11}
M.A. Davenport, M.F. Duarte, Y.C. Eldar, G. Kutyniok, 
Introduction to compressed sensing, 
In: Compressed sensing: Theory and applications, 
Y.C. Eldar and G. Kutyniok, eds., 
Cambridge University Press, 2011.

\bibitem{DavidsonS:01}
K.R. Davidson, S.J. Szarek, 
Local operator theory, random matrices and Banach spaces, 
In: Handbook in Banach Spaces Vol I, ed. W.B. Johnson, J. Lindenstrauss,
Elsevier (2001), 317--366.

\bibitem{DelvauxV:laa08}
S. Delvaux, M. Van Barel,
Rank-deficient submatrices of Fourier matrices,
Linear Algebra Appl. 429 (2008) 1587--1605.

\bibitem{DeVore:07}
R.A. DeVore,
Deterministic constructions of compressed sensing matrices,
J. Complexity 23 (2007) 918--925.

\bibitem{DonohoE:03}
D.L. Donoho, M. Elad,
Optimally sparse representation in general (nonorthogonal) dictionaries via $\ell_1$ minimization,
Proc. Nat. Acad. Sci. USA 100 (2003) 2197--2202.

\bibitem{DonohoET:06}
D.L. Donoho, M. Elad, V.N. Temlyakov,
Stable recovery of sparse overcomplete representations in the presence of noise,
IEEE Trans. Inform. Theory 52 (2006) 6--18.

\bibitem{donoho:ptrsa09}
D.L. Donoho, J. Tanner,
Observed universality of phase transitions in high-dimensional geometry, with implications for modern data analysis and signal processing,
Phil. Trans. R. Soc. A 367 (2009) 4273--4293.

\bibitem{DuffinS:52}
R.J. Duffin, A.C. Schaeffer, 
A class of nonharmonic Fourier series, 
Trans. Amer. Math. Soc. 72 (1952) 341--366.

\bibitem{DuncanHS:10}
D.M. Duncan, T.R. Hoffman, J.P. Solazzo,
Equiangular tight frames and fourth root seidel matrices,
Linear Algebra Appl. 432 (2010) 2816--2823.

\bibitem{ergun1999note}
F. Ergun, J. Kilian, R. Kumar,
A note on the limits of collusion-resistant watermarks,
Proc. Eurocrypt (1999) 140--149.

\bibitem{EvansI:ams77}
R.J. Evans, I.M. Isaacs,
Generalized Vandermonde determinants and roots of prime order,
Proc. Amer. Math. Soc. 58 (1997) 51--54.

\bibitem{Fickus:09}
M. Fickus,
Maximally equiangular frames and {G}auss sums,
J. Fourier Anal. Appl. 15 (2009) 413--427.

\bibitem{FickusMT:11}
M. Fickus, D.G. Mixon, J.C. Tremain, 
Constructing a large family of equiangular tight frames, 
Proc. Sampl. Theory Appl. (2011) 4 pages.

\bibitem{FickusMT:10}
M. Fickus, D.G. Mixon, J.C. Tremain,
Steiner equiangular tight frames, 
Linear Algebra Appl. 436 (2012) 1014--1027.

\bibitem{Friedman:03}
J. Friedman,
A proof of Alon's second eigenvalue conjecture and related problems,
Mem. Amer. Math. Soc. 195 (2008).

\bibitem{Fuchs:05}
J.-J. Fuchs,
Sparsity and uniqueness for some specific under-determined linear systems,
Proc. IEEE Int. Conf. Acoust. Speech Signal Process. (2005) 729--732.

\bibitem{Gerschgorin:31}
S. Gerschgorin,
\"{U}ber die Abgrenzung der Eigenwerte einer Matrix,
Izv. Akad. Nauk. USSR Otd. Fiz.-Mat. 7 (1931) 749--754.

\bibitem{GorodnitskyR:97}
I.F. Gorodnitsky, B.D. Rao,
Sparse signal reconstruction from limited data using FOCUSS: A re-weighted minimum norm algorithm,
IEEE Trans. Signal Process. 45 (1997) 600--616.

\bibitem{Goyal:phd98}
V.K. Goyal,
Beyond Traditional Transform Coding.
Ph.D. Thesis, University California, Berkeley, 1998.

\bibitem{GrahamR:90}
S.W. Graham, C.J. Ringrose, 
Lower bounds for least quadratic non-residues, 
Prog. Math. 85 (1990) 269--309.

\bibitem{GribonvalN:03}
R. Gribonval, M. Nielsen,
Sparse representations in unions of bases,
IEEE Trans. Inform. Theory 49 (2003) 3320--3325.

\bibitem{HarshaB:05}
P. Harsha, A. Barth,
Lecture 5: Derandomization (Part II),
Available online:
\url{http://www.tcs.tifr.res.in/ prahladh/teaching/05spring/lectures/lec5.pdf}

\bibitem{haupt:tit10} 
J. Haupt, W.U. Bajwa, G. Raz, R. Nowak, 
Toeplitz compressed sensing matrices with applications to sparse channel estimation,
IEEE Trans. Inform. Theory 56 (2010) 5862--5875.

\bibitem{haupt:icassp07}
J. Haupt, R. Nowak,
Compressive sampling for signal detection,
Proc. IEEE Int. Conf. Acoustics, Speech, and Signal Processing (2007) 1509--1512.

\bibitem{HHH05} 
A. Hayashi, T. Hashimoto, M. Horibe, 
Reexamination of optimal quantum state estimation of pure states, 
Phys. Rev. A 72 (2005) 5 pages.

\bibitem{herman:sp09}
M.A. Herman, T. Strohmer,
High-resolution radar via compressed sensing,
IEEE Trans. Signal Processing 57 (2009) 2275--2284.

\bibitem{hoeffding:jasa63} 
W. Hoeffding, 
Probability inequalities for sums of bounded random variables, 
J. Amer. Statist. Assoc. 58 (1963) 13--30.

\bibitem{HolmesP:04}
R.B. Holmes, V.I. Paulsen,
Optimal frames for erasures,
Linear Algebra Appl. 377 (2004) 31--51.

\bibitem{HooryLW:06}
S. Hoory, N. Linial, A. Wigderson,
Expander graphs and their applications,
Bull. Amer. Math. Soc. 43 (2006) 439--561.

\bibitem{HsuNips2009} 
D. Hsu, S. Kakade, J. Langford, T. Zhang, 
Multi-label prediction via compressed sensing, 
Proc. Advances in Neural Information Processing Systems (2009) 772--780.

\bibitem{JungnickelPS:hcd07}
D. Jungnickel, A. Pott, K.W. Smith,
Difference Sets.
In: Colbourn, C.J., Dinitz, J.H. (Eds.), 
Handbook of Combinatorial Designs, 2nd ed., 2007, 419--435.

\bibitem{Kalra:06}
D. Kalra,
Complex equiangular cyclic frames and erasures,
Linear Algebra Appl. 419 (2006) 373--399.

\bibitem{Karp:ccc72}
R.M. Karp,
Reducibility Among Combinatorial Problems.
In: Miller, R.E., Thatcher, J.W. (Eds.),
Complexity of Computer Computations,
Plenum, New York, 1972, 85--103. 

\bibitem{kay:98b}
S.M. Kay, 
Fundamentals of Statistical Signal Processing: Detection Theory,
Upper Saddle River, Prentice Hall, 1998.

\bibitem{Khatirinejad:08}
M. Khatirinejad,
On Weyl-Heisenberg orbits of equiangular lines,
J. Algebr. Comb. 28 (2008) 333--349.

\bibitem{kilian1998resistance}
J. Kilian, F. Leighton, L. Matheson, T. Shamoon, R. Tarjan, F. Zane,
Resistance of digital watermarks to collusive attacks,
Proc. IEEE Int. Symp. Inform. Theory (1998) 271--271.

\bibitem{KoiranZ:11}
P. Koiran, A. Zouzias,
On the certification of the restricted isometry property,
Available online: arXiv:1103.4984

\bibitem{kiyavash2009regular}
N. Kiyavash, P. Moulin, T. Kalker, 
Regular simplex fingerprints and their optimality properties,
IEEE Trans. Inf. Forensics Security 4 (2009) 318--329.

\bibitem{LaurentM:00}
B. Laurent, P. Massart,
Adaptive estimation of a quadratic functional by model selection,
Ann. Statist. 28 (2000) 1302--1338.

\bibitem{lawrence:jfaa05} 
J. Lawrence, G.E. Pfander, D. Walnut, 
Linear independence of Gabor systems in finite dimensional vector spaces, 
J. Fourier Anal. Appl. 11 (2005) 715--726.

\bibitem{LemmensS:73}
P.W.H. Lemmens, J.J. Seidel,
Equiangular lines,
J. Algebra 24 (1973) 494--512.

\bibitem{LubotzkyPS:88}
A. Lubotzky, R. Phillips, P. Sarnak,
Ramanujan graphs,
Combinatorica 8 (1988) 261--277.

\bibitem{Marx:tcs09}
D. Marx,
A parameterized view on matroid optimization problems,
Theor. Comput. Sci. 410 (2009) 4471--4479.

\bibitem{McCormick:phd83}
S.T. McCormick,
A Combinatorial Approach to Some Sparse Matrix Problems.
Ph.D. Thesis, Stanford University, 1983.

\bibitem{M90} 
Y. Mimura, 
A construction of spherical 2-designs, 
Graphs Combin. 6 (1990) 369--372.

\bibitem{MixonBC:11}
D.G. Mixon, W.U. Bajwa, R. Calderbank, 
Frame coherence and sparse signal processing, 
Proc. IEEE Int. Symp. Inform. Theory (2011) 663-667. 

\bibitem{MixonQKF:11}
D.G. Mixon, C. Quinn, N. Kiyavash, M. Fickus, 
Equiangular tight frame fingerprinting codes, 
Proc. IEEE Int. Conf. Acoust. Speech Signal Process. (2011) 1856--1859.

\bibitem{MixonQKF:12}
D.G. Mixon, C.J. Quinn, N. Kiyavash, M. Fickus, 
Fingerprinting with equiangular tight frames, 
Available online: arxiv:1111.3376

\bibitem{MohimaniBJ:09}
H. Mohimani, M. Babaie-Zadeh, C. Jutten,
A Fast Approach for Overcomplete Sparse Decomposition Based on Smoothed $\ell^0$ Norm,
IEEE Trans. Signal Process. 57 (2009) 289--301.

\bibitem{MSEA03} 
K. Mukkavilli, A. Sabharwal, E. Erkip, B.A. Aazhang, 
On beam-forming with finite rate feedback in multiple antenna systems, 
IEEE Trans. Inform. Theory 49 (2003) 2562--2579.

\bibitem{NakamuraM:ieeetc82}
S. Nakamura, G.M. Masson,
Lower bounds on crosspoints in concentrators,
IEEE Trans. Comput. C-31 (1982) 1173--1179.

\bibitem{Natarajan:95}
B.K. Natarajan,
Sparse approximate solutions to linear systems,
SIAM J. Comput. 24 (1995) 227--234.

\bibitem{nelson:jat11}
J. Nelson, V.N. Temlyakov, 
On the size of incoherent systems, 
J. Approx. Theory 163 (2011) 1238--1245.

\bibitem{Peralta:92}
R. Peralta,
On the distribution of quadratic residues and nonresidues modulo a prime number,
Math. Comput. 58 (1992) 433--440.

\bibitem{pfander:tsp08} 
G.E. Pfander, H. Rauhut, J. Tanner, 
Identification of matrices having a sparse representation, 
IEEE Trans. Signal Processing 56 (2008) 5376--5388.

\bibitem{PiffW:jlms70}
M.J. Piff, D.J.A. Welsh,
On the vector representation of matroids,
J. London Math. Soc. 2 (1970) 284--288.

\bibitem{PuschelK:dcc05}
M. P\"{u}schel, J. Kova\v{c}evi\'{c},
Real, tight frames with maximal robustness to erasures,
Proc. Data Compr. Conf. (2005) 63--72.

\bibitem{Rauhut:08}
H. Rauhut,
Stability results for random sampling of sparse trigonometric polynomials,
IEEE Trans. Inform. Theory 54 (2008) 5661--5670.

\bibitem{Renes:07}
J.M. Renes,
Equiangular tight frames from Paley tournaments,
Linear Algebra Appl. 426 (2007) 497--501.

\bibitem{RenesBSC:04}
J.M. Renes, R. Blume-Kohout, A.J. Scott, C.M. Caves,
Symmetric informationally complete quantum measurements,
J. Math. Phys. 45 (2004) 2171--2180.

\bibitem{rudelson:icm10} 
M. Rudelson, R. Vershynin, 
Non-asymptotic theory of random matrices: Extreme singular values, 
Proc. Int. Congr. of Mathematicians (2010) 25 pages.

\bibitem{RudelsonV:08}
M. Rudelson, R. Vershynin,
On sparse reconstruction from Fourier and Gaussian measurements,
Commun. Pure Appl. Anal. 61 (2008) 1025--1045.

\bibitem{Sachs:62}
H. Sachs,
\"{U}ber selbstkomplement\"{a}re Graphen,
Publ. Math. Debrecen 9 (1962) 270--288. 

\bibitem{Sarwate:99}
D.V. Sarwate,
Meeting the Welch bound with equality,
In: Sequences and their applications, Springer, London, 1999, 79--102.

\bibitem{ScottG:10}
A.J. Scott, M. Grassl,
Symmetric informationally complete positive-operator valued measures: A new computer study,
J. Math. Phys. 51 (2010) 042203/1--15.

\bibitem{Seidel:73}
J.J. Seidel, 
A survey of two-graphs, 
In: Proc. Intern. Coll. Teorie Combinatorie (1973) 481--511.

\bibitem{seidel:laa68}
J.J. Seidel,
Strongly Regular Graphs with $(-1,1,0)$ Adjacency Matrix Having Eigenvalue 3,
Linear Algebra Appl. 1 (1968) 281--298.

\bibitem{Singer:11}
A. Singer,
Angular synchronization by eigenvectors and semideﬁnite programming,
Appl. Comput. Harmon. Anal. 30 (2011) 20--36.

\bibitem{Singh:10}
P. Singh,
Equiangular tight frames and signature sets in groups,
Linear Algebra Appl. 422 (2010) 2208--2242.

\bibitem{StevenhagenL:mi96}
P. Stevenhagen, H.W. Lenstra,
Chebotar\"{e}v and his density theorem,
Math. Intelligencer 18 (1996) 26--37.

\bibitem{Strawn:11}
N. Strawn,
Finite frame varieties: Nonsingular points, tangent spaces, and explicit local parameterizations,
J. Fourier Anal. Appl. 17 (2011) 821--853.

\bibitem{Strohmer:08}
T. Strohmer,
A note on equiangular tight frames,
Linear Algebra Appl. 429 (2008) 326--330.

\bibitem{StrohmerH:03}
T. Strohmer, R.W. Heath,
Grassmannian frames with applications to coding and communication,
Appl. Comput. Harmon. Anal. 14 (2003) 257--275.

\bibitem{SustikTDH:07}
M.A. Sustik, J.A. Tropp, I.S. Dhillon, R.W. Heath,
On the existence of equiangular tight frames,
Linear Algebra Appl. 426 (2007) 619--635.

\bibitem{Tao:mrl05}
T. Tao,
An uncertainty principle for cyclic groups of prime order,
Math. Research Letters 12 (2005) 121--128.

\bibitem{Tao:07}
T. Tao,
Open question: Deterministic UUP matrices,
\url{http://terrytao.wordpress.com/2007/07/02/open-question-deterministic-uup-matrices}.

\bibitem{TangN:10}
G. Tang, A. Nehorai,
Performance Analysis for Sparse Support Recovery,
IEEE Trans. Inform. Theory 56 (2010) 1383--1399.

\bibitem{Tropp:04}
J.A. Tropp,
Greed is good: Algorithmic results for sparse approximation,
IEEE Trans. Inform. Theory 50 (2004) 2231--2242.

\bibitem{tropp:cras08}
J.A. Tropp, 
Norms of random submatrices and sparse approximation,
C. R. Acad. Sci. 346 (2008) 1271--1274.

\bibitem{Tropp:acha08}
J.A. Tropp,
On the conditioning of random subdictionaries,
Appl. Comput. Harmon. Anal. 25 (2008) 1--24.

\bibitem{TroppDHS:05}
J.A. Tropp, I.S. Dhillon, R.W. Heath, T. Strohmer,
Designing structured tight frames via an alternating projection method,
IEEE Trans. Inform. Theory 51 (2005) 188--209.

\bibitem{ValiantV:tcs86}
L. Valiant, V. Vazirani,
NP is as easy as detecting unique solutions,
Theor. Comput. Sci. 47 (1986) 85--93.

\bibitem{vanLintS:66}
J.H. van Lint, J.J. Seidel, 
Equilateral point sets in elliptic geometry, 
Nederl. Akad. Wetensch. Proc. Ser. A 69 (1966) 335--348; Indag. Math. 28.

\bibitem{wainwright:tit09} 
M.J. Wainwright, 
Sharp thresholds for high-dimensional and noisy sparsity recovery using $\ell_1$-constrained quadratic programming (lasso), 
IEEE Trans. Inform. Theory 55 (2009) 2183--2202.

\bibitem{Waldron:09}
S. Waldron,
On the construction of equiangular frames from graphs,
Linear Algebra Appl. 431 (2009) 2228--2242.

\bibitem{wang2005anti}
Z. Wang, M. Wu, H. Zhao, W. Trappe, K. Liu, 
Anti-collusion forensics of multimedia fingerprinting using orthogonal modulation,
IEEE Trans. Image Process. 14 (2005) 804--821.

\bibitem{Welch:74}
L.R. Welch,
Lower bounds on the maximum cross correlation of signals,
IEEE Trans. Inform. Theory 20 (1974) 397--399.

\bibitem{WipfR:04}
D.P. Wipf, B.D. Rao,
Sparse Bayesian learning for basis selection,
IEEE Trans. Signal Process. 52 (2004) 2153--2164.

\bibitem{wolfe:spaa01}
P.J. Wolfe, M. D\"{o}rfler, S.J. Godsill, 
Multi-Gabor dictionaries for audio time-frequency analysis,
Proc. IEEE Workshop Signal Process. Audio Acoust. (2001) 43--46.

\bibitem{XiaZG:05}
P. Xia, S. Zhou, G.B Giannakis,
Achieving the Welch bound with difference sets,
IEEE Trans. Inform. Theory 51 (2005) 1900--1907.

\bibitem{YG06} 
N.Y. Yu, G. Gong, 
A new binary sequence family with low correlation and large size, 
IEEE Trans. Inform. Theory 52 (2006) 1624--1636.

\bibitem{Yurinskii:76}
V.V. Yurinskii,
Exponential inequalities for sums of random vectors,
J. Multivariate Anal. 6 (1976) 473--499.

\bibitem{zahedi:acc10}
R. Zahedi, A. Pezeshki, E.K.P. Chong, 
Robust measurement design for detecting sparse signals: Equiangular uniform tight frames and Grassmannian packings, 
American Control Conference (2010) 6 pages.

\bibitem{Zauner:phd99}
G. Zauner,
Quantendesigns: Grundz\"{u}ge einer nichtkommutativen Designtheorie.
Ph.D. thesis, University of Vienna, 1999.

\end{thebibliography}
\end{document}